\begin{document}
\newcommand {\emptycomment}[1]{} 

\newcommand{\tabincell}[2]{\begin{tabular}{@{}#1@{}}#2\end{tabular}}

\newcommand{\nc}{\newcommand}
\newcommand{\delete}[1]{}
\nc{\mfootnote}[1]{\footnote{#1}} 
\nc{\todo}[1]{\tred{To do:} #1}

\delete{
\nc{\mlabel}[1]{\label{#1}}  
\nc{\mcite}[1]{\cite{#1}}  
\nc{\mref}[1]{\ref{#1}}  
\nc{\meqref}[1]{\eqref{#1}} 
\nc{\bibitem}[1]{\bibitem{#1}} 
}

\nc{\mlabel}[1]{\label{#1}  
{\hfill \hspace{1cm}{\bf{{\ }\hfill(#1)}}}}
\nc{\mcite}[1]{\cite{#1}{{\bf{{\ }(#1)}}}}  
\nc{\mref}[1]{\ref{#1}{{\bf{{\ }(#1)}}}}  
\nc{\meqref}[1]{\eqref{#1}{{\bf{{\ }(#1)}}}} 
\nc{\mbibitem}[1]{\bibitem[\bf #1]{#1}} 

\nc{\papl}{pre-anti-pre-Lie\xspace}


\newtheorem{thm}{Theorem}[section]
\newtheorem{lem}[thm]{Lemma}
\newtheorem{cor}[thm]{Corollary}
\newtheorem{pro}[thm]{Proposition}
\newtheorem{conj}[thm]{Conjecture}
\theoremstyle{definition}
\newtheorem{defi}[thm]{Definition}
\newtheorem{ex}[thm]{Example}
\newtheorem{rmk}[thm]{Remark}
\newtheorem{pdef}[thm]{Proposition-Definition}
\newtheorem{condition}[thm]{Condition}



\title[A bialgebra theory for transposed Poisson algebras]
{A bialgebra theory for transposed Poisson algebras via
anti-pre-Lie bialgebras and anti-pre-Lie-Poisson bialgebras}

\author{Guilai Liu}
\address{Chern Institute of Mathematics \& LPMC, Nankai University, Tianjin 300071, China}
\email{1120190007@mail.nankai.edu.cn}

\author{Chengming Bai}
\address{Chern Institute of Mathematics \& LPMC, Nankai University, Tianjin 300071, China }
\email{baicm@nankai.edu.cn}


\begin{abstract}
The approach for Poisson bialgebras characterized by Manin triples
with respect to the invariant bilinear forms on both the
commutative associative algebras and the Lie algebras is not
available for giving a bialgebra theory for transposed Poisson
algebras. Alternatively, we consider Manin triples with respect to
the commutative 2-cocycles on the Lie algebras instead.
Explicitly,  we first introduce the notion of anti-pre-Lie
bialgebras as the equivalent structure of Manin triples of Lie
algebras with respect to the commutative 2-cocycles. Then we
introduce the notion of anti-pre-Lie Poisson bialgebras,
characterized by Manin triples of transposed Poisson algebras with
respect to the bilinear forms which are invariant on the
commutative associative algebras and commutative 2-cocycles on the
Lie algebras, giving a bialgebra theory for transposed Poisson
algebras.  Finally the coboundary cases and the related structures
such as analogues of the classical Yang-Baxter equation and
$\mathcal O$-operators are studied.
\end{abstract}


\subjclass[2020]{
17A36,  
17A40,  
17B10, 
17B38, 
17B40, 
17B60, 
17B63,  
17D25.  
}

\keywords{transposed Poisson algebra; anti-pre-Lie algebra;
commutative 2-cocycle; anti-pre-Lie Poisson algebra; bialgebra}

\maketitle


\tableofcontents

\allowdisplaybreaks
\section{Introduction}\
This paper aims to give a bialgebra theory for 
transposed Poisson algebras in terms of the bialgebra structures
corresponding to Manin triples of transposed Poisson algebras with
respect to the invariant bilinear forms on the commutative
associative algebras and the commutative 2-cocycles on the Lie
algebras.

\subsection{Transposed Poisson algebras}\

Poisson algebras arose in the study of Poisson geometry
(\cite{BV1,Li77,Wei77}), and are closely related to a lot of
topics in mathematics and physics  such as classical mechanics,
quantum mechanics and deformation quantization. The notion of
transposed Poisson algebras is given as the dual notion of Poisson
algebras, which exchanges the roles of the two binary operations
in the Leibniz rule defining the Poisson algebras.

\begin{defi}(\cite{Bai2020})\label{defi:transposed Poisson algebra}
    A \textbf{transposed Poisson algebra} is a triple
    $(A,\cdot,[-,-])$, where the pair $(A,\cdot)$ is a commutative associative
    algebra, the pair $(A,[-,-])$ is a Lie algebra, and the following equation holds:
    \begin{equation}\label{eq:defi:transposed Poisson algebra}
        2z\cdot[x,y]=[z\cdot x,y]+[x,z\cdot y],\;\; \forall x,y,z\in A.
    \end{equation}
\end{defi}

Transposed Poisson algebras share common properties with Poisson
algebras, such as the closure under taking tensor products and the
Koszul self-duality as an operad. They also closely relate to a
lot of other algebraic structures such as Novikov-Poisson algebras
(\cite{Xu1997}) and 3-Lie algebras (\cite{Fil}). In particular,
due to the relationships between transposed Poisson algebras and
3-Lie algebras, the factor $2$ in Eq.~(\ref{eq:defi:transposed
Poisson algebra}) is interpreted as the arity of the operation of
the Lie algebra (\cite{Bai2020}). There are further studies on
transposed Poisson algebras in
\cite{BFK,BOK,FKL,KK,KK2,KLZ,LS,YH}. On the other hand, there are
the following examples of transposed Poisson algebras constructed
from  commutative differential (associative) algebras, and
conversely, any unital transposed Poisson algebra $(A,\cdot,
[-,-])$ in the sense that $(A,\cdot)$ is a unital commutative
associative algebra is exactly obtained this way.

\begin{ex}(\cite{Bai2020})\label{ex:TPA}
Let $(A,\cdot)$ be a commutative associative algebra with a
derivation $P$. Then there is a (Witt type) Lie algebra
(\cite{SXZ,Xu}) defined by
\begin{equation}\label{eq:Lie algebras form differential commutative associative algebras}
    [x,y]=P(x)\cdot y-x\cdot P(y),\;\;\forall x,y\in A.
\end{equation}
Moreover, $(A,\cdot,[-,-])$ is a transposed Poisson algebra.
\end{ex}


A  bialgebra structure is a vector space equipped with both an
algebra structure and a coalgebra structure satisfying certain
compatibility conditions. The known examples of such structures
include Lie bialgebras (\cite{CP1,Dri}) and antisymmetric
infinitesimal bialgebras (\cite{Agu2000, Agu2001,
Agu2004,Bai2010}). Lie bialgebras, as equivalent structures of
Manin triples of Lie algebras (with respect to the invariant
bilinear forms), are closely related to Poisson-Lie groups and
play an important role in the infinitesimalization of quantum
groups (\cite{CP1}). Antisymmetric infinitesimal bialgebras, as
the  associative analogue for Lie bialgebras, can be characterized
as double constructions of Frobenius algebras which are widely
applied in 2d topological field and string theory
(\cite{Kock,Lau}).

There is a  bialgebra theory for Poisson algebras given in
\cite{NB} and thus the notion of Poisson bialgebras was introduced
as a combination of Lie bialgebras and commutative and
cocommutative infinitesimal bialgebras satisfying certain
compatible conditions. Equivalently,  a Poisson bialgebra is
characterized as a Manin triple of Poisson algebras which is both
a Manin triple of Lie algebras (with respect to the invariant
bilinear form) and a double construction of commutative Frobenius
algebras simultaneously.

It is quite natural to consider giving a bialgebra theory for
transposed Poisson algebras. The main idea is still to
characterize the bialgebra theory for transposed Poisson algebras
obtained from a specific kind of Manin triples of transposed
Poisson algebras with respect to the bilinear forms satisfying
suitable conditions. Unfortunately, the approach for Poisson
bialgebras in terms of Manin triples with the respect to the
invariant bilinear forms on both the commutative associative
algebras and the Lie algebras  in \cite{NB} is not available for
transposed Poisson algebras. In fact, if there is a nondegenerate
symmetric bilinear form $\mathcal{B}$ on a transposed Poisson
algebra $(A,\cdot,[-,-])$ such that it is {\bf invariant} on both
$(A,\cdot)$ and $(A,[-,-])$ 
in the sense that
\begin{equation}
    \mathcal{B}(x\cdot y,z)=\mathcal{B}(x,y\cdot z),\;\;
    \mathcal{B}([x,y],z)=\mathcal{B}(x,[y,z]),\;\;\forall x,y,z\in
    A,
\end{equation}
then one shows (see Proposition \ref{pro:tpa bilinear form}) that $(A,\cdot,[-,-])$ 
satisfies
\begin{equation}\label{eq:coherent TPA}
        [x,y]\cdot z=[x\cdot y,z]=0,\;\;\forall x,y,z\in A.
    \end{equation}
It is regarded as a ``trivial" case for
Eq.~(\ref{eq:defi:transposed Poisson algebra}). Note that it is a
Poisson algebra which is also regarded as a ``trivial" case for
the Leibniz rule. Also note that this fact exactly exhibits an
obvious difference between Poisson algebras and transposed Poisson
algebras.

On the other hand, recall that a \textbf{commutative 2-cocycle}
(\cite{Dzh}) on a Lie algebra $(\mathfrak{g},[-,-])$ is a
symmetric bilinear form $ \mathcal{B}$ such that
\begin{equation}\label{defi:2-cocycle}
\mathcal{B}([x,y],z)+\mathcal{B}([y,z],x)+\mathcal{B}([z,x],y)=0,\;\;\forall x,y,z\in\mathfrak{g}.
\end{equation}
Commutative 2-cocycles appear in the study of non-associative
algebras satisfying certain skew-symmetric identities
(\cite{Dzh09}), and in the description of the second cohomology of
current Lie algebras (\cite{Zu}). For a transposed Poisson algebra
$(A,\cdot,[-,-])$ given in Example \ref{ex:TPA}, if $(A,\cdot)$ is
a commutative symmetric Frobenius algebra, that is, if there
exists a nondegenerate symmetric bilinear form $\mathcal{B}$ on
$A$ such that it is invariant on $(A,\cdot)$, then $\mathcal B$ is
a commutative 2-cocycle on $(A,[-,-])$ due to the following fact:

\begin{pro} {\rm (\cite{LB2022})}\label{pro:comm 2-cocycle}
Let $(A,\cdot)$ be a commutative associative algebra with a
derivation $P$. Let $\mathcal{B}$ be a symmetric invariant
bilinear form on $(A,\cdot)$. Then $\mathcal{B}$ is  a commutative
2-cocycle on the Witt type Lie algebra $(A,[-,-])$ defined
 by Eq.~(\ref{eq:Lie algebras form differential commutative
associative algebras}).
\end{pro}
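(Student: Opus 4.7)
The plan is to reduce the commutative 2-cocycle identity to a direct computation using only the three inputs: the explicit form of the bracket $[x,y]=P(x)\cdot y-x\cdot P(y)$, the invariance $\mathcal{B}(a\cdot b,c)=\mathcal{B}(a,b\cdot c)$, and the combination of symmetry of $\mathcal{B}$ with commutativity of $\cdot$. Crucially, I would not need to use the derivation property of $P$ at all — the identity is purely a consequence of the skew-symmetry of $[-,-]$ together with the invariance and symmetry data.

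First I would expand each of the three cyclic terms $\mathcal{B}([x,y],z)$, $\mathcal{B}([y,z],x)$, $\mathcal{B}([z,x],y)$ by substituting the Witt-type bracket, producing six summands of the form $\pm\mathcal{B}(P(u)\cdot v,w)$ or $\pm\mathcal{B}(u\cdot P(v),w)$. Next I would use the invariance of $\mathcal{B}$ on $(A,\cdot)$ to move the factor not involving $P$ across, turning every term into one of the form $\pm\mathcal{B}(P(u),v\cdot w)$. The remaining step is to normalize: for the three terms that initially have $P$ in the \emph{second} factor of the product (before invariance is applied), I would use $\mathcal{B}(a,b)=\mathcal{B}(b,a)$ together with $v\cdot w=w\cdot v$ to bring them to the same canonical form $\mathcal{B}(P(\cdot),\cdot\,\cdot\,\cdot)$.

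After this normalization, the six summands become
\[
\mathcal{B}(P(x),y\cdot z)-\mathcal{B}(P(y),z\cdot x)+\mathcal{B}(P(y),z\cdot x)-\mathcal{B}(P(z),x\cdot y)+\mathcal{B}(P(z),x\cdot y)-\mathcal{B}(P(x),y\cdot z),
\]
which telescopes to $0$. This is exactly Eq.~(\ref{defi:2-cocycle}), so $\mathcal{B}$ is a commutative 2-cocycle on $(A,[-,-])$.

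There is essentially no obstacle; the main point to keep straight is the bookkeeping of which side of the product the derivation $P$ lands on after each application of invariance, and then applying symmetry of $\mathcal{B}$ plus commutativity of $\cdot$ uniformly to align the three contributions so that cyclic cancellation is manifest. It is worth remarking that the argument only uses invariance of $\mathcal{B}$ on $(A,\cdot)$ and the explicit shape of the bracket, which is consistent with the observation in the preceding discussion that symmetric invariant forms on the associative side automatically yield commutative 2-cocycles (rather than invariant forms) on the Lie side.
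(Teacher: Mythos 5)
Your proof is correct: expanding the three cyclic terms with $[x,y]=P(x)\cdot y-x\cdot P(y)$, applying invariance $\mathcal{B}(a\cdot b,c)=\mathcal{B}(a,b\cdot c)$ (after using commutativity of $\cdot$, or symmetry of $\mathcal{B}$, to handle the terms where $P$ sits in the second factor) yields exactly the telescoping sum you display, which vanishes; and your observation that the derivation property of $P$ enters only in making $[-,-]$ a Lie bracket, not in the 2-cocycle identity itself, is accurate. The paper does not reprove this statement but cites it from \cite{LB2022}, and your direct verification is the standard argument one would give for it.
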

Hence there exists a nontrivial transposed Poisson algebra with a
nondegenerate symmetric bilinear form which  is invariant on the
commutative associative algebra and a commutative 2-cocycle on the
Lie algebra. Therefore it is reasonable to consider a Manin triple
of transposed Poisson algebras with respect to such a bilinear
form instead of the invariant bilinear form on both the
commutative associative algebras and the Lie algebras. Obviously,
it is  a combination of a double construction of commutative
Frobenius algebras and a Manin triple of Lie algebras with respect
to the commutative 2-cocycle. Note that the former corresponds to
a commutative and cocommutative infinitesimal bialgebra in
\cite{Bai2010}.


\subsection{
Anti-pre-Lie algebras and anti-pre-Lie bialgebras}

As the first step for considering a bialgebra theory for
transposed Poisson algebras along the approach given at the end of
the previous subsection, we consider the bialgebra structures
corresponding to Manin triples of Lie algebras with respect to the
commutative 2-cocycles, which are closely related to the following
algebraic structures.

\begin{defi}\label{defi:anti-pre-Lie algebras}(\cite{LB2022})
    Let $A$ be a vector space with a bilinear operation $\circ: A\otimes A\rightarrow A$. The pair $(A,\circ)$ is called an
    \textbf{anti-pre-Lie algebra} if the following equations are
    satisfied:
    \begin{equation}\label{eq:defi:anti-pre-Lie algebras1}
        x\circ(y\circ z)-y\circ(x\circ z)=[y,x]\circ z,
    \end{equation}
    \begin{equation}\label{eq:defi:anti-pre-Lie algebras2}
        [x,y]\circ z+[y,z]\circ x+[z,x]\circ y=0,
    \end{equation}
    where
        $[x,y]=x\circ y-y\circ x$,
    for all $x,y,z\in A$.
\end{defi}

For an anti-pre-Lie algebra $(A,\circ)$, 
the bilinear operation $[-,-]$ defines a Lie algebra, which is
called the \textbf{sub-adjacent Lie algebra} of $(A,\circ)$ and
denoted by $(\mathfrak{g}(A),[-,-])$, and $(A,\circ)$ is called a
\textbf{compatible anti-pre-Lie algebra} structure on
$(\mathfrak{g}(A),[-,-])$.
Conversely,  anti-pre-Lie algebras are characterized as a class of
Lie-admissible algebras whose negative left multiplication
operators make representations of the commutator Lie algebras,
justifying the notion by the comparison with pre-Lie algebras
(\cite{Bur}) which are characterized as a class of Lie-admissible
algebras whose left multiplication operators do so. Moreover, they
are regarded as the underlying algebraic structures of Lie
algebras with nondegenerate commutative 2-cocycles due to the
following relationship between them.

\begin{thm}\label{thm:commutative 2-cocycles and anti-pre-Lie algebras} {\rm
(\cite{LB2022})}
    Let $\mathcal{B}$ be a nondegenerate commutative 2-cocycle on a
    Lie algebra $(\frak g,[-,-])$. Then there exists a  unique compatible
    anti-pre-Lie algebra structure $\circ$ on $(\frak g,[-,-])$ such that
    \begin{equation}\label{eq:thm:commutative 2-cocycles and anti-pre-Lie algebras}
        \mathcal{B}(x\circ y,z)=\mathcal{B}(y,[x,z]), \;\;\forall x,y,z\in
        \frak g.
    \end{equation}
\end{thm}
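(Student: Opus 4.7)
The plan is to use the nondegeneracy of $\mathcal{B}$ to define $\circ$ implicitly by Eq.~\eqref{eq:thm:commutative 2-cocycles and anti-pre-Lie algebras} and then verify each required identity by pairing against an arbitrary test vector $w\in\mathfrak{g}$ and invoking the 2-cocycle and Jacobi identities. Concretely, since $\mathcal{B}$ is nondegenerate, for each pair $(x,y)\in\mathfrak{g}\times\mathfrak{g}$ the linear functional $z\mapsto\mathcal{B}(y,[x,z])$ is represented by a unique element $x\circ y\in\mathfrak{g}$, giving a bilinear operation $\circ$; uniqueness of any $\circ$ satisfying Eq.~\eqref{eq:thm:commutative 2-cocycles and anti-pre-Lie algebras} is then automatic.

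Compatibility with $[-,-]$ is the first check: using symmetry of $\mathcal{B}$, the commutative 2-cocycle identity rearranges to $\mathcal{B}([x,y],z)=\mathcal{B}(y,[x,z])-\mathcal{B}(x,[y,z])=\mathcal{B}(x\circ y-y\circ x,\,z)$, and nondegeneracy yields $[x,y]=x\circ y-y\circ x$, so $\circ$ is a compatible structure on $(\mathfrak{g},[-,-])$. The first anti-pre-Lie axiom \eqref{eq:defi:anti-pre-Lie algebras1} is a short one-line check: pair $x\circ(y\circ z)-y\circ(x\circ z)$ against $w$, unfold twice to obtain $\mathcal{B}(z,\,[y,[x,w]]-[x,[y,w]])$, and use Jacobi to collapse the bracket difference to $[[y,x],w]$; this gives $\mathcal{B}([y,x]\circ z,\,w)$, and nondegeneracy concludes.

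The second anti-pre-Lie axiom \eqref{eq:defi:anti-pre-Lie algebras2} is where I expect the main resistance. Pairing the cyclic sum $[x,y]\circ z+[y,z]\circ x+[z,x]\circ y$ against $w$ produces
\[
T=\sum_{\mathrm{cyc}(x,y,z)}\mathcal{B}(z,\,[[x,y],w]).
\]
Applying the 2-cocycle identity with $a=[x,y]$, $b=w$, $c=z$ (and cyclic analogues) converts each summand into $\mathcal{B}(w,[[x,y],z])-\mathcal{B}([x,y],[w,z])$, so that $T=\mathcal{B}(w,\sum_{\mathrm{cyc}}[[x,y],z])-S$, where $S:=\sum_{\mathrm{cyc}(x,y,z)}\mathcal{B}([x,y],[w,z])$. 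Jacobi annihilates the first contribution, leaving $T=-S$. A second application of the 2-cocycle identity to each summand of $S$ (now with $a=x$, $b=y$, $c=[w,z]$, etc.), followed by Jacobi to simplify the inner nested brackets of the form $[b,[w,c]]-[c,[w,b]]=-[w,[b,c]]$, splits $S$ into a $\mathcal{B}(w,-)$ contribution that vanishes by Jacobi plus a remainder which, using antisymmetry of $[-,-]$, equals $-S$. One concludes $S=-S$, so $S=0$ in characteristic $\neq 2$, whence $T=0$ and nondegeneracy finishes the axiom. The main obstacle is precisely this apparent non-termination of the unfolding in the second axiom, resolved by interlocking two applications of the 2-cocycle identity with two applications of Jacobi to force the cancellation $2S=0$.
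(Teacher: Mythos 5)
The paper itself only quotes this theorem from \cite{LB2022} without reproducing a proof, and your blind argument is the standard dualization proof and is correct: define $x\circ y$ by nondegeneracy, read off both the compatibility $x\circ y-y\circ x=[x,y]$ and the first axiom directly from the commutative 2-cocycle identity and Jacobi, and for the second axiom combine $T=-S$ (first cocycle application plus Jacobi) with a second cocycle application showing $S=T$, so $2T=0$ and hence $T=0$ in characteristic zero. Two cosmetic corrections only: the Jacobi consequence should read $[b,[w,c]]-[c,[w,b]]=[w,[b,c]]$ rather than $-[w,[b,c]]$, and at the second stage no separate $\mathcal{B}(w,-)$ contribution actually appears---after grouping, the transformed summands reassemble directly into $\sum_{\mathrm{cyc}}\mathcal{B}(x,[[y,z],w])=T$, which you have already shown equals $-S$, so the cancellation $2S=0$ goes through exactly as you intend.
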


Therefore Manin triples of Lie algebras with respect to the
commutative 2-cocycles are interpreted in terms of anti-pre-Lie
algebras and in particular, they are equivalent to certain
bialgebra structures for anti-pre-Lie algebras, namely,
anti-pre-Lie bialgebras. Furthermore, both them are equivalent to
the matched pairs of Lie algebras with respect to the
representations given by the compatible anti-pre-Lie algebras. The
study of coboundary cases of anti-pre-Lie bialgebras leads to the
introduction of an analogue of the classical Yang-Baxter equation
in a Lie algebra, called the anti-pre-Lie Yang-Baxter equation
(APL-YBE). The skew-symmetric solutions of the APL-YBE in
anti-pre-Lie algebras give anti-pre-Lie bialgebras. Moreover, the
notions of $\mathcal{O}$-operators on anti-pre-Lie algebras and
\papl (pre-APL) algebras are introduced to provide skew-symmetric
solutions of the APL-YBE in anti-pre-Lie algebras .

\subsection{
Anti-pre-Lie Poisson algebras and anti-pre-Lie Poisson bialgebras}
Now we consider, as a bialgebra theory for transposed Poisson
algebras, the bialgebra structures corresponding to Manin triples
of transposed Poisson algebras with respect to the invariant
bilinear forms on  the commutative associative algebras and the
commutative 2-cocycles on the Lie algebras.

The notion of anti-pre-Lie Poisson algebras was introduced as
follows.

\begin{defi}\label{defi:anti-pre-Lie Poisson}(\cite{LB2022})
    An \textbf{anti-pre-Lie Poisson algebra} is a triple
    $(A,\cdot,\circ)$, where the pair $(A,\cdot)$ is a commutative
    associative algebra and the pair $(A,\circ)$ is an anti-pre-Lie algebra such that the following equations hold:
    \begin{eqnarray}\label{eq:defi:anti-pre-Lie Poisson1}
        2(x\circ y)\cdot z-2(y\circ x)\cdot z&=&y\cdot(x\circ z)-x\cdot(y\circ
        z),\\
    \label{eq:defi:anti-pre-Lie Poisson2}
        2x\circ(y\cdot z)&=&(z\cdot x)\circ y+z\cdot(x\circ
        y),\;\;\forall x,y,z\in A.
\end{eqnarray}
\end{defi}

Anti-pre-Lie Poisson algebras play a similar role here as
anti-pre-Lie algebras in the previous subsection. In particular,
there are relationships between anti-pre-Lie Poisson algebras and
transposed Poisson algebras which are analogues of the
relationships between anti-pre-Lie algebras and the sub-adjacent
Lie algebras. Explicitly, for an anti-pre-Lie Poisson algebra
$(A,\cdot,\circ)$ with $(A,[-,-])$ being the sub-adjacent Lie
algebra of $(A,\circ)$, $(A,\cdot, [-,-])$ is a transposed Poisson
algebra, called the {\bf sub-adjacent transposed Poisson algebra}.
Conversely, anti-pre-Lie Poisson algebras are characterized in
terms of representations of the sub-adjacent transposed Poisson
algebras on the dual spaces of themselves. Moreover, like
Theorem~\ref{thm:commutative 2-cocycles and anti-pre-Lie
algebras}, for a transposed Poisson algebra $(A,\cdot, [-,-])$
with a nondegenerate symmetric bilinear form $\mathcal B$ such
that it is invariant on $(A,\cdot)$ and a commutative 2-cocycle on
$(A,[-,-])$, there is an anti-pre-Lie Poisson algebra
$(A,\cdot,\circ)$, where $(A,\circ)$ is defined by
Eq.~(\ref{eq:thm:commutative 2-cocycles and anti-pre-Lie
algebras}) through $\mathcal B$ (\cite{LB2022}).

Hence the role of anti-pre-Lie Poisson algebras in the study of
the bialgebra structures corresponding to Manin triples of
transposed Poisson algebras with respect to the invariant bilinear
forms on the commutative associative algebras and the commutative
2-cocycles on the Lie algebras is the same as the role of
anti-pre-Lie algebras in the study of the bialgebra structures
corresponding to Manin triples of Lie algebras with respect to the
commutative 2-cocycles. Consequently, we introduce the notion of
anti-pre-Lie Poisson bialgebras as the equivalent structures for
the above Manin triples of transposed Poisson algebras as well as
the needed bialgebra theory for transposed Poisson algebras. The
study of coboundary cases and the related structures such as an
analogue of the classical Yang-Baxter equation and $\mathcal
O$-operators on anti-pre-Lie Poisson algebras is still available.

\subsection{Layout of the paper}
This paper is organized as follows.

In Section \ref{S2}, we introduce the notion of anti-pre-Lie
bialgebras as the bialgebra structures corresponding to Manin
triples of Lie algebras with respect to the commutative
2-cocycles. Both of them are interpreted in terms of certain
matched pairs of Lie algebras  as well as the compatible
anti-pre-Lie algebras. The study of coboundary cases leads to the
introduction of the APL-YBE, whose skew-symmetric solutions give
coboundary anti-pre-Lie bialgebras. The notions of
$\mathcal{O}$-operators of anti-pre-Lie algebras and pre-APL 
algebras are introduced to construct skew-symmetric solutions of
the APL-YBE in anti-pre-Lie algebras.

In Section \ref{S4}, we characterize anti-pre-Lie Poisson algebras
in terms of representations of the sub-adjacent transposed Poisson
algebras on the dual spaces of themselves. Then we introduce the
notion of anti-pre-Lie Poisson bialgebras as the bialgebra
structures corresponding to Manin triples of transposed Poisson
algebras with respect to the invariant bilinear forms on the
commutative associative algebras and the commutative 2-cocycles on
the Lie algebras, characterized by certain matched pairs of
anti-pre-Lie Poisson algebras and transposed Poisson algebras. The
study of coboundary cases and the related structures is given.

Throughout this paper,  unless otherwise specified, all the vector
spaces and algebras are finite-dimensional over an algebraically
closed field $\mathbb {K}$ of characteristic zero, although many
results and notions remain valid in the infinite-dimensional case.

\section{Anti-pre-Lie bialgebras}\label{S2}
We introduce the notions of representations and matched pairs of
anti-pre-Lie algebras, and give their relationships with
representations and matched pairs of the sub-adjacent Lie
algebras. Then we introduce the notion of Manin triples of Lie
algebras with respect to the commutative 2-cocycles and give their
equivalence with certain matched pairs of Lie algebras as well as
the compatible anti-pre-Lie algebras. Consequently, we introduce
the notion of anti-pre-Lie bialgebras as their equivalent
structures. Finally, we study the coboundary anti-pre-Lie
bialgebras, which lead to the introduction of the anti-pre-Lie
Yang-Baxter equation (APL-YBE). In particular, a skew-symmetric
solution of the APL-YBE in an anti-pre-Lie algebra gives a
coboundary anti-pre-Lie bialgebra. We also introduce the notions
of $\mathcal{O}$-operators of anti-pre-Lie algebras and \papl
(pre-APL) algebras to construct skew-symmetric solutions of the
APL-YBE in anti-pre-Lie algebras.

\subsection{Representations and matched pairs of anti-pre-Lie algebras}\


Recall some basic facts on the representations of Lie algebras. A
\textbf{representation} of a Lie algebra $(\mathfrak{g},[-,-])$ is
a pair $(\rho,V)$, such that $V$ is a vector space and
$\rho:\mathfrak{g}\rightarrow\mathfrak{gl}(V)$ is a Lie algebra
homomorphism for the natural Lie algebra structure on
$\mathfrak{gl}(V)=\mathrm{End}(V)$.
In particular, the linear map ${\rm ad}:\mathfrak{g}\rightarrow
\mathfrak{gl}(\mathfrak{g})$  defined by ${\rm ad}(x)(y)=[x,y]$
for all $x,y\in \mathfrak{g}$, gives a representation
$(\mathrm{ad},\mathfrak{g})$, called the \textbf{adjoint
representation} of $(\mathfrak{g},[-,-])$.

For a vector space $V$ and a linear map $\rho:\mathfrak{g}\rightarrow\mathfrak{gl}(V)$, the pair $(\rho,V)$ is a representation of a Lie algebra $(\mathfrak{g},[-,-])$ if and only if 
$\mathfrak{g}\oplus V$
is a ({\bf semi-direct product}) Lie algebra  by defining the
multiplication $[-,-]_{\mathfrak{g}\oplus V}$ (often still denoted
by $[-,-]$ for simplicity) on $\mathfrak{g}\oplus V$ by
\begin{equation}\label{eq:SDLie}
    [x+u,y+v]_{\mathfrak{g}\oplus V}=[x,y]+\rho(x)v-\rho(y)u,\;\;\forall x,y\in \mathfrak{g}, u,v\in V.
\end{equation}
We denote it by $\mathfrak{g}\ltimes_{\rho}V$.

Now we give the notion of representations of anti-pre-Lie algebras.
\begin{defi}\label{defi:rep anti-pre-Lie algebra}
    Let $(A,\circ)$ be an anti-pre-Lie algebra. A \textbf{representation} of $(A,\circ)$ is a triple $(l_{\circ},r_{\circ},V)$, such that $V$ is a vector space, and $l_{\circ},r_{\circ}:A\rightarrow \mathrm{End}(V)$ are linear maps satisfying
\begin{eqnarray}
    l_{\circ}(y\circ x)-l_{\circ}(x\circ y)&=&l_{\circ}(x)l_{\circ}(y)-l_{\circ}(y)l_{\circ}(x),\label{eq:defi:rep anti-pre-Lie algebra1}\\
    r_{\circ}(x\circ y)&=&l_{\circ}(x)r_{\circ}(y)+r_{\circ}(y)l_{\circ}(x)-r_{\circ}(y)r_{\circ}(x),\label{eq:defi:rep anti-pre-Lie algebra2}\\
     l_{\circ}(y\circ x)-l_{\circ}(x\circ y)&=&r_{\circ}(x)l_{\circ}(y)-r_{\circ}(y)l_{\circ}(x)-r_{\circ}(x)r_{\circ}(y)+r_{\circ}(y)r_{\circ}(x),\label{eq:defi:rep anti-pre-Lie algebra3}
\end{eqnarray}
\delete{
    \begin{equation}\label{eq:defi:rep anti-pre-Lie algebra1}
        l_{\circ}(y\circ x)-l_{\circ}(x\circ y)=l_{\circ}(x)l_{\circ}(y)-l_{\circ}(y)l_{\circ}(x),
    \end{equation}
    \begin{equation}\label{eq:defi:rep anti-pre-Lie algebra2}
        r_{\circ}(x\circ y)=l_{\circ}(x)r_{\circ}(y)+r_{\circ}(y)l_{\circ}(x)-r_{\circ}(y)r_{\circ}(x),
    \end{equation}
    \begin{equation}\label{eq:defi:rep anti-pre-Lie algebra3}
        l_{\circ}(y\circ x)-l_{\circ}(x\circ y)=r_{\circ}(x)l_{\circ}(y)-r_{\circ}(y)l_{\circ}(x)-r_{\circ}(x)r_{\circ}(y)+r_{\circ}(y)r_{\circ}(x),
    \end{equation}}
    for all $x,y\in A$.
\end{defi}

\begin{ex}
    Let $(A,\circ)$ be an anti-pre-Lie algebra.
    Define linear maps $\mathcal{L}_{\circ},\mathcal{R}_{\circ}:A\rightarrow\mathrm{End}(A)$ by $\mathcal{L}_{\circ}(x)y=\mathcal{R}_{\circ}(y)x=x\circ y$, for all $x,y\in A$.
    Then $(\mathcal{L}_{\circ},\mathcal{R}_{\circ},A)$ is  a representation of $(A,\circ)$, called the \textbf{adjoint representation} of $(A,\circ).$
\end{ex}

\begin{pro}\label{pro:repandsemidirectproduct}
    Let $(A,\circ)$ be an anti-pre-Lie algebra and $\big(\mathfrak{g}(A),[-,-]\big)$ be the sub-adjacent Lie algebra of $(A,\circ)$. Let $V$ be a vector space and $l_{\circ},r_{\circ}:A\rightarrow \mathrm{End}(V)$ be two linear maps.
    \begin{enumerate}
\item\label{rep1} $(l_{\circ},r_{\circ},V)$ is a representation of
$(A,\circ)$ if and only if the direct sum $A\oplus V$ of vector
spaces is a (\textbf{semi-direct product}) anti-pre-Lie algebra by
defining the bilinear operation $\circ_{A\oplus V}$ (often still
denoted by $\circ$) on $A\oplus V$ by
        \begin{equation}\label{eq:pro:repandsemidirectproduct1}
            (x+u)\circ_{A\oplus V}(y+v)=x\circ y+l_{\circ}(x)v+r_{\circ}(y)u,\;\;\forall x,y\in A, u,v\in V.
        \end{equation}
        We denote it by $A\ltimes_{l_{\circ},r_{\circ}}V$.
\item Let $(l_{\circ},r_{\circ},V)$ be a representation of
$(A,\circ)$. \label{rep2}
\begin{itemize}
\item[(a)] \label{rep2-1} $(-l_{\circ},V)$ is a representation of
$\big(\mathfrak{g}(A),[-,-]\big)$. In particular,
$(-\mathcal{L}_{\circ},A)$ is a representation of
$\big(\mathfrak{g}(A),[-,-]\big)$. \item[(b)]\label{rep2-2}
$(l_{\circ}-r_{\circ},V)$ is a representation of
$\big(\mathfrak{g}(A),[-,-]\big)$.
\end{itemize}
    \end{enumerate}
\end{pro}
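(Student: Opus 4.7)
The plan for part~(\ref{rep1}) is a direct verification. Writing out the two axioms \eqref{eq:defi:anti-pre-Lie algebras1}--\eqref{eq:defi:anti-pre-Lie algebras2} of an anti-pre-Lie algebra for the operation \eqref{eq:pro:repandsemidirectproduct1} on three generic elements $x+u,\,y+v,\,z+w\in A\oplus V$, and separating each resulting identity into its $A$-component and its $V$-component, the $A$-components recover the axioms for $(A,\circ)$. The $V$-components further split into contributions linear in $u$, $v$, and $w$; collecting like terms, Eq.~\eqref{eq:defi:anti-pre-Lie algebras1} produces Eqs.~\eqref{eq:defi:rep anti-pre-Lie algebra1} and \eqref{eq:defi:rep anti-pre-Lie algebra2} (from the coefficients of $w$ and of $u,v$ respectively), while the cyclic-sum identity \eqref{eq:defi:anti-pre-Lie algebras2} produces Eq.~\eqref{eq:defi:rep anti-pre-Lie algebra3}. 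Since each step is an equivalence, both directions of the ``if and only if'' follow simultaneously.

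For part~(\ref{rep2})(a), I would proceed by a direct check: the condition that $-l_\circ\colon A\to\mathfrak{gl}(V)$ is a Lie algebra homomorphism reads
\[
-l_\circ([x,y]) = [-l_\circ(x),\,-l_\circ(y)] = l_\circ(x)l_\circ(y)-l_\circ(y)l_\circ(x),
\]
and using $[x,y]=x\circ y-y\circ x$ this rearranges into exactly Eq.~\eqref{eq:defi:rep anti-pre-Lie algebra1}. Hence (a) is immediate from the first defining axiom of a representation, and the ``in particular'' assertion is the case of the adjoint representation $(\mathcal{L}_\circ,\mathcal{R}_\circ,A)$.

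For part~(\ref{rep2})(b), the cleanest route is to bootstrap from part~(\ref{rep1}). Once $A\ltimes_{l_\circ,r_\circ}V$ is known to be an anti-pre-Lie algebra, its sub-adjacent Lie algebra is a genuine Lie algebra, and direct computation of its bracket on $x+u,\,y+v\in A\oplus V$ gives
\[
[x+u,\,y+v] = (x+u)\circ(y+v)-(y+v)\circ(x+u) = [x,y]+(l_\circ-r_\circ)(x)\,v-(l_\circ-r_\circ)(y)\,u,
\]
which coincides with the formula~\eqref{eq:SDLie} for the semi-direct product Lie algebra $\mathfrak{g}(A)\ltimes_{l_\circ-r_\circ}V$. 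Therefore $(l_\circ-r_\circ,V)$ is a representation of $\bigl(\mathfrak{g}(A),[-,-]\bigr)$.

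The main obstacle is the bookkeeping in part~(\ref{rep1}): one has to track carefully which of the $V$-coefficient equations corresponds to which of \eqref{eq:defi:rep anti-pre-Lie algebra1}--\eqref{eq:defi:rep anti-pre-Lie algebra3}, since different choices of which variable ``carries'' the $V$-component initially produce identities that look distinct but in fact coincide after relabeling. Once that correspondence is pinned down, parts~(a) and (b) essentially drop out: (a) is a one-line rearrangement of \eqref{eq:defi:rep anti-pre-Lie algebra1}, and (b) is read off the sub-adjacent bracket on the semi-direct product.
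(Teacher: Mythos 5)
Your proof is correct, and it follows a more self-contained route than the paper's. For part~(\ref{rep1}) the paper does not verify anything directly: it invokes Theorem~\ref{thm:matched pairs of anti-pre-Lie algebras} on matched pairs of anti-pre-Lie algebras, specialized to $B=V$ with the zero multiplication, and likewise part~(\ref{rep2})(b) is obtained by specializing Proposition~\ref{pro:from matched pairs of anti-pre-Lie algebras to matched pairs of Lie algebras} to the same degenerate matched pair; only (\ref{rep2})(a) is argued exactly as you do, as an immediate rearrangement of Eq.~\eqref{eq:defi:rep anti-pre-Lie algebra1}. Your component-splitting verification of (\ref{rep1}) is exactly the computation hidden inside the proof of the matched pair theorem, and your bookkeeping is right: the $V$-part of Eq.~\eqref{eq:defi:anti-pre-Lie algebras1} yields \eqref{eq:defi:rep anti-pre-Lie algebra1} (coefficient of the third $V$-variable) and \eqref{eq:defi:rep anti-pre-Lie algebra2} (coefficients of the first two, which coincide after relabeling), while the cyclic identity \eqref{eq:defi:anti-pre-Lie algebras2} yields \eqref{eq:defi:rep anti-pre-Lie algebra3}; terms with two $V$-entries vanish, so the equivalence is genuine. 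Your derivation of (\ref{rep2})(b) by reading off the sub-adjacent bracket of $A\ltimes_{l_{\circ},r_{\circ}}V$ and matching it with Eq.~\eqref{eq:SDLie} for $\rho=l_{\circ}-r_{\circ}$ is also valid (it even appears, commented out, in the paper's source as an alternative argument). The trade-off: the paper's citations buy brevity and place the proposition inside the general matched-pair framework it develops anyway, while your direct argument is elementary, avoids leaning on the later, only sketched proof of Theorem~\ref{thm:matched pairs of anti-pre-Lie algebras}, and makes the origin of each representation axiom transparent.
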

\begin{proof}
(\ref{rep1}). It is a special case of the matched pair of
anti-pre-Lie algebras in Theorem \ref{thm:matched pairs of
anti-pre-Lie algebras} when $B=V$ is equipped with the zero
multiplication.

(\ref{rep2}) (a). It follows directly from Eq.~(\ref{eq:defi:rep
anti-pre-Lie algebra1}).

 (\ref{rep2}) (b). It is a special case
of Proposition~ \ref{pro:from matched pairs of anti-pre-Lie
algebras to matched pairs of Lie algebras} when $B=V$ is equipped
with the zero multiplication.
\delete{ By Item
(\ref{rep1}), there is a semi-direct product anti-pre-Lie algebra
$A\ltimes_{l_{\circ},r_{\circ}}V$, whose sub-adjacent Lie algebra
structure on $A\oplus V$ is given by
        \begin{eqnarray*}
        [x+u,y+v]_{\mathfrak{g}(A)\oplus V}&=&(x+u)\circ_{A\oplus V}(y+v)-(y+v)\circ_{A\oplus V}(x+u)\\
        &=&x\circ y+l_{\circ}(x)v+r_{\circ}(y)u-(y\circ x+l_{\circ}(y)u+r_{\circ}(x)v)\\
        &=&[x,y]+(l_{\circ}-r_{\circ})(x)v-(l_{\circ}-r_{\circ})(y)u,
        \end{eqnarray*}
        for all $x,y\in A, u,v\in V$. Thus $(l_{\circ}-r_{\circ},V)$ is a representation of
        $(\mathfrak{g}(A),[-,-])$.}
\end{proof}

Let $A$ and $V$ be vector spaces. For a linear map
$\rho:A\rightarrow{\rm End}(V)$, we set
$\rho^{*}:A\rightarrow\mathrm{End}(V^{*})$ by
\begin{equation}
\langle\rho^{*}(x)u^{*},v\rangle=-\langle
u^{*},\rho(x)v\rangle,\;\;\forall x\in A, u^{*}\in V^{*}, v\in V.
\end{equation}
Here $\langle\ ,\ \rangle$ is the usual pairing between $V$ and
$V^*$. It is known that if $(\rho,V)$ is a representation of a Lie
algebra $(\mathfrak{g},[-,-])$, then $(\rho^{*},V^{*})$ is also a
representation of $(\mathfrak{g},[-,-])$. In particular,
$(\mathrm{ad}^{*},\mathfrak{g}^{*})$ is a representation of
$(\mathfrak{g},[-,-])$.

\begin{pro}\label{pro:dual rep anti-pre-Lie algebra}
    Let $(l_{\circ}, r_{\circ},V)$ be a representation of an anti-pre-Lie algebra $(A,\circ)$.
    \begin{enumerate}
        \item\label{it:a}$(-l^{*}_{\circ}, V^{*})$ is a representation of the sub-adjacent Lie algebra $\big(\mathfrak{g}(A),[-,-]\big)$. In particular, $(-\mathcal{L}^{*}_{\circ}, A^{*})$ is a representation of $\big(\mathfrak{g}(A),[-,-]\big)$.
        \item \label{it:b} $(r_{\circ}^{*}-l_{\circ}^{*}$,$r_{\circ}^{*}$,$V^{*})$ is a representation of $(A,\circ)$.  In particular, $(\mathcal{R}_{\circ}^{*}-\mathcal{L}_{\circ}^{*}=-\mathrm{ad}^{*},\mathcal{R}_{\circ}^{*},A^{*})$ is a representation of $(A,\circ)$.
    \end{enumerate}
\end{pro}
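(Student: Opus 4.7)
The plan is to derive part~(\ref{it:a}) from Proposition~\ref{pro:repandsemidirectproduct}(\ref{rep2})(a) combined with the standard fact (recalled just before the statement) that the dual of a Lie representation is again a Lie representation, and to establish part~(\ref{it:b}) by verifying the three axioms of Definition~\ref{defi:rep anti-pre-Lie algebra} directly via the dual pairing.

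For (\ref{it:a}), Proposition~\ref{pro:repandsemidirectproduct}(\ref{rep2})(a) already gives that $(-l_\circ, V)$ is a representation of the sub-adjacent Lie algebra $(\mathfrak{g}(A), [-,-])$. Dualizing yields that $(-l_\circ)^{*} = -l^{*}_{\circ}$ is a representation of $(\mathfrak{g}(A), [-,-])$ on $V^{*}$, which is the claim. The ``in particular'' assertion is just the specialization $V = A$, $l_\circ = \mathcal{L}_\circ$.

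For (\ref{it:b}), set $L := r^{*}_{\circ} - l^{*}_{\circ}$ and $R := r^{*}_{\circ}$, and check Eqs.~(\ref{eq:defi:rep anti-pre-Lie algebra1})--(\ref{eq:defi:rep anti-pre-Lie algebra3}) with $(l_\circ, r_\circ)$ replaced by $(L, R)$. Eq.~(\ref{eq:defi:rep anti-pre-Lie algebra1}) for $(L,R)$ is equivalent to saying that $-L = l_\circ^{*} - r_\circ^{*}$ is a Lie representation of $(\mathfrak{g}(A), [-,-])$ on $V^{*}$, and this follows by dualizing Proposition~\ref{pro:repandsemidirectproduct}(\ref{rep2})(b). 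For the remaining two axioms I would pair both sides with an arbitrary $v \in V$, using $\langle \rho^{*}(x)u^{*}, v \rangle = -\langle u^{*}, \rho(x)v \rangle$ and the consequent identity $\langle f^{*}(x)g^{*}(y)u^{*}, v \rangle = \langle u^{*}, g(y)f(x)v \rangle$ for any $f,g: A \to \mathrm{End}(V)$. After expansion, the axiom for $R(x \circ y)$ collapses directly to Eq.~(\ref{eq:defi:rep anti-pre-Lie algebra2}) for $(l_\circ, r_\circ, V)$, while Eq.~(\ref{eq:defi:rep anti-pre-Lie algebra3}) for $(L,R)$ reduces to the identity $l_\circ([y,x]) - r_\circ([y,x]) = l_\circ(x) r_\circ(y) - l_\circ(y) r_\circ(x)$, which in turn is obtained by subtracting Eq.~(\ref{eq:defi:rep anti-pre-Lie algebra3}) from the skew-symmetrization in $x,y$ of Eq.~(\ref{eq:defi:rep anti-pre-Lie algebra2}) for the original representation. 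The final ``in particular'' statement then follows by taking the adjoint representation and observing that $\mathrm{ad}(x) = \mathcal{L}_\circ(x) - \mathcal{R}_\circ(x)$, so that $\mathcal{R}^{*}_\circ - \mathcal{L}^{*}_\circ = -\mathrm{ad}^{*}$. I do not expect a genuine obstacle here: the whole argument is bookkeeping with the dual pairing, and the only mildly subtle point is tracking the sign and order-reversal produced when moving operators across $\langle -, -\rangle$.
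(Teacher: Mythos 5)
Your proposal is correct and follows essentially the same route as the paper: part~(\ref{it:a}) is obtained exactly as in the paper from Proposition~\ref{pro:repandsemidirectproduct}~(\ref{rep2})~(a) together with the duality of Lie algebra representations, and part~(\ref{it:b}) is verified by pairing the three axioms of Definition~\ref{defi:rep anti-pre-Lie algebra} against $V$, your explicit reductions of Eqs.~(\ref{eq:defi:rep anti-pre-Lie algebra2})--(\ref{eq:defi:rep anti-pre-Lie algebra3}) being precisely the computations the paper dismisses with ``similarly'' (and the identity $l_{\circ}([y,x])-r_{\circ}([y,x])=l_{\circ}(x)r_{\circ}(y)-l_{\circ}(y)r_{\circ}(x)$ does indeed follow from the skew-symmetrization of Eq.~(\ref{eq:defi:rep anti-pre-Lie algebra2}) combined with Eq.~(\ref{eq:defi:rep anti-pre-Lie algebra3})). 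Your only deviation is treating Eq.~(\ref{eq:defi:rep anti-pre-Lie algebra1}) for the dual triple by dualizing Proposition~\ref{pro:repandsemidirectproduct}~(\ref{rep2})~(b) rather than by the paper's direct pairing computation, which is a harmless, slightly cleaner shortcut.
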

\begin{proof}
(\ref{it:a}). It follows from Proposition
\ref{pro:repandsemidirectproduct} (\ref{rep2}) (a).

(\ref{it:b}).
    Let $x,y\in A, u^{*} \in V^{*}, v\in V$. Then we have
{\small \begin{eqnarray*}
        &&\langle \big( (r_{\circ}^{*}-l_{\circ}^{*})(y\circ x)-(r_{\circ}^{*}-l_{\circ}^{*})(x\circ y)-(r_{\circ}^{*}-l_{\circ}^{*})(x)(r_{\circ}^{*}-l_{\circ}^{*})(y)+(r_{\circ}^{*}-l_{\circ}^{*})(y)(r_{\circ}^{*}-l_{\circ}^{*})(x) \big) u^{*},v\rangle\\
        &&=\langle u^{*}, \big( (l_{\circ}-r_{\circ})(y\circ x)-(l_{\circ}-r_{\circ})(x\circ y)-(l_{\circ}-r_{\circ})(y)(l_{\circ}-r_{\circ})(x)+(l_{\circ}-r_{\circ})(x)(l_{\circ}-r_{\circ})(y) \big) v\rangle\\
        &&\overset{(\ref{eq:defi:rep anti-pre-Lie algebra3})}{=}\langle u^{*},\big( -r_{\circ}(y\circ x)+r_{\circ}(x\circ y)-l_{\circ}(y)l_{\circ}(x)+l_{\circ}(y)r_{\circ}(x)+l_{\circ}(x)l_{\circ}(y)-l_{\circ}(x)r_{\circ}(y) \big) v \rangle\\
        &&\overset{(\ref{eq:defi:rep anti-pre-Lie algebra1}),(\ref{eq:defi:rep anti-pre-Lie algebra2})}{=}\langle u^{*},\big( l_{\circ}(x)r_{\circ}(y)+r_{\circ}(y)l_{\circ}(x)-r_{\circ}(y)r_{\circ}(x)
        -l_{\circ}(y)r_{\circ}(x)-r_{\circ}(x)l_{\circ}(y)+r_{\circ}(x)r_{\circ}(y)\\
        &&\hspace{1cm}  +l_{\circ}(y)r_{\circ}(x)-l_{\circ}(x)r_{\circ}(y)+l_{\circ}(y\circ x)-l_{\circ}(x\circ y)\big) v\rangle\\
        &&\overset{(\ref{eq:defi:rep anti-pre-Lie algebra3})}{=}0.
    \end{eqnarray*}}
    Thus Eq.~(\ref{eq:defi:rep anti-pre-Lie algebra1}) holds for the
     triple   $(r_{\circ}^{*}-l_{\circ}^{*}$,$r_{\circ}^{*}$,$V^{*})$. Similarly, Eqs.~(\ref{eq:defi:rep anti-pre-Lie algebra2}) and (\ref{eq:defi:rep anti-pre-Lie algebra3}) hold for the triple
      $(r_{\circ}^{*}-l_{\circ}^{*}$,$r_{\circ}^{*}$,$V^{*})$. Thus $(r_{\circ}^{*}-l_{\circ}^{*}$,$r_{\circ}^{*}$,$V^{*})$ is a representation of $(A,\circ)$.
\end{proof}

Next we consider matched pairs of anti-pre-Lie algebras.
\begin{defi}\label{defi:matched pairs of anti-pre-Lie algebras}
        Let $(A,\circ_{A})$ and $(B,\circ_{B})$ be two anti-pre-Lie algebras. Suppose that $(l_{\circ_{A}},r_{\circ_{A}},B)$ and $(l_{\circ_{B}}$, $r_{\circ_{B}}$, $A)$ are  representations of $(A,\circ_{A})$ and $(B,\circ_{B})$ respectively. Let $\big(\mathfrak{g}(A),[-,-]_{A}\big)$ and $\big(\mathfrak{g}(B)$,
        $[-,-]_{B}\big)$ be the sub-adjacent Lie algebras of  $(A,\circ_{A})$ and $(B,\circ_{B})$ respectively.  Suppose that the following equations hold:
        \begin{small}
    \begin{equation*}\label{eq:defi:matched pairs of anti-pre-Lie algebras1}
        r_{\circ_{B}}\big ( l_{\circ_{A}}(y)a\big)x+x\circ_{A}r_{\circ_{B}}(a)y-r_{\circ_{B}}\big( l_{\circ_{A}}(x)a\big) y-y\circ_{A}r_{\circ_{B}}(a)x=r_{\circ_{B}}(a)([y,x]_{A}),
    \end{equation*}
    \begin{equation*}\label{eq:defi:matched pairs of anti-pre-Lie algebras2}
        x\circ_{A}l_{\circ_{B}}(a)y+r_{\circ_{B}}\big( r_{\circ_{A}}(y)a \big) x-l_{\circ_{B}}(a)(x\circ_{A}y)=(l_{\circ_{B}}-r_{\circ_{B}})(a)x\circ_{A}y+l_{\circ_{B}}\big( (r_{\circ_{A}}-l_{\circ_{A}})(x)a\big) y,
    \end{equation*}
    \begin{equation*}\label{eq:defi:matched pairs of anti-pre-Lie algebras3}
    r_{\circ_{B}}(a)([x,y]_{A})=(l_{\circ_{B}}-r_{\circ_{B}})(a)y\circ_{A}x+(r_{\circ_{B}}-l_{\circ_{B}})(a)x\circ_{A}y+l_{\circ_{B}}\big((r_{\circ_{A}}-l_{\circ_{A}})(y)a\big)x+l_{\circ_{B}}\big((l_{\circ_{A}}-r_{\circ_{A}})(x)a\big)y,
    \end{equation*}
    \begin{equation*}\label{eq:defi:matched pairs of anti-pre-Lie algebras4}
        r_{\circ_{A}}\big(l_{\circ_{B}}(b)x\big)a+a\circ_{B}r_{\circ_{A}}(x)b-r_{\circ_{A}}\big(l_{\circ_{B}}(a)x\big)b-b\circ_{B}r_{\circ_{A}}(x)a=r_{\circ_{A}}(x)([b,a]_{B}),
    \end{equation*}
    \begin{equation*}\label{eq:defi:matched pairs of anti-pre-Lie algebras5}
        a\circ_{B}l_{\circ_{A}}(x)b+r_{\circ_{A}}\big(r_{\circ_{B}}(b)x\big)a-l_{\circ_{A}}(x)(a\circ_{B}b)=(l_{\circ_{A}}-r_{\circ_{A}})(x)a\circ_{B}b+l_{\circ_{A}}\big((r_{\circ_{B}}-l_{\circ_{B}})(a)x\big)b,
    \end{equation*}
    \begin{equation*}\label{eq:defi:matched pairs of anti-pre-Lie algebras6}
    r_{\circ_{A}}(x)([a,b]_{B})=(l_{\circ_{A}}-r_{\circ_{A}})(x)b\circ_{B}a+(r_{\circ_{A}}-l_{\circ_{A}})(x)a\circ_{B}b+l_{\circ_{A}}\big((r_{\circ_{B}}-l_{\circ_{B}})(b)x\big)a+l_{\circ_{A}}\big((l_{\circ_{B}}-r_{\circ_{B}})(a)x\big)b,
    \end{equation*}
\end{small}
    for all $x,y\in A,a,b\in B$. Such a structure is called a \textbf{matched pair of anti-pre-Lie algebras} $(A,\circ_{A})$ and $(B,\circ_{B})$. We denote it by $(A,B,l_{\circ_{A}},r_{\circ_{A}},l_{\circ_{B}},r_{\circ_{B}})$.
\end{defi}

\begin{thm}\label{thm:matched pairs of anti-pre-Lie algebras}
    Let $(A,\circ_{A})$ and $(B,\circ_{B})$ be two anti-pre-Lie algebras. Suppose that
    $l_{\circ_{A}},r_{\circ_{A}}:A\rightarrow\mathrm{End}(B)$ and
        $l_{\circ_{B}},r_{\circ_{B}}:B\rightarrow\mathrm{End}(A)$ are linear maps.
    Define a bilinear operation on $A\oplus B$ by
    \begin{equation}\label{thm:matched pairs of anti-pre-Lie algebras1}
        (x+a)\circ(y+b)=x\circ_{A}y+l_{\circ_{B}}(a)y+r_{\circ_{B}}(b)x+l_{\circ_{A}}(x)b+r_{\circ_{A}}(y)a+a\circ_{B}b,
    \end{equation}
    for all $x,y\in A, a,b\in B$. Then $(A\oplus B,\circ)$ is an anti-pre-Lie algebra if and only if  $(A,B,l_{\circ_{A}},r_{\circ_{A}},l_{\circ_{B}}$,
    $r_{\circ_{B}})$ is a matched pair of anti-pre-Lie algebras. In this case, we denote this anti-pre-Lie algebra structure on $A\oplus B$
    by $A\bowtie^{l_{\circ_{A}},r_{\circ_{A}}}_{l_{\circ_{B}},r_{\circ_{B}}}B$. Conversely,
    every anti-pre-Lie algebra which is the direct sum of the underlying vector spaces of two subalgebras can be obtained from a matched pair of anti-pre-Lie algebras by this construction.
\end{thm}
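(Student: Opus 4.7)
The plan is to verify the biconditional by direct substitution: plug the formula (\ref{thm:matched pairs of anti-pre-Lie algebras1}) into the two defining axioms (\ref{eq:defi:anti-pre-Lie algebras1}) and (\ref{eq:defi:anti-pre-Lie algebras2}) of an anti-pre-Lie algebra, expand everything linearly, and then decompose the resulting identities according to which of the three input elements lie in $A$ versus $B$. Since $A$ and $B$ are vector-space complements in $A\oplus B$, an equation in $A\oplus B$ holds if and only if each of its homogeneous components (with prescribed pattern of $A$- and $B$-entries) vanishes in its respective summand.

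First I would check Eq.~(\ref{eq:defi:anti-pre-Lie algebras1}) for $\circ$ on $A\oplus B$. The case where all three inputs are in $A$ recovers Eq.~(\ref{eq:defi:anti-pre-Lie algebras1}) for $(A,\circ_A)$, and all three in $B$ gives the same for $(B,\circ_B)$. The mixed cases split by how many arguments come from $A$: taking two from $A$ and one from $B$ (projected onto $A$ and onto $B$ respectively) should yield the two conditions corresponding to $r_{\circ_B}$ and $l_{\circ_B}$ acting on $A$, i.e.\ the first and second equations listed in Definition~\ref{defi:matched pairs of anti-pre-Lie algebras}; the symmetric cases with two arguments from $B$ and one from $A$ then yield the fourth and fifth equations by the complete $A \leftrightarrow B$ symmetry of the construction. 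Next I would do the same for the cyclic identity Eq.~(\ref{eq:defi:anti-pre-Lie algebras2}): the all-$A$ and all-$B$ cases reduce to the axioms for $(A,\circ_A)$ and $(B,\circ_B)$, while the two genuinely mixed cyclic sums (two elements from one side and one from the other, projected into each summand) contribute the third and sixth equations of Definition~\ref{defi:matched pairs of anti-pre-Lie algebras}. In each line, I would use the definition $[x,y]_A=x\circ_A y-y\circ_A x$ (and similarly for $B$) to replace brackets that appear after expansion, which is exactly how they enter the matched-pair identities.

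For the converse statement, suppose $(\widehat{A},\circ)$ is an anti-pre-Lie algebra and $A,B\subseteq \widehat{A}$ are anti-pre-Lie subalgebras with $\widehat{A}=A\oplus B$ as vector spaces. For $x\in A$ and $b\in B$, write $x\circ b=l_{\circ_A}(x)b+r_{\circ_B}(b)x$ with $l_{\circ_A}(x)b\in B$ and $r_{\circ_B}(b)x\in A$, and similarly decompose $b\circ x$ to define $l_{\circ_B}(b)x$ and $r_{\circ_A}(x)b$. The fact that $A$ and $B$ are subalgebras ensures that $\circ$ on $\widehat{A}$ is exactly of the form (\ref{thm:matched pairs of anti-pre-Lie algebras1}). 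Applying the forward direction in reverse, the anti-pre-Lie axioms for $\widehat{A}$ force $(l_{\circ_A},r_{\circ_A},B)$ and $(l_{\circ_B},r_{\circ_B},A)$ to satisfy Eqs.~(\ref{eq:defi:rep anti-pre-Lie algebra1})--(\ref{eq:defi:rep anti-pre-Lie algebra3}) (they are the all-$A$-with-one-$B$ and all-$B$-with-one-$A$ components that already involve only one external element) and the six mixed identities of Definition~\ref{defi:matched pairs of anti-pre-Lie algebras}.

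The main obstacle is not conceptual but combinatorial: the two axioms yield $2\times 2^3=16$ pieces after decomposition, and each piece has many terms once the formula (\ref{thm:matched pairs of anti-pre-Lie algebras1}) is expanded, so the bookkeeping is substantial and error-prone. I would organize the verification by fixing, for each axiom, a table indexed by the $A$/$B$-type of $(x,y,z)$ and the $A$/$B$-summand in which the identity is being checked, and then record for each cell which of the six compatibility equations (or which subalgebra axiom or representation axiom) is being asserted. This way the proof reduces to a clean matching rather than a long, undifferentiated computation.
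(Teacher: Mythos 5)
Your proposal is correct and follows essentially the same route as the paper, whose proof is declared straightforward and whose (suppressed) computation proceeds exactly by substituting Eq.~(\ref{thm:matched pairs of anti-pre-Lie algebras1}) into the two anti-pre-Lie axioms and matching the $A$/$B$-typed components against the subalgebra, representation, and compatibility conditions. Just be careful that in the forward direction the mixed components must also produce the representation axioms (\ref{eq:defi:rep anti-pre-Lie algebra1})--(\ref{eq:defi:rep anti-pre-Lie algebra3}) for $(l_{\circ_{A}},r_{\circ_{A}},B)$ and $(l_{\circ_{B}},r_{\circ_{B}},A)$, not only the six displayed equations of Definition~\ref{defi:matched pairs of anti-pre-Lie algebras}, since the maps are assumed merely linear.
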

\begin{proof}
It is straightforward. \delete{    Let $x,y,z\in A, a,b,c\in B$.
    Since $(A,\circ_{A})$ and $(B,\circ_{B})$ are anti-pre-Lie algebras, equation
    \begin{small}
        \begin{equation}\label{thm:matched pairs of anti-pre-Lie algebras2}
        (x+a)\circ((y+b)\circ(z+c))-(y+b)\circ((x+a)\circ(z+c))=((y+b)\circ(x+a))\circ(z+c)-((x+a)\circ(y+b))\circ(z+c)
        \end{equation}
    \end{small}
    holds on $A\oplus B$ if and only if  the following equations hold:
    \begin{eqnarray}
    x\circ(y\circ a)-y\circ(x\circ a)&=&(y\circ_{A} x)\circ a-(x\circ_{A} y)\circ a,\label{thm:matched pairs of anti-pre-Lie algebras3}\\
        x\circ(a\circ y)-a\circ(x\circ_{A} y)&=&(a\circ x)\circ y-(x\circ a)\circ y,\label{thm:matched pairs of anti-pre-Lie algebras4}\\
            a\circ(x\circ b)-x\circ(a\circ_{B} b)&=&(x\circ a)\circ b-(a\circ x)\circ b,\label{thm:matched pairs of anti-pre-Lie algebras6}\\
            a\circ(b\circ x)-b\circ(a\circ x)&=&(b\circ_{B} a)\circ x-(a\circ_{B} b)\circ x.\label{thm:matched pairs of anti-pre-Lie algebras5}
    \end{eqnarray}
    \delete{\begin{equation}\label{thm:matched pairs of anti-pre-Lie algebras3}
        x\circ(y\circ a)-y\circ(x\circ a)=(y\circ_{A} x)\circ a-(x\circ_{A} y)\circ a,
    \end{equation}
    \begin{equation}\label{thm:matched pairs of anti-pre-Lie algebras4}
        x\circ(a\circ y)-a\circ(x\circ_{A} y)=(a\circ x)\circ y-(x\circ a)\circ y,
    \end{equation}
    \begin{equation}\label{thm:matched pairs of anti-pre-Lie algebras5}
        a\circ(b\circ x)-b\circ(a\circ x)=(b\circ_{B} a)\circ x-(a\circ_{B} b)\circ x,
    \end{equation}
    \begin{equation}\label{thm:matched pairs of anti-pre-Lie algebras6}
        a\circ(x\circ b)-x\circ(a\circ_{B} b)=(x\circ a)\circ b-(a\circ x)\circ b.
    \end{equation}}
    By Eq.~(\ref{thm:matched pairs of anti-pre-Lie algebras1}), we have
    \begin{eqnarray*}
        x\circ(y\circ a)&=&x\circ(l_{\circ_{A}}(y)a+r_{\circ_{B}}(a)y)\\
        &=&l_{\circ_{A}}(x)l_{\circ_{A}}(y)a+r_{\circ_{B}}(l_{\circ_{A}}(y)a)x+x\circ_{A}r_{\circ_{B}}(a)y,\\
        -y\circ(x\circ a)&=&-l_{\circ_{A}}(y)l_{\circ_{A}}(x)a-r_{\circ_{B}}(l_{\circ_{A}}(x)a)y-y\circ_{A}r_{\circ_{B}}(a)x,\\
        (y\circ_{A} x)\circ a&=&l_{\circ_{A}}(y\circ_{A}x)a+r_{\circ_{B}}(a)(y\circ_{A}x),\\
        -(x\circ_{A} y)\circ a&=&-l_{\circ_{A}}(x\circ_{A}y)a-r_{\circ_{B}}(a)(x\circ_{A}y).
    \end{eqnarray*}
    Thus Eq.~(\ref{thm:matched pairs of anti-pre-Lie algebras3}) holds if and only if Eq.~(\ref{eq:defi:matched pairs of anti-pre-Lie algebras1}) and the following equation hold:
    $$l_{\circ_{A}}(x)l_{\circ_{A}}(y)a-l_{\circ_{A}}(y)l_{\circ_{A}}(x)a=l_{\circ_{A}}(y\circ_{A}x)a-l_{\circ_{A}}(x\circ_{A}y)a.$$
    Similarly, Eq.(\ref{thm:matched pairs of anti-pre-Lie algebras4}) hold if and only if Eq.~(\ref{eq:defi:matched pairs of anti-pre-Lie algebras2}) and the following equation hold:
    $$r_{\circ_{A}}(x\circ_{A}y)a=l_{\circ_{A}}(x)r_{\circ_{A}}(y)a+r_{\circ_{A}}(y)l_{\circ_{A}}(x)a-r_{\circ_{A}}(y)r_{\circ_{A}}(x)a.$$
    On the other hand, since $(A,\circ_{A})$ and $(B,\circ_{B})$ are anti-pre-Lie algebras, equation
    \begin{small}
    \begin{equation}\label{thm:matched pairs of anti-pre-Lie algebras7}
    \begin{array}{ll}
    &((y+b)\circ(x+a))\circ(z+c)-((x+a)\circ(y+b))\circ(z+c)+((z+c)\circ(y+b))\circ(x+a)\\
    &-((y+b)\circ(z+c))\circ(x+a)+((x+a)\circ(z+c))\circ(y+b)-((z+c)\circ(x+a))\circ(y+b)=0
    \end{array}
    \end{equation}
    \end{small}
    \delete{\begin{equation}\label{thm:matched pairs of anti-pre-Lie algebras7}
        \begin{array}{ll}
            0&=((y+b)\circ(x+a))\circ(z+c)-((x+a)\circ(y+b))\circ(z+c)\\
            &\ \  +((z+c)\circ(y+b))\circ(x+a)-((y+b)\circ(z+c))\circ(x+a)\\
            &\ \  +((x+a)\circ(z+c))\circ(y+b)-((z+c)\circ(x+a))\circ(y+b)\\
        \end{array}
    \end{equation}}
    holds if and only if the following equations hold:
    \begin{equation}\label{thm:matched pairs of anti-pre-Lie algebras8}
        [y,x]_{A}\circ a+(a\circ y)\circ x-(y\circ a)\circ x+(x\circ a)\circ y-(a\circ x)\circ y=0,
    \end{equation}
    \begin{equation}\label{thm:matched pairs of anti-pre-Lie algebras9}
        [b,a]_{B}\circ x+(x\circ b)\circ a-(b\circ x)\circ a+(a\circ x)\circ b-(x\circ a)\circ b=0.
    \end{equation}
\delete{    By Eq.~(\ref{thm:matched pairs of anti-pre-Lie algebras1}), we have
    \begin{eqnarray*}
    [y,x]_{A}\circ a&=&l_{\circ_{A}}([y,x]_{A})a+r_{\circ_{B}}(a)([y,x]_{A}),\\
    (a\circ y)\circ x&=&l_{\circ_{B}}(a)y\circ_{A}x+l_{\circ_{B}}(r_{\circ_{A}}(y)a)x+r_{\circ_{A}}(x)r_{\circ_{A}}(y)a,\\
    -(y\circ a)\circ x&=&-l_{\circ_{B}}(l_{\circ_{A}}(y)a)x-r_{\circ_{A}}(x)l_{\circ_{A}}(y)a-r_{\circ_{B}}(a)y\circ_{A}x,\\
    (x\circ a)\circ y&=&l_{\circ_{B}}(l_{\circ_{A}}(x)a)y+r_{\circ_{A}}(y)l_{\circ_{A}}(x)a+r_{\circ_{B}}(a)x\circ_{A}y,\\
    -(a\circ x)\circ y&=&-l_{\circ_{B}}(a)x\circ_{A}y-l_{\circ_{B}}(r_{\circ_{A}}(x)a)y-r_{\circ_{A}}(y)r_{\circ_{B}}(x)a.
    \end{eqnarray*}}
     Again by Eq.~(\ref{thm:matched pairs of anti-pre-Lie algebras1}), we have that
     Eq.~(\ref{thm:matched pairs of anti-pre-Lie algebras8}) holds if and only if Eq.~(\ref{eq:defi:matched pairs of anti-pre-Lie algebras3}) and the following equation hold:
    $$l_{\circ_{A}}([y,x]_{A})a+r_{\circ_{A}}(x)r_{\circ_{A}}(y)a-r_{\circ_{A}}(x)l_{\circ_{A}}(y)a+r_{\circ_{B}}(y)l_{\circ_{B}}(x)a-r_{\circ_{A}}(y)r_{\circ_{A}}(x)a=0.$$
    In conclusion, Eqs.~(\ref{thm:matched pairs of anti-pre-Lie algebras3}),(\ref{thm:matched pairs of anti-pre-Lie algebras4}) and (\ref{thm:matched pairs of anti-pre-Lie algebras8}) hold if and only if Eqs.~(\ref{eq:defi:matched pairs of anti-pre-Lie algebras1}), (\ref{eq:defi:matched pairs of anti-pre-Lie algebras2}), (\ref{eq:defi:matched pairs of anti-pre-Lie algebras3}) hold, and $(l_{\circ_{A}},r_{\circ_{A}},B)$ is a representation of $(A,\circ_{A})$.
    By symmetry,   Eqs.~(\ref{thm:matched pairs of anti-pre-Lie algebras6}),(\ref{thm:matched pairs of anti-pre-Lie algebras5}) and (\ref{thm:matched pairs of anti-pre-Lie algebras9}) hold if and only if Eqs.~(\ref{eq:defi:matched pairs of anti-pre-Lie algebras4}), (\ref{eq:defi:matched pairs of anti-pre-Lie algebras5}), (\ref{eq:defi:matched pairs of anti-pre-Lie algebras6}) hold, and $(l_{\circ_{B}},r_{\circ_{B}},A)$ is a representation of $(B,\circ_{B})$. Thus $(A\oplus B,\circ)$ is an anti-pre-Lie algebra if and only if  $(A,B,l_{\circ_{A}},r_{\circ_{A}},l_{\circ_{B}},r_{\circ_{B}})$ is a matched pair of anti-pre-Lie algebras.
}
\end{proof}

Recall the notion of matched pairs of Lie algebras (\cite{Maj}).
Let $(\mathfrak{g},[-,-]_{\mathfrak{g}})$ and
$(\mathfrak{h},[-,-]_{\mathfrak{h}})$ be two Lie algebras. Suppose
that $(\rho_{\mathfrak{g}},\mathfrak{h})$ and
$(\rho_{\mathfrak{h}},\mathfrak{g})$ are representations of
$(\mathfrak{g},[-,-]_{\mathfrak{g}})$ and
$(\mathfrak{h},[-,-]_{\mathfrak{h}})$ respectively. If the
following equations are satisfied:
\begin{equation}\label{eq:matched pair of Lie algebras1}
    \rho_{\mathfrak{g}}(x)[a,b]_{\mathfrak{h}}-[\rho_{\mathfrak{g}}(x)a,b]_{\mathfrak{h}}-[a,\rho_{\mathfrak{g}}(x)b]_{\mathfrak{h}}+\rho_{\mathfrak{g}}\big(\rho_{\mathfrak{h}}(a)x\big)b-\rho_{\mathfrak{g}}\big(\rho_{\mathfrak{h}}(b)x\big)a=0,
\end{equation}
\begin{equation}\label{eq:matched pair of Lie algebras2}
    \rho_{\mathfrak{h}}(a)[x,y]_{\mathfrak{g}}-[\rho_{\mathfrak{h}}(a)x,y]_{\mathfrak{g}}-[x,\rho_{\mathfrak{h}}(a)y]_{\mathfrak{g}}+\rho_{\mathfrak{h}}\big(\rho_{\mathfrak{g}}(x)a\big)y-\rho_{\mathfrak{h}}\big(\rho_{\mathfrak{g}}(y)a\big)x=0,
\end{equation}
for all $x,y\in \mathfrak{g},a,b\in \mathfrak{h}$, then
$(\mathfrak{g},\mathfrak{h},\rho_{\mathfrak{g}},\rho_{\mathfrak{h}})$ is called a \textbf{matched pair
of Lie algebras}. In fact, for Lie algebras $(\mathfrak{g},[-,-]_{\mathfrak{g}})$ , $(\mathfrak{h},[-,-]_{\mathfrak{h}})$ and linear maps $\rho_{\mathfrak{g}}:\mathfrak{g}\rightarrow\mathrm{End}(\mathfrak{h})$,$\rho_{\mathfrak{h}}:\mathfrak{h}\rightarrow\mathrm{End}(\mathfrak{g})$, there is a Lie algebra structure
on the vector space $\mathfrak{g}\oplus \mathfrak{h}$ given by
\begin{equation}\label{eq:Lie}
    [x+a,y+b]=[x,y]_{\mathfrak{g}}+\rho_{\mathfrak{h}}(a)y-\rho_{\mathfrak{h}}(b)x+[a,b]_{\mathfrak{h}}+\rho_{\mathfrak{g}}(x)b-\rho_{\mathfrak{g}}(y)a,\;\;\forall x,y\in \mathfrak{g}, a,b\in \mathfrak{h}
\end{equation}
if and only if
$(\mathfrak{g},\mathfrak{h},\rho_{\mathfrak{g}},\rho_{\mathfrak{h}})$
is a matched pair of Lie algebras. In this case, we denote the Lie
algebra structure on  $\mathfrak{g}\oplus \mathfrak{h}$ by
$\mathfrak{g}\bowtie_{\rho_{\mathfrak{h}}}^{\rho_{\mathfrak{g}}}\mathfrak{h}$.
Conversely, every Lie algebra which is the direct sum of the
underlying vector spaces of two subalgebras can be obtained from a
matched pair of Lie algebras by this construction.


\begin{pro}\label{pro:from matched pairs of anti-pre-Lie algebras to matched pairs of Lie algebras}
    Let $(A,\circ_{A})$ and $(B,\circ_{B})$ be two anti-pre-Lie algebras and their sub-adjacent Lie algebras be $\big(\mathfrak{g}(A),[-,-]_{A}\big)$ and $\big(\mathfrak{g}(B),[-,-]_{B}\big)$ respectively. If $(A,B,l_{\circ_{A}},r_{\circ_{A}},l_{\circ_{B}},r_{\circ_{B}})$ is a matched pair of anti-pre-Lie algebras, then $\big(\mathfrak{g}(A),\mathfrak{g}(  B),l_{\circ_{A}}-r_{\circ_{A}},l_{\circ_{B}}-r_{\circ_{B}}\big)$ is a matched pair of Lie algebras.
\end{pro}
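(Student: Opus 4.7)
The strategy is to lift Proposition~\ref{pro:repandsemidirectproduct}(\ref{rep2})(b) from the semi-direct product setting to the matched pair setting, using the characterization of matched pairs of Lie algebras quoted in the excerpt. More precisely, I will construct the candidate Lie algebra structure on $\mathfrak{g}(A)\oplus\mathfrak{g}(B)$ as the sub-adjacent Lie algebra of the bigger anti-pre-Lie algebra produced by Theorem~\ref{thm:matched pairs of anti-pre-Lie algebras}, and then read off that it is exactly of the form (\ref{eq:Lie}) with $\rho_{\mathfrak{g}(A)}=l_{\circ_{A}}-r_{\circ_{A}}$ and $\rho_{\mathfrak{g}(B)}=l_{\circ_{B}}-r_{\circ_{B}}$.

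First, since $(A,B,l_{\circ_{A}},r_{\circ_{A}},l_{\circ_{B}},r_{\circ_{B}})$ is a matched pair of anti-pre-Lie algebras, Theorem~\ref{thm:matched pairs of anti-pre-Lie algebras} yields an anti-pre-Lie algebra structure $\circ$ on $A\oplus B$ defined by Eq.~(\ref{thm:matched pairs of anti-pre-Lie algebras1}). Let $\big(\mathfrak{g}(A\bowtie B),[-,-]\big)$ denote its sub-adjacent Lie algebra. For $x,y\in A$ and $a,b\in B$, a direct computation from Eq.~(\ref{thm:matched pairs of anti-pre-Lie algebras1}) gives
\begin{align*}
[x+a,y+b] &= (x+a)\circ(y+b)-(y+b)\circ(x+a)\\
&= [x,y]_{A}+[a,b]_{B}+\bigl(l_{\circ_{A}}-r_{\circ_{A}}\bigr)(x)b-\bigl(l_{\circ_{A}}-r_{\circ_{A}}\bigr)(y)a\\
&\quad +\bigl(l_{\circ_{B}}-r_{\circ_{B}}\bigr)(a)y-\bigl(l_{\circ_{B}}-r_{\circ_{B}}\bigr)(b)x.
\end{align*}

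Comparing with Eq.~(\ref{eq:Lie}), this is precisely the bracket associated to the linear maps $\rho_{\mathfrak{g}(A)}:=l_{\circ_{A}}-r_{\circ_{A}}:A\to\mathrm{End}(B)$ and $\rho_{\mathfrak{g}(B)}:=l_{\circ_{B}}-r_{\circ_{B}}:B\to\mathrm{End}(A)$. Since $[-,-]$ is a Lie bracket on the vector space $\mathfrak{g}(A)\oplus\mathfrak{g}(B)$, the characterization of matched pairs of Lie algebras recalled immediately after Eq.~(\ref{eq:Lie}) implies that $\bigl(\mathfrak{g}(A),\mathfrak{g}(B),l_{\circ_{A}}-r_{\circ_{A}},l_{\circ_{B}}-r_{\circ_{B}}\bigr)$ is a matched pair of Lie algebras; in particular, $(\rho_{\mathfrak{g}(A)},\mathfrak{g}(B))$ and $(\rho_{\mathfrak{g}(B)},\mathfrak{g}(A))$ are representations, and Eqs.~(\ref{eq:matched pair of Lie algebras1}) and (\ref{eq:matched pair of Lie algebras2}) hold automatically.

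There is no real obstacle here: the only nontrivial step is the commutator computation above, which is the natural generalization of the calculation sketched (inside the deleted comment) in the proof of Proposition~\ref{pro:repandsemidirectproduct}(\ref{rep2})(b). The representation property of $l_{\circ_{A}}-r_{\circ_{A}}$ on $\mathfrak{g}(B)$, for instance, does not need to be verified by hand from Eqs.~(\ref{eq:defi:matched pairs of anti-pre-Lie algebras1})--(\ref{eq:defi:matched pairs of anti-pre-Lie algebras6}); it is encoded in the Jacobi identity for $[-,-]$, which we already have for free from Theorem~\ref{thm:matched pairs of anti-pre-Lie algebras}.
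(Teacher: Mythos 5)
Your proposal is correct and matches the paper's own argument: the paper also forms the anti-pre-Lie algebra $A\bowtie^{l_{\circ_{A}},r_{\circ_{A}}}_{l_{\circ_{B}},r_{\circ_{B}}}B$ via Theorem~\ref{thm:matched pairs of anti-pre-Lie algebras}, computes the commutator of Eq.~(\ref{thm:matched pairs of anti-pre-Lie algebras1}) to see that the sub-adjacent Lie bracket has exactly the form of Eq.~(\ref{eq:Lie}) with $\rho_{\mathfrak g(A)}=l_{\circ_{A}}-r_{\circ_{A}}$ and $\rho_{\mathfrak g(B)}=l_{\circ_{B}}-r_{\circ_{B}}$, and then invokes the characterization of matched pairs of Lie algebras stated after Eq.~(\ref{eq:Lie}). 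No gaps.
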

\begin{proof}
    By Theorem \ref{thm:matched pairs of anti-pre-Lie algebras} , there is an anti-pre-Lie algebra $A\bowtie^{l_{\circ_{A}},r_{\circ_{A}}}_{l_{\circ_{B}},r_{\circ_{B}}}B$, whose sub-adjacent Lie algebra is given by
    \begin{small}
    \begin{eqnarray*}
    [x+a,y+b]&=&(x+a)\circ (y+b)-(y+b)\circ (x+a)\\
    &=&x\circ_{A}y+l_{\circ_{B}}(a)y+r_{\circ_{B}}(b)x+l_{\circ_{A}}(x)b+r_{\circ_{A}}(y)a+a\circ_{B}b\\
    &&-\big(y\circ_{A}x+l_{\circ_{B}}(b)x+r_{\circ_{B}}(a)y+l_{\circ_{A}}(y)a+r_{\circ_{A}}(x)b+b\circ_{B}a\big)\\
    &=&[x,y]_{A}+(l_{\circ_{B}}-r_{\circ_{B}})(a)y-(l_{\circ_{B}}-r_{\circ_{B}})(b)x+[a,b]_{B}+(l_{\circ_{A}}-r_{\circ_{A}})(x)b-(l_{\circ_{A}}-r_{\circ_{A}})(y)a,
    \end{eqnarray*}
\end{small}
for all $x,y\in A, a,b\in B$. Thus  $\big(\mathfrak{g}(A),\mathfrak{g}( B),l_{\circ_{A}}-r_{\circ_{A}},l_{\circ_{B}}-r_{\circ_{B}}\big)$ is a matched pair of Lie algebras.
\end{proof}

\subsection{Manin triples of  Lie algebras with respect to the commutative 2-cocycles and anti-pre-Lie bialgebras}

\begin{defi}
Let $(\mathfrak{g},[-,-]_{\mathfrak{g}})$ be a Lie algebra. Assume that there is a Lie algebra structure $(\mathfrak{g}^{*},[-,-]_{\mathfrak{g}^{*}})$ on the dual space $\mathfrak{g}^{*}$. Suppose that there is a Lie algebra structure $(\mathfrak{g}\oplus\mathfrak{g}^{*},[-,-])$ on the direct sum $\mathfrak{g}\oplus\mathfrak{g}^{*}$ of vector spaces such that the nondegenerate symmetric
bilinear form $\mathcal{B}_{d}$ defined by
\begin{equation}\label{eq:defi:Manin triples of Lie algebras}
    \mathcal{B}_{d}(x+a^{*},y+b^{*})=\langle x,b^{*}\rangle+\langle a^{*},y\rangle, \;\;\forall x,y\in A,a^{*},b^{*}\in A^{*},
\end{equation}
is a commutative 2-cocycle on
$(\mathfrak{g}\oplus\mathfrak{g}^{*},[-,-])$, and
$(\mathfrak{g},[-,-]_{\mathfrak{g}})$ and
$(\mathfrak{g}^{*},[-,-]_{\mathfrak{g}^{*}})$ are Lie subalgebras
of $(\mathfrak{g}\oplus\mathfrak{g}^{*},[-,-])$. Such a structure
is called a \textbf{(standard) Manin triple of Lie algebras with
respect to the commutative 2-cocycle}. We denote it by
$\big((\mathfrak{g}\oplus\mathfrak{g}^{*},[-,-],\mathcal{B}_{d}),\mathfrak{g},\mathfrak{g}^{*}\big)$.
\end{defi}

\begin{lem}\label{lem:111}
Let
$\big((\mathfrak{g}\oplus\mathfrak{g}^{*},[-,-],\mathcal{B}_{d}),\mathfrak{g},\mathfrak{g}^{*}\big)$
be a Manin triple of Lie algebras with respect to the commutative
2-cocycle. Then there exists a compatible anti-pre-Lie algebra
structure $\circ$ on $\frak g\oplus \frak g^*$ defined by
Eq.~(\ref{eq:thm:commutative 2-cocycles and anti-pre-Lie
algebras}) through $\mathcal B_d$. Moreover, with this operation,
$(\frak g,\circ_{\frak g}=:\circ|_{\frak g\otimes \frak g})$ and
$(\frak g^*,\circ_{\frak g^*}=:\circ|_{\frak g^*\otimes \frak
g^*})$ are anti-pre-Lie subalgebras whose sub-adjacent Lie
algebras are $(\mathfrak{g},[-,-]_{\mathfrak{g}})$ and
$(\mathfrak{g}^{*},[-,-]_{\mathfrak{g}^{*}})$ respectively.
\end{lem}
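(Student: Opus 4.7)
The plan is to apply Theorem~\ref{thm:commutative 2-cocycles and anti-pre-Lie algebras} directly to the big Lie algebra $(\mathfrak{g}\oplus\mathfrak{g}^*,[-,-])$ equipped with the nondegenerate commutative 2-cocycle $\mathcal{B}_d$. This immediately produces a unique compatible anti-pre-Lie operation $\circ$ on $\mathfrak{g}\oplus\mathfrak{g}^*$ satisfying \eqref{eq:thm:commutative 2-cocycles and anti-pre-Lie algebras}, where ``compatible'' means that the sub-adjacent Lie bracket $u\circ v-v\circ u$ coincides with the ambient bracket $[u,v]$ on $\mathfrak{g}\oplus\mathfrak{g}^*$.

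The nontrivial step is to check that $\mathfrak{g}$ and $\mathfrak{g}^*$ are each closed under $\circ$. For $x,y\in\mathfrak{g}$, decompose $x\circ y=p+q^*$ with $p\in\mathfrak{g}$ and $q^*\in\mathfrak{g}^*$; the claim reduces to $q^*=0$. For any $z\in\mathfrak{g}$,
\[ \langle q^*,z\rangle \;=\; \mathcal{B}_d(x\circ y,\,z) \;=\; \mathcal{B}_d(y,\,[x,z]). \]
Now $\mathfrak{g}$ is a Lie subalgebra, so $[x,z]\in\mathfrak{g}$, and $\mathcal{B}_d$ vanishes on $\mathfrak{g}\times\mathfrak{g}$ by its very definition \eqref{eq:defi:Manin triples of Lie algebras}; hence the right-hand side is $0$, forcing $q^*=0$, i.e.\ $x\circ y\in\mathfrak{g}$. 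The identical argument, with the roles of $\mathfrak{g}$ and $\mathfrak{g}^*$ exchanged, shows that $\mathfrak{g}^*$ is closed under $\circ$. The anti-pre-Lie axioms \eqref{eq:defi:anti-pre-Lie algebras1}--\eqref{eq:defi:anti-pre-Lie algebras2} for $(\mathfrak{g},\circ_{\mathfrak{g}})$ and $(\mathfrak{g}^*,\circ_{\mathfrak{g}^*})$ then hold simply by restriction.

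Finally, the sub-adjacent Lie bracket on $\mathfrak{g}$ is computed as $x\circ_{\mathfrak{g}}y-y\circ_{\mathfrak{g}}x=x\circ y-y\circ x=[x,y]=[x,y]_{\mathfrak{g}}$, where the second equality is compatibility of $\circ$ on the big space and the last uses that $\mathfrak{g}$ is a Lie subalgebra; the same calculation works for $\mathfrak{g}^*$. The only real obstacle is the closure step, and it boils down to recognising $\mathfrak{g}$ and $\mathfrak{g}^*$ as Lagrangian (maximal isotropic) subspaces for $\mathcal{B}_d$: the defining identity $\mathcal{B}_d(x\circ y,z)=\mathcal{B}_d(y,[x,z])$ then automatically annihilates the component of $x\circ y$ transverse to the subalgebra containing $x$ and $y$.
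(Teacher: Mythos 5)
Your proposal is correct and follows essentially the same route as the paper: invoke Theorem~\ref{thm:commutative 2-cocycles and anti-pre-Lie algebras} on $(\mathfrak{g}\oplus\mathfrak{g}^{*},[-,-])$ with the nondegenerate commutative 2-cocycle $\mathcal{B}_{d}$, then prove closure of $\mathfrak{g}$ (and symmetrically $\mathfrak{g}^{*}$) under $\circ$ by pairing the $\mathfrak{g}^{*}$-component of $x\circ y$ against $z\in\mathfrak{g}$ via $\mathcal{B}_{d}(x\circ y,z)=\mathcal{B}_{d}(y,[x,z])$ and using that $\mathfrak{g}$ is a $\mathcal{B}_{d}$-isotropic Lie subalgebra. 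The identification of the sub-adjacent Lie brackets by restriction is likewise exactly the paper's concluding observation.
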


\begin{proof}
The first conclusion follows from Theorem~\ref{thm:commutative
2-cocycles and anti-pre-Lie algebras}. Let $x,y\in \frak g$.
Suppose that $x\circ_{\frak g} y=w+w^*$, where $w\in \frak g$ and
$w^*\in \frak g^*$. Then we have
$$\langle w^*,z\rangle=\mathcal B_d(x\circ_{\frak g} y,z)=\mathcal B_d(y, [x,z])=0,\;\;\forall z\in \frak g.$$
Hence $w^*=0$ and thus $x\circ_{\frak g} y\in \frak g$. So $(\frak
g,\circ_{\frak g})$ is an anti-pre-Lie subalgebra. Similarly,
$(\frak g^*,\circ_{\frak g^*})$ is also an anti-pre-Lie
subalgebra. Obviously, the sub-adjacent Lie algebras of $(\frak
g,\circ_{\frak g})$ and $(\frak g^*,\circ_{\frak g^*})$ are
$(\mathfrak{g},[-,-]_{\mathfrak{g}})$ and
$(\mathfrak{g}^{*},[-,-]_{\mathfrak{g}^{*}})$ respectively.
\end{proof}

\delete{\begin{defi}\label{defi:quadratic anti-pre-Lie algebras}
    An anti-pre-Lie algebra $(A,\circ)$ is called \textbf{quadratic} if there exists a nondegenerate symmetric invariant bilinear form $\mathcal{B}$ on it. We denote it by $(A,\circ,\mathcal{B})$.
\end{defi}

\begin{defi}\label{defi:Manin triples of anti-pre-Lie algebras}
    Let $(A,\circ_{A})$ be an anti-pre-Lie algebra. Suppose that there is an anti-pre-Lie algebra $(A^{*},\circ_{A})$ on the dual space $A^{*}$.
    A \textbf{Manin triple of anti-pre-Lie algebras}  associated to $(A,\circ_{A})$ and $(A^{*},\circ_{A^{*}})$ is a collection $((A\oplus A^{*},\circ,\mathcal{B}_{d}),(A,\circ_{A})$,
    $(A^{*},\circ_{A^{*}}))$, such that $(A\oplus A^{*},\circ,\mathcal{B}_{d})$ is a quadratic anti-pre-Lie algebra, where $\mathcal{B}_{d}$ is given by
    \begin{equation}\label{eq:defi:Manin triples of anti-pre-Lie algebras}
        \mathcal{B}_{d}(x+a^{*},y+b^{*})=\langle x,b^{*}\rangle+\langle a^{*},y\rangle, \forall x,y\in A,a^{*},b^{*}\in A^{*},
    \end{equation}
    and $(A,\circ_{A})$ and $(A^{*},\circ_{A^{*}})$ are anti-pre-Lie subalgebras of $(A\oplus A^{*},\circ)$.
\end{defi}}

\begin{thm}\label{thm:Manin triples and matched pairs}
    Let $(A,\circ_{A})$ and $(A^{*},\circ_{A^{*}})$ be two anti-pre-Lie algebras and their sub-adjacent Lie algebras be $\big(\mathfrak{g}(A),[-,-]_{A}\big)$ and $\big(\mathfrak{g}(A^{*}),[-,-]_{A^{*}}\big)$ respectively. Then the following conditions are equivalent:
    \begin{enumerate}
        \item \label{it:APL1}  There is a Manin triple $\Big(\big(\mathfrak{g}(A)\oplus\mathfrak{g}(A^{*}),[-,-],\mathcal{B}_{d}\big),\mathfrak{g}(A),\mathfrak{g}(A^{*})\Big)$ of Lie algebras  with respect to the commutative 2-cocycle such that
         the compatible anti-pre-Lie algebra $(A\oplus A^{*},\circ)$ defined by
Eq.~(\ref{eq:thm:commutative 2-cocycles and anti-pre-Lie
algebras}) through $\mathcal B_d$ contains $(A,\circ_{A})$ and
$(A^{*},\circ_{A^{*}})$ as anti-pre-Lie subalgebras.
        \item \label{it:APL2} $(A,A^{*},-\mathrm{ad}^{*}_{A},\mathcal{R}^{*}_{\circ_{A}},-\mathrm{ad}^{*}_{A^{*}},\mathcal{R}^{*}_{\circ_{A^{*}}})$ is a matched pair of anti-pre-Lie algebras.
        \item \label{it:APL3} $\big(\mathfrak{g}(A),\mathfrak{g}(A^{*}),-\mathcal{L}^{*}_{\circ_{A}},-\mathcal{L}^{*}_{\circ_{A^{*}}}\big)$ is a matched pair of Lie algebras.
    \end{enumerate}
Moreover, every Manin triple of Lie algebras  with respect to the
commutative 2-cocycle can be obtained this way.
\end{thm}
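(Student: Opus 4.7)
The plan is to establish the cycle by proving the two equivalences (1) $\Leftrightarrow$ (2) and (2) $\Leftrightarrow$ (3), with the ``moreover'' clause following from the uniqueness statement in Theorem~\ref{thm:commutative 2-cocycles and anti-pre-Lie algebras}. The heart of the argument is to pin down which linear maps appear in the matched pair of anti-pre-Lie algebras produced by the converse half of Theorem~\ref{thm:matched pairs of anti-pre-Lie algebras}, using the 2-cocycle identity Eq.~(\ref{eq:thm:commutative 2-cocycles and anti-pre-Lie algebras}) as the main tool.

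For (1) $\Rightarrow$ (2), I would apply Lemma~\ref{lem:111} to the Manin triple to obtain a compatible anti-pre-Lie algebra $(A\oplus A^{*},\circ)$ containing $(A,\circ_{A})$ and $(A^{*},\circ_{A^{*}})$ as anti-pre-Lie subalgebras. Since $A\oplus A^{*}$ is a direct sum of two subalgebras as vector spaces, the converse part of Theorem~\ref{thm:matched pairs of anti-pre-Lie algebras} produces linear maps $l_{\circ_{A}},r_{\circ_{A}}:A\to\mathrm{End}(A^{*})$ and $l_{\circ_{A^{*}}},r_{\circ_{A^{*}}}:A^{*}\to\mathrm{End}(A)$ giving a matched pair. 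To identify these maps I would test Eq.~(\ref{eq:thm:commutative 2-cocycles and anti-pre-Lie algebras}) on appropriate triples: for $x,y\in A$ and $a^{*}\in A^{*}$, the identity $\mathcal{B}_{d}(x\circ a^{*},y)=\mathcal{B}_{d}(a^{*},[x,y]_{A})$ pins down $l_{\circ_{A}}(x)=-\mathrm{ad}^{*}_{A}(x)$, while $\mathcal{B}_{d}(y\circ_{A}x,a^{*})=\mathcal{B}_{d}(x,[y,a^{*}])$, combined with the formula just obtained and the relation $[y,a^{*}]=y\circ a^{*}-a^{*}\circ y$, pins down $r_{\circ_{A}}(x)=\mathcal{R}^{*}_{\circ_{A}}(x)$. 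Swapping the roles of $A$ and $A^{*}$ gives the analogous identifications for $l_{\circ_{A^{*}}},r_{\circ_{A^{*}}}$. Conversely, for (2) $\Rightarrow$ (1), Theorem~\ref{thm:matched pairs of anti-pre-Lie algebras} equips $A\oplus A^{*}$ with an anti-pre-Lie algebra structure, and it suffices to check that $\mathcal{B}_{d}(u\circ v,w)=\mathcal{B}_{d}(v,[u,w])$ holds for all $u,v,w\in A\oplus A^{*}$; doing this for each of the eight combinations $u,v,w\in A\cup A^{*}$ reduces, after using that $(A,\circ_{A})$ and $(A^{*},\circ_{A^{*}})$ are already anti-pre-Lie algebras, to the explicit prescriptions $l_{\circ_{A}}=-\mathrm{ad}^{*}_{A}$, $r_{\circ_{A}}=\mathcal{R}^{*}_{\circ_{A}}$, and their $A^{*}$ counterparts, hence is automatic.

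For (2) $\Leftrightarrow$ (3), the forward direction follows from Proposition~\ref{pro:from matched pairs of anti-pre-Lie algebras to matched pairs of Lie algebras} once one checks the operator identity
\begin{equation*}
-\mathrm{ad}^{*}_{A}(x)-\mathcal{R}^{*}_{\circ_{A}}(x)=-\mathcal{L}^{*}_{\circ_{A}}(x),\qquad \forall x\in A,
\end{equation*}
which is immediate from $[x,y]_{A}=x\circ_{A}y-y\circ_{A}x$, together with the symmetric identity on $A^{*}$. For (3) $\Rightarrow$ (2), the classical matched pair construction Eq.~(\ref{eq:Lie}) produces a Lie algebra on $\mathfrak{g}(A)\oplus\mathfrak{g}(A^{*})$ with $\mathfrak{g}(A)$ and $\mathfrak{g}(A^{*})$ as Lie subalgebras; verifying the commutative 2-cocycle condition Eq.~(\ref{defi:2-cocycle}) for $\mathcal{B}_{d}$ on this Lie algebra reduces to eight cases parametrized by which of the three arguments lie in $A$ versus $A^{*}$, and each case unpacks into one of the six matched pair axioms of Definition~\ref{defi:matched pairs of anti-pre-Lie algebras} after the dual representations are rewritten as $-\mathrm{ad}^{*}$ and $\mathcal{R}^{*}$. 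Once the 2-cocycle condition is secured, Lemma~\ref{lem:111} and the uniqueness in Theorem~\ref{thm:commutative 2-cocycles and anti-pre-Lie algebras} imply that the compatible anti-pre-Lie algebra on $A\oplus A^{*}$ restricts to $(A,\circ_{A})$ and $(A^{*},\circ_{A^{*}})$, which simultaneously proves (1) and the concluding ``moreover'' clause.

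The main obstacle will be the bookkeeping in the eight-case verifications in (2)$\Rightarrow$(1) and (3)$\Rightarrow$(2), especially the mixed cases (two elements from $A$, one from $A^{*}$ and vice versa), where one must carefully translate Eq.~(\ref{eq:thm:commutative 2-cocycles and anti-pre-Lie algebras}) against the explicit representations to recover, one by one, the six axioms in Definition~\ref{defi:matched pairs of anti-pre-Lie algebras}. Nothing conceptually new enters beyond the 2-cocycle identity and dualizing operators on $A\oplus A^{*}$, so the argument is routine once the dictionary between $l_{\circ}$, $r_{\circ}$ and $-\mathrm{ad}^{*}$, $\mathcal{R}^{*}$ is fixed.
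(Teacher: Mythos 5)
Your proposal is, in substance, the paper's own proof: the identification $l_{\circ_A}=-\mathrm{ad}^{*}_{A}$, $r_{\circ_A}=\mathcal{R}^{*}_{\circ_A}$ (and the $A^{*}$-analogues) by pairing Eq.~(\ref{eq:thm:commutative 2-cocycles and anti-pre-Lie algebras}) against $\mathcal{B}_d$ is exactly the paper's $(\ref{it:APL1})\Rightarrow(\ref{it:APL2})$; $(\ref{it:APL2})\Rightarrow(\ref{it:APL3})$ via Proposition~\ref{pro:from matched pairs of anti-pre-Lie algebras to matched pairs of Lie algebras} together with $-\mathrm{ad}^{*}-\mathcal{R}^{*}_{\circ}=-\mathcal{L}^{*}_{\circ}$ is the same; and your final paragraph, despite being labelled ``$(\ref{it:APL3})\Rightarrow(\ref{it:APL2})$'', is precisely the paper's $(\ref{it:APL3})\Rightarrow(\ref{it:APL1})$ (build the Lie algebra from the Lie matched pair, check that $\mathcal{B}_d$ is a commutative 2-cocycle, then invoke Lemma~\ref{lem:111} and the uniqueness in Theorem~\ref{thm:commutative 2-cocycles and anti-pre-Lie algebras}); since you already have $(\ref{it:APL1})\Leftrightarrow(\ref{it:APL2})$, this closes the cycle and also gives the ``moreover'' clause, exactly as in the paper. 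The only genuinely extra ingredient is your direct $(\ref{it:APL2})\Rightarrow(\ref{it:APL1})$, which the paper obtains by passing through $(\ref{it:APL3})$; it is sound, because the compatibility identity $\mathcal{B}_d(u\circ v,w)=\mathcal{B}_d(v,[u,w])$ holds by construction of the dual maps and, together with the anti-pre-Lie identities on $A\oplus A^{*}$, forces $\mathcal{B}_d$ to be a commutative 2-cocycle on the sub-adjacent Lie algebra.

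One claim should be repaired. In the $(\ref{it:APL3})\Rightarrow(\ref{it:APL1})$ step you assert that the verification of Eq.~(\ref{defi:2-cocycle}) for $\mathcal{B}_d$ ``unpacks into the six matched pair axioms of Definition~\ref{defi:matched pairs of anti-pre-Lie algebras}''. It cannot and must not: at that point only the Lie matched pair $(\ref{it:APL3})$ is available, so appealing to the axioms of $(\ref{it:APL2})$ would be circular. Fortunately the check is much simpler than you predict and uses none of them: the cases with all three arguments in $A$ (or all in $A^{*}$) vanish trivially, and in the mixed case, with $\rho_{\mathfrak g(A)}=-\mathcal{L}^{*}_{\circ_A}$ and $\rho_{\mathfrak g(A^{*})}=-\mathcal{L}^{*}_{\circ_{A^{*}}}$ in Eq.~(\ref{eq:Lie}), one computes $\mathcal{B}_d([x,y]_A,a^{*})+\mathcal{B}_d([y,a^{*}],x)+\mathcal{B}_d([a^{*},x],y)=\langle a^{*},x\circ_A y-y\circ_A x\rangle+\langle a^{*},y\circ_A x\rangle-\langle a^{*},x\circ_A y\rangle=0$, and symmetrically for two arguments in $A^{*}$. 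With that correction your argument is complete and coincides with the paper's.
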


\begin{proof}
$(\ref{it:APL1})\Longrightarrow (\ref{it:APL2})$. By assumption
and Theorem~\ref{thm:matched pairs of anti-pre-Lie algebras},
there are linear maps
$l_{\circ_{A}},r_{\circ_{A}}:A\rightarrow\mathrm{End}(A^{*})$ and
$l_{\circ_{A^{*}}},r_{\circ_{A^{*}}}:A^{*}\rightarrow\mathrm{End}(A)$
such that
$(A,A^{*},l_{\circ_{A}},r_{\circ_{A}},l_{\circ_{A^{*}}},r_{\circ_{A^{*}}})$
is a matched pair of anti-pre-Lie algebras and
    $$x\circ a^{*}=l_{\circ_{A}}(x)a^{*}+r_{\circ_{A^{*}}}(a^{*})x, \;\;a^{*}\circ x=l_{\circ_{A^{*}}}(a^{*})x+r_{\circ_{A}}(x)a^{*},\;\;\forall x\in A, a^*\in A^*.$$
    Then we have
$$\langle l_{\circ_{A}}(x)a^{*}, y\rangle=\mathcal{B}_{d}(x\circ a^{*},
y)=\mathcal{B}_{d}([x,y]_{A},a^{*})=\langle[x,y]_{A},
a^{*}\rangle=\langle -{\rm ad}_A^*(x)a^*, y\rangle,\;\;\forall
x,y\in A,a^{*}\in A^{*}.$$
    Thus $l_{\circ_{A}}=-\mathrm{ad}^*_{A}$, and similarly $l_{\circ_{A^{*}}}=-\mathrm{ad}_{A^{*}}^*$. Moreover, for all $x,y\in A,a^{*}\in A^{*}$, we have
    \begin{eqnarray*}
        \langle a^{*}, x\circ_{A}y\rangle&=& \mathcal{B}_{d}(a^{*},x\circ_{A}y)=\mathcal{B}_{d}([x,a^{*}],y)=\mathcal{B}_{d}(x\circ a^{*}-a^{*}\circ x,y)\\
        &=&\langle(l_{\circ_{A}}-r_{\circ_{A}})(x)a^{*},y\rangle=-\langle a^{*},(l^*_{\circ_{A}}-r^*_{\circ_{A}})(x)y\rangle.
    \end{eqnarray*}
    Since $l_{\circ_{A}}=-\mathrm{ad}^*_{A}$, we have $r^*_{\circ_{A}}=\mathcal{R}_{\circ_{A}}$
and thus $r_{\circ_{A}}=\mathcal{R}^*_{\circ_{A}}$. Similarly we
have $r_{\circ_{A^{*}}}=\mathcal{R}^*_{\circ_{A^{*}}}$. Thus
$(A,A^{*},-\mathrm{ad}^{*}_{A},\mathcal{R}^{*}_{\circ_{A}},-\mathrm{ad}^{*}_{A^{*}}$,
    $\mathcal{R}^{*}_{\circ_{A^{*}}})$ is a matched pair of anti-pre-Lie algebras.

   $(\ref{it:APL2})\Longrightarrow (\ref{it:APL3})$.  It follows from Proposition \ref{pro:from matched pairs of anti-pre-Lie algebras to matched pairs of Lie algebras}.

    $(\ref{it:APL3})\Longrightarrow (\ref{it:APL1})$. Suppose that $\big(\mathfrak{g}(A),\mathfrak{g}(A^{*}),-\mathcal{L}^{*}_{\circ_{A}},-\mathcal{L}^{*}_{\circ_{A^{*}}}\big)$ is a matched pair of Lie algebras.
    Then there is a Lie algebra
    $\mathfrak{g}(A)\bowtie_{-\mathcal{L}^{*}_{\circ_{A^{*}}}}^{-\mathcal{L}^{*}_{\circ_{A}}}\mathfrak{g}(A^{*})$
    with
$$[x,a^{*}]=-\mathcal{L}^{*}_{\circ_{A}}(x)a^{*}+\mathcal{L}^{*}_{\circ_{A^{*}}}(a^{*})x,\;\;\forall x\in A,a^{*}\in A^{*}.$$
It is straightforward to show that  $\mathcal{B}_{d}$ is a
commutative 2-cocycle on this Lie algebra. Therefore
$\Big(\big(\mathfrak{g}(A)\oplus\mathfrak{g}(A^{*}),[-,-],\mathcal{B}_{d}\big),\mathfrak{g}(A),\mathfrak{g}(A^{*})\Big)$
is a Manin triple of Lie algebras with respect to the commutative
2-cocycle. Moreover, by Lemma~\ref{lem:111}, with the compatible
anti-pre-Lie algebra structure  $(A\oplus A^{*},\circ)$ defined by
Eq.~(\ref{eq:thm:commutative 2-cocycles and anti-pre-Lie
algebras}) through $\mathcal B_d$, the two anti-pre-Lie subalgebra
structures on $A$ and $A^*$ are exactly $(A,\circ_{A})$ and
$(A^{*},\circ_{A^{*}})$ respectively, that is,
Item~(\ref{it:APL1}) holds.

Finally, the last conclusion also follows from
Lemma~\ref{lem:111}.
\end{proof}


\begin{defi}\label{defi:anti-pre-Lie coalgebras}
    An \textbf{anti-pre-Lie coalgebra} is a pair $(A,\delta)$, such that $A$ is a vector space and $\delta:A\rightarrow A\otimes A$ is a linear map satisfying
    \begin{equation}\label{eq:defi:anti-pre-Lie coalgebras1}
        (\mathrm{id}^{\otimes 3}-\tau\otimes \mathrm{id})(\mathrm{id}\otimes\delta)\delta=(\tau\otimes \mathrm{id}-\mathrm{id}^{\otimes 3})(\delta\otimes \mathrm{id})\delta,
    \end{equation}
 where $\tau:A\otimes A\rightarrow A\otimes A$ is the flip map defined by $\tau(x\otimes y)=y\otimes x$, for all $x,y\in A$, and
    \begin{equation}\label{eq:defi:anti-pre-Lie coalgebras2}
        (\mathrm{id}^{\otimes 3}+\xi+\xi^{2})(\tau\otimes \mathrm{id}-\mathrm{id}^{\otimes 3})(\delta\otimes \mathrm{id})\delta=0,
    \end{equation}
    where $\xi(x\otimes y\otimes z)=y\otimes z\otimes x$, for all $x,y,z\in A$.
\end{defi}

\begin{pro}\label{pro:anti-pre-Lie coalgebras and anti-pre-Lie algebras}
    Let $A$ be a vector space and $\delta:A\rightarrow A\otimes A$ be a linear map.
    Let $\circ_{A^{*}}:A^{*}\otimes A^{*}\rightarrow A^{*}$ be the linear dual of $\delta$, that is,
        \begin{equation}\label{eq:pro:anti-pre-Lie coalgebras and anti-pre-Lie algebras}
        \langle a^{*}\circ_{A^{*}}b^{*},x\rangle=\langle\delta^{*}(a^{*}\otimes b^{*}),x\rangle=\langle a^{*}\otimes b^{*},\delta(x)\rangle, \;\;\forall a^{*},b^{*}\in A^{*}, x\in A.
    \end{equation}
    Then $(A,\delta)$ is an anti-pre-Lie coalgebra if and only if $(A^{*},\circ_{A^{*}})$ is an anti-pre-Lie algebra.
\end{pro}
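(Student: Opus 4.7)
The plan is to dualize each anti-pre-Lie algebra axiom on $(A^{*},\circ_{A^{*}})$ via the pairing between $A^{\otimes 3}$ and $(A^{*})^{\otimes 3}$ and show that it precisely matches the corresponding coalgebra identity on $(A,\delta)$. The key identity, unpacked directly from Eq.~(\ref{eq:pro:anti-pre-Lie coalgebras and anti-pre-Lie algebras}), is that for any $a^{*},b^{*},c^{*}\in A^{*}$ and $x\in A$,
\begin{equation*}
    \langle a^{*}\circ_{A^{*}}(b^{*}\circ_{A^{*}} c^{*}),x\rangle=\langle a^{*}\otimes b^{*}\otimes c^{*},(\mathrm{id}\otimes\delta)\delta(x)\rangle,\quad \langle (a^{*}\circ_{A^{*}} b^{*})\circ_{A^{*}} c^{*},x\rangle=\langle a^{*}\otimes b^{*}\otimes c^{*},(\delta\otimes\mathrm{id})\delta(x)\rangle.
\end{equation*}
Permuting the factors $a^{*},b^{*},c^{*}$ amounts to acting on the output of $(\mathrm{id}\otimes\delta)\delta$ or $(\delta\otimes\mathrm{id})\delta$ by the dual permutation, and in particular swapping $a^{*}$ and $b^{*}$ corresponds to applying $\tau\otimes\mathrm{id}$, while cyclically shifting $(a^{*},b^{*},c^{*})\mapsto (b^{*},c^{*},a^{*})$ corresponds to $\xi^{2}$.

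With this dictionary, I would first handle Eq.~(\ref{eq:defi:anti-pre-Lie algebras1}) on $(A^{*},\circ_{A^{*}})$: writing $[b^{*},a^{*}]_{A^{*}}\circ_{A^{*}} c^{*}=(b^{*}\circ_{A^{*}} a^{*}-a^{*}\circ_{A^{*}} b^{*})\circ_{A^{*}} c^{*}$ and pairing with $x$, the axiom becomes
\begin{equation*}
\langle a^{*}\otimes b^{*}\otimes c^{*},(\mathrm{id}^{\otimes 3}-\tau\otimes\mathrm{id})(\mathrm{id}\otimes\delta)\delta(x)-(\tau\otimes\mathrm{id}-\mathrm{id}^{\otimes 3})(\delta\otimes\mathrm{id})\delta(x)\rangle=0
\end{equation*}
for all $a^{*},b^{*},c^{*},x$. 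Since $A^{\otimes 3}$ separates points of $(A^{*})^{\otimes 3}$, this is equivalent to Eq.~(\ref{eq:defi:anti-pre-Lie coalgebras1}).

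Next I would treat Eq.~(\ref{eq:defi:anti-pre-Lie algebras2}). Using the computation above, the first summand $[a^{*},b^{*}]_{A^{*}}\circ_{A^{*}} c^{*}$ pairs with $x$ as $\langle a^{*}\otimes b^{*}\otimes c^{*}, T(x)\rangle$ with $T=(\mathrm{id}^{\otimes 3}-\tau\otimes\mathrm{id})(\delta\otimes\mathrm{id})\delta$. The other two cyclic summands are obtained by permuting $(a^{*},b^{*},c^{*})$, which passes to $\xi^{2}$ and $\xi$ acting on the output $T(x)$. Summing, the Jacobi-type relation on $A^{*}$ is equivalent to $(\mathrm{id}^{\otimes 3}+\xi+\xi^{2})T=0$, i.e.\ Eq.~(\ref{eq:defi:anti-pre-Lie coalgebras2}) up to an overall sign.

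The main obstacle is bookkeeping: one must carefully translate the action of swapping or cycling the dual labels $a^{*},b^{*},c^{*}$ into the correct permutation ($\tau\otimes\mathrm{id}$, $\xi$, or $\xi^{2}$) acting on the output in $A^{\otimes 3}$, and keep signs straight when unfolding the commutators $[\,-,-\,]_{A^{*}}$. Once this dictionary is set, both equivalences are algebraic rewrites and no further structural argument is needed.
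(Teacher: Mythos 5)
Your proposal is correct and follows essentially the same route as the paper: dualize each axiom through the pairing, track the permutations ($\tau\otimes\mathrm{id}$, $\xi$, $\xi^{2}$) induced by relabelling $a^{*},b^{*},c^{*}$, and conclude by nondegeneracy of the pairing in finite dimensions. The only cosmetic differences are that the paper pushes the operators to the dual side as $\delta^{*}(\mathrm{id}\otimes\delta^{*})$, etc., and states Eq.~(\ref{eq:defi:anti-pre-Lie coalgebras2}) with the opposite overall sign, which, as you note, is immaterial.
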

\begin{proof}
    For all $x\in A, a^{*},b^{*},c^{*}\in A^{*}$, by Eq.~(\ref{eq:pro:anti-pre-Lie coalgebras and anti-pre-Lie algebras}), we have
\begin{eqnarray*}
    \langle (\mathrm{id}^{\otimes 3}-\tau\otimes \mathrm{id})(\mathrm{id}\otimes\delta)\delta(x),a^{*}\otimes b^{*}\otimes c^{*}\rangle
    &=&\langle x,\delta^{*}(\mathrm{id}\otimes \delta^{*})(\mathrm{id}^{\otimes 3}-\tau\otimes \mathrm{id})(a^{*}\otimes b^{*}\otimes c^{*})\rangle\\
    &=&\langle x,a^{*}\circ_{A^{*}}(b^{*}\circ_{A^{*}} c^{*})-b^{*}\circ_{A^{*}}(a^{*}\circ_{A^{*}}c^{*})\rangle,
\end{eqnarray*}
\begin{eqnarray*}
    \langle (\tau\otimes \mathrm{id}-\mathrm{id}^{\otimes 3})(\delta\otimes \mathrm{id})\delta(x), a^{*}\otimes b^{*}\otimes c^{*}\rangle&=&\langle x, \delta^{*}(\delta^{*}\otimes \mathrm{id})(\tau\otimes
    \mathrm{id}-\mathrm{id}^{\otimes 3})(a^{*}\otimes b^{*}\otimes c^{*})\rangle\\
    &=&\langle x,(b^{*}\circ_{A^{*}} a^{*})\circ_{A^{*}} c^{*}-(a^{*}\circ_{A^{*}} b^{*})\circ_{A^{*}} c^{*}\rangle.
\end{eqnarray*}
    Thus Eq.~(\ref{eq:defi:anti-pre-Lie coalgebras1}) holds if and only if Eq.~(\ref{eq:defi:anti-pre-Lie algebras1}) holds on $A^{*}$. Similarly  Eq.~(\ref{eq:defi:anti-pre-Lie coalgebras2}) holds if and only if Eq.~(\ref{eq:defi:anti-pre-Lie algebras2}) holds on $A^{*}$. Hence the conclusion follows.
\end{proof}

\begin{rmk}
   By \cite{LB2022}, $(A,\circ)$ is an anti-pre-Lie algebra if and only if Eq.~(\ref{eq:defi:anti-pre-Lie algebras1}) and the following equation
 \begin{equation}\label{eq:defi:anti-pre-Lie algebras5}
        x\circ[y,z]+y\circ[z,x]+z\circ[x,y]=0, \;\;\forall x,y,z\in A,
    \end{equation}
hold. Therefore by dualization, $(A,\delta)$ is an anti-pre-Lie
coalgebra if and only if Eq.~(\ref{eq:defi:anti-pre-Lie
coalgebras1}) and the following equation
\begin{equation}\label{eq:defi:anti-pre-Lie coalgebras3}
        (\mathrm{id}^{\otimes 3}+\xi+\xi^{2})(\tau\otimes\mathrm{id}-\mathrm{id}^{\otimes 3})(\mathrm{id}\otimes\delta)\delta=0,
    \end{equation}
hold.
  \end{rmk}

\begin{defi}\label{defi:anti-pre-Lie bialgebra}
    An \textbf{anti-pre-Lie bialgebra} is a triple $(A,\circ,\delta)$, such that the pair $(A,\circ)$ is an anti-pre-Lie algebra, the pair $(A,\delta)$ is an anti-pre-Lie coalgebra, and the following equations hold:
\begin{eqnarray}
     &&(\mathrm{id}^{\otimes 2}-\tau)\Big(\delta(x\circ y)-\big(\mathcal{L}_{\circ}(x)\otimes \mathrm{id}\big)\delta(y)-\big(\mathrm{id}\otimes\mathcal{L}_{\circ}(x)\big)\delta(y)+\big(\mathrm{id}\otimes\mathcal{R}_{\circ}(y)\big)\delta(x)\Big)=0,\label{eq:defi:anti-pre-Lie bialgebra1}\\
     &&\delta([x,y])=\big(\mathrm{id}\otimes\mathrm{ad}(x)-\mathcal{L}_{\circ}(x)\otimes \mathrm{id}\big)\delta(y)-\big(\mathrm{id}\otimes\mathrm{ad}(y)-\mathcal{L}_{\circ}(y)\otimes
        \mathrm{id}\big)\delta(x),\label{eq:defi:anti-pre-Lie bialgebra2}
\end{eqnarray}
\delete{
    \begin{equation}\label{eq:defi:anti-pre-Lie bialgebra1}
        (\mathrm{id}^{\otimes 2}-\tau)(\delta(x\circ y)-(\mathcal{L}_{\circ}(x)\otimes \mathrm{id})\delta(y)-(\mathrm{id}\otimes\mathcal{L}_{\circ}(x))\delta(y)+(\mathrm{id}\otimes\mathcal{R}_{\circ}(y))\delta(x))=0,
    \end{equation}
    \begin{equation}\label{eq:defi:anti-pre-Lie bialgebra2}
        \delta([x,y])=(\mathrm{id}\otimes\mathrm{ad}(x)-\mathcal{L}_{\circ}(x)\otimes \mathrm{id})\delta(y)-(\mathrm{id}\otimes\mathrm{ad}(y)-\mathcal{L}_{\circ}(y)\otimes
        \mathrm{id})\delta(x),
    \end{equation}}
 for all $x,y\in A$.
\end{defi}

For a representation $(\rho,V)$ of a Lie algebra
$(\mathfrak{g},[-,-])$, recall that a \textbf{1-cocycle} of
$(\mathfrak{g},[-,-])$ associated to $(\rho,V)$ is a linear map
$f:\mathfrak{g}\rightarrow V$ such that
\begin{equation}
    f([x,y])=\rho(x)f(y)-\rho(y)f(x),\;\;\forall x,y\in\mathfrak{g}.
\end{equation}
In particular, if there exists $u\in V$ such that $f(x)=\rho(x) u$
for all $x\in \frak g$, then $f$ is called a {\bf 1-coboundary}.
Any 1-coboundary is a 1-cocycle.

For two representations $(\rho,V)$ and $(\phi,W)$ of a Lie algebra
$(\mathfrak{g},[-,-])$, it is known that
$(\rho\otimes\mathrm{id}+\mathrm{id}\otimes\phi, V\otimes W)$ is
also a representation of $(\mathfrak{g},[-,-])$, where
\begin{equation*}
(\rho\otimes\mathrm{id}+\mathrm{id}\otimes\phi)(x)(v\otimes w)
=\rho(x)v\otimes w+v\otimes\phi(x)w, \;\;\forall x\in\mathfrak{g}, v\in V,w\in W.
\end{equation*}


\begin{thm}\label{thm:equivalence matched pairs of Lie algebras and anti-pre-Lie bialgebras}
    Let $(A,\circ_{A})$ be an anti-pre-Lie algebra. Suppose that there is an anti-pre-Lie algebra structure $(A^{*},\circ_{A^{*}})$ on the dual space $A^{*}$.
Let $\big(\mathfrak{g}(A),[-,-]_{A}\big)$ and
$\big(\mathfrak{g}(A^{*}),[-,-]_{A^{*}}\big)$ be the sub-adjacent Lie
algebras of $(A,\circ_{A})$ and $(A^{*},\circ_{A^{*}})$
respectively. Let $\delta:A\rightarrow A\otimes A$ and
$\beta:A^{*}\rightarrow A^{*}\otimes A^{*}$ be linear duals of
$\circ_{A^{*}}$ and $\circ_{A}$ respectively. Then the following
conditions are equivalent:
    \begin{enumerate}
            \item\label{it:aa1} $(A,\circ_{A},\delta)$ is an anti-pre-Lie bialgebra.
        \item\label{it:aa2} $\delta$ is a 1-cocycle of $\big(\mathfrak{g}(A),[-,-]_{A}\big)$ associated to $(-\mathcal{L}_{\circ_{A}}\otimes\mathrm{id}+\mathrm{id}\otimes\mathrm{ad}_{A})$,
        and $\beta$ is a 1-cocycle of $\big(\mathfrak{g}(A^{*}),[-,-]_{A^{*}}\big)$ associated to $(-\mathcal{L}_{\circ_{A^{*}}}\otimes\mathrm{id}+\mathrm{id}\otimes\mathrm{ad}_{A^{*}})$.
        \item\label{it:aa3} $\big(\mathfrak{g}(A),\mathfrak{g}(A^{*}),-\mathcal{L}^{*}_{\circ_{A}},-\mathcal{L}^{*}_{\circ_{A^{*}}}\big)$ is a matched pair of Lie algebras.
    \end{enumerate}
\end{thm}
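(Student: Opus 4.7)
The plan is to prove the three-way equivalence by establishing (1) $\Leftrightarrow$ (2) and (2) $\Leftrightarrow$ (3) separately. Note that under the standing hypothesis that $(A,\circ_A)$ and $(A^{*},\circ_{A^{*}})$ are both anti-pre-Lie, Proposition \ref{pro:anti-pre-Lie coalgebras and anti-pre-Lie algebras} guarantees that $(A,\delta)$ and $(A^{*},\beta)$ are automatically anti-pre-Lie coalgebras, so only the compatibility equations (\ref{eq:defi:anti-pre-Lie bialgebra1})--(\ref{eq:defi:anti-pre-Lie bialgebra2}) need to be matched against the cocycle/matched-pair conditions. Recall also from Proposition \ref{pro:repandsemidirectproduct} (\ref{rep2}) that both $-\mathcal{L}_{\circ_A}$ and $\mathrm{ad}_A$ are representations of $\mathfrak{g}(A)$, so the tensor $-\mathcal{L}_{\circ_A}\otimes\mathrm{id}+\mathrm{id}\otimes\mathrm{ad}_A$ is indeed a representation on $A\otimes A$, making the 1-cocycle statement in (2) meaningful.

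For (1) $\Leftrightarrow$ (2), observe that Eq.~(\ref{eq:defi:anti-pre-Lie bialgebra2}) is, word for word, the 1-cocycle condition for $\delta:\mathfrak{g}(A)\to A\otimes A$ with respect to the representation $-\mathcal{L}_{\circ_A}\otimes\mathrm{id}+\mathrm{id}\otimes\mathrm{ad}_A$, so nothing needs to be proved there. The task is thus to show that, in the presence of (\ref{eq:defi:anti-pre-Lie bialgebra2}), Eq.~(\ref{eq:defi:anti-pre-Lie bialgebra1}) is equivalent to the 1-cocycle condition on $\beta$. I will do this by dualization: pair both sides of Eq.~(\ref{eq:defi:anti-pre-Lie bialgebra1}) with $a^{*}\otimes b^{*}\in A^{*}\otimes A^{*}$, and use the identities $\langle \delta(z),a^{*}\otimes b^{*}\rangle=\langle z,a^{*}\circ_{A^{*}}b^{*}\rangle$ together with the definitions of $\mathcal{L}^{*}_{\circ_A}$ and $\mathrm{ad}^{*}_A$. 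The antisymmetrizer $(\mathrm{id}^{\otimes 2}-\tau)$ on the inside of Eq.~(\ref{eq:defi:anti-pre-Lie bialgebra1}) becomes, after dualization, the commutator $[a^{*},b^{*}]_{A^{*}}=a^{*}\circ_{A^{*}}b^{*}-b^{*}\circ_{A^{*}}a^{*}$ appearing on the left of the cocycle equation for $\beta$, while the remaining four terms reorganize into the right-hand side $(-\mathcal{L}_{\circ_{A^{*}}}(a^{*})\otimes\mathrm{id}+\mathrm{id}\otimes\mathrm{ad}_{A^{*}}(a^{*}))\beta(b^{*})-(a^{*}\leftrightarrow b^{*})$.

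For (2) $\Leftrightarrow$ (3), I adapt the standard Drinfeld-style correspondence between 1-cocycles and matched pairs. Namely, the 1-cocycle condition for $\delta$ associated to $-\mathcal{L}_{\circ_A}\otimes\mathrm{id}+\mathrm{id}\otimes\mathrm{ad}_A$, when dualized by pairing with an arbitrary $a^{*}\in A^{*}$, produces an identity in $A$ that is exactly Eq.~(\ref{eq:matched pair of Lie algebras2}) with $\rho_{\mathfrak{g}}=-\mathcal{L}^{*}_{\circ_A}$ and $\rho_{\mathfrak{h}}=-\mathcal{L}^{*}_{\circ_{A^{*}}}$; here one uses $\mathrm{ad}_A=\mathcal{L}_{\circ_A}-\mathcal{R}_{\circ_A}$ to rewrite $\mathrm{id}\otimes\mathrm{ad}_A$ and then the identity $\mathcal{R}^{*}_{\circ_A}-\mathcal{L}^{*}_{\circ_A}=-\mathrm{ad}^{*}_A$ from Proposition \ref{pro:dual rep anti-pre-Lie algebra} (\ref{it:b}). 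Symmetrically, the 1-cocycle condition on $\beta$ dualizes to Eq.~(\ref{eq:matched pair of Lie algebras1}). The main obstacle is purely bookkeeping, namely tracking the exact combination of $-\mathcal{L}_\circ$ and $\mathrm{ad}$ under dualization so that the asymmetric representation $-\mathcal{L}_{\circ_A}\otimes\mathrm{id}+\mathrm{id}\otimes\mathrm{ad}_A$ (as opposed to the familiar $\mathrm{ad}\otimes\mathrm{id}+\mathrm{id}\otimes\mathrm{ad}$ from Lie bialgebra theory) yields precisely the matched pair with the two copies of $-\mathcal{L}^{*}_{\circ}$ rather than the two copies of $\mathrm{ad}^{*}$; once the calculation is carried out carefully, the equivalence is immediate.
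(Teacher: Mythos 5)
Your proposal is correct and follows essentially the same route as the paper: it treats Eq.~(\ref{eq:defi:anti-pre-Lie bialgebra2}) as literally the 1-cocycle condition for $\delta$, establishes the equivalence of Eq.~(\ref{eq:defi:anti-pre-Lie bialgebra1}) with the 1-cocycle condition for $\beta$ by pairing against $a^{*}\otimes b^{*}$ (the paper runs the same computation from the $\beta$-side, pairing against $x\otimes y$), and obtains $(\ref{it:aa2})\Leftrightarrow(\ref{it:aa3})$ by dualizing the two cocycle conditions into Eqs.~(\ref{eq:matched pair of Lie algebras2}) and (\ref{eq:matched pair of Lie algebras1}) with $\rho_{\mathfrak g(A)}=-\mathcal{L}^{*}_{\circ_{A}}$, $\rho_{\mathfrak g(A^{*})}=-\mathcal{L}^{*}_{\circ_{A^{*}}}$, exactly as in the paper. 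The only cosmetic differences are that you make explicit the automatic coalgebra structure via Proposition~\ref{pro:anti-pre-Lie coalgebras and anti-pre-Lie algebras} and phrase the $(\ref{it:aa1})\Leftrightarrow(\ref{it:aa2})$ step as conditional on Eq.~(\ref{eq:defi:anti-pre-Lie bialgebra2}) (the paper proves it unconditionally), neither of which affects correctness.
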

\begin{proof}
$(\ref{it:aa1})\Longleftrightarrow (\ref{it:aa2})$. Obviously
Eq.~(\ref{eq:defi:anti-pre-Lie bialgebra2}) holds if and only if
$\delta$ is a 1-cocycle of $(\mathfrak{g}(A),[-,-]_{A})$
associated to
$(-\mathcal{L}_{\circ_{A}}\otimes\mathrm{id}+\mathrm{id}\otimes\mathrm{ad}_{A})$.
Let $x,y\in A, a^{*},b^{*}\in A^{*}$. Then we have {\small
\begin{eqnarray*} \langle\beta([a^{*},b^{*}]_{A^{*}}),x\otimes
y\rangle&=&
\langle a^{*}\circ_{A^{*}}b^{*}, x\circ_{A}y\rangle-\langle
b^{*}\circ_{A^{*}}a^{*}, x\circ_{A}y\rangle\\
&=&\langle a^*\otimes b^*, (\mathrm{id}^{\otimes 2}-\tau)\delta(x\circ_{A}y)\rangle,\\
\langle
\big(\mathrm{id}\otimes\mathrm{ad}_{A^{*}}(b^{*})-\mathcal{L}_{\circ_{A^{*}}}(b^{*})\otimes\mathrm{id}\big)\beta(a^{*}),
x\otimes y\rangle
&=&-\langle\beta(a^{*}),x\otimes\mathrm{ad}^{*}_{A^{*}}(b^{*})y\rangle+\langle\beta(a^{*}),\mathcal{L}^{*}_{\circ_{A^{*}}}(b^{*})x\otimes y\rangle\\
&=&-\langle a^{*},x\circ_{A}\mathrm{ad}^{*}_{A^{*}}(b^{*})y\rangle+\langle a^{*},\mathcal{L}^{*}_{\circ_{A^{*}}}(b^{*})x\circ_{A}y\rangle\\
&=&\langle[\mathcal{L}^{*}_{\circ_{A}}(x)a^{*},b^{*}]_{A^{*}},y\rangle+\langle b^{*}\circ_{A^{*}}\mathcal{R}^{*}_{\circ_{A}}(y)a^{*},x\rangle\\
&=&\langle a^{*}\otimes b^{*} ,\tau\big(\mathrm{id}\otimes\mathcal{L}_{\circ_{A}}(x)\big)\delta(y)-\tau\big(\mathrm{id}\otimes\mathcal{R}_{\circ_{A}}(y)\big)\delta(x)\\
&&\hspace{1.6cm}-\big(\mathcal{L}_{\circ_{A}}(x)\otimes\mathrm{id}\big)\delta(y)\rangle,\\
-\langle
\big(\mathrm{id}\otimes\mathrm{ad}_{A^{*}}(a^{*})-\mathcal{L}_{\circ_{A^{*}}}(a^{*})\otimes\mathrm{id}\big)\beta(b^{*}),
x\otimes y\rangle&=& \langle a^{*}\otimes b^{*},
-\big(\mathrm{id}\otimes\mathcal{L}_{\circ_{A}}(x)\big)\delta(y)+\big(\mathrm{id}\otimes\mathcal{R}_{\circ_{A}}(y)\big)\delta(x)\\
&&\hspace{1.6cm}+\tau\big(\mathcal{L}_{\circ_{A}}(x)\otimes\mathrm{id}\big)\delta(y)\rangle.
\end{eqnarray*}}
Hence $\beta$ is a 1-cocycle of $\big(\mathfrak{g}(A^{*}),[-,-]_{A^{*}}\big)$ associated to $(-\mathcal{L}_{\circ_{A^{*}}}\otimes\mathrm{id}+\mathrm{id}\otimes\mathrm{ad}_{A^{*}})$
if and only if Eq.~(\ref{eq:defi:anti-pre-Lie bialgebra1}) holds.

$(\ref{it:aa2})\Longleftrightarrow (\ref{it:aa3})$.
    For all $x,y\in A, a^{*},b^{*}\in A^{*}$, we have
    \begin{eqnarray*}
        -\langle\mathcal{L}^{*}_{\circ_A^{*}}(a^{*})([x,y]_{A}),b^{*}\rangle&=&\langle[x,y]_{A},a^{*}\circ_{A^{*}}b^{*}\rangle=\langle\delta([x,y]_{A}),a^{*}\otimes b^{*}\rangle,\\
        \langle[\mathcal{L}^{*}_{\circ_{A^{*}}}(a^{*})x,y]_{A},b^{*}\rangle&=&\langle\mathcal{L}^{*}_{\circ_{A^{*}}}(a^{*})x,\mathrm{ad}^{*}_{A}(y)b^{*}\rangle=-\langle x,a^{*}\circ_{A^{*}}\mathrm{ad}^{*}_{A}(y)b^{*}\rangle\\
        &=&-\langle x,\delta^{*}\big(\mathrm{id}\otimes\mathrm{ad}^{*}_{A}(y)\big)(a^{*}\otimes b^{*})\rangle=\langle\big(\mathrm{id}\otimes\mathrm{ad}_{A}(y)\big)\delta(x),a^{*}\otimes b^{*}\rangle,\\
        \langle[x,\mathcal{L}^{*}_{\circ_{a^{*}}}(a^{*})y]_{A},b^{*}\rangle&=&-\langle\big(\mathrm{id}\otimes\mathrm{ad}_{A}(x)\big)\delta(y),a^{*}\otimes b^{*}\rangle,\\
        \langle\mathcal{L}^{*}_{\circ_{A^{*}}}\big(\mathcal{L}^{*}_{\circ_{A}}(x)a^{*}\big)y,b^{*}\rangle&=&-\langle y,\mathcal{L}^{*}_{\circ_{A}}(x)a^{*}\circ_{A^{*}}b^{*}\rangle=-\langle y,\delta^{*}\big(\mathcal{L}^{*}_{\circ_{A}}(x)\otimes \mathrm{id}\big)(a^{*}\otimes b^{*})\rangle\\
        &=&\langle\big(\mathcal{L}_{\circ_{A}}(x)\otimes \mathrm{id}\big)\delta(y),a^{*}\otimes b^{*}\rangle,\\
        -\langle\mathcal{L}^{*}_{\circ_{A^{*}}}\big(\mathcal{L}^{*}_{\circ_{A}}(y)a^{*}\big)x,b^{*}\rangle&=&-\langle\big(\mathcal{L}_{\circ_{A}}(y)\otimes \mathrm{id}\big)\delta(x),a^{*}\otimes b^{*}\rangle.
    \end{eqnarray*}
 Thus $\delta$ is a 1-cocycle of $\big(\mathfrak{g}(A),[-,-]_{A}\big)$ associated to $(-\mathcal{L}_{\circ_{A}}\otimes\mathrm{id}+\mathrm{id}\otimes\mathrm{ad}_{A})$  if and only if Eq.~(\ref{eq:matched pair of Lie algebras2}) holds for
 $\rho_{\frak g(A)}=-\mathcal{L}^{*}_{\circ_{A}}, \rho_{\frak g (A^{*})}=-\mathcal{L}^{*}_{\circ_{A^{*}}}$. By symmetry,
 $\beta$ is a 1-cocycle of $\big(\mathfrak{g}(A^{*}),[-,-]_{A^{*}}\big)$ associated to $(-\mathcal{L}_{\circ_{A^{*}}}\otimes\mathrm{id}+\mathrm{id}\otimes\mathrm{ad}_{A^{*}})$ if and only
 if Eq.~(\ref{eq:matched pair of Lie algebras1}) holds for $\rho_{\frak g(A)}=-\mathcal{L}^{*}_{\circ_{A}}, \rho_{\frak g (A^{*})}=-\mathcal{L}^{*}_{\circ_{A^{*}}}$.
\end{proof}

Combining Theorems \ref{thm:Manin triples and matched pairs} and
\ref{thm:equivalence matched pairs of Lie algebras and
anti-pre-Lie bialgebras} together, we have

\begin{cor}\label{cor:matched pairs of anti-pre-Lie algebras, Manin triples of anti-pre-Lie algebras, anti-pre-Lie bialgebras and their equivalence}
    Let $(A,\circ_{A})$ be an anti-pre-Lie algebra. Suppose that there is an anti-pre-Lie algebra structure $(A^{*},\circ_{A^{*}})$ on the dual space $A^{*}$ and $\delta:A\rightarrow A\otimes A$ is the linear dual of $\circ_{A^{*}}$.
Let $\big(\mathfrak{g}(A),[-,-]_{A}\big)$ and
$\big(\mathfrak{g}(A^{*}),[-,-]_{A^{*}}\big)$ be the sub-adjacent Lie
algebras of $(A,\circ_{A})$ and $(A^{*},\circ_{A^{*}})$
respectively.
    Then the following  conditions are equivalent:
    \begin{enumerate}
        \item There is a Manin triple $\Big(\big(\mathfrak{g}(A)\oplus\mathfrak{g}(A^{*}),[-,-],\mathcal{B}_{d}\big),\mathfrak{g}(A),\mathfrak{g}(A^{*})\Big)$ of Lie algebras  with respect to the commutative 2-cocycle such that
         the compatible anti-pre-Lie algebra $(A\oplus A^{*},\circ)$ defined by
Eq.~(\ref{eq:thm:commutative 2-cocycles and anti-pre-Lie
algebras}) through $\mathcal B_d$ contains $(A,\circ_{A})$ and
$(A^{*},\circ_{A^{*}})$ as anti-pre-Lie subalgebras.
    \item $(A,A^{*},-\mathrm{ad}^{*}_{A},\mathcal{R}^{*}_{\circ_{A}},-\mathrm{ad}^{*}_{A^{*}},\mathcal{R}^{*}_{\circ_{A^{*}}})$ is a matched pair of anti-pre-Lie algebras.
    \item $\big(\mathfrak{g}(A),\mathfrak{g}(A^{*}),-\mathcal{L}^{*}_{\circ_{A}},-\mathcal{L}^{*}_{\circ_{A^{*}}}\big)$ is a matched pair of Lie algebras.
    \item $(A,\circ_{A},\delta)$ is an anti-pre-Lie bialgebra.
    \end{enumerate}
\end{cor}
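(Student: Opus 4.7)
The plan is to simply chain the two preceding theorems and observe that all four conditions become tied together by a common intermediate statement, namely the matched pair of Lie algebras $\bigl(\mathfrak{g}(A),\mathfrak{g}(A^{*}),-\mathcal{L}^{*}_{\circ_{A}},-\mathcal{L}^{*}_{\circ_{A^{*}}}\bigr)$. Concretely, items (1), (2), and (3) of the corollary are literally the three items of Theorem~\ref{thm:Manin triples and matched pairs}, so that theorem already delivers $(1)\Longleftrightarrow(2)\Longleftrightarrow(3)$ with no further work. For the passage to item (4), I would invoke Theorem~\ref{thm:equivalence matched pairs of Lie algebras and anti-pre-Lie bialgebras}, whose item (3) coincides with item (3) of the corollary and whose item (1) coincides with item (4) of the corollary; this gives the missing link $(3)\Longleftrightarrow(4)$, and transitivity does the rest.

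The only thing to verify before citing the two theorems is that the data in the corollary match their hypotheses. The corollary assumes $(A,\circ_{A})$ and $(A^{*},\circ_{A^{*}})$ are anti-pre-Lie algebras and defines $\delta:A\to A\otimes A$ as the linear dual of $\circ_{A^{*}}$. By Proposition~\ref{pro:anti-pre-Lie coalgebras and anti-pre-Lie algebras}, dualizing $\circ_{A^{*}}$ yields an anti-pre-Lie coalgebra $(A,\delta)$, which is precisely what is required for the notion of anti-pre-Lie bialgebra in item (4) to be well-posed, and which is also the setting of Theorem~\ref{thm:equivalence matched pairs of Lie algebras and anti-pre-Lie bialgebras}. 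Similarly, the hypotheses of Theorem~\ref{thm:Manin triples and matched pairs} are satisfied by assumption. Once these identifications are noted, the chain $(1)\Longleftrightarrow(2)\Longleftrightarrow(3)\Longleftrightarrow(4)$ is immediate.

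There is no substantive obstacle here; the real content was carried out in the two previous theorems. The only mild bookkeeping is to make sure that the specific representations $-\mathrm{ad}^{*}_{A}$, $\mathcal{R}^{*}_{\circ_{A}}$, $-\mathrm{ad}^{*}_{A^{*}}$, $\mathcal{R}^{*}_{\circ_{A^{*}}}$ appearing in item (2) of the corollary are exactly those produced in the proof of the implication $(1)\Longrightarrow(2)$ of Theorem~\ref{thm:Manin triples and matched pairs}, and that the induced Lie-level representations $-\mathcal{L}^{*}_{\circ_{A}}$ and $-\mathcal{L}^{*}_{\circ_{A^{*}}}$ of item (3) arise from these via Proposition~\ref{pro:from matched pairs of anti-pre-Lie algebras to matched pairs of Lie algebras}. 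With these matches in place, the proof of the corollary reduces to a single sentence appealing to the two theorems.
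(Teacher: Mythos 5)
Your proposal is correct and coincides with the paper's own argument: the corollary is stated there immediately after the phrase ``Combining Theorems \ref{thm:Manin triples and matched pairs} and \ref{thm:equivalence matched pairs of Lie algebras and anti-pre-Lie bialgebras} together,'' i.e.\ exactly the chaining of the two theorems through the common matched-pair condition (3) that you describe. Your extra bookkeeping (that $\delta$ dualizes $\circ_{A^{*}}$ via Proposition \ref{pro:anti-pre-Lie coalgebras and anti-pre-Lie algebras} and that the representations match those in the theorems) is a harmless elaboration of the same route.
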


\subsection{Coboundary anti-pre-Lie bialgebras and the anti-pre-Lie Yang-Baxter equation}

\begin{defi}\label{defi:coboundary anti-pre-Lie bialgebras}
    An anti-pre-Lie bialgebra $(A,\circ,\delta)$
    is called \textbf{coboundary} if $\delta:A\rightarrow A\otimes A$
    is a 1-coboundary of $\big(\mathfrak{g}(A),[-,-]\big)$ associated to $(-\mathcal{L}_{\circ}\otimes\mathrm{id}+\mathrm{id}\otimes\mathrm{ad})$, that is,  there exists an $r\in A\otimes A$ such that
    \begin{equation}\label{eq:defi:coboundary anti-pre-Lie bialgebras}
        \delta(x):=\delta_{r}(x):=\big(-\mathcal{L}_{\circ}(x)\otimes \mathrm{id}+\mathrm{id}\otimes\mathrm{ad}(x)\big)r,\;\; \forall x\in A.
    \end{equation}
\end{defi}


\begin{pro}\label{pro:cob coalg}
Let $(A,\circ)$ be an anti-pre-Lie algebra and
$r=\sum\limits_{i}a_{i}\otimes b_{i}\in A\otimes A$. Let
$\delta=\delta_{r}:A\rightarrow A\otimes A$ be a linear map
defined by Eq.~(\ref{eq:defi:coboundary anti-pre-Lie bialgebras}).
Denote
\begin{equation}
    \textbf{T}(r)=r_{12}\circ r_{13}+r_{12}\circ r_{23}-[r_{13},r_{23}],
\end{equation}
where
\begin{equation}
r_{12}\circ r_{13}=\sum_{i,j}a_{i}\circ a_{j}\otimes b_{i}\otimes b_{j},  r_{12}\circ r_{23}=\sum_{i,j}a_{i}\otimes b_{i}\circ a_{j}\otimes b_{j}, [r_{13},r_{23}]=\sum_{i,j}a_{i}\otimes a_{j}\otimes [b_{i},b_{j}].
\end{equation}
\begin{enumerate}
    \item \label{it:bb1}Eq.~(\ref{eq:defi:anti-pre-Lie coalgebras1}) holds if and only if for all $x\in
    A$, the following equation holds:
    \begin{equation}\label{eq:pro:cob coalg1}
        \begin{split}
            &\Big(\mathcal{L}_{\circ }(x)\otimes\mathrm{id}\otimes\mathrm{id}-(\tau\otimes\mathrm{id})\big(\mathcal{L}_{\circ }(x)\otimes\mathrm{id}\otimes\mathrm{id}\big)-\mathrm{id}\otimes\mathrm{id}\otimes\mathrm{ad}(x)\Big)\textbf{T}(r)\\
            &+\sum_{j}\big(\mathrm{id}\otimes\mathcal{L}_{\circ}(a_{j})\otimes\mathrm{ad}(x)-\mathrm{ad}(a_{j})\otimes\mathrm{id}\otimes\mathrm{ad}(x)\big)\Big(\big(r+\tau(r)\big)\otimes b_{j}\Big)\\
            &+(\mathrm{id}^{\otimes 3}-\tau\otimes\mathrm{id})\sum_{j}\big(\mathcal{L}_{\circ}(x\circ a_{j})\otimes\mathrm{id}\otimes\mathrm{id}-\mathcal{L}_{\circ}(x)\mathcal{R}_{\circ}(a_{j})\otimes\mathrm{id}\otimes\mathrm{id}\big)\Big(\big(r+\tau(r)\big)\otimes b_{j}\Big)=0.
        \end{split}
    \end{equation}
    \item \label{it:bb2} Eq.~(\ref{eq:defi:anti-pre-Lie coalgebras2}) holds if and only if for all $x\in
    A$, the following equation holds:
    \begin{small}
\begin{equation}\label{eq:pro:cob coalg2}
    \begin{split}
        &(\mathrm{id}^{\otimes 3}+\xi+\xi^{2})\bigg(-\big(\mathrm{id}\otimes\mathrm{id}\otimes\mathcal{L}_{\circ}(x)\big)\textbf{T}(r)+\sum_{j}\big(\mathrm{id}\otimes\mathrm{id}\otimes\mathcal{L}_{\circ}([x,b_{j}])\big)(\mathrm{id}^{\otimes 3}-\tau\otimes\mathrm{id})\Big(a_{j}\otimes \big(r+\tau(r)\big)\Big)\\
        &+\sum_{j}\big(\mathrm{id}\otimes\mathrm{ad}(b_{j})\otimes \mathcal{L}_{\circ}(x)\big)\Big(a_{j}\otimes \big(r+\tau(r)\big)\Big)+\sum_{j}\big(\mathrm{id}\otimes\mathrm{ad}(a_{j})\otimes \mathcal{L}_{\circ}(x)\big)\Big(b_{j}\otimes \big(r+\tau(r)\big)\Big)\\
        &+\sum_{j}\big(\mathcal{L}_{\circ}(a_{j})\otimes\mathrm{id}\otimes \mathcal{L}_{\circ}(x)\big)(\tau\otimes\mathrm{id})\Big(b_{j}\otimes \big(r+\tau(r)\big)\Big)+\sum_{j}\big(\mathrm{ad}(b_{j})\otimes \mathrm{id}\otimes\mathcal{L}_{\circ}(x)\big)
        \Big(\big(r+\tau(r)\big)\otimes a_{j}\Big)\bigg)=0.
    \end{split}
\end{equation}
\end{small}
    \item \label{it:bb3} Eq.~(\ref{eq:defi:anti-pre-Lie bialgebra1}) holds if and only if for all $x,y\in A$, the following equation holds:
\begin{small}
    \begin{equation}\label{eq:pro:coboundary anti-pre-Lie bialgebras1}
        \big(\mathrm{id}\otimes\mathcal{L}_{\circ}(x\circ y)-\mathrm{id}\otimes\mathcal{L}_{\circ}(x)\mathcal{L}_{\circ}(y)+\mathcal{L}_{\circ}(x)\mathcal{L}_{\circ}(y)\otimes \mathrm{id}-\mathcal{L}_{\circ}(x\circ y)\otimes \mathrm{id}+\mathcal{L}_{\circ}(y)\otimes\mathcal{L}_{\circ}(x)-\mathcal{L}_{\circ}(x)\otimes\mathcal{L}_{\circ}(y)\big)\big(r+\tau(r)\big)=0.
    \end{equation}
\end{small}
\end{enumerate}
\end{pro}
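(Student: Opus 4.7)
The plan is to substitute $\delta = \delta_r$ as given by Eq.~\eqref{eq:defi:coboundary anti-pre-Lie bialgebras} into each of the three target conditions and carry out a direct but carefully organized tensor expansion. Writing $r=\sum_i a_i\otimes b_i$, we have
$$\delta_r(x) \;=\; -\sum_i (x\circ a_i)\otimes b_i \;+\; \sum_i a_i\otimes [x,b_i],\qquad \forall x\in A,$$
so each iterated coproduct $(\mathrm{id}\otimes \delta_r)\delta_r(x)$ and $(\delta_r\otimes \mathrm{id})\delta_r(x)$ produces four classes of terms which, once the outer symmetrizer is applied, subdivide into two physically distinct families: those quadratic in $r$ with no leftover action of $\mathcal L_\circ(x)$ or $\mathrm{ad}(x)$ outside (these will reassemble into $\mathbf T(r)$), and those in which $x$ still sits in an operator slot (these will pair up into $r+\tau(r)$ expressions).

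For item (\ref{it:bb1}), after substitution I would expand and group the sixteen resulting tensor summands. The first family collapses to $\mathbf T(r)$ acted upon by $\mathcal L_\circ(x)\otimes \mathrm{id}\otimes \mathrm{id}$, its flip, and $\mathrm{id}\otimes \mathrm{id}\otimes \mathrm{ad}(x)$, by using Eq.~\eqref{eq:defi:anti-pre-Lie algebras1} to rewrite associators in $\circ$ as commutators acting, and invoking the Jacobi identity on $\mathfrak g(A)$ for the $[r_{13},r_{23}]$ piece. The second family pairs, summand by summand, with its $\tau$-flipped partner; the failure of $r$ to be skew-symmetric is precisely what prevents cancellation and produces the correction terms built from $(r+\tau(r))\otimes b_j$. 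The identity Eq.~\eqref{eq:defi:anti-pre-Lie algebras2} is then used to move $\mathrm{ad}(a_j)$ past $\mathcal L_\circ$, matching the explicit combination in Eq.~\eqref{eq:pro:cob coalg1}.

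Item (\ref{it:bb2}) proceeds analogously, but now the cyclic symmetrizer $\mathrm{id}^{\otimes 3}+\xi+\xi^2$ drastically simplifies the $\mathbf T(r)$-family: under cyclic symmetrization, most summands collapse using Eq.~\eqref{eq:defi:anti-pre-Lie algebras5} and Jacobi, and only the single term $-(\mathrm{id}\otimes \mathrm{id}\otimes \mathcal L_\circ(x))\mathbf T(r)$ survives. The remaining cross-couplings between the $-(x\circ a_i)\otimes b_i$ and $a_j\otimes [x,b_j]$ contributions to $\delta_r$ are reorganized via $r+\tau(r)$ in the same style as in item (\ref{it:bb1}), yielding Eq.~\eqref{eq:pro:cob coalg2}. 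Item (\ref{it:bb3}) is the shortest: substituting $\delta_r$ into Eq.~\eqref{eq:defi:anti-pre-Lie bialgebra1} and applying the antisymmetrizer $\mathrm{id}^{\otimes 2}-\tau$ automatically converts all $r$-dependence into $r+\tau(r)$, and a short reorganization using Eq.~\eqref{eq:defi:anti-pre-Lie algebras1} produces Eq.~\eqref{eq:pro:coboundary anti-pre-Lie bialgebras1}; here one also observes for context that $\delta_r$ is tautologically a 1-coboundary, hence a 1-cocycle, for the representation $-\mathcal L_\circ\otimes \mathrm{id}+\mathrm{id}\otimes \mathrm{ad}$ guaranteed by Proposition~\ref{pro:repandsemidirectproduct}(\ref{rep2})(b), so Eq.~\eqref{eq:defi:anti-pre-Lie bialgebra2} is not an independent condition.

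The main obstacle is the bookkeeping in item (\ref{it:bb1}): one must track roughly a dozen and a half tensor summands and correctly identify which combinations assemble into $\mathbf T(r)$ versus which sum to give the $r+\tau(r)$ corrections. This separation is not apparent term by term and requires simultaneous use of the two defining anti-pre-Lie identities together with the Jacobi identity on $\mathfrak g(A)$. Once the organization in item (\ref{it:bb1}) is done, items (\ref{it:bb2}) and (\ref{it:bb3}) are streamlined variants; no step requires a genuinely new idea beyond the coboundary formalism, but the computation is intrinsically lengthy.
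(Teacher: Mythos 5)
Your plan is essentially the paper's own proof (carried out in the Appendix): substitute $\delta_r(x)=\sum_i\big(-(x\circ a_i)\otimes b_i+a_i\otimes[x,b_i]\big)$ into each condition, expand, split the resulting summands into a family that reassembles into $\mathbf{T}(r)$ acted on by the stated operators and a family that is rewritten, by adding and subtracting terms, through $r+\tau(r)$, using Eq.~(\ref{eq:defi:anti-pre-Lie algebras1}) and the Jacobi identity of the sub-adjacent Lie algebra. The only cosmetic discrepancy is in which identities you credit where (the paper's item~(\ref{it:bb1}) needs only Eq.~(\ref{eq:defi:anti-pre-Lie algebras1}) and Jacobi, and its item~(\ref{it:bb2}) computation is pure regrouping), which does not affect the correctness of the approach.
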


\begin{proof}
The proof follows from the careful interpretation. It seems a
little lengthy and hence we put it into the Appendix.
\end{proof}

\begin{thm}\label{thm:cob bialg}
Let $(A,\circ)$ be an anti-pre-Lie algebra and
$r=\sum\limits_{i}a_{i}\otimes b_{i}\in A\otimes A$. Let
$\delta=\delta_{r}$ be a linear map defined by
Eq.~(\ref{eq:defi:coboundary anti-pre-Lie bialgebras}). Then
$(A,\circ,\delta)$ is an anti-pre-Lie bialgebra  if and only if
Eqs.~(\ref{eq:pro:cob coalg1})-(\ref{eq:pro:coboundary
anti-pre-Lie bialgebras1}) hold.
\end{thm}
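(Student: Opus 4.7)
The plan is to reduce the theorem directly to Proposition \ref{pro:cob coalg} by unpacking Definition \ref{defi:anti-pre-Lie bialgebra} and handling the one compatibility condition not already covered by that proposition. Explicitly, $(A,\circ,\delta)$ is an anti-pre-Lie bialgebra precisely when $(A,\circ)$ is an anti-pre-Lie algebra (given), $(A,\delta)$ is an anti-pre-Lie coalgebra (i.e.\ Eqs.~\eqref{eq:defi:anti-pre-Lie coalgebras1} and \eqref{eq:defi:anti-pre-Lie coalgebras2} hold), and the two compatibility identities \eqref{eq:defi:anti-pre-Lie bialgebra1}, \eqref{eq:defi:anti-pre-Lie bialgebra2} are satisfied for $\delta = \delta_r$. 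So four conditions must be verified.

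Three of the four are handled term-by-term by Proposition~\ref{pro:cob coalg}: part~(\ref{it:bb1}) says Eq.~\eqref{eq:defi:anti-pre-Lie coalgebras1} is equivalent to Eq.~\eqref{eq:pro:cob coalg1}; part~(\ref{it:bb2}) says Eq.~\eqref{eq:defi:anti-pre-Lie coalgebras2} is equivalent to Eq.~\eqref{eq:pro:cob coalg2}; and part~(\ref{it:bb3}) says Eq.~\eqref{eq:defi:anti-pre-Lie bialgebra1} is equivalent to Eq.~\eqref{eq:pro:coboundary anti-pre-Lie bialgebras1}. Thus the theorem will follow as soon as I check that the remaining identity, Eq.~\eqref{eq:defi:anti-pre-Lie bialgebra2}, holds automatically for every $r \in A \otimes A$.

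For this last point I would invoke Theorem~\ref{thm:equivalence matched pairs of Lie algebras and anti-pre-Lie bialgebras}, whose proof records that Eq.~\eqref{eq:defi:anti-pre-Lie bialgebra2} is equivalent to the statement that $\delta$ is a $1$-cocycle of $\big(\mathfrak{g}(A),[-,-]\big)$ associated to $-\mathcal{L}_\circ\otimes\mathrm{id}+\mathrm{id}\otimes\mathrm{ad}$. The latter is a representation on $A\otimes A$ because $(-\mathcal{L}_\circ,A)$ is a representation of $\mathfrak{g}(A)$ by Proposition~\ref{pro:repandsemidirectproduct}(\ref{rep2})(a), and tensor products of representations of a Lie algebra are again representations. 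Since $\delta_r$ is by construction the $1$-coboundary associated to $r$ with respect to this representation, and since every $1$-coboundary is a $1$-cocycle (as noted just before Theorem~\ref{thm:equivalence matched pairs of Lie algebras and anti-pre-Lie bialgebras}), Eq.~\eqref{eq:defi:anti-pre-Lie bialgebra2} is automatic.

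The main ``obstacle'' is therefore purely organizational rather than computational: all the heavy term-chasing has been pushed into Proposition~\ref{pro:cob coalg}, and it only remains to recognize that the Lie-cocycle compatibility is free for coboundaries. Assembling the three equivalences from Proposition~\ref{pro:cob coalg} with the automatic validity of Eq.~\eqref{eq:defi:anti-pre-Lie bialgebra2} yields the claimed biconditional, with no additional calculation required.
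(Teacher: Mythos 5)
Your proposal is correct and matches the paper's own proof: the paper likewise observes that Eq.~(\ref{eq:defi:anti-pre-Lie bialgebra2}) holds automatically (precisely because $\delta_r$ is by construction a $1$-coboundary, hence a $1$-cocycle, for $-\mathcal{L}_\circ\otimes\mathrm{id}+\mathrm{id}\otimes\mathrm{ad}$) and then cites Proposition~\ref{pro:cob coalg} for the remaining three equivalences. You have merely spelled out the "obviously" step, which is fine.
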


\begin{proof}
Obviously Eq.~(\ref{eq:defi:anti-pre-Lie bialgebra2}) holds
automatically. By Proposition~\ref{pro:cob coalg}, the conclusion
holds.
\end{proof}

The simplest solutions to satisfy Eqs.~(\ref{eq:pro:cob
coalg1})-(\ref{eq:pro:coboundary anti-pre-Lie bialgebras1}) are to
assume that $r$ is skew-symmetric (that is, $\tau(r)=-r$) and
$\textbf{T}(r)=0$, that is,

\begin{cor}\label{cor:cob bialg}
Let $(A,\circ)$ be an anti-pre-Lie algebra and $r\in A\otimes A$
be skew-symmetric. If $\textbf{T}(r)=0$, then $(A,\circ,\delta)$
is an anti-pre-Lie bialgebra, where $\delta=\delta_{r}$ is defined
by Eq.~(\ref{eq:defi:coboundary anti-pre-Lie bialgebras}).
\end{cor}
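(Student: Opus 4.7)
The plan is to apply Theorem \ref{thm:cob bialg} and show that the hypothesis that $r$ is skew-symmetric together with $\mathbf{T}(r)=0$ forces all three conditions, Eqs.~(\ref{eq:pro:cob coalg1}), (\ref{eq:pro:cob coalg2}) and (\ref{eq:pro:coboundary anti-pre-Lie bialgebras1}), to hold automatically. The key observation is that skew-symmetry of $r$ means $r+\tau(r)=0$, which will wipe out every summand in the three equations that contains the expression $r+\tau(r)$ as a sub-term. After that reduction, each equation either vanishes identically or collapses to a statement that follows immediately from $\mathbf{T}(r)=0$.

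First I would verify Eq.~(\ref{eq:pro:coboundary anti-pre-Lie bialgebras1}). Inspecting its right-hand side, every operator is applied to $r+\tau(r)$, so since $\tau(r)=-r$ the equation holds trivially for all $x,y\in A$.

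Next I would treat Eqs.~(\ref{eq:pro:cob coalg1}) and (\ref{eq:pro:cob coalg2}). In each, I would separate the terms into two groups: those involving $\mathbf{T}(r)$ and those involving $r+\tau(r)$. Skew-symmetry eliminates the second group termwise. For the first group in Eq.~(\ref{eq:pro:cob coalg1}), the surviving expression is
\[
\Big(\mathcal{L}_{\circ}(x)\otimes\mathrm{id}\otimes\mathrm{id}-(\tau\otimes\mathrm{id})\big(\mathcal{L}_{\circ}(x)\otimes\mathrm{id}\otimes\mathrm{id}\big)-\mathrm{id}\otimes\mathrm{id}\otimes\mathrm{ad}(x)\Big)\mathbf{T}(r),
\]
which vanishes because $\mathbf{T}(r)=0$. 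The analogous reduction applies to Eq.~(\ref{eq:pro:cob coalg2}): after skew-symmetry eliminates all summands involving $r+\tau(r)$, only the term $-(\mathrm{id}^{\otimes 3}+\xi+\xi^{2})\big(\mathrm{id}\otimes\mathrm{id}\otimes\mathcal{L}_{\circ}(x)\big)\mathbf{T}(r)$ remains, and it too vanishes by the assumption $\mathbf{T}(r)=0$.

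Having verified all three equations, Theorem \ref{thm:cob bialg} yields that $(A,\circ,\delta_r)$ is an anti-pre-Lie bialgebra, completing the proof. The only subtlety I foresee is the bookkeeping: one has to check carefully that in Eqs.~(\ref{eq:pro:cob coalg1}) and (\ref{eq:pro:cob coalg2}) the separation into a ``$\mathbf{T}(r)$-part'' and an ``$(r+\tau(r))$-part'' is clean, i.e.\ that no residual cross-term escapes both annihilations. Apart from this routine verification, no further computation is required.
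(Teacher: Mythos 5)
Your proposal is correct and is exactly the paper's argument: the paper obtains the corollary from Theorem \ref{thm:cob bialg} by observing that skew-symmetry ($r+\tau(r)=0$) annihilates every summand of Eqs.~(\ref{eq:pro:cob coalg1})--(\ref{eq:pro:coboundary anti-pre-Lie bialgebras1}) involving $r+\tau(r)$, while $\textbf{T}(r)=0$ kills the remaining terms. Your verification that the split into a $\textbf{T}(r)$-part and an $(r+\tau(r))$-part is clean matches how those equations are organized in Proposition \ref{pro:cob coalg}, so nothing further is needed.
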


\begin{defi}
Let $(A,\circ)$ be an anti-pre-Lie algebra and $r\in A\otimes A$.
We say $r$ is a solution of the \textbf{anti-pre-Lie Yang-Baxter
equation} or \textbf{APL-YBE} in short, in $(A,\circ)$  if
$\textbf{T}(r)=0$.
\end{defi}

\begin{ex}\label{ex:AYBE}
Let $(A,\cdot)$ be a commutative associative algebra with a
derivation $P$.  Then according to \cite{LB2022}, there is an
anti-pre-Lie algebra $(A,\circ)$ given by
\begin{equation}\label{eq:diff anti-pre-Lie}
    x\circ y=P(x\cdot y)+P(x)\cdot y,\;\;\forall x,y\in A.
\end{equation}
Let $r\in A\otimes A$ be a solution of the {\bf associative
Yang-Baxter equation (AYBE)} in $(A,\cdot)$, that is, $r$
satisfies
\begin{equation}\label{eq:AYBE}{\bf A}(r):=r_{12}\cdot r_{13}-r_{23}\cdot r_{12}+r_{13}\cdot
r_{23}=0,\end{equation} where for $r=\sum\limits_{i} a_{i}\otimes
b_{i}$,
\begin{equation}
r_{12}\cdot r_{13}=\sum_{i,j} a_{i}\cdot a_{j}\otimes b_{i}\otimes b_{j},\  r_{23}\cdot r_{12}=\sum_{i,j}a_{j}\otimes a_{i}\cdot b_{j}\otimes b_{i},\  r_{13}\cdot r_{23}=\sum_{i,j} a_{i}\otimes a_{j}\otimes b_{i}\cdot b_{j}.
\end{equation}
If in addition, $r$ satisfies
\begin{equation}\label{eq:-P}
        (\mathrm{id}\otimes P+P\otimes\mathrm{id})r=0,
    \end{equation}
then $r$ is also a solution of the APL-YBE in $(A,\circ)$. In fact, we have
 \begin{eqnarray*}
    \textbf{T}(r)&=&r_{12}\circ r_{13}+r_{12}\circ r_{23}-[r_{13}, r_{23}]\\
    &=&\sum_{i,j}\big(P(a_{i}\cdot a_{j})\otimes b_{i}\otimes b_{j}+P(a_{i})\cdot a_{j}\otimes b_{i}\otimes b_{j}+a_{i}\otimes P(b_{i}\cdot a_{j})\otimes b_{j}\\
    &&\ +a_{i}\otimes P(b_{i})\cdot a_{j}\otimes b_{j}+a_{i}\otimes a_{j}\otimes b_{i}\cdot P(b_{j})-a_{i}\otimes a_{j}\otimes P(b_{i})\cdot b_{j}\big)\\
    &{\overset{(\ref{eq:-P})}{=}}&\sum_{i,j}\big(P(a_{i}\cdot a_{j})\otimes b_{i}\otimes b_{j}-a_{i}\cdot a_{j}\otimes P(b_{i})\otimes b_{j}+a_{i}\otimes P(b_{i}\cdot a_{j})\otimes b_{j}\\
    &&\ -P(a_{i})\otimes b_{i}\cdot a_{j}\otimes b_{j}-a_{i}\otimes P(a_{j})\otimes b_{i}\cdot b_{j}+P(a_{i})\otimes a_{j}\otimes b_{i}\cdot b_{j}\big)\\
    &=&(P\otimes\mathrm{id}\otimes\mathrm{id}-\mathrm{id}\otimes P\otimes\mathrm{id})(r_{12}\cdot r_{13}-r_{23}\cdot r_{12}+r_{13}\cdot r_{23})\\
    &=&0.
    \end{eqnarray*}
\end{ex}

\delete{
\begin{defi}
    Let $(A,\circ)$ be an anti-pre-Lie algebra. A bilinear form $\mathcal{B}$ on $(A,\circ)$ is called an \textbf{anti-2-cocycle} if
    \begin{equation}
        \mathcal{B}(x,y\circ z)-    \mathcal{B}(y,x\circ z)=\mathcal{B}([y,x],z),\;\;\forall x,y,z\in A.
    \end{equation}
\end{defi}

Then we have the following theorem.

For a vector space $A$, the isomorphism $A\otimes A\cong {\rm
Hom}(A^*,A)$ identifies an $r\in A\otimes A$ with a map from $A^*$
to $A$ which we still denote by $r$. Explicitly, writing
$r=\sum\limits_{i}a_{i}\otimes b_{i}\in A\otimes A$, we have
\begin{equation}\label{eq:identify}
    r:A^{*}\rightarrow A,\;\; r(a^{*})=\sum_{i}\langle a^{*}, a_{i}\rangle b_{i},\;\;\forall a^{*}\in A^{*}.
\end{equation}

\begin{thm}\label{thm:anti-2-cocycle}
    Let $(A,\circ)$ be an anti-pre-Lie algebra and $r\in A\otimes A$. Suppose that $r$ is skew-symmetric and nondegenerate. Then $r$ is a solution of the APL-YBE if and only if the bilinear form $\mathcal{B}_{r}$ given by
    \begin{equation}\label{eq:Br}
        \mathcal{B}_{r}(x,y)=\langle r^{-1}(x), y\rangle,\;\;\forall x,y\in A
    \end{equation}
is an anti-2-cocycle on $(A,\circ)$.
\end{thm}
\begin{proof}
    Let $r=\sum\limits_{i}a_{i}\otimes b_{i}$. Since $r$ is skew-symmetric, we have
    $\sum\limits_{i}a_{i}\otimes b_{i}=-\sum\limits_{i}b_{i}\otimes a_{i}$.
    Therefore $r(v^{*})=\sum\limits_{i}\langle v^{*}, a_{i}\rangle b_{i}=-\sum\limits_{i}\langle v^{*}, b_{i}\rangle a_{i},\;\;\forall v^{*}\in A^{*}$.
    Since $r$ is nondegenerate, for all $x,y,z\in A$, there exist $u^{*}, v^{*}, w^{*}\in A^{*}$ such that $x=r(u^{*}), y=r(v^{*}), z=r(w^{*})$.
    By Eq.~(\ref{eq:Br}), $\mathcal{B}_{r}$ is nondegenerate and skew-symmetric, and we have
    \begin{eqnarray*}
        \mathcal{B}_{r}([x,y],z)&=&-\langle [r(u^{*}), r(v^{*})],w^{*}\rangle=-\sum_{i,j}\langle u^{*}, a_{i}\rangle\langle v^{*}, a_{j}\rangle\langle w^{*},[b_{i},b_{j}]\rangle\\
        &=&-\sum_{i,j}\langle u^{*}\otimes v^{*}\otimes w^{*}, a_{i}\otimes a_{j}\otimes[b_{i},b_{j}]\rangle,\\
        \mathcal{B}_{r}(x,y\circ z)&=&\langle u^{*}, r(v^{*})\circ r(w^{*})\rangle=\sum_{i,j}\langle v^{*}, b_{i}\rangle\langle w^{*},b_{j}\rangle\langle u^{*}, a_{i}\circ a_{j}\rangle\\
        &=&\sum_{i,j}\langle u^{*}\otimes v^{*}\otimes w^{*}, a_{i}\circ a_{j}\otimes b_{i}\otimes b_{j}\rangle,\\
        -\mathcal{B}_{r}(y,x\circ z)&=&-\langle v^{*}, r(u^{*})\circ r(w^{*})\rangle=\sum_{i,j}\langle u^{*}, a_{i}\rangle\langle w^{*}, b_{j}\rangle\langle v^{*}, b_{i}\circ a_{j}\rangle\\
        &=&\sum_{i,j}\langle u^{*}\otimes v^{*}\otimes w^{*}, a_{i}\otimes b_{i}\circ a_{j}\otimes b_{j}\rangle.
    \end{eqnarray*}
Thus
$$\mathcal{B}_{r}([x,y],z)+\mathcal{B}_{r}(x,y\circ z)-\mathcal{B}_{r}(y,x\circ z)=\langle u^{*}\otimes v^{*}\otimes w^{*}, \textbf{T}(r)\rangle.$$
Hence the conclusion follows.
    \end{proof}
}

For a vector space $A$, the isomorphism $A\otimes A\cong {\rm
Hom}(A^*,A)$ identifies an $r\in A\otimes A$ with a map from $A^*$
to $A$ which we still denote by $r$. Explicitly, writing
$r=\sum\limits_{i}a_{i}\otimes b_{i}\in A\otimes A$, we have
\begin{equation}\label{eq:identify}
    r:A^{*}\rightarrow A,\;\; r(a^{*})=\sum_{i}\langle a^{*}, a_{i}\rangle b_{i},\;\;\forall a^{*}\in A^{*}.
\end{equation}

\begin{thm}\label{thm:O-operator and T-equation}
    Let $(A,\circ)$ be an anti-pre-Lie algebra and $r\in A\otimes A$ be skew-symmetric. Then the following conditions are equivalent.
\begin{enumerate}
\item\label{it:ss1} $r$ is a solution of the APL-YBE in
$(A,\circ)$. \item\label{it:ss2} The following equation holds.
\begin{equation}\label{eq:APL-O}
    r(a^{*})\circ r(b^{*})+r\Big(\mathrm{ad}^{*}\big(r(a^{*})\big)b^{*}\Big)-r\Big(\mathcal{R}^{*}_{\circ}\big(r(b^{*})\big)a^{*}\Big)=0,\;\;\forall a^{*},b^{*}\in A^{*}.
\end{equation}
\item\label{it:ss3} The following equation holds.
\begin{equation}\label{eq:Lie-O}
        [r(a^{*}),r(b^{*})]+r\Big(\mathcal{L}^{*}_{\circ}\big(r(a^{*})\big)b^{*}\Big)-r\Big(\mathcal{L}^{*}_{\circ}\big(r(b^{*})\big)a^{*}\Big)=0,\;\;\forall a^{*},b^{*}\in A^{*}.
    \end{equation}
\end{enumerate}
\end{thm}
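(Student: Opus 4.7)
The plan is to prove the theorem by a direct computation: pair $\mathbf{T}(r)$ against an arbitrary element $a^{*}\otimes b^{*}\otimes c^{*}\in(A^{*})^{\otimes 3}$ and use the skew-symmetry of $r$ to rewrite everything in terms of the map $r:A^{*}\to A$ from Eq.~(\ref{eq:identify}). The key observation I will need repeatedly is that skew-symmetry gives $\sum_{i}\langle v^{*},b_{i}\rangle a_{i}=-r(v^{*})$ for all $v^{*}\in A^{*}$, together with the identity $\langle r(\alpha^{*}),\beta^{*}\rangle=-\langle\alpha^{*},r(\beta^{*})\rangle$ for all $\alpha^{*},\beta^{*}\in A^{*}$.

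First I would compute the three pairings term by term. For $r_{12}\circ r_{13}$, collapsing the sums via skew-symmetry yields $\langle r_{12}\circ r_{13},a^{*}\otimes b^{*}\otimes c^{*}\rangle=\langle r(b^{*})\circ r(c^{*}),a^{*}\rangle$. A similar computation gives $\langle r_{12}\circ r_{23},a^{*}\otimes b^{*}\otimes c^{*}\rangle=-\langle r(a^{*})\circ r(c^{*}),b^{*}\rangle$, and directly one has $\langle [r_{13},r_{23}],a^{*}\otimes b^{*}\otimes c^{*}\rangle=\langle[r(a^{*}),r(b^{*})],c^{*}\rangle$. Consequently, $\mathbf{T}(r)=0$ is equivalent to the scalar identity
\begin{equation}\label{eq:plan-scalar}
\langle r(b^{*})\circ r(c^{*}),a^{*}\rangle-\langle r(a^{*})\circ r(c^{*}),b^{*}\rangle-\langle [r(a^{*}),r(b^{*})],c^{*}\rangle=0
\end{equation}
holding for all $a^{*},b^{*},c^{*}\in A^{*}$. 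This establishes the equivalence (\ref{it:ss1})$\Leftrightarrow$(\ref{eq:plan-scalar}) and will serve as the common hub.

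Next, to obtain (\ref{it:ss1})$\Leftrightarrow$(\ref{it:ss3}), I would pair Eq.~(\ref{eq:Lie-O}) with $c^{*}\in A^{*}$. Using $\langle r(\alpha^{*}),c^{*}\rangle=-\langle\alpha^{*},r(c^{*})\rangle$ and the definition of $\mathcal{L}_{\circ}^{*}$, the middle term becomes $\langle r(a^{*})\circ r(c^{*}),b^{*}\rangle$ and the last term becomes $-\langle r(b^{*})\circ r(c^{*}),a^{*}\rangle$, so the pairing reproduces Eq.~(\ref{eq:plan-scalar}) (up to an overall sign). Since Eq.~(\ref{eq:Lie-O}) is equivalent to the vanishing of all such pairings, the equivalence follows. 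The equivalence (\ref{it:ss1})$\Leftrightarrow$(\ref{it:ss2}) is obtained in the same fashion: pairing Eq.~(\ref{eq:APL-O}) with $c^{*}$ and using the definitions of $\mathrm{ad}^{*}$ and $\mathcal{R}_{\circ}^{*}$, one gets $\langle r(a^{*})\circ r(b^{*}),c^{*}\rangle+\langle[r(a^{*}),r(c^{*})],b^{*}\rangle-\langle r(c^{*})\circ r(b^{*}),a^{*}\rangle=0$; the cyclic relabeling $(a^{*},b^{*},c^{*})\mapsto(a^{*},c^{*},b^{*})$ then matches Eq.~(\ref{eq:plan-scalar}).

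No genuinely hard step appears; the main obstacle is purely bookkeeping. I will need to track carefully the two sources of sign: the one from skew-symmetry of $r$ (which appears each time an index of $r$ is contracted with a tensor factor from a different copy) and the one from the defining rule $\langle\rho^{*}(x)u^{*},v\rangle=-\langle u^{*},\rho(x)v\rangle$ of dual representations. Once the three pairings with $(A^{*})^{\otimes 3}$ are reduced to Eq.~(\ref{eq:plan-scalar}), the remaining identifications are clean.
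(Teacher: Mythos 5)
Your proposal is correct and follows essentially the same route as the paper: both proofs pair the relevant expressions against $(A^{*})^{\otimes 3}$, use skew-symmetry of $r$ and the definitions of the dual maps to convert $\mathbf{T}(r)=0$, Eq.~(\ref{eq:APL-O}) and Eq.~(\ref{eq:Lie-O}) into one and the same scalar identity; your intermediate pairings (including the signs and the relabeling $(a^{*},b^{*},c^{*})\mapsto(a^{*},c^{*},b^{*})$) all check out.
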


\begin{proof}
Let $r=\sum\limits_{i}a_{i}\otimes b_{i}\in A\otimes A$ and $a^{*},b^{*},c^{*}\in A^{*}$.

(\ref{it:ss1}) $\Longleftrightarrow$ (\ref{it:ss2}).  By
Eq.~(\ref{eq:identify}), we have
\begin{small}
\begin{eqnarray*}
    \langle r(a^{*})\circ r(b^{*}), c^{*}\rangle&=&\sum_{i,j}\langle a^{*},a_{i}\rangle\langle b^{*},a_{j}\rangle\langle b_{i}\circ b_{j}, c^{*}\rangle=\sum_{i,j}\langle c^{*}\otimes a^{*}\otimes b^{*}, b_{i}\circ b_{j}\otimes a_{i}\otimes a_{j}\rangle\\
    &=&\sum_{i,j}\langle c^{*}\otimes a^{*}\otimes b^{*}, a_{i}\circ a_{j}\otimes b_{i}\otimes b_{j}\rangle,\\
    \langle r\Big(\mathrm{ad}^{*}\big(r(a^{*})\big)b^{*}\Big), c^{*}\rangle&=&\sum_{i}\langle\mathrm{ad}^{*}\big(r(a^{*})\big)b^{*}, a_{i}\rangle\langle b_{i},c^{*}\rangle=-\sum_{i}\langle b^{*},[r(a^{*}),a_{i}]\rangle\langle b_{i},c^{*}\rangle\\
    &=&-\sum_{i,j}\langle a^{*}, a_{j}\rangle\langle b^{*},[b_{j},a_{i}]\rangle\langle b_{i},c^{*}\rangle=-\sum_{i,j}\langle c^{*}\otimes a^{*}\otimes b^{*}, b_{i}\otimes a_{j}\otimes [b_{j},a_{i}]\rangle\\
    &=&-\sum_{i,j}\langle c^{*}\otimes a^{*}\otimes b^{*}, a_{i}\otimes a_{j}\otimes [b_{i},b_{j}]\rangle,\\
    -\langle r\Big(\mathcal{R}^{*}_{\circ}\big(r(b^{*})\big)a^{*}\Big),c^{*}\rangle&=&-\sum_{i}\langle \mathcal{R}^{*}_{\circ}\big(r(b^{*})\big)a^{*},a_{i}\rangle\langle b_{i},c^{*}\rangle=\sum_{i}\langle a^{*},a_{i}\circ r(b^{*})\rangle\langle b_{i},c^{*}\rangle\\
    &=&\sum_{i,j}\langle b^{*},a_{j}\rangle\langle a^{*}, a_{i}\circ b_{j}\rangle\langle b_{i},c^{*}\rangle=\sum_{i,j}\langle c^{*}\otimes a^{*}\otimes b^{*}, b_{i}\otimes a_{i}\circ b_{j}\otimes a_{j}\rangle\\
    &=&\sum_{i,j}\langle c^{*}\otimes a^{*}\otimes b^{*}, a_{i}\otimes b_{i}\circ a_{j}\otimes b_{j}\rangle.
\end{eqnarray*}
\end{small}
Hence Item~(\ref{it:ss1}) holds if and only if
Eq.~(\ref{eq:APL-O}) holds.

(\ref{it:ss1}) $\Longleftrightarrow$ (\ref{it:ss3}). By a similar
study as above, we have
\begin{eqnarray*}
               \langle [r(a^{*}), r(b^{*})], c^{*}\rangle
                &=&\sum_{i,j}\langle a^{*}\otimes b^{*}\otimes c^{*}, a_{i}\otimes a_{j}\otimes [b_{i},b_{j}]\rangle,\\
               \langle r\Big(\mathcal{L}^{*}_{\circ}\big(r(a^{*})\big)b^{*}\Big),c^{*}\rangle
                &=&-\sum_{i,j}\langle a^{*}\otimes b^{*}\otimes c^{*}, a_{i}\otimes b_{i}\circ a_{j}\otimes b_{j}\rangle,\\
                -\langle r\Big(\mathcal{L}^{*}_{\circ}\big(r(b^{*})\big)a^{*}\Big),c^{*}\rangle
                &=&-\sum_{i,j}\langle a^{*}\otimes b^{*}\otimes c^{*}, a_{i}\circ a_{j}\otimes b_{i}\otimes b_{j}\rangle.
            \end{eqnarray*}
Hence Item~(\ref{it:ss1}) holds if and only if
Eq.~(\ref{eq:Lie-O}) holds.
\end{proof}

\subsection{$\mathcal{O}$-operators of anti-pre-Lie algebras and \papl  algebras}\

Theorem~\ref{thm:O-operator and T-equation} motivates to give the following notion.

\begin{defi}\label{defi:O-operators}
    Let $(A,\circ)$ be an anti-pre-Lie algebra and $(l_{\circ},r_{\circ},V)$ be a representation of $(A,\circ)$. A linear map $T:V\rightarrow A$ is called \textbf{an $\mathcal{O}$-operator of $(A,\circ)$ associated to} $(l_{\circ},r_{\circ},V)$ if
    \begin{equation}\label{eq:defi:O-operators}
        T(u)\circ T(v)=T\Big(l_{\circ}\big(T(u)\big)v+r_{\circ}\big(T(v)\big)u\Big),\;\;\forall u,v\in V.
    \end{equation}
\end{defi}

Recall (\cite{Kup}) that an {\bf $\mathcal O$-operator $T$ of a
Lie algebra $(\frak g,[-,-])$ associated to a representation
$(\rho, V)$} is a linear map $T:V\rightarrow \frak g$ satisfying
\begin{equation}
[T(u),T(v)]=T\Big(\rho\big(T(u)\big)v-\rho\big(T(v)\big)u\Big),\;\;\forall u,v\in V.
\end{equation}
Obviously, if $T$ is an $\mathcal O$-operator of an anti-pre-Lie
algebra $(A,\circ)$ associated to a representation
$(l_{\circ},r_{\circ},V)$, then $T$ is an $\mathcal O$-operator of
the sub-adjacent Lie algebra $\big(\frak g(A),[-,-]\big)$ associated to
$(l_\circ-r_\circ, V)$.

\begin{ex}
Theorem~\ref{thm:O-operator and T-equation} is rewritten as
follows. Let $(A,\circ)$ be an anti-pre-Lie algebra and $r\in
A\otimes A$ be skew-symmetric. Then $r$ is a solution of the
APL-YBE in $(A,\circ)$ if and only if $r$ is an
$\mathcal{O}$-operator of $(A,\circ)$ associated to
$(-\mathrm{ad}^{*},\mathcal{R}^{*}_{\circ},A^{*})$, or
equivalently, $r$ is an $\mathcal O$-operator of the sub-adjacent
Lie algebra $\big(\frak g(A),[-,-]\big)$ associated to $(-\mathcal
L^*_\circ, A^{*})$.
\end{ex}

\begin{thm}\label{thm:O-operator and T-equation:semi-direct product version}
    Let $(A,\circ)$ be an anti-pre-Lie algebra and $(l_{\circ},r_{\circ},V)$ be a representation of $(A,\circ)$.
    Set $\hat{A}=A\ltimes_{r^{*}_{\circ}-l^{*}_{\circ},r^{*}_{\circ}}V^{*}$.
    Let $T:V\rightarrow A$ be a linear map which is identified as an element in the vector space
    $\hat{A}\otimes\hat{A}$ (through $\mathrm{Hom}(V,A)\cong A\otimes V^{*} \subseteq  \hat{A}\otimes\hat{A}$).
     Then $r=T-\tau(T)$ is a skew-symmetric solution of the APL-YBE in the anti-pre-Lie algebra $\hat{A}$ if and only if $T$ is an $\mathcal{O}$-operator of $(A,\circ)$ associated to $(l_{\circ},r_{\circ},V)$.
\end{thm}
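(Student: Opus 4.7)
The plan is to reduce the claim to Theorem~\ref{thm:O-operator and T-equation} applied inside the ambient anti-pre-Lie algebra $\hat{A}$. Since $r = T - \tau(T)$ is manifestly skew-symmetric, that theorem tells us $r$ is a solution of the APL-YBE in $\hat{A}$ if and only if $r$, regarded as a map $\hat{A}^{*} \to \hat{A}$ via Eq.~(\ref{eq:identify}), is an $\mathcal{O}$-operator of $\hat{A}$ associated to the representation $(-\mathrm{ad}^{*}_{\hat{A}}, \mathcal{R}^{*}_{\circ_{\hat{A}}}, \hat{A}^{*})$. So the task becomes to show that this identity in $\hat{A}$ is equivalent to Eq.~(\ref{eq:defi:O-operators}) for the original $T: V \to A$.

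Before splitting the $\mathcal{O}$-operator identity I would record three preliminary computations. First, using the inclusion $A \otimes V^{*} \hookrightarrow \hat{A} \otimes \hat{A}$ and writing $T = \sum_{i} a_{i} \otimes v_{i}^{*}$, the induced map $r: \hat{A}^{*} = A^{*} \oplus V \to \hat{A} = A \oplus V^{*}$ satisfies $r(x^{*}) = T^{*}(x^{*}) \in V^{*}$ and $r(v) = -T(v) \in A$. Second, the semi-direct product formula Eq.~(\ref{eq:pro:repandsemidirectproduct1}) gives the explicit multiplication
\[
    (x + u^{*}) \circ_{\hat{A}} (y + v^{*}) = x \circ y + (r_{\circ}^{*} - l_{\circ}^{*})(x) v^{*} + r_{\circ}^{*}(y) u^{*}.
\]
Third, I would express $-\mathrm{ad}^{*}_{\hat{A}}$ and $\mathcal{R}^{*}_{\circ_{\hat{A}}}$ acting on $\hat{A}^{*}$ component by component, in terms of the original operations on $A$ and the structure maps $l_{\circ}, r_{\circ}$ via their natural duals.

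Then I would test the $\mathcal{O}$-operator identity
\[
    r(\xi) \circ_{\hat{A}} r(\eta) + r\bigl(\mathrm{ad}^{*}_{\hat{A}}(r(\xi))\eta\bigr) - r\bigl(\mathcal{R}^{*}_{\circ_{\hat{A}}}(r(\eta))\xi\bigr) = 0
\]
on the four pieces $(\xi, \eta) \in \{A^{*}, V\}^{2}$. The key case $\xi = v,\ \eta = w \in V$ has $r(v), r(w) \in A$, so LHS reduces to $T(v) \circ T(w) - T\bigl(l_{\circ}(T(v))w + r_{\circ}(T(w))v\bigr)$, recovering exactly Eq.~(\ref{eq:defi:O-operators}). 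The three remaining mixed cases, in which at least one of $\xi, \eta$ lies in $A^{*}$, should reduce to identities that hold automatically from $(l_{\circ}, r_{\circ}, V)$ being a representation of $(A, \circ)$, i.e., from Eqs.~(\ref{eq:defi:rep anti-pre-Lie algebra1})--(\ref{eq:defi:rep anti-pre-Lie algebra3}), together with the dual-representation facts in Proposition~\ref{pro:dual rep anti-pre-Lie algebra}.

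The main obstacle is purely bookkeeping: tracking the signs coming from $r = T - \tau(T)$ and the dualisations inherent in $(-\mathrm{ad}^{*}_{\hat{A}}, \mathcal{R}^{*}_{\circ_{\hat{A}}})$, and confirming that each mixed case collapses onto one of the representation axioms rather than producing new constraints. No new conceptual input is required beyond the dictionary between the semi-direct product structure of $\hat{A}$ and its dual representation; all the content is concentrated in the $(V,V)$ component.
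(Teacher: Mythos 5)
Your route is sound and it is genuinely different from the paper's. The paper proves this theorem by writing $T=\sum_{i}T(v_{i})\otimes v_{i}^{*}$ in a basis and computing the three tensors $r_{12}\circ r_{13}$, $r_{12}\circ r_{23}$, $[r_{13},r_{23}]$ explicitly inside $\hat{A}^{\otimes 3}$, then reading off that $\textbf{T}(r)=0$ holds exactly when Eq.~(\ref{eq:defi:O-operators}) does; you instead apply Theorem~\ref{thm:O-operator and T-equation} inside $\hat{A}$ to replace $\textbf{T}(r)=0$ by the operator identity (\ref{eq:APL-O}) on $\hat{A}^{*}=A^{*}\oplus V$, and then do casework on components. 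Your identifications $r(x^{*})=T^{*}(x^{*})$, $r(v)=-T(v)$ and the reduction of the $(V,V)$-case to $T(v)\circ T(w)-T\big(l_{\circ}(T(v))w+r_{\circ}(T(w))v\big)=0$ are correct (using $\mathrm{ad}^{*}_{\hat{A}}(x)w=-l_{\circ}(x)w$ and $\mathcal{R}^{*}_{\circ_{\hat{A}}}(x)w=r_{\circ}(x)w$ for $x\in A$, $w\in V$). What you gain is a basis-free argument that avoids triple-tensor bookkeeping; what the paper's computation gains is that it never needs the dual-space dictionary for $\hat{A}$. One correction to your expected structure of the casework: the mixed cases are \emph{not} automatic consequences of the representation axioms (\ref{eq:defi:rep anti-pre-Lie algebra1})--(\ref{eq:defi:rep anti-pre-Lie algebra3}). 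A direct check shows that the $(A^{*},V)$-case, paired against $V$, reproduces Eq.~(\ref{eq:defi:O-operators}) verbatim, and the $(V,A^{*})$-case is equivalent to the induced identity $[T(v),T(w)]=T\big((l_{\circ}-r_{\circ})(T(v))w-(l_{\circ}-r_{\circ})(T(w))v\big)$, i.e.\ the $\mathcal{O}$-operator condition for the sub-adjacent Lie algebra; only the $(A^{*},A^{*})$-case is trivially zero. This does not damage the equivalence, since those cases are consequences of Eq.~(\ref{eq:defi:O-operators}) (so the ``if'' direction closes) and the ``only if'' direction already follows from the $(V,V)$-case alone, but your write-up should state it this way rather than attributing the mixed cases to the representation axioms.
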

\begin{proof}
    Let $\lbrace v_{1},\cdots ,v_{n}\rbrace$ be a basis of $V$ and $\lbrace v^{*}_{1},\cdots ,v^{*}_{n}\rbrace$ be the dual basis. Then
    $$T=\sum_{i=1}^{n}T(v_{i})\otimes v^{*}_{i}\in T(V)\otimes V^{*}\subset \hat{A}\otimes \hat{A}.$$
    Note that
    $$l^{*}_{\circ}\big(T(v_{i})\big)v^{*}_{j}=\sum_{k=1}^{n}-\langle v^{*}_{j},l_{\circ}\big(T(v_{i})\big)v_{k}\rangle v^{*}_{k},\ \  r^{*}_{\circ}\big(T(v_{i})\big)v^{*}_{j}=\sum_{k=1}^{n}-\langle v^{*}_{j},r_{\circ}\big(T(v_{i})\big)v_{k}\rangle v^{*}_{k}.$$
   Then we have
    \begin{small}
    \begin{eqnarray*}
        r_{12}\circ r_{13}
        &=&\sum_{i,j=1}^{n}T(v_{i})\circ T(v_{j})\otimes v^{*}_{i}\otimes v^{*}_{j}-T(v_{i})\circ v^{*}_{j}\otimes v^{*}_{i}\otimes T(v_{j})-v^{*}_{i}\circ T(v_{j})\otimes T(v_{i})\otimes v^{*}_{j}\\
        &=&\sum_{i,j=1}^{n}T(v_{i})\circ T(v_{j})\otimes v^{*}_{i}\otimes v^{*}_{j}
        -(r^{*}_{\circ}-l^{*}_{\circ})\big(T(v_{i})\big)v^{*}_{j}\otimes v^{*}_{i}\otimes T(v_{j})
        -r^{*}_{\circ}\big(T(v_{j})\big)v^{*}_{i}\otimes T(v_{i})\otimes v^{*}_{j}\\
        &=&\sum_{i,j=1}^{n}T(v_{i})\circ T(v_{j})\otimes v^{*}_{i}\otimes v^{*}_{j}
        +v^{*}_{j}\otimes v^{*}_{i}\otimes T\Big((r_{\circ}-l_{\circ})\big(T(v_{i})\big)v_{j}\Big)
        +v^{*}_{i}\otimes T\Big(r_{\circ}\big(T(v_{j})\big)v_{i}\Big)\otimes v^{*}_{j}.
    \end{eqnarray*}
    \end{small}
    Similarly, we have
    \begin{small}
    \begin{eqnarray*}
        r_{12}\circ r_{23}&=&\sum_{i,j=1}^{n}-T\Big(r_{\circ}\big(T(v_{j})\big)v_{i}\Big)\otimes v^{*}_{i}\otimes v^{*}_{j}-v^{*}_{i}\otimes T(v_{i})\circ T(v_{j})\otimes v^{*}_{j}\\
        &\mbox{}&\hspace{4cm} +v^{*}_{i}\otimes v^{*}_{j}\otimes T\Big((r_{\circ}-l_{\circ})\big(T(v_{i})\big)v_{j}\Big),\\
        -[r_{13},r_{23}]&=&\sum_{i,j=1}^{n}v^{*}_{i}\otimes T\Big(l_{\circ}\big(T(v_{i})\big)v_{j}\Big)\otimes v^{*}_{j}-T\Big(l_{\circ}\big(T(v_{j})\big)v_{i}\Big)\otimes v^{*}_{j}\otimes v^{*}_{i}+v^{*}_{i}\otimes v^{*}_{j}\otimes[T(v_{i}),T(v_{j})].
    \end{eqnarray*}
\end{small}
    Hence the conclusion follows.
\end{proof}

\delete{ According to Theorems \ref{thm:O-operator and T-equation}
and \ref{thm:O-operator and T-equation:semi-direct product
version}, we have the following conclusion.

\begin{cor}\label{cor:O-operator and T-equation:equivalence}
    Let $(A,\circ)$ be an anti-pre-Lie algebra and $(l_{\circ},r_{\circ},V)$ be a representation of $(A,\circ)$. Set $\hat{A}=A\ltimes_{r^{*}_{\circ}-l^{*}_{\circ},r^{*}_{\circ}}V^{*}$.
    Then the following conditions are equivalent:
    \begin{enumerate}
        \item $T$ is an $\mathcal{O}$-operator of $(A,\circ)$ associated to $(l_{\circ},r_{\circ},V)$;
        \item $r=T-\tau(T)$ is a skew-symmetric solution of the APL-YBE in the anti-pre-Lie algebra $\hat{A}$;
        \item $r=T-\tau(T)$ is an $\mathcal{O}$-operator of $\hat{A}$ associated to
        $(-\mathrm{ad}^{*}_{\hat{A}},\mathcal{R}^{*}_{\circ_{\hat{A}}},\hat{A}^{*})$.
    \end{enumerate}
\end{cor}
}

\begin{defi}\label{defi:quasi anti-pre-Lie algebras}
    A \textbf{\papl algebra} or a {\bf pre-APL algebra} in short, is a triple $(A,\succ$,
    $\prec)$, where $A$ is a vector space and $\succ,\prec:A\otimes A\rightarrow A$ are bilinear operations, such that the following equations hold:
    \begin{small}
\begin{eqnarray}
    (y\succ x+y\prec x)\succ z-(x\succ y+x\prec y)\succ z&=&x\succ(y\succ z)-y\succ(x\succ z),\ \ \ \ \ \ \label{eq:defi:quasi anti-pre-Lie algebras1}\\
    z\prec(x\succ y+ x\prec y)&=&x\succ(z\prec y)+(x\succ z)\prec y-(z\prec x)\prec y,\ \ \ \ \ \ \label{eq:defi:quasi anti-pre-Lie algebras2}\\
    (y\succ x+y\prec x)\succ z-(x\succ y+x\prec y)\succ z
       &=&(y\succ z)\prec x-(x\succ z)\prec y-(z\prec y)\prec x+(z\prec x)\prec y,\ \ \ \ \ \ \label{eq:defi:quasi anti-pre-Lie algebras3}
\end{eqnarray}
\end{small}
    \delete{
    \begin{equation}\label{eq:defi:quasi anti-pre-Lie algebras1}
        (y\succ x+x\triangleleft y)\succ z-(x\succ y+y\triangleleft x)\succ z=x\succ(y\succ z)-y\succ(x\succ z),
    \end{equation}
    \begin{equation}\label{eq:defi:quasi anti-pre-Lie algebras2}
        (x\succ y+y\triangleleft x)\triangleleft z=x\succ(y\triangleleft z)+y\triangleleft(x\succ z)-y\triangleleft(x\triangleleft z),
    \end{equation}
    \begin{equation}\label{eq:defi:quasi anti-pre-Lie algebras3}
        (y\succ x+x\triangleleft y)\succ z-(x\succ y+y\triangleleft x)\succ z
        =x\triangleleft(y\succ z)-y\triangleleft(x\succ z)-x\triangleleft(y\triangleleft z)+y\triangleleft(x\triangleleft
        z),
    \end{equation}}
for all $x,y,z\in A$.
\end{defi}

\begin{pro}\label{defi:quasi anti-pre-Lie algebras and anti-pre-Lie algebras}
    Let $(A,\succ,\prec)$ be a pre-APL algebra. Define a bilinear operation $\circ:A\otimes A\rightarrow A$ by
    \begin{equation}\label{eq:defi:quasi anti-pre-Lie algebras and anti-pre-Lie algebras}
        x\circ y=x\succ y+ x\prec y,\;\;\forall x,y\in A.
    \end{equation}
    Then $(A,\circ)$ is an anti-pre-Lie algebra, called the \textbf{sub-adjacent anti-pre-Lie algebra} of $(A,\succ,\prec)$, and $(A,\succ,\prec)$ is
    called a \textbf{compatible pre-APL  algebra} of $(A,\circ)$. Moreover, $(\mathcal{L}_{\succ},\mathcal{R}_{\prec},A)$ is a representation of $(A,\circ)$,
    where $\mathcal{L}_{\succ},\mathcal{R}_{\prec}:A\rightarrow\mathrm{End}(A)$ are linear maps defined by
    $$\mathcal{L}_{\succ}(x)y=x\succ y,\; \mathcal{R}_{\prec}(x)y=y\prec x,\;\forall x,y\in A.$$
\end{pro}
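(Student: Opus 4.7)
The plan is to verify two things in sequence: (a) that $(A,\circ)$ satisfies the anti-pre-Lie axioms \eqref{eq:defi:anti-pre-Lie algebras1} and \eqref{eq:defi:anti-pre-Lie algebras2}, and (b) that $(\mathcal{L}_{\succ}, \mathcal{R}_{\prec}, A)$ satisfies the three representation equations \eqref{eq:defi:rep anti-pre-Lie algebra1}--\eqref{eq:defi:rep anti-pre-Lie algebra3}. The common strategy is to expand $\circ = \succ + \prec$ and appeal to the pre-APL axioms \eqref{eq:defi:quasi anti-pre-Lie algebras1}--\eqref{eq:defi:quasi anti-pre-Lie algebras3}. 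Part (b) is in fact immediate by inspection: under the identifications $l_{\circ}(x)z = x\succ z$ and $r_{\circ}(x)z = z\prec x$, evaluating \eqref{eq:defi:rep anti-pre-Lie algebra1}, \eqref{eq:defi:rep anti-pre-Lie algebra2}, \eqref{eq:defi:rep anti-pre-Lie algebra3} on an arbitrary $z\in A$ produces exactly the three pre-APL axioms in the same order.

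For (a), to verify \eqref{eq:defi:anti-pre-Lie algebras1} I expand $x\circ(y\circ z) - y\circ(x\circ z)$ into three blocks of type $\succ(\succ)$, $\succ(\prec)$, and $\prec(\circ)$. The $\succ(\succ)$ block collapses via \eqref{eq:defi:quasi anti-pre-Lie algebras1} to $(y\circ x - x\circ y)\succ z$. Applying \eqref{eq:defi:quasi anti-pre-Lie algebras2} to $x\prec(y\circ z)$ and $y\prec(x\circ z)$ rewrites the $\prec(\circ)$ block as $[y\succ(x\prec z) - x\succ(y\prec z)] + (y\circ x - x\circ y)\prec z$; the first piece exactly cancels the $\succ(\prec)$ block, and the second combines with the $\succ(\succ)$ contribution to give $[y,x]\circ z$. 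For \eqref{eq:defi:anti-pre-Lie algebras2}, I split each $[\cdot,\cdot]\circ\cdot$ into $\succ$ and $\prec$ parts, then apply \eqref{eq:defi:quasi anti-pre-Lie algebras3} to the $\succ$ parts in the form $[x,y]\succ z = (x\succ z)\prec y - (y\succ z)\prec x + (z\prec y)\prec x - (z\prec x)\prec y$. Summing over the cyclic permutations of $(x,y,z)$, the resulting twelve $(\ast)\prec(\ast)$ terms regroup by second-slot variable into $[x,z]\prec y + [z,y]\prec x + [y,x]\prec z$, which equals $-([x,y]\prec z + [y,z]\prec x + [z,x]\prec y)$ and therefore exactly cancels the direct contribution from the $\prec$ parts of the cyclic sum.

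The only place that requires care is the bookkeeping for \eqref{eq:defi:anti-pre-Lie algebras2}: six $(\succ)\prec$ and six $(\prec)\prec$ terms produced by cyclic application of \eqref{eq:defi:quasi anti-pre-Lie algebras3} must be reorganized so that pieces with the same $\prec$-second-variable pair up into $\succ$- and $\prec$-differences, revealing the commutator bracket $\circ - \circ$ in the first slot. Once this reorganization is carried out the cancellation with the direct $\prec$-part cyclic sum is exact, and everything else (the check of \eqref{eq:defi:anti-pre-Lie algebras1} and of the representation axioms) reduces to straightforward substitution.
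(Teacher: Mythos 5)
Your proposal is correct and follows essentially the same route as the paper: verify Eq.~(\ref{eq:defi:anti-pre-Lie algebras1}) by expanding $\circ=\succ+\prec$ and using Eqs.~(\ref{eq:defi:quasi anti-pre-Lie algebras1})--(\ref{eq:defi:quasi anti-pre-Lie algebras2}), verify Eq.~(\ref{eq:defi:anti-pre-Lie algebras2}) via Eq.~(\ref{eq:defi:quasi anti-pre-Lie algebras3}), and note that the pre-APL axioms are precisely Eqs.~(\ref{eq:defi:rep anti-pre-Lie algebra1})--(\ref{eq:defi:rep anti-pre-Lie algebra3}) evaluated on $\mathcal{L}_{\succ},\mathcal{R}_{\prec}$. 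Your cyclic regrouping for the second axiom correctly fills in the detail the paper compresses into ``similarly.''
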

\begin{proof}
    Let $x,y,z\in A$. Then by Eqs.~(\ref{eq:defi:quasi anti-pre-Lie algebras1}) and (\ref{eq:defi:quasi anti-pre-Lie algebras2}), we have
    \begin{eqnarray*}
    &&x\circ(y\circ z)-y\circ(x\circ z)+(x\circ y)\circ z-(y\circ x)\circ z\\
    &&=x\prec(y\succ z) +x\prec(  y\prec z) - y\succ(x\prec z)+ (  x\prec y)\prec z- (y\succ x)\prec z\\
    &&\ \ -y\prec (x\succ z) -y\prec(  x\prec z) + x\succ(  y\prec z)- (  y\prec x)\prec z+ (x\succ y)\prec z\\
    &&\ \ +x\succ(y\succ z)-y\succ(x\succ z)+(x\succ y)\succ z+( x\prec y)\succ z-(y\succ x)\succ z-( y\prec x)\succ z\\
    &&=0.
    \end{eqnarray*}
Hence  Eq.~(\ref{eq:defi:anti-pre-Lie algebras1}) holds.
Similarly, Eq.~(\ref{eq:defi:anti-pre-Lie algebras2}) holds by
Eq.~(\ref{eq:defi:quasi anti-pre-Lie algebras3}),
    and thus $(A,\circ)$ is an anti-pre-Lie algebra. Moreover, by Eqs.~(\ref{eq:defi:rep anti-pre-Lie
algebra1})-(\ref{eq:defi:rep anti-pre-Lie algebra3}),
$(\mathcal{L}_{\succ},\mathcal{R}_{\prec},A)$ is
a representation $(A,\circ)$.
\end{proof}

 Recall the definition of Zinbiel algebras.
\begin{defi}(\cite{Lod})
    A \textbf{Zinbiel algebra} is a pair $(A,\star)$, where $A$ is a vector space and $\star:A\otimes A\rightarrow A$ is a bilinear operation such that the following equation holds:
    \begin{equation}\label{eq:Zinbiel}
        x\star(y\star z)=(y\star x)\star z+(x\star y)\star z, \;\;\forall x,y,z\in A.
    \end{equation}
\end{defi}

Let $(A,\star)$ be a Zinbiel algebra. Define a new bilinear
operation $\cdot$ on $A$ by
\begin{equation}\label{eq:ZintoAss}
    x\cdot y=x\star y+y\star x,\;\;\forall x,y\in A.
\end{equation}
Then $(A,\cdot)$ is a commutative associative algebra. Moreover,
$(\mathcal{L}_{\star},A)$ is a representation of the commutative
associative algebra $(A,\cdot)$ (see Eq.~\eqref{eq:54}), where
$\mathcal{L}_{\star}:A\rightarrow\mathrm{End}(A)$ is given by
$\mathcal{L}_{\star}(x)y=x\star y$, for all $x,y\in A$.

Similar to the fact that a commutative associative algebra with a
derivation gives an anti-pre-Lie algebra in Example \ref{ex:AYBE},
there is the following construction of pre-APL algebras from
Zinbiel algebras with derivations, which can be regarded as the
``splitting viewpoint" of the former fact.

\begin{ex}\label{ex:Zinbiel derivation}
    Let $(A,\star)$ be a Zinbiel algebra with a derivation $P$. Define two bilinear operations $\succ,\prec:A\otimes A\rightarrow A$ respectively by
    \begin{equation}\label{eq:ex:Zinbiel derivation}
        x\succ y=P(x\star y)+P(x)\star y,\; x\prec y=P(y\star x)+y\star P(x),\;\;\forall x,y\in A.
    \end{equation}
Then $(A,\succ,\prec)$ is a pre-APL algebra.
Moreover, the sub-adjacent anti-pre-Lie algebra $(A,\circ)$ of
$(A,\succ,\prec)$ satisfies Eq.~(\ref{eq:diff
anti-pre-Lie}).
\end{ex}

\delete{ \textcolor{blue}{GL: The "pre pre-Lie algebras" are
L-dendriform algebras, but they correspond to pre-Lie algebras in
two different ways, called "horizontal" and "vertical" pre-Lie
algebras. I am not sure how it comes and how to do it for
anti-pre-Lie algebras.}

\textcolor{blue}{GL:An anti L-dendriform algebra $(A,\succ,\triangleleft)$ is characterized as a compatible algebra of a pre-Lie algebra $(A,\circ)$ by $x\circ y=x\succ y+x\triangleleft y$ such that $(-\mathcal{L}_{\succ},-\mathcal{L}_{\triangleleft},A)$ is a representation of $(A,\circ)$.
Explicitly, the identities of an anti L-dendriform algebra $(A,\succ,\triangleleft)$ are
\begin{equation}
    \begin{array}{ll}
    &(x\succ y)\succ z+(x\triangleleft y)\succ z+(x\succ y)\triangleleft z+(x\triangleleft y)\triangleleft z\\
    &-x\succ (y\succ z)-x\succ(y\triangleleft z)-x\triangleleft(y\succ z)-x\triangleleft(y\triangleleft z)\\
    &=(y\succ x)\succ z+(y\triangleleft x)\succ z+(y\succ x)\triangleleft z+(y\triangleleft x)\triangleleft z\\
    &-y\succ (x\succ z)-y\succ(x\triangleleft z)-y\triangleleft(x\succ z)-y\triangleleft(x\triangleleft z),\\
    \end{array}
\end{equation}
\begin{equation}
x\succ (y\succ z)+(x\succ y+x\triangleleft y)\succ z=y\succ    (x\succ z)+(y\succ x+y\triangleleft x)\succ z,
\end{equation}
\begin{equation}
x\succ(y\triangleleft z)-y\triangleleft(x\succ z)+(x\succ y)\triangleleft z+(x\triangleleft y)\triangleleft z+y\triangleleft(x\triangleleft z)=0.
\end{equation}
} }

\begin{pro}\label{thm:O-operator and quasi anti-pre-Lie algebras}
    Let $(A,\circ)$ be an anti-pre-Lie algebra and $(l_{\circ},r_{\circ},V)$ be a representation of $(A,\circ)$. Let $T:V\rightarrow A$ be an $\mathcal{O}$-operator of $(A,\circ)$ associated to
    $(l_{\circ},r_{\circ},V)$. Then there exists a  pre-APL  algebra structure $(V,\succ,\prec)$ on $V$, where
    \begin{equation}\label{eq:thm:O-operator and quasi anti-pre-Lie algebras1}
        u\succ v=l_{\circ}\big(T(u)\big)v,\;u\prec v=r_{\circ}\big(T(v)\big)u,\;\;\forall u,v\in V.
    \end{equation}
Conversely, let $(A,\succ,\prec)$ be a pre-APL
algebra and $(A,\circ)$ be the sub-adjacent anti-pre-Lie algebra.
Then the identity map $\mathrm{id}:A\rightarrow A$ is an
$\mathcal{O}$-operator of $(A,\circ)$ associated to
$(\mathcal{L}_{\succ},\mathcal{R}_{\prec},A)$.
\end{pro}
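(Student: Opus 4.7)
The plan is to verify directly the defining axioms on both sides of the equivalence, using the $\mathcal{O}$-operator condition to convert expressions of the form $T(u)\circ T(v)$ into $T$ applied to something, and then using the representation axioms (Eqs.~\eqref{eq:defi:rep anti-pre-Lie algebra1}--\eqref{eq:defi:rep anti-pre-Lie algebra3}) to match terms.

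For the forward direction, I start from $T$ being an $\mathcal{O}$-operator, so
$$T(u)\circ T(v)=T\bigl(l_{\circ}(T(u))v+r_{\circ}(T(v))u\bigr),\quad \forall u,v\in V,$$
and define $\succ,\prec$ as in Eq.~\eqref{eq:thm:O-operator and quasi anti-pre-Lie algebras1}. Then I check the three identities \eqref{eq:defi:quasi anti-pre-Lie algebras1}, \eqref{eq:defi:quasi anti-pre-Lie algebras2}, \eqref{eq:defi:quasi anti-pre-Lie algebras3} of pre-APL algebras one at a time. For \eqref{eq:defi:quasi anti-pre-Lie algebras1}, the left-hand side is $l_\circ\bigl(T(u\succ v+u\prec v)\bigr)w-l_\circ\bigl(T(v\succ u+v\prec u)\bigr)w = l_\circ(T(u)\circ T(v))w - l_\circ(T(v)\circ T(u))w$ after applying the $\mathcal{O}$-operator identity, and this equals the right-hand side $l_\circ(T(u))l_\circ(T(v))w - l_\circ(T(v))l_\circ(T(u))w$ by Eq.~\eqref{eq:defi:rep anti-pre-Lie algebra1} applied to $x=T(u), y=T(v)$. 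The identities \eqref{eq:defi:quasi anti-pre-Lie algebras2} and \eqref{eq:defi:quasi anti-pre-Lie algebras3} reduce analogously to Eqs.~\eqref{eq:defi:rep anti-pre-Lie algebra2} and \eqref{eq:defi:rep anti-pre-Lie algebra3}, respectively; in each case one rewrites $T(u)\circ T(v)$ using the $\mathcal{O}$-operator condition and then recognizes the resulting expression as the appropriate representation axiom evaluated at $x=T(u), y=T(v)$, applied to $w\in V$.

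For the converse, let $(A,\succ,\prec)$ be a pre-APL algebra with sub-adjacent anti-pre-Lie algebra $(A,\circ)$ given by $x\circ y=x\succ y+x\prec y$, and representation $(\mathcal{L}_{\succ},\mathcal{R}_{\prec},A)$ of $(A,\circ)$ from Proposition~\ref{defi:quasi anti-pre-Lie algebras and anti-pre-Lie algebras}. The $\mathcal{O}$-operator condition Eq.~\eqref{eq:defi:O-operators} for $T=\mathrm{id}$ reads
$$x\circ y=\mathcal{L}_{\succ}(x)y+\mathcal{R}_{\prec}(y)x=x\succ y+x\prec y,$$
which is exactly Eq.~\eqref{eq:defi:quasi anti-pre-Lie algebras and anti-pre-Lie algebras}, so $\mathrm{id}$ is indeed an $\mathcal{O}$-operator.

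The main obstacle is purely bookkeeping: the three pre-APL identities each contain six terms, and matching them carefully to the three representation axioms requires tracking signs and the order of $u,v,w$ precisely. There is no conceptual difficulty, since each pre-APL axiom is tailor-made to be the image of the corresponding representation axiom under the correspondence $u\succ v\leftrightarrow l_\circ(T(u))v$, $u\prec v\leftrightarrow r_\circ(T(v))u$, once one substitutes using the $\mathcal{O}$-operator identity. Thus the proof is essentially a direct, if somewhat lengthy, verification.
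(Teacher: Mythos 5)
Your direct verification is exactly what the paper intends (its own proof is simply ``It is straightforward''): the correspondence you set up, namely pre-APL identity $(i)$ matching representation axiom $(i)$ evaluated at $x=T(u)$, $y=T(v)$ after rewriting $T\big(l_{\circ}(T(u))v+r_{\circ}(T(v))u\big)=T(u)\circ T(v)$ via the $\mathcal{O}$-operator condition, is correct, as is your one-line check that $\mathrm{id}$ is an $\mathcal{O}$-operator in the converse. One small fix: substituting $x=u$, $y=v$, $z=w$ into Eq.~\eqref{eq:defi:quasi anti-pre-Lie algebras1} gives the left-hand side $l_{\circ}\big(T(v)\circ T(u)\big)w-l_{\circ}\big(T(u)\circ T(v)\big)w$, the opposite order to what you wrote, so your displayed equality as literally stated conflicts with Eq.~\eqref{eq:defi:rep anti-pre-Lie algebra1}; swapping the two terms (or relabelling $u\leftrightarrow v$ consistently on both sides) repairs this purely notational slip.
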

\begin{proof}
It is straightforward.
\end{proof}

\delete{

\begin{cor}\label{cor:O-operator and quasi anti-pre-Lie algebras}
    Let $(A,\circ)$ be an anti-pre-Lie algebra. Then there is a compatible \papl   algebra structure on $(A,\circ)$ if and only if there is an invertible $\mathcal{O}$-operator of $(A,\circ)$.
\end{cor}
\begin{proof}
    If $T:V\rightarrow A$ is an invertible $\mathcal{O}$-operator of $(A,\circ)$ associated to $(l_{\circ},r_{\circ},V)$, then by Theorem \ref{thm:O-operator and quasi anti-pre-Lie algebras}, there is a compatible \papl   algebra structure on $A$ given by
    $$x\succ y=T(l_{\circ}(x)T^{-1}(y)),\;x\triangleleft y=T(r_{\circ}(x)T^{-1}(y)),\;\;\forall x,y\in A.$$
    Conversely, let $(A,\succ,\triangleleft)$ be a \papl   algebra, and $(A,\circ)$ be the sub-adjacent anti-pre-Lie algebra. Then the identity map $\mathrm{id}:A\rightarrow A$ is an invertible $\mathcal{O}$-operator of $(A,\circ)$ associated to $(\mathcal{L}_{\succ},\mathcal{L}_{\triangleleft},A)$.
\end{proof}
}


\begin{pro}\label{pro:O-operator and T-equation}
    Let $(A,\succ,\prec)$ be a pre-APL   algebra and $(A,\circ)$ be the sub-adjacent anti-pre-Lie algebra of $(A,\succ,\prec)$.
    Let $\lbrace e_{1},\cdots ,e_{n}\rbrace$ be a basis of $A$ and $\lbrace e^{*}_{1},\cdots ,e^{*}_{n}\rbrace$ be the dual basis. Then
    \begin{equation}\label{eq:pro:O-operator and T-equation}
        r=\sum_{i=1}^{n}(e_{i}\otimes e^{*}_{i}-e^{*}_{i}\otimes e_{i})
    \end{equation}
    is a skew-symmetric solution of the APL-YBE in the anti-pre-Lie algebra $A\ltimes_{\mathcal{R}^{*}_{\prec}-\mathcal{L}^{*}_{\succ},\mathcal{R}^{*}_{\prec}}A^{*}$.
\end{pro}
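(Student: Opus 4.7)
The plan is to deduce this directly by combining the two machinery results already established: Proposition \ref{defi:quasi anti-pre-Lie algebras and anti-pre-Lie algebras} (the identity is an $\mathcal{O}$-operator for a pre-APL algebra) and Theorem \ref{thm:O-operator and T-equation:semi-direct product version} (which converts $\mathcal{O}$-operators into skew-symmetric solutions of the APL-YBE in a semi-direct product).

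First, by Proposition \ref{defi:quasi anti-pre-Lie algebras and anti-pre-Lie algebras}, the triple $(\mathcal{L}_\succ, \mathcal{R}_\prec, A)$ is a representation of $(A, \circ)$, and the identity map $T = \mathrm{id}: A \to A$ is an $\mathcal{O}$-operator of $(A, \circ)$ associated to this representation. Next I would write out the identification $\mathrm{Hom}(A, A) \cong A \otimes A^*$ explicitly with respect to the chosen basis: under this isomorphism, $T = \mathrm{id}$ corresponds to $\sum_{i=1}^{n} e_i \otimes e_i^*$, since $T(e_i) = e_i$ for each $i$.

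Now set $V = A$ with representation $(l_\circ, r_\circ) = (\mathcal{L}_\succ, \mathcal{R}_\prec)$, so that $V^* = A^*$ and $\hat{A} = A \ltimes_{r_\circ^* - l_\circ^*, r_\circ^*} V^* = A \ltimes_{\mathcal{R}_\prec^* - \mathcal{L}_\succ^*, \mathcal{R}_\prec^*} A^*$, exactly matching the anti-pre-Lie algebra in the statement. Applying Theorem \ref{thm:O-operator and T-equation:semi-direct product version} to $T = \mathrm{id}$, the element
\[
r = T - \tau(T) = \sum_{i=1}^{n}\bigl(e_i \otimes e_i^* - e_i^* \otimes e_i\bigr)
\]
is a skew-symmetric solution of the APL-YBE in $\hat{A}$.

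There is essentially no obstacle here, since both key inputs have already been proved: the main content is purely bookkeeping, namely verifying that the image of $\mathrm{id}$ under the canonical isomorphism $\mathrm{Hom}(A, A) \cong A \otimes A^* \subseteq \hat{A} \otimes \hat{A}$ is precisely $\sum_i e_i \otimes e_i^*$, and that the representation data $(\mathcal{L}_\succ, \mathcal{R}_\prec)$ used in Proposition \ref{defi:quasi anti-pre-Lie algebras and anti-pre-Lie algebras} is the same as the $(l_\circ, r_\circ)$ fed into Theorem \ref{thm:O-operator and T-equation:semi-direct product version} to construct $\hat{A}$.
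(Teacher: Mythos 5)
Your proof is correct and is essentially the paper's own argument: identify $\mathrm{id}=\sum_i e_i\otimes e_i^*$ as an $\mathcal{O}$-operator of $(A,\circ)$ associated to $(\mathcal{L}_\succ,\mathcal{R}_\prec,A)$ and apply Theorem~\ref{thm:O-operator and T-equation:semi-direct product version} with $V=A$, so that $r=T-\tau(T)$ solves the APL-YBE in $A\ltimes_{\mathcal{R}^{*}_{\prec}-\mathcal{L}^{*}_{\succ},\mathcal{R}^{*}_{\prec}}A^{*}$. The only quibble is a citation label: the $\mathcal{O}$-operator statement for $\mathrm{id}$ is Proposition~\ref{thm:O-operator and quasi anti-pre-Lie algebras} rather than Proposition~\ref{defi:quasi anti-pre-Lie algebras and anti-pre-Lie algebras} (which supplies the representation $(\mathcal{L}_\succ,\mathcal{R}_\prec,A)$), though with that representation in hand the $\mathcal{O}$-operator property of $\mathrm{id}$ is immediate from the definition of $\circ$, as you implicitly use.
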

\begin{proof}
By  Proposition~\ref{thm:O-operator and quasi anti-pre-Lie
algebras}, the identity map $\mathrm{id}$ is an
$\mathcal{O}$-operator of $(A,\circ)$ associated to
$(\mathcal{L}_{\succ},\mathcal{R}_{\prec}$, $A)$.
Note that  $\mathrm{id}=\sum\limits_{i=1}^{n}e_{i}\otimes
e^{*}_{i}$. 
Thus by  Theorem \ref{thm:O-operator and T-equation:semi-direct
product version}, the conclusion follows.
    \end{proof}

\section{Anti-pre-Lie Poisson bialgebras}\label{S4}
We give the notion of representations of transposed
 Poisson algebras. Thus anti-pre-Lie Poisson algebras are characterized
in terms of representations of the sub-adjacent transposed Poisson
algebras on the dual spaces of themselves. We also introduce the
notion of representations of anti-pre-Lie Poisson algebras as well
as the notions of matched pairs of both transposed Poisson and
anti-pre-Lie Poisson algebras and study the relationships between
them. We introduce the notion of Manin triples of transposed
Poisson algebras with respect to the invariant bilinear forms on
the commutative associative algebras and the commutative
2-cocycles on the Lie algebras, which is a combination of a double
construction of commutative Frobenius algebras and a Manin triple
of Lie algebras with respect to the commutative 2-cocycle,
characterized by certain matched pairs of anti-pre-Lie Poisson
algebras and transposed Poisson algebras. Consequently, we
introduce the notion of anti-pre-Lie Poisson bialgebras as their
equivalent structures. Finally, we study the coboundary
anti-pre-Lie Poisson bialgebras, which lead to the introduction of
the anti-pre-Lie Poisson Yang-Baxter equation (APLP-YBE). In
particular, a skew-symmetric solution of the APLP-YBE in an
anti-pre-Lie Poisson algebra gives a coboundary anti-pre-Lie
Poisson bialgebra. We also introduce the notions of
$\mathcal{O}$-operators of anti-pre-Lie Poisson algebras and
pre-(anti-pre-Lie Poisson) algebras to construct skew-symmetric
solutions of the APLP-YBE in anti-pre-Lie Poisson algebras.

\subsection{Representations of transposed Poisson algebras and anti-pre-Lie Poisson
algebras}\


Recall that a \textbf{representation} of a commutative associative
algebra $(A,\cdot)$ is a pair $(\mu,V)$, where $V$ is a vector
space and $\mu:A\rightarrow\mathrm{End}(V)$ is a linear map
satisfying
\begin{equation}\label{eq:54}
    \mu(x\cdot y)=\mu(x)\mu(y),\;\;\forall x,y\in A.
\end{equation}
For a commutative associative algebra $(A,\cdot)$, let
$\mathcal{L}_{\cdot}:A\rightarrow\mathrm{End}(A)$ be a linear map
given by $\mathcal{L}_{\cdot}(x)y=x\cdot y$, for all $x,y\in A$.
Then $(\mathcal{L}_{\cdot},A)$ is a representation of $(A,\cdot)$,
called the \textbf{adjoint representation} of $(A,\cdot)$.

In fact, $(\mu,V)$ is a representation of a commutative
associative algebra $(A,\cdot)$  if and only if the direct sum
$A\oplus V$ of vector spaces is a ({\bf
    semi-direct product}) commutative associative algebra  by defining the multiplication $\cdot_{A\oplus V}$ (often still denoted by $\cdot$) on $A\oplus
V$ by
\begin{equation}\label{eq:SDASSO}
    (x+u)\cdot_{A\oplus V}(y+v)=x\cdot y+\mu(x)v+\mu(y)u,\;\;\forall x,y\in A, u,v\in V.
\end{equation}
We denote it by $A\ltimes_{\mu}V$.

If $(\mu,V)$ is a representation of a commutative associative
algebra $(A,\cdot)$, then $(-\mu^{*},V^{*})$ is also a
representation of $(A,\cdot)$. In particular,
$(-\mathcal{L}^{*}_{\cdot},A^{*})$ is a representation of
$(A,\cdot)$.

Now we introduce the notion of  representations of  transposed
Poisson algebras.
\begin{defi}
 A \textbf{representation} of a transposed Poisson algebra $(A,\cdot,[-,-])$ is a triple $(\mu,\rho,V)$, such that the pair $(\mu,V)$ is a representation of the commutative associative algebra $(A,\cdot)$, the pair $(\rho,V)$ is a
 representation of the Lie algebra $(A,[-,-])$, and the following conditions hold:
\begin{eqnarray}
    &&2\mu(x)\rho(y)=\rho(x\cdot y)+\rho(y)\mu(x),\label{eq:defi:TPA REP2}\\
    &&2\mu([x,y])=\rho(x)\mu(y)-\rho(y)\mu(x),\label{eq:defi:TPA REP1}
\end{eqnarray}
for all $x,y\in A$.
\end{defi}

\begin{ex}
    Let $(A,\cdot,[-,-])$ be a transposed Poisson algebra.  Then $(\mathcal{L}_{\cdot},\mathrm{ad},A)$ is a representation of $(A,\cdot,[-,-])$, called the \textbf{adjoint representation} of $(A,\cdot,[-,-])$.
\end{ex}

\begin{pro}\label{pro:repandsemidirectproduct2}
    Let $(A,\cdot,[-,-])$ be a transposed Poisson algebra and $V$ be a vector space. Let $\mu,\rho:A\rightarrow \mathrm{End}(V)$ be linear maps. Then $(\mu,\rho,V)$ is a representation of $(A,\cdot,[-,-])$ if and only if the direct sum $A\oplus V$ of vector spaces is a (\textbf{semi-direct product}) transposed Poisson algebra by defining the
    bilinear operations on $A\oplus V$ by Eqs.~(\ref{eq:SDASSO}) and (\ref{eq:SDLie}) respectively.
    We denote it by $A\ltimes_{\mu,\rho}V$.
\end{pro}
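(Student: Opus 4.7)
The plan is to reduce the proposition to the earlier semi-direct product constructions for commutative associative algebras and Lie algebras, and then to check only the transposed Poisson compatibility Eq.~(\ref{eq:defi:transposed Poisson algebra}) by a small case analysis.

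First, I would observe that the commutative associative part and the Lie part are already essentially handled. Indeed, $(\mu,V)$ being a representation of $(A,\cdot)$ is known to be equivalent to $A\oplus V$ with the product in Eq.~(\ref{eq:SDASSO}) being a commutative associative algebra, and $(\rho,V)$ being a representation of $(A,[-,-])$ is equivalent to $A\oplus V$ with the bracket in Eq.~(\ref{eq:SDLie}) being a Lie algebra. Hence the issue is solely whether Eq.~(\ref{eq:defi:transposed Poisson algebra}) holds on $A\oplus V$ for the combined structures, assuming it holds on $A$.

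Next, I would argue by trilinearity: it suffices to verify Eq.~(\ref{eq:defi:transposed Poisson algebra}) on triples $(x,y,z)$ where each entry lies in either $A$ or $V$. Grouping by the number of entries drawn from $V$, the all-$A$ case is exactly the transposed Poisson identity on $A$; and the cases with two or three entries from $V$ are automatic because $V\cdot_{A\oplus V}V=0$ and $[V,V]_{A\oplus V}=0$ by the defining formulas, so every term on both sides vanishes. This leaves only the three sub-cases with exactly one entry from $V$, and the skew-symmetry of $[-,-]$ in $x,y$ makes the sub-cases ``$x\in V$'' and ``$y\in V$'' equivalent.

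Then I would carry out the two remaining computations. Plugging $z=u\in V$, $x,y\in A$ into Eq.~(\ref{eq:defi:transposed Poisson algebra}) using Eqs.~(\ref{eq:SDASSO}) and (\ref{eq:SDLie}) rewrites the identity as $2\mu([x,y])u=\rho(x)\mu(y)u-\rho(y)\mu(x)u$, which is precisely Eq.~(\ref{eq:defi:TPA REP1}). Plugging $y=u\in V$, $x,z\in A$ similarly rewrites it as $2\mu(z)\rho(x)u=\rho(z\cdot x)u+\rho(x)\mu(z)u$, which, after relabeling, is Eq.~(\ref{eq:defi:TPA REP2}). Thus the transposed Poisson identity on $A\oplus V$ for all mixed triples is equivalent to the conjunction of Eqs.~(\ref{eq:defi:TPA REP1}) and (\ref{eq:defi:TPA REP2}), giving both implications at once.

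There is no genuine obstacle here; the proof is purely a case check combined with the already-established semi-direct product statements. The only point requiring modest care is tracking signs coming from $\rho$ in Eq.~(\ref{eq:SDLie}) to ensure the two mixed cases produce exactly the two compatibility relations in the definition of a representation of a transposed Poisson algebra.
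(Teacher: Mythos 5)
Your proof is correct: the case analysis is exhaustive, the two-or-more-$V$-entry cases do vanish (since $A\cdot V,[A,V]\subseteq V$ and $V\cdot V=[V,V]=0$), and the two mixed computations reproduce exactly Eqs.~(\ref{eq:defi:TPA REP1}) and (\ref{eq:defi:TPA REP2}), with the $x\in V$ subcase following from skew-symmetry as you say; combined with the standard semi-direct product equivalences for the commutative associative and Lie parts this gives both implications. The paper takes a different route in presentation: it deduces Proposition~\ref{pro:repandsemidirectproduct2} as the special case $B=V$ with zero multiplications of the general matched pair result, Theorem~\ref{thm:mp TPA} (whose own verification is left as ``straightforward''), so your argument is essentially the omitted computation restricted to the semi-direct product situation. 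What the paper's approach buys is economy and uniformity -- one matched pair theorem simultaneously yields the semi-direct product proposition and the converse statement about transposed Poisson algebras decomposing into two subalgebras -- whereas your direct verification is more elementary and self-contained, making explicit why precisely the two compatibility conditions (\ref{eq:defi:TPA REP1})--(\ref{eq:defi:TPA REP2}) appear in the definition of a representation. Either route is acceptable; yours fills in details the paper chose not to display.
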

\begin{proof}
    It is a special case of the matched pair of transposed Poisson algebras in Theorem \ref{thm:mp TPA} when $B=V$ is equipped with the zero multiplications.
\end{proof}

\begin{pro}\label{pro:dual rep TPA}
Let $(A,\cdot,[-,-])$ be a transposed Poisson algebra and
$(\mu,\rho,V)$ be a representation of $(A,\cdot,[-,-])$. Then
$(-\mu^{*},\rho^{*},V^{*})$ is a representation of
$(A,\cdot,[-,-])$ if and only if
\begin{equation}\label{eq:d0}
    \mu([x,y])=0,\ \rho(x\cdot y)=\mu(x)\rho(y),\;\;\forall x,y\in A.
\end{equation}
In particular, $(-\mathcal{L}^{*}_{\cdot},\mathrm{ad}^{*},A^{*})$
is a representation of $(A,\cdot,[-,-])$ if and only if
Eq.~(\ref{eq:coherent TPA}) holds, that is, the following equation
holds:
\begin{equation}\label{eq:coherent TPA1}
        [x,y]\cdot z=[x\cdot y,z]=0,\;\;\forall x,y,z\in A.
    \end{equation}
\end{pro}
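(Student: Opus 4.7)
The plan is to unfold the definition of ``$(-\mu^*,\rho^*,V^*)$ is a representation of $(A,\cdot,[-,-])$'' into explicit operator equations on $V$ via the pairing $\langle\cdot,\cdot\rangle$, and then compare them with the given axioms for $(\mu,\rho,V)$. The associative-algebra part $(-\mu^*,V^*)$ and the Lie-algebra part $(\rho^*,V^*)$ are automatic from the known general facts recalled just before the proposition, so only the two mixed compatibility conditions
\begin{equation*}
2(-\mu^{*}(x))\rho^{*}(y)=\rho^{*}(x\cdot y)+\rho^{*}(y)(-\mu^{*}(x)),\qquad
2(-\mu^{*}([x,y]))=\rho^{*}(x)(-\mu^{*}(y))-\rho^{*}(y)(-\mu^{*}(x))
\end{equation*}
need to be analysed.

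The first step is to evaluate each of these identities against an arbitrary $v\in V$ using $\langle\rho^{*}(x)u^{*},v\rangle=-\langle u^{*},\rho(x)v\rangle$ (and the analogous formula for $\mu^{*}$), which translates the two conditions, respectively, into the operator equations
\begin{equation*}
\rho(x\cdot y)+\mu(x)\rho(y)=2\rho(y)\mu(x),\qquad
-2\mu([x,y])=\mu(y)\rho(x)-\mu(x)\rho(y),\ \forall x,y\in A.
\end{equation*}
I would then plug the TPA-representation axiom~(\ref{eq:defi:TPA REP2}) into the first: substituting $\rho(x\cdot y)=2\mu(x)\rho(y)-\rho(y)\mu(x)$ collapses it to $\mu(x)\rho(y)=\rho(y)\mu(x)$, i.e.\ $\mu$ and $\rho$ commute, and then (\ref{eq:defi:TPA REP2}) further forces $\rho(x\cdot y)=\mu(x)\rho(y)$. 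Conversely, $\rho(x\cdot y)=\mu(x)\rho(y)$ combined with (\ref{eq:defi:TPA REP2}) yields commutativity of $\mu$ and $\rho$, which in turn makes the first dualized equation automatic. Commutativity of $\cdot$ then gives $\mu(y)\rho(x)=\rho(y\cdot x)=\rho(x\cdot y)=\mu(x)\rho(y)$, so the second dualized equation reduces to $\mu([x,y])=0$. This establishes the first ``iff'' cleanly; I do not foresee any serious obstacle here beyond careful bookkeeping with the sign convention for the dual action.

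For the ``in particular'' statement I would simply specialize to $\mu=\mathcal{L}_{\cdot}$ and $\rho=\mathrm{ad}$. The two conditions then read $[x,y]\cdot z=0$ and $[x\cdot y,z]=x\cdot[y,z]$ for all $x,y,z\in A$. The first gives $[y,z]\cdot x=0$, whence $x\cdot[y,z]=[y,z]\cdot x=0$ by commutativity of $\cdot$, and therefore $[x\cdot y,z]=0$ as well; this is exactly Eq.~(\ref{eq:coherent TPA1}). The converse direction is immediate: if $[x,y]\cdot z=0=[x\cdot y,z]$ for all $x,y,z$, then $\mu([x,y])=0$, and $\mu(x)\rho(y)z=x\cdot[y,z]=[y,z]\cdot x=0=\rho(x\cdot y)z$, so both conditions hold. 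The whole argument is essentially a careful dualization plus a short consequence drawn from the axiom~(\ref{eq:defi:TPA REP2}); the only mildly delicate point is noticing that the first of the two abstract conditions already encodes the commutation $\mu(x)\rho(y)=\rho(y)\mu(x)$, after which everything reduces to bookkeeping.
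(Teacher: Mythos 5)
Your proposal is correct and follows essentially the paper's own route: dualize the two mixed compatibility conditions via the sign convention for $\mu^*,\rho^*$ to get the operator identities $\rho(x\cdot y)+\mu(x)\rho(y)=2\rho(y)\mu(x)$ and $\mu(y)\rho(x)-\mu(x)\rho(y)=-2\mu([x,y])$, and then compare them with Eqs.~(\ref{eq:defi:TPA REP2})--(\ref{eq:defi:TPA REP1}); your extraction of the commutation $\mu(x)\rho(y)=\rho(y)\mu(x)$ and the specialization $\mu=\mathcal{L}_{\cdot}$, $\rho=\mathrm{ad}$ just make explicit the steps the paper leaves as "immediate."
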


\begin{proof}
    Let $x,y\in A, u^{*}\in V^{*}, v\in V$. Then we have
        \begin{equation}\label{eq:d1}
        \langle \big(\rho^{*}(x\cdot y)-\rho^{*}(y)\mu^{*}(x)+2\mu^{*}(x)\rho^{*}(y)\big)u^{*}, v\rangle=\langle u^{*}, \big(-\rho(x\cdot y)-\mu(x)\rho(y)+2\rho(y)\mu(x)\big)v\rangle,
        \end{equation}
    \begin{equation}\label{eq:d2}
            \langle \big(-\rho^{*}(x)\mu^{*}(y)+\rho^{*}(y)\mu^{*}(x)+2\mu^{*}([x,y])\big)u^{*},v\rangle=\langle u^{*},\big(-\mu(y)\rho(x)+\mu(x)\rho(y)-2\mu([x,y])\big)v\rangle.
    \end{equation}
Thus $(-\mu^{*},\rho^{*},V^{*})$ is a representation if and only
if the following equations hold:
\begin{equation}\label{eq:d3}
\rho(x\cdot y)+\mu(x)\rho(y)-2\rho(y)\mu(x)=0,
\end{equation}
\begin{equation}\label{eq:d4}
\mu(y)\rho(x)-\mu(x)\rho(y)+2\mu([x,y])=0.
\end{equation}
By Eqs.~(\ref{eq:defi:TPA REP2}) and (\ref{eq:defi:TPA REP1}),
Eqs.~(\ref{eq:d3}) and (\ref{eq:d4}) hold if and only if
Eq.~(\ref{eq:d0}) holds. The particular case follows immediately.
\end{proof}

\begin{rmk} In \cite{Bai2020},
a transposed Poisson algebra $(A,\cdot,[-,-])$ is also a Poisson
algebra if and only if Eq.~(\ref{eq:coherent TPA}) holds. On the
other hand, by \cite{NB}, if $(\mu,\rho,V)$ is a representation of
a Poisson algebra $(A,\cdot,[-,-])$, then
$(-\mu^{*},\rho^{*},V^{*})$ is automatically a representation of
$(A,\cdot,[-,-])$. Hence here there is an obvious difference
between Poisson algebras and transposed Poisson algebras, that is,
for the latter, there is not a ``natural" dual representation for
any representation of a transposed Poisson algebra.
\end{rmk}

Next we interpret the relationship between transposed Poisson
algebras and anti-pre-Lie Poisson algebras in terms of
representations of transposed Poisson algebras.

\begin{pro}{\rm (\cite{LB2022})}
Let $(A,\cdot,\circ)$ be an anti-pre-Lie Poisson algebra and
$(A,[-,-])$ be the sub-adjacent Lie algebra of $(A,\circ)$. Then
$(A,\cdot,[-,-])$ is a transposed Poisson algebra.
\end{pro}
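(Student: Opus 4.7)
My plan is to verify the transposed Poisson identity \eqref{eq:defi:transposed Poisson algebra} directly by unfolding the bracket $[x,y]=x\circ y-y\circ x$ on both sides and reducing everything to the two defining identities \eqref{eq:defi:anti-pre-Lie Poisson1} and \eqref{eq:defi:anti-pre-Lie Poisson2} of an anti-pre-Lie Poisson algebra together with the commutativity of $\cdot$. The computation is short, so the whole argument should be a few lines once the correct substitutions are organised.

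First, I would rewrite the right-hand side
\[
[z\cdot x,y]+[x,z\cdot y]=(z\cdot x)\circ y-y\circ(z\cdot x)+x\circ(z\cdot y)-(z\cdot y)\circ x
\]
and apply Eq.~\eqref{eq:defi:anti-pre-Lie Poisson2} (together with commutativity of $\cdot$) to the two ``outer'' terms $(z\cdot x)\circ y$ and $(z\cdot y)\circ x$ in order to move $z$ outside and write them as
\[
(z\cdot x)\circ y=2x\circ(y\cdot z)-z\cdot(x\circ y),\qquad (z\cdot y)\circ x=2y\circ(x\cdot z)-z\cdot(y\circ x).
\]
Subtracting and using commutativity of $\cdot$, the difference $(z\cdot x)\circ y-(z\cdot y)\circ x$ becomes $2x\circ(z\cdot y)-2y\circ(z\cdot x)-z\cdot[x,y]$, so the whole right-hand side collapses to
\[
3\bigl(x\circ(z\cdot y)-y\circ(z\cdot x)\bigr)-z\cdot[x,y].
\]

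The remaining task, and what I view as the crux, is to prove the identity
\[
x\circ(z\cdot y)-y\circ(z\cdot x)=z\cdot[x,y].
\]
For this I would apply Eq.~\eqref{eq:defi:anti-pre-Lie Poisson2} twice more, with the cyclic permutations $(x,y,z)\mapsto(x,z,y)$ and $(x,y,z)\mapsto(y,z,x)$, which yield
\[
2x\circ(z\cdot y)=(y\cdot x)\circ z+y\cdot(x\circ z),\qquad 2y\circ(z\cdot x)=(x\cdot y)\circ z+x\cdot(y\circ z).
\]
Subtracting kills the $(y\cdot x)\circ z=(x\cdot y)\circ z$ terms and leaves $2x\circ(z\cdot y)-2y\circ(z\cdot x)=y\cdot(x\circ z)-x\cdot(y\circ z)$, at which point Eq.~\eqref{eq:defi:anti-pre-Lie Poisson1} converts the right-hand side into $2(x\circ y)\cdot z-2(y\circ x)\cdot z=2z\cdot[x,y]$.

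Combining this with the earlier reduction gives $[z\cdot x,y]+[x,z\cdot y]=3z\cdot[x,y]-z\cdot[x,y]=2z\cdot[x,y]$, which is exactly Eq.~\eqref{eq:defi:transposed Poisson algebra}. Since $(A,\cdot)$ is commutative associative by hypothesis and $(A,[-,-])$ is the sub-adjacent Lie algebra of the anti-pre-Lie algebra $(A,\circ)$ (hence a Lie algebra), the triple $(A,\cdot,[-,-])$ is a transposed Poisson algebra. The only real subtlety is bookkeeping: making sure the cyclic permutations of \eqref{eq:defi:anti-pre-Lie Poisson2} are the right ones so that the $\circ z$ terms cancel, which is guided by the shape of \eqref{eq:defi:anti-pre-Lie Poisson1}.
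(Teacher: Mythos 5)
Your verification is correct: the two applications of Eq.~(\ref{eq:defi:anti-pre-Lie Poisson2}) to $(z\cdot x)\circ y$ and $(z\cdot y)\circ x$, the reduction of the right-hand side to $3\bigl(x\circ(z\cdot y)-y\circ(z\cdot x)\bigr)-z\cdot[x,y]$, and the key identity $x\circ(z\cdot y)-y\circ(z\cdot x)=z\cdot[x,y]$ obtained from two further substitutions in Eq.~(\ref{eq:defi:anti-pre-Lie Poisson2}) combined with Eq.~(\ref{eq:defi:anti-pre-Lie Poisson1}) all check out. The paper itself gives no proof (the proposition is quoted from \cite{LB2022}), and your argument is the standard direct verification one would expect there, so there is nothing substantive to contrast.
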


Hence we give the following notion.

\begin{defi} Let $(A,\cdot,\circ)$ be an anti-pre-Lie Poisson algebra and
$(A,[-,-])$ be the sub-adjacent Lie algebra of $(A,\circ)$. We
call $(A,\cdot,[-,-])$ the \textbf{sub-adjacent transposed Poisson
algebra} of $(A,\cdot,\circ)$, and $(A,\cdot,\circ)$  a
\textbf{compatible anti-pre-Lie Poisson algebra} of
$(A,\cdot,[-,-])$.
\end{defi}

On the other hand, anti-pre-Lie Poisson algebras are characterized
in terms of representations of the sub-adjacent transposed Poisson
algebras on the dual spaces of themselves.

\begin{pro}\label{pro:anti-pre-Lie Poisson}
Let $(A,\cdot,[-,-])$ be a transposed Poisson algebra. Suppose
that $(A,[-,-])$ is the sub-adjacent Lie algebra of an
anti-pre-Lie algebra $(A,\circ)$. Then
$(-\mathcal{L}^{*}_{\cdot},-\mathcal{L}^{*}_{\circ},A^{*})$ is a
representation of $(A,\cdot,[-,-])$ if and only if
$(A,\cdot,\circ)$ is an anti-pre-Lie Poisson algebra.
\end{pro}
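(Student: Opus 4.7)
The plan is to reduce the equivalence to the compatibility conditions between $\cdot$ and $\circ$, since the two ``component'' representation properties come for free: $(-\mathcal{L}^{*}_{\cdot},A^{*})$ is always a representation of $(A,\cdot)$, and by Proposition~\ref{pro:repandsemidirectproduct}(\ref{rep2})(a), $(-\mathcal{L}^{*}_{\circ},A^{*})$ is always a representation of the sub-adjacent Lie algebra $\big(\mathfrak{g}(A),[-,-]\big)$. Therefore $(-\mathcal{L}^{*}_{\cdot},-\mathcal{L}^{*}_{\circ},A^{*})$ is a representation of the transposed Poisson algebra $(A,\cdot,[-,-])$ if and only if the compatibility equations~(\ref{eq:defi:TPA REP2}) and (\ref{eq:defi:TPA REP1}) hold for $\mu=-\mathcal{L}^{*}_{\cdot}$ and $\rho=-\mathcal{L}^{*}_{\circ}$.

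Next, I would translate each of these two equations into a relation on $A$ by pairing against an arbitrary $a^{*}\in A^{*}$ and testing against an arbitrary $z\in A$. Using the standard identities $\langle\mathcal{L}^{*}_{\cdot}(x)a^{*},z\rangle=-\langle a^{*},x\cdot z\rangle$ and $\langle\mathcal{L}^{*}_{\circ}(x)a^{*},z\rangle=-\langle a^{*},x\circ z\rangle$, a short bookkeeping computation shows that Eq.~(\ref{eq:defi:TPA REP2}) is equivalent to
\begin{equation*}
2\,y\circ(x\cdot z)=(x\cdot y)\circ z+x\cdot(y\circ z),\;\;\forall x,y,z\in A,
\end{equation*}
and that Eq.~(\ref{eq:defi:TPA REP1}) is equivalent to
\begin{equation*}
2\,[x,y]\cdot z=y\cdot(x\circ z)-x\cdot(y\circ z),\;\;\forall x,y,z\in A.
\end{equation*}

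Finally, I would match these with Definition~\ref{defi:anti-pre-Lie Poisson}. Using commutativity of $\cdot$ and relabeling $(x,y,z)\mapsto(z,x,y)$, the first equation above becomes precisely Eq.~(\ref{eq:defi:anti-pre-Lie Poisson2}). Substituting $[x,y]=x\circ y-y\circ x$ into the second equation, it becomes precisely Eq.~(\ref{eq:defi:anti-pre-Lie Poisson1}). Since $(A,\cdot)$ is already commutative associative and $(A,\circ)$ is already anti-pre-Lie by hypothesis, these two identities are exactly the defining conditions for $(A,\cdot,\circ)$ to be an anti-pre-Lie Poisson algebra, completing the equivalence.

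The proof is essentially a dualization computation, so no serious obstacle arises; the only care needed is in tracking signs (the two minus signs on $\mathcal{L}^{*}_{\cdot}$ and $\mathcal{L}^{*}_{\circ}$ combine to give a plus in the compositions $\mu\rho$ and $\rho\mu$) and in the relabeling needed to put the resulting identity into the exact form of Eq.~(\ref{eq:defi:anti-pre-Lie Poisson2}).
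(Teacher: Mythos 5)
Your proposal is correct and follows essentially the same route as the paper's proof: the two component representation properties are automatic, and the equivalence reduces to dualizing the compatibility conditions (\ref{eq:defi:TPA REP2}) and (\ref{eq:defi:TPA REP1}) for $\mu=-\mathcal{L}^{*}_{\cdot}$, $\rho=-\mathcal{L}^{*}_{\circ}$, which after pairing against $a^{*}$ and evaluating on $z$ become exactly Eqs.~(\ref{eq:defi:anti-pre-Lie Poisson2}) and (\ref{eq:defi:anti-pre-Lie Poisson1}). The only nitpick is the citation: the fact that $(-\mathcal{L}^{*}_{\circ},A^{*})$ is a representation of the sub-adjacent Lie algebra is Proposition~\ref{pro:dual rep anti-pre-Lie algebra}~(\ref{it:a}), whereas Proposition~\ref{pro:repandsemidirectproduct}~(\ref{rep2})~(a) gives $(-\mathcal{L}_{\circ},A)$ and needs the standard dualization of Lie algebra representations on top.
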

\begin{proof}
    It is obvious that $(-\mathcal{L}^{*}_{\cdot}, A^{*})$ is a representation of $(A,\cdot)$, and $(-\mathcal{L}^{*}_{\circ}, A^{*})$ is a representation of $(A,\circ)$. Moreover, for all $x,y,z\in A, a^{*}\in A^{*}$, we have
    \begin{equation*}
        \langle \big(2\mathcal{L}^{*}_{\cdot}(x)\mathcal{L}^{*}_{\circ}(y)+\mathcal{L}^{*}_{\circ}(x\cdot y)-\mathcal{L}^{*}_{\circ}(y)\mathcal{L}^{*}_{\cdot}(x)\big)a^{*},z\rangle=\langle a^{*}, 2y\circ(x\cdot z)-(x\cdot y)\circ z-x\cdot(y\circ z)\rangle,
    \end{equation*}
    \begin{equation*}
        \langle \big(2\mathcal{L}^{*}_{\cdot}([x,y])-\mathcal{L}^{*}_{\circ}(x)\mathcal{L}^{*}_{\cdot}(y)+\mathcal{L}^{*}_{\circ}(y)\mathcal{L}^{*}_{\cdot}(x)\big)a^{*},z\rangle=\langle a^{*}, -2[x,y]\cdot z-y\cdot (x\circ z)+x\cdot(y\circ z)\rangle.
    \end{equation*}
Hence the conclusion follows.
\end{proof}

\begin{cor}
Let $(A,\cdot,\circ)$ be an anti-pre-Lie Poisson algebra and
$(A,\cdot,[-,-])$ be the sub-adjacent transposed Poisson algebra.
Then $(\mathcal{L}_{\cdot},-\mathcal{L}_{\circ},A)$ is a
representation of $(A,\cdot,[-,-])$ if and only if the following
equation holds:
\begin{equation}\label{eq:cor}
[x,y]\cdot z=0, (x\cdot y)\circ z=y\circ(x\cdot z),\;\;\forall x,y,z\in A.
\end{equation}
\end{cor}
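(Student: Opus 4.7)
I would first note that $(\mathcal{L}_{\cdot}, A)$ is the adjoint representation of the commutative associative algebra $(A, \cdot)$, and that $(-\mathcal{L}_{\circ}, A)$ is a representation of the sub-adjacent Lie algebra $(A, [-,-])$ by Proposition~\ref{pro:repandsemidirectproduct}(\ref{rep2})(a). Therefore $(\mathcal{L}_{\cdot}, -\mathcal{L}_{\circ}, A)$ is a representation of the transposed Poisson algebra $(A, \cdot, [-,-])$ if and only if the two cross-compatibility relations \eqref{eq:defi:TPA REP2} and \eqref{eq:defi:TPA REP1} hold for $\mu = \mathcal{L}_{\cdot}$ and $\rho = -\mathcal{L}_{\circ}$. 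Evaluating both sides at an arbitrary $z \in A$, these become
\begin{align*}
\text{(A):} &\ 2 x \cdot (y \circ z) = (x \cdot y) \circ z + y \circ (x \cdot z),\\
\text{(B):} &\ 2 [x, y] \cdot z = y \circ (x \cdot z) - x \circ (y \cdot z),
\end{align*}
for all $x, y, z \in A$, so the corollary reduces to showing the equivalence $\mathrm{(A)} \wedge \mathrm{(B)} \Longleftrightarrow \eqref{eq:cor}$.

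\textbf{From (A), (B) to \eqref{eq:cor}.} I would antisymmetrize (A) in $x$ and $y$: by commutativity of $\cdot$ the RHS collapses to $y \circ (x \cdot z) - x \circ (y \cdot z)$, while by Eq.~\eqref{eq:defi:anti-pre-Lie Poisson1} the antisymmetrized LHS equals $-4 [x, y] \cdot z$. Comparing with (B), which asserts that the same expression $y \circ (x \cdot z) - x \circ (y \cdot z)$ equals $2 [x, y] \cdot z$, forces $6 [x, y] \cdot z = 0$ and hence $[x, y] \cdot z = 0$. To obtain the second condition of \eqref{eq:cor}, apply Eq.~\eqref{eq:defi:anti-pre-Lie Poisson2} under the cyclic relabeling $(x, y, z) \mapsto (y, z, x)$, giving $(x \cdot y) \circ z = 2 y \circ (x \cdot z) - x \cdot (y \circ z)$; combining this with $(x \cdot y) \circ z = 2 x \cdot (y \circ z) - y \circ (x \cdot z)$, which comes directly from (A), yields $x \cdot (y \circ z) = y \circ (x \cdot z)$, and substituting back into (A) produces $(x \cdot y) \circ z = y \circ (x \cdot z)$.

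\textbf{From \eqref{eq:cor} to (A), (B).} Assume \eqref{eq:cor}. Then (B) is immediate: the LHS vanishes since $[x, y] \cdot z = 0$, and the RHS vanishes since $y \circ (x \cdot z) = (x \cdot y) \circ z = (y \cdot x) \circ z = x \circ (y \cdot z)$ by \eqref{eq:cor} and $\cdot$-commutativity. For (A), I would apply Eq.~\eqref{eq:defi:anti-pre-Lie Poisson2} after the substitution $x \leftrightarrow y$, use \eqref{eq:cor} to rewrite $(z \cdot y) \circ x$ as $y \circ (x \cdot z)$, and replace $y \circ x = x \circ y - [x, y]$; since $z \cdot [x, y] = 0$, this produces $y \circ (x \cdot z) = z \cdot (x \circ y)$. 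A further relabeling of this identity together with $x \cdot [y, z] = 0$ and the commutativity of $\cdot$ gives $x \cdot (y \circ z) = y \circ (x \cdot z)$, upon which (A) collapses to the tautology $2 x \cdot (y \circ z) = 2 x \cdot (y \circ z)$.

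The main obstacle is purely organizational: one must pair each of the two compatibility identities (A) and (B) with the correct axiom among \eqref{eq:defi:anti-pre-Lie Poisson1} and \eqref{eq:defi:anti-pre-Lie Poisson2}, together with the right cyclic permutation of variables, in order to extract the two clean statements of \eqref{eq:cor}. Once the pairings are fixed, all remaining manipulations rely only on commutativity of $\cdot$ and the definition $[x, y] = x \circ y - y \circ x$, together with the characteristic-zero assumption used to pass from $6 [x, y] \cdot z = 0$ to $[x, y] \cdot z = 0$.
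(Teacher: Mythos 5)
Your proof is correct, but it follows a genuinely different route from the paper's. The paper disposes of the corollary in two lines by dualizing: since $(A,\cdot,\circ)$ is anti-pre-Lie Poisson, Proposition~\ref{pro:anti-pre-Lie Poisson} gives that $(-\mathcal{L}^{*}_{\cdot},-\mathcal{L}^{*}_{\circ},A^{*})$ is a representation of the sub-adjacent transposed Poisson algebra, and then the general criterion of Proposition~\ref{pro:dual rep TPA} (applied to this representation, whose dual is exactly $(\mathcal{L}_{\cdot},-\mathcal{L}_{\circ},A)$) says that the dual is again a representation if and only if $-\mathcal{L}^{*}_{\cdot}([x,y])=0$ and $-\mathcal{L}^{*}_{\circ}(x\cdot y)=\mathcal{L}^{*}_{\cdot}(x)\mathcal{L}^{*}_{\circ}(y)$, which unpacks precisely to \eqref{eq:cor}. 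You instead verify the defining identities \eqref{eq:defi:TPA REP2}--\eqref{eq:defi:TPA REP1} for $(\mu,\rho)=(\mathcal{L}_{\cdot},-\mathcal{L}_{\circ})$ directly against the anti-pre-Lie Poisson axioms \eqref{eq:defi:anti-pre-Lie Poisson1}--\eqref{eq:defi:anti-pre-Lie Poisson2}: the antisymmetrization of your identity (A) combined with (B) and \eqref{eq:defi:anti-pre-Lie Poisson1} forces $6[x,y]\cdot z=0$, and the pairing of (A) with the cyclically relabeled \eqref{eq:defi:anti-pre-Lie Poisson2} yields the second condition; conversely, from \eqref{eq:cor} and \eqref{eq:defi:anti-pre-Lie Poisson2} you extract $y\circ(x\cdot z)=z\cdot(x\circ y)$, and the final relabeling does go through provided one first uses $z\cdot[x,y]=0$ to rewrite the right-hand side as $z\cdot(y\circ x)$ (which you do flag), giving $x\cdot(y\circ z)=y\circ(x\cdot z)$ and hence (A). In effect you re-derive, in the special case $V=A$, the content of the two cited propositions, at the cost of the antisymmetrization trick, careful bookkeeping of relabelings, and repeated use of invertibility of $2$ and $3$ (harmless under the paper's characteristic-zero hypothesis, and the paper's own argument is no better in this respect); what your approach buys is a self-contained, element-level verification that does not depend on the dual-representation machinery, while the paper's buys brevity and reuse of its general results.
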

\begin{proof}
By Proposition \ref{pro:anti-pre-Lie Poisson}, $(-\mathcal{L}^{*}_{\cdot},-\mathcal{L}^{*}_{\circ},A^{*})$ is a representation of $(A,\cdot,[-,-])$. Then by Proposition \ref{pro:dual rep TPA}, $\big(-(-\mathcal{L}^{*}_{\cdot})^{*},(-\mathcal{L}^{*}_{\circ})^{*},(A^{*})^{*}\big)=(\mathcal{L}_{\cdot},-\mathcal{L}_{\circ}, A)$ is a representation of $(A,\cdot,[-,-])$ if and only if
\begin{equation}
    -\mathcal{L}^{*}_{\cdot}([x,y])=0, -\mathcal{L}^{*}_{\circ}(x\cdot y)=\mathcal{L}^{*}_{\cdot}(x)\mathcal{L}^{*}_{\circ}(y),\;\;\forall x,y\in A,
\end{equation}
or equivalently, Eq.~(\ref{eq:cor}) holds.
\end{proof}

\begin{rmk} Unlike anti-pre-Lie algebras (\cite{LB2022}) and pre-Lie algebras
(\cite{Bur}) being characterized in terms of representations of
the sub-adjacent Lie algebras on both the spaces of themselves and
equivalently, the dual spaces, only the representations of the
sub-adjacent transposed Poisson algebras of anti-pre-Lie Poisson
algebras on the dual spaces of themselves are involved in
characterizing anti-pre-Lie Poisson algebras.
\end{rmk}

\begin{defi}\label{defi:anti-pre-Lie Poisson rep}
    Let $(A,\cdot,\circ)$ be an anti-pre-Lie Poisson algebra. A \textbf{representation} of $(A,\cdot,\circ)$ is a
    quadruple $(\mu,l_{\circ}, r_{\circ},V)$, such that the pair $(\mu,V)$ is
a representation of $(A,\cdot)$, the triple $(l_{\circ},
r_{\circ},V)$ is a representation of $(A,\circ)$, and the
following conditions hold:
\begin{eqnarray}
 2\mu(y)l_{\circ}(x)- 2\mu(y)r_{\circ}(x)&=&\mu(x\circ y)-\mu(x)r_{\circ}(y),   \label{eq:defi:anti-pre-Lie Poisson rep1}\\
 2\mu(x\circ y)- 2\mu(y\circ x)&=&\mu(y)l_{\circ}(x)-\mu(x)l_{\circ}(y), \label{eq:defi:anti-pre-Lie Poisson rep2}\\
  2r_{\circ}(x\cdot y)&=&r_{\circ}(y)\mu(x)+\mu(x)r_{\circ}(y), \label{eq:defi:anti-pre-Lie Poisson rep3}\\
   2l_{\circ}(x)\mu(y)&=&l_{\circ}(x\cdot y)+\mu(y)l_{\circ}(x), \label{eq:defi:anti-pre-Lie Poisson rep4}\\
   2l_{\circ}(x)\mu(y)&=&r_{\circ}(y)\mu(x)+\mu(x\circ y), \label{eq:defi:anti-pre-Lie Poisson rep5}
\end{eqnarray}
\delete{
    \begin{equation}\label{eq:defi:anti-pre-Lie Poisson rep1}
        2\mu(y)l_{\circ}(x)- 2\mu(y)r_{\circ}(x)=\mu(x\circ y)-\mu(x)r_{\circ}(y),
    \end{equation}
    \begin{equation}\label{eq:defi:anti-pre-Lie Poisson rep2}
        2\mu(x\circ y)- 2\mu(y\circ x)=\mu(y)l_{\circ}(x)-\mu(x)l_{\circ}(y),
    \end{equation}
    \begin{equation}\label{eq:defi:anti-pre-Lie Poisson rep3}
        2r_{\circ}(x\cdot y)=r_{\circ}(y)\mu(x)+\mu(x)r_{\circ}(y),
    \end{equation}
    \begin{equation}\label{eq:defi:anti-pre-Lie Poisson rep4}
        2l_{\circ}(x)\mu(y)=l_{\circ}(x\cdot y)+\mu(y)l_{\circ}(x),
    \end{equation}
    \begin{equation}\label{eq:defi:anti-pre-Lie Poisson rep5}
        2l_{\circ}(x)\mu(y)=r_{\circ}(y)\mu(x)+\mu(x\circ y),
    \end{equation}}
for all $x,y\in A$.
\end{defi}

\begin{ex}
    Let $(A,\cdot,\circ)$ be an anti-pre-Lie Poisson algebra. Then $(\mathcal{L}_{\cdot},\mathcal{L}_{\circ},\mathcal{R}_{\circ},A)$ is a representation of $(A,\cdot,\circ)$, called the \textbf{adjoint representation} of $(A,\cdot,\circ)$.
\end{ex}

\begin{pro}\label{pro:repandsemidirectproduct3}
    Let $(A,\cdot,\circ)$ be  an anti-pre-Lie Poisson algebra and
$(A,\cdot,[-,-])$ be the sub-adjacent transposed Poisson algebra.
Let $V$ be a vector space and
$\mu,l_{\circ},r_{\circ}:A\rightarrow \mathrm{End}(V)$ be linear
maps.
    \begin{enumerate}
        \item \label{it:hh1} $(\mu,l_{\circ},r_{\circ},V)$ is a representation of $(A,\cdot,\circ)$ if and only if the direct sum $A\oplus V$ of vector spaces is an (\textbf{semi-direct product})
        anti-pre-Lie Poisson algebra by defining the  bilinear operations on $A\oplus V$ by Eqs.~(\ref{eq:SDASSO}) and (\ref{eq:pro:repandsemidirectproduct1}) respectively.
        We denote it by $A\ltimes_{\mu,l_{\circ},r_{\circ}}V$.
        \item\label{rep32} If $(\mu,l_{\circ},r_{\circ},V)$ is a representation of $(A,\cdot,\circ)$, then $(\mu,l_{\circ}-r_{\circ},V)$ is a representation of $(A,\cdot,[-,-])$.
    \end{enumerate}
\end{pro}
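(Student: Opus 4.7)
The plan is direct: expand the defining axioms of an anti-pre-Lie Poisson algebra on the direct sum $A\oplus V$ equipped with the bilinear operations given by Eqs.~(\ref{eq:SDASSO}) and (\ref{eq:pro:repandsemidirectproduct1}), and match each resulting identity with one of the defining conditions of a representation in Definition~\ref{defi:anti-pre-Lie Poisson rep}. That $(A\oplus V,\cdot)$ is a commutative associative algebra is equivalent to $(\mu,V)$ being a representation of $(A,\cdot)$; that $(A\oplus V,\circ)$ is an anti-pre-Lie algebra is equivalent to $(l_\circ,r_\circ,V)$ being a representation of $(A,\circ)$ by Proposition~\ref{pro:repandsemidirectproduct}~(\ref{rep1}). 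So the remaining task for part~(\ref{it:hh1}) is to analyze when the compatibility conditions Eqs.~(\ref{eq:defi:anti-pre-Lie Poisson1}) and (\ref{eq:defi:anti-pre-Lie Poisson2}) hold on $A\oplus V$.

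The approach is to evaluate each compatibility equation on a generic triple $(x+u,\,y+v,\,z+w)\in (A\oplus V)^{\times 3}$ and separate the result into its $A$-component and the coefficients of $u$, $v$, and $w$ in $V$. The $A$-component of Eq.~(\ref{eq:defi:anti-pre-Lie Poisson1}) reproduces the same identity on $(A,\cdot,\circ)$, while the $V$-coefficients yield Eqs.~(\ref{eq:defi:anti-pre-Lie Poisson rep1}) and (\ref{eq:defi:anti-pre-Lie Poisson rep2}) after harmless renaming of variables. Analogously, Eq.~(\ref{eq:defi:anti-pre-Lie Poisson2}) on $A\oplus V$ produces the same identity on $A$ from its $A$-component and, from the coefficients of $u,v,w$, the three remaining compatibility conditions Eqs.~(\ref{eq:defi:anti-pre-Lie Poisson rep3})--(\ref{eq:defi:anti-pre-Lie Poisson rep5}). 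Since each of these five identities arises as one component of a single expansion, the equivalence holds in both directions simultaneously.

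For part~(\ref{rep32}), rather than performing a second direct computation, I would deduce it from part~(\ref{it:hh1}) together with the corresponding semi-direct product statement for transposed Poisson algebras. By part~(\ref{it:hh1}), $A\ltimes_{\mu,l_\circ,r_\circ}V$ is an anti-pre-Lie Poisson algebra, hence its sub-adjacent transposed Poisson algebra is a transposed Poisson algebra on $A\oplus V$ whose commutative associative multiplication is Eq.~(\ref{eq:SDASSO}) and whose Lie bracket, by the calculation used in Proposition~\ref{pro:repandsemidirectproduct}~(\ref{rep2})(b), equals $[x+u,y+v]=[x,y]+(l_\circ-r_\circ)(x)v-(l_\circ-r_\circ)(y)u$. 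This is precisely the semi-direct product transposed Poisson algebra structure associated to the triple $(\mu,l_\circ-r_\circ,V)$, so Proposition~\ref{pro:repandsemidirectproduct2} immediately yields that $(\mu,l_\circ-r_\circ,V)$ is a representation of $(A,\cdot,[-,-])$.

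The main obstacle is the careful bookkeeping in part~(\ref{it:hh1}): since Eqs.~(\ref{eq:defi:anti-pre-Lie Poisson rep1})--(\ref{eq:defi:anti-pre-Lie Poisson rep5}) are not symmetric in their arguments, one must identify each extracted coefficient with the correct equation only after the right variable relabelling, which is mechanical but error-prone. Once this matching is set up cleanly, both parts of the proposition follow without any further input beyond the already-established semi-direct product results.
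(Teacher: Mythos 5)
Your proof is correct, and it is organized a bit differently from the paper's. The paper disposes of both items by specializing its matched-pair results: item~(\ref{it:hh1}) is cited as the special case of Theorem~\ref{thm:mp AP} in which $B=V$ carries the zero multiplications, and item~(\ref{rep32}) as the corresponding special case of Proposition~\ref{pro:TPA-APL}. You instead verify item~(\ref{it:hh1}) by expanding Eqs.~(\ref{eq:defi:anti-pre-Lie Poisson1})--(\ref{eq:defi:anti-pre-Lie Poisson2}) on $A\oplus V$ and reading off the coefficients of $u,v,w$, which indeed produce exactly Eqs.~(\ref{eq:defi:anti-pre-Lie Poisson rep1})--(\ref{eq:defi:anti-pre-Lie Poisson rep2}) from the first identity and Eqs.~(\ref{eq:defi:anti-pre-Lie Poisson rep3})--(\ref{eq:defi:anti-pre-Lie Poisson rep5}) from the second (the quadratic-in-$V$ terms vanish, so the extraction gives the equivalence in both directions); and for item~(\ref{rep32}) you pass to the sub-adjacent transposed Poisson algebra of $A\ltimes_{\mu,l_{\circ},r_{\circ}}V$, identify its bracket as the semi-direct product bracket for $(l_{\circ}-r_{\circ},V)$, and invoke Proposition~\ref{pro:repandsemidirectproduct2} — which is essentially the specialization of Proposition~\ref{pro:TPA-APL} carried out by hand, and involves no circularity since Proposition~\ref{pro:repandsemidirectproduct2} rests only on Theorem~\ref{thm:mp TPA}. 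What your route buys is self-containedness: you do not need the full matched-pair machinery, at the cost of redoing (in the special case) the expansions that Theorem~\ref{thm:mp AP} already encodes; the paper's route is shorter on the page but leans on results whose proofs are themselves left as straightforward computations of the same kind.
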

\begin{proof}
(\ref{it:hh1}). It is a special case of the matched pair of
anti-pre-Lie Poisson algebras in Theorem
        \ref{thm:mp AP}
         when $B=V$ is equipped with the zero multiplications.

(\ref{rep32}) It is a special case of Proposition~
\ref{pro:TPA-APL} when $B=V$ is equipped
with the zero multiplications.
\end{proof}

\begin{pro}\label{pro:rep of anti-pre-Lie Poisson}
    Let $(\mu,l_{\circ}, r_{\circ},V)$ be a representation of an anti-pre-Lie Poisson algebra $(A,\cdot,\circ)$.
Then $(-\mu^{*}, r^{*}_{\circ}-l^{*}_{\circ},r^{*}_{\circ},V^{*})$
is a representation of $(A,\cdot,\circ)$. In particular,
$(-\mathcal{L}^{*}_{\cdot},-\mathrm{ad}^{*},\mathcal{R}^{*}_{\circ},A^{*})$
is a representation of $(A,\cdot,\circ)$.
\end{pro}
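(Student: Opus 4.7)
The plan is to verify the four conditions in Definition \ref{defi:anti-pre-Lie Poisson rep} for the quadruple $(-\mu^{*}, r_{\circ}^{*}-l_{\circ}^{*}, r_{\circ}^{*}, V^{*})$. First, $(-\mu^{*}, V^{*})$ is a representation of the commutative associative algebra $(A, \cdot)$ by the standard fact recalled just before Definition \ref{defi:anti-pre-Lie Poisson rep}. Second, $(r_{\circ}^{*}-l_{\circ}^{*}, r_{\circ}^{*}, V^{*})$ is a representation of the anti-pre-Lie algebra $(A, \circ)$ immediately by Proposition \ref{pro:dual rep anti-pre-Lie algebra} (\ref{it:b}). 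Thus only the five compatibility conditions Eqs.~(\ref{eq:defi:anti-pre-Lie Poisson rep1})--(\ref{eq:defi:anti-pre-Lie Poisson rep5}) remain.

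The strategy for these five identities is direct dualization. Writing $\tilde\mu = -\mu^{*}$, $\tilde l_{\circ} = r_{\circ}^{*}-l_{\circ}^{*}$, $\tilde r_{\circ} = r_{\circ}^{*}$, for each new equation I would plug in the operators, pair with an arbitrary $v \in V$ via $\langle\cdot, v\rangle$, and use $\langle f^{*}(u^{*}), v\rangle = -\langle u^{*}, f(v)\rangle$ together with $f^{*}g^{*} = -(gf)^{*}$ to move everything inside $V$. Each resulting identity on $V^{*}$ should then collapse to one of the original compatibility equations on $V$. For example, unpacking the new version of (\ref{eq:defi:anti-pre-Lie Poisson rep1}) yields $2l_{\circ}(x)\mu(y) = \mu(x\circ y) + r_{\circ}(y)\mu(x)$, which is exactly the original (\ref{eq:defi:anti-pre-Lie Poisson rep5}). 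I expect a similar pairing for each remaining equation: the new (\ref{eq:defi:anti-pre-Lie Poisson rep2})--(\ref{eq:defi:anti-pre-Lie Poisson rep5}) will reduce, after dualization, to identities equivalent to the original (\ref{eq:defi:anti-pre-Lie Poisson rep2})--(\ref{eq:defi:anti-pre-Lie Poisson rep5}) (up to reshuffling, since $\tilde l_{\circ}$ mixes $l_{\circ}^{*}$ and $r_{\circ}^{*}$). The only real subtlety is the bookkeeping of signs and the order of composition, since $f\mapsto f^{*}$ reverses composition up to a sign.

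For the ``in particular'' part, apply the general result to the adjoint representation $(\mathcal{L}_{\cdot}, \mathcal{L}_{\circ}, \mathcal{R}_{\circ}, A)$ of $(A,\cdot,\circ)$ and use the identity $\mathcal{R}_{\circ}^{*} - \mathcal{L}_{\circ}^{*} = -\mathrm{ad}^{*}$, already noted in Proposition \ref{pro:dual rep anti-pre-Lie algebra}(\ref{it:b}), which yields the claimed quadruple $(-\mathcal{L}_{\cdot}^{*}, -\mathrm{ad}^{*}, \mathcal{R}_{\circ}^{*}, A^{*})$. The main (indeed essentially only) obstacle is the systematic verification in the second paragraph; no conceptual difficulty arises once the pattern of dualization is identified.
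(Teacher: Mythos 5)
Your proposal is correct and follows essentially the same route as the paper: dualize each compatibility condition by pairing with elements of $V$, observe (as your explicit sample computation shows, matching the paper's) that the new version of Eq.~(\ref{eq:defi:anti-pre-Lie Poisson rep1}) collapses to the original Eq.~(\ref{eq:defi:anti-pre-Lie Poisson rep5}), and handle the remaining identities by the same bookkeeping, with the ``in particular'' part obtained from the adjoint representation via $\mathcal{R}^{*}_{\circ}-\mathcal{L}^{*}_{\circ}=-\mathrm{ad}^{*}$. Your preliminary remarks that $(-\mu^{*},V^{*})$ and $(r^{*}_{\circ}-l^{*}_{\circ},r^{*}_{\circ},V^{*})$ are representations of $(A,\cdot)$ and $(A,\circ)$ respectively are a slight elaboration the paper leaves implicit, but the argument is the same.
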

\begin{proof}
    For all $x,y\in A, u^{*}\in V^{*}, v\in V$, we have
    \begin{eqnarray*}
        &&\langle \Big(-2\mu^{*}(y)\big(r^{*}_{\circ}(x)-l^{*}_{\circ}(x)\big)+2\mu^{*}(y)r^{*}_{\circ}(x)+\mu^{*}(x\circ y)-\mu^{*}(x)r^{*}_{\circ}(y)\Big)u^{*},v\rangle\\
        &&=\langle u^{*}, \big(2l_{\circ}(x)\mu(y)-\mu(x\circ y)-r_{\circ}(y)\mu(x)\big)v\rangle
        \overset{(\ref{eq:defi:anti-pre-Lie Poisson rep5})}{=}0.
    \end{eqnarray*}
Thus Eq.~(\ref{eq:defi:anti-pre-Lie Poisson rep1})  holds for the
quadruple $(-\mu^{*},
r^{*}_{\circ}-l^{*}_{\circ},r^{*}_{\circ},V^{*})$. Similarly
Eqs.~(\ref{eq:defi:anti-pre-Lie Poisson
rep2})-(\ref{eq:defi:anti-pre-Lie Poisson rep5})   hold for
$(-\mu^{*}, r^{*}_{\circ}-l^{*}_{\circ},r^{*}_{\circ},V^{*})$.
Thus $(-\mu^{*}, r^{*}_{\circ}-l^{*}_{\circ},r^{*}_{\circ},V^{*})$
is a representation of $(A,\cdot,\circ)$.
    \end{proof}

\begin{cor}\label{cor:dual rep TPA}
Let $(A,\cdot,\circ)$ be an anti-pre-Lie Poisson algebra and
$(A,\cdot,[-,-])$ be the sub-adjacent transposed Poisson algebra.
If $(\mu,l_{\circ},r_{ \circ},V)$ is a representation of
$(A,\cdot,\circ)$, then $(-\mu^{*}$, $-l^{*}_{\circ}$, $V^{*})$ is
a representation of $(A,\cdot,[-,-])$.
\end{cor}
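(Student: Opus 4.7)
The plan is to obtain this corollary as a direct two-step consequence of results proved earlier in the section, with essentially no new computation required. Given the representation $(\mu, l_\circ, r_\circ, V)$ of the anti-pre-Lie Poisson algebra $(A,\cdot,\circ)$, the first step is to invoke Proposition \ref{pro:rep of anti-pre-Lie Poisson}, which provides the dual-type representation $(-\mu^{*}, r^{*}_{\circ}-l^{*}_{\circ}, r^{*}_{\circ}, V^{*})$ of the same anti-pre-Lie Poisson algebra $(A,\cdot,\circ)$.

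The second step is to apply Proposition \ref{pro:repandsemidirectproduct3}(\ref{rep32}) to this newly produced representation. That proposition says: whenever $(\mu',l'_{\circ},r'_{\circ},V')$ is a representation of $(A,\cdot,\circ)$, the triple $(\mu', l'_{\circ}-r'_{\circ}, V')$ is a representation of the sub-adjacent transposed Poisson algebra $(A,\cdot,[-,-])$. Taking $\mu'=-\mu^{*}$, $l'_{\circ}=r^{*}_{\circ}-l^{*}_{\circ}$, $r'_{\circ}=r^{*}_{\circ}$, we compute
\[
l'_{\circ}-r'_{\circ}=(r^{*}_{\circ}-l^{*}_{\circ})-r^{*}_{\circ}=-l^{*}_{\circ},
\]
so the output is exactly the claimed triple $(-\mu^{*},-l^{*}_{\circ},V^{*})$, which is therefore a representation of the sub-adjacent transposed Poisson algebra $(A,\cdot,[-,-])$.

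There is essentially no obstacle: the corollary is a formal composition of the passage to the ``twisted dual'' representation for $(A,\cdot,\circ)$ with the passage from an anti-pre-Lie Poisson representation to a sub-adjacent transposed Poisson representation. The only subtle point to verify in writing up is that the $V^{*}$-version of Proposition \ref{pro:repandsemidirectproduct3}(\ref{rep32}) genuinely applies, i.e.\ that one may use it with any representation (not just the original one); but this is immediate since Proposition \ref{pro:repandsemidirectproduct3}(\ref{rep32}) is stated for an arbitrary representation of $(A,\cdot,\circ)$. Hence the proof reduces to citing Proposition \ref{pro:rep of anti-pre-Lie Poisson} followed by Proposition \ref{pro:repandsemidirectproduct3}(\ref{rep32}), and observing the cancellation $l'_{\circ}-r'_{\circ}=-l^{*}_{\circ}$.
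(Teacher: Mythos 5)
Your proposal is correct and matches the paper's proof, which likewise deduces the corollary by combining Proposition \ref{pro:rep of anti-pre-Lie Poisson} (giving the representation $(-\mu^{*},r^{*}_{\circ}-l^{*}_{\circ},r^{*}_{\circ},V^{*})$ of $(A,\cdot,\circ)$) with Proposition \ref{pro:repandsemidirectproduct3}~(\ref{rep32}), so that $(r^{*}_{\circ}-l^{*}_{\circ})-r^{*}_{\circ}=-l^{*}_{\circ}$ yields the claim. Your cancellation step is exactly the intended argument.
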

\begin{proof}
    It follows from Propositions \ref{pro:repandsemidirectproduct3}  (\ref{rep32})  and  \ref{pro:rep of anti-pre-Lie Poisson}.
    \end{proof}

\begin{rmk}
As a direct consequence of Corollary \ref{cor:dual rep TPA}, for
an anti-pre-Lie Poisson algebra $(A,\cdot,\circ)$, the triple
$(-\mathcal{L}^{*}_{\cdot},-\mathcal{L}^{*}_{\circ},A^{*})$ is a
representation of the sub-adjacent transposed Poisson algebra
$(A$, $\cdot$, $[-,-])$, which  recovers the ``if" part of the
result in Proposition \ref{pro:anti-pre-Lie Poisson}.

\end{rmk}

\subsection{Matched pairs of transposed Poisson algebras and anti-pre-Lie Poisson algebras}\

Recall matched pairs of commutative associative algebras
(\cite{Bai2010}).
 Let $(A,\cdot_{A})$ and $(B,\cdot_{B})$ be two
commutative associative algebras, and $(\mu_{A},B)$ and
$(\mu_{B},A)$ be representations of $(A,\cdot_{A})$ and
$(B,\cdot_{B})$ respectively. If the following equations are
satisfied:
\begin{eqnarray}
    &&\mu_{A}(x)(a\cdot_{B} b)=\big(\mu_{A}(x)a\big)\cdot_{B} b+\mu_{A}\big(\mu_{B}(a)x\big)b,\\
    &&\mu_{B}(a)(x\cdot_{A} y)=\big(\mu_{B}(a)x\big)\cdot_{A} y+\mu_{B}\big(\mu_{A}(x)a\big)y,
\end{eqnarray}
for all $x,y\in A,a,b\in B$, then $(A,B,\mu_{A},\mu_{B})$ is
called a \textbf{matched
    pair of commutative associative algebras}.
In fact, for commutative associative  algebras $(A,\cdot_{A})$ ,
$(B,\cdot_{B})$ and linear maps
$\mu_{A}:A\rightarrow\mathrm{End}(B)$,
$\mu_{B}:B\rightarrow\mathrm{End}(A)$, there is a commutative
associative algebra structure on the vector space $A\oplus B$
given by
\begin{equation}\label{eq:Asso} (x+a)\cdot (y+b)=x\cdot_{A}
    y+\mu_{B}(a)y+\mu_{B}(b)x+a\cdot_{B}
    b+\mu_{A}(x)b+\mu_{A}(y)a,\;\;\forall x,y\in A, a,b\in B,
\end{equation}
if and only if $(A,B,\mu_{A},\mu_{B})$ is a matched pair of
commutative associative algebras. In this case, we denote the
commutative associative algebra structure on $A\oplus B$ by
$A\bowtie^{\mu_{A}}_{\mu_{B}}B$. Conversely, every commutative
associative algebra which is the direct sum of the underlying
vector spaces of two subalgebras can be obtained from a matched
pair of commutative associative algebras by this construction.

\begin{defi}\label{defi:MP TPA}
    Let $(A,\cdot_{A},[-,-]_{A})$ and $(B,\cdot_{B},[-,-]_{B})$ be two transposed Poisson algebras. Let $\mu_{A},\rho_{A}:A\rightarrow\mathrm{End}(B)$ and $\mu_{B},\rho_{B}:B\rightarrow\mathrm{End}(A)$
    be linear maps. Suppose that the following conditions are satisfied:
    \begin{enumerate}
        \item $(A,B,\mu_{A},\mu_{B})$ is a matched pair of commutative associative algebras and $(A,B,\rho_{A},\rho_{B})$ is a matched pair of Lie algebras.
        \item 
        $(\mu_{A},\rho_{A},B)$ is a representation of $(A,\cdot_{A},[-,-]_{A})$
        and $(\mu_{B},\rho_{B},A)$ is a representation of $(B$, $\cdot_{B}$, $[-,-]_{B})$.
        \item For all $x,y\in A, a,b\in B$, the following equations hold:
        \begin{eqnarray}
            &&2\mu_{B}\big(\rho_{A}(y)a\big)x-2x\cdot_{A}\rho_{B}(a)y=-\rho_{B}(a)(x\cdot_{A} y)-\rho_{B}\big(\mu_{A}(x)a\big)y+[y,\mu_{B}(a)x]_{A},\label{eq:defi:MP TPA a}\\
            && 2\mu_{B}(a)([x,y]_{A})=[\mu_{B}(a)x,y]_{A}+\rho_{B}\big(\mu_{A}(x)a\big)y+[x,\mu_{B}(a)y]_{A}-\rho_{B}\big(\mu_{A}(y)a\big)x,\label{eq:defi:MP TPA b}\\
            &&2\mu_{A}\big(\rho_{B}(b)x\big)a-2a\cdot_{B}\rho_{A}(x)b=-\rho_{A}(x)(a\cdot_{B} b)-\rho_{A}\big(\mu_{B}(a)x\big)b+[b,\mu_{A}(x)a]_{B},\label{eq:defi:MP TPA c}\\
            &&2\mu_{A}(x)([a,b]_{B})=[\mu_{A}(x)a,b]_{B}+\rho_{A}\big(\mu_{B}(a)x\big)b+[a,\mu_{A}(y)b]_{B}-\rho_{A}\big(\mu_{B}(b)x\big)a.\label{eq:defi:MP TPA d}
        \end{eqnarray}
    \end{enumerate}
    Such a structure is called a  \textbf{matched pair of transposed Poisson algebras} $(A,\cdot_{A},[-,-]_{A})$ and $(B,\cdot_{B},[-,-]_{B})$. We denote it by $(A,B,\mu_{A},\rho_{A},\mu_{B},\rho_{B})$.
\end{defi}

\begin{thm}\label{thm:mp TPA}
    Let $(A,\cdot_{A},[-,-]_{A})$ and $(B,\cdot_{B},[-,-]_{B})$ be two transposed Poisson algebras. Suppose that $\mu_{A},\rho_{A}:A\rightarrow\mathrm{End}(B)$ and $\mu_{B},\rho_{B}:B\rightarrow\mathrm{End}(A)$ are linear maps. Define two bilinear
    operations $\cdot,[-,-]$ on $A\oplus B$ by Eqs.~ (\ref{eq:Asso}) and (\ref{eq:Lie}) respectively. Then
    $(A\oplus B,\cdot,[-,-])$ is a transposed Poisson algebra if and only if $(A,B,\mu_{A},\rho_{A},\mu_{B},\rho_{B})$ is a matched pair of transposed Poisson algebras.
    In this case, we denote this transposed Poisson algebra structure on $A\oplus B$ by $A\bowtie^{\mu_{A},\rho_{A}}_{\mu_{B},\rho_{B}}B$.
  Conversely, every transposed Poisson  algebra which is the
    direct sum of the underlying vector spaces of two subalgebras can
    be obtained from a matched pair of transposed Poisson
    algebras by this construction.
\end{thm}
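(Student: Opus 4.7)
The plan is to verify the three defining axioms of a transposed Poisson algebra for $(A\oplus B,\cdot,[-,-])$ and then separate the resulting identities according to which summand each output lands in. Since the operations $\cdot$ and $[-,-]$ on $A\oplus B$ are given by the semi-direct product formulas~(\ref{eq:Asso}) and~(\ref{eq:Lie}), the recalled characterizations of matched pairs of commutative associative algebras and of Lie algebras give immediately that $(A\oplus B,\cdot)$ is a commutative associative algebra if and only if $(A,B,\mu_A,\mu_B)$ is a matched pair of commutative associative algebras, and $(A\oplus B,[-,-])$ is a Lie algebra if and only if $(A,B,\rho_A,\rho_B)$ is a matched pair of Lie algebras. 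This accounts for Item~(1) of Definition~\ref{defi:MP TPA}.

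Assuming both of these, what remains is the transposed Leibniz identity~(\ref{eq:defi:transposed Poisson algebra}) on $A\oplus B$. By trilinearity, I would test this on the eight types of triples obtained by letting each of $x,y,z$ be either in $A$ or in $B$. The two pure types $(x,y,z\in A)$ and $(x,y,z\in B)$ are automatic from the hypothesis that $A$ and $B$ are transposed Poisson algebras. The remaining six mixed types fall, by the $x\leftrightarrow y$ symmetry of~(\ref{eq:defi:transposed Poisson algebra}) and the $A\leftrightarrow B$ symmetry of the whole setup, into two essentially distinct families.

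The family with two arguments from $A$ and one from $B$, after expanding both sides of~(\ref{eq:defi:transposed Poisson algebra}) using~(\ref{eq:Asso}) and~(\ref{eq:Lie}) and separating into the $A$- and $B$-components, produces on the $A$-side exactly the representation axioms~(\ref{eq:defi:TPA REP2})-(\ref{eq:defi:TPA REP1}) for the pair $(\mu_B,\rho_B,A)$, and on the $B$-side exactly the cross-compatibilities~(\ref{eq:defi:MP TPA a})-(\ref{eq:defi:MP TPA b}). By the $A\leftrightarrow B$ symmetric calculation, the family with one argument from $A$ and two from $B$ produces~(\ref{eq:defi:TPA REP2})-(\ref{eq:defi:TPA REP1}) for $(\mu_A,\rho_A,B)$ together with~(\ref{eq:defi:MP TPA c})-(\ref{eq:defi:MP TPA d}). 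Together with the two matched pair conditions of Item~(1), this exactly reproduces Definition~\ref{defi:MP TPA}.

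The main obstacle is purely the bookkeeping: each of the six mixed cases yields a long identity once both sides are expanded, and one must sort the terms correctly by their summand in $A\oplus B$ and pair them against the enumerated identities, using the already-verified associative and Lie matched pair conditions to cancel ambient terms. To keep the calculation manageable I would present one representative mixed case in detail and indicate the remaining ones by symmetry. For the converse, given a transposed Poisson algebra $(P,\cdot,[-,-])$ with $P=A\oplus B$ as a vector space and $A,B$ subalgebras, the $A$-components of $a\cdot x$ and $[a,x]$ (for $x\in A$, $a\in B$) and the $B$-components of $x\cdot a$ and $[x,a]$ canonically define linear maps $\mu_A,\rho_A,\mu_B,\rho_B$, and applying the forward direction to these maps recovers the required matched pair structure.
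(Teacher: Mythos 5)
Your strategy is exactly the one underlying the paper's proof (which is left as ``straightforward''): use the recalled matched-pair characterizations for the commutative associative and Lie structures, then test the transposed Leibniz rule \eqref{eq:defi:transposed Poisson algebra} on mixed triples and sort the resulting identities by component, obtaining the representation conditions and Eqs.~\eqref{eq:defi:MP TPA a}--\eqref{eq:defi:MP TPA d} of Definition~\ref{defi:MP TPA}; the converse is handled just as you say. Two bookkeeping corrections, though. First, your attribution is swapped: in the family with two arguments from $A$ and one from $B$ (i.e.\ $2x\cdot[y,a]=[x\cdot_A y,a]+[y,x\cdot a]$ and $2a\cdot[x,y]_A=[a\cdot x,y]+[x,a\cdot y]$), the $B$-components give the representation axioms \eqref{eq:defi:TPA REP2} and \eqref{eq:defi:TPA REP1} for $(\mu_A,\rho_A,B)$, while the $A$-components give \eqref{eq:defi:MP TPA a}--\eqref{eq:defi:MP TPA b} (which are equations in $A$); the mirror family yields the axioms for $(\mu_B,\rho_B,A)$ on the $A$-side and \eqref{eq:defi:MP TPA c}--\eqref{eq:defi:MP TPA d} on the $B$-side. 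Since you defer the computation, this does not break the argument, but as stated the correspondence is wrong. Second, ``one representative mixed case'' does not suffice: within each family the two cases (multiplier in $A$ versus multiplier in $B$) are not related by the $x\leftrightarrow y$ or $A\leftrightarrow B$ symmetries you invoke, so you must expand two cases in detail and then obtain the remaining two by the $A\leftrightarrow B$ symmetry.
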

\begin{proof}
\delete{    As aforementioned, $(A\oplus B,\cdot)$ is a
commutative associative algebra if and only if
$(A,B,\mu_{A},\mu_{B})$ is a matched pair of commutative
associative algebras, and $(A\oplus B,[-,-])$ is a Lie algebra if
and only if $(A,B,\rho_{A},\rho_{B})$ is a matched pair of Lie
algebras.
    Let $x,y,z\in A, a,b,c\in B$. The transposed Leibniz rule on $A\oplus B$, that is,
    \begin{equation}
        2(z+c)\cdot[x+a,y+b]=[(z+c)\cdot(x+a),y+b]+[x+a,(z+c)\cdot(y+b)]
    \end{equation}
    holds if and only if the following equations hold:
    \begin{eqnarray}
        &&2x\cdot[y,a]=[x\cdot_{A}y,a]+[y,x\cdot a],\label{eq:Pf mp tpa a}\\
        &&2a\cdot[x,y]_{A}=[a\cdot x,y]+[x,a\cdot y],\label{eq:Pf mp tpa b}\\
        &&2a\cdot[b,x]=[a\cdot_{B}b,x]+[b,a\cdot x],\label{eq:Pf mp tpa c}\\
        &&2x\cdot[a,b]_{B}=[x\cdot a,b]+[a,x\cdot b].\label{eq:Pf mp tpa d}
    \end{eqnarray}
    By Eqs.~ (\ref{eq:Asso}) and (\ref{eq:Lie}), we have
    \begin{eqnarray*}
        &&2x\cdot[y,a]=2x\cdot(\rho_{A}(y)a-\rho_{B}(a)y)=2\mu_{A}(x)\rho_{A}(y)a+2\mu_{B}(\rho_{A}(y)a)x-2x\cdot_{A}\rho_{B}(a)y,\\
        &&[x\cdot_{A}y,a]=\rho_{A}(x\cdot_{A}y)a-\rho_{B}(a)(x\cdot_{A}y),\\
        &&[y,x\cdot a]=[y,\mu_{A}(x)a+\mu_{B}(a)x]=\rho_{A}(y)\mu_{A}(x)a-\rho_{B}(\mu_{A}(x)a)y+[y,\mu_{B}(a)x]_{A}.
    \end{eqnarray*}
    Thus Eq.~(\ref{eq:Pf mp tpa a}) holds if and only if Eqs.~(\ref{eq:rep TPA A1}) and (\ref{eq:defi:MP TPA a}) hold, and similarly Eq.~(\ref{eq:Pf mp tpa b}) holds if and only if Eqs.~(\ref{eq:rep TPA A2}) and (\ref{eq:defi:MP TPA b}) hold.
    By symmetry, Eq.~(\ref{eq:Pf mp tpa c}) holds if and only if Eqs.~ (\ref{eq:defi:MP TPA c}) and (\ref{eq:TPA b1}) hold, and
    Eq.~(\ref{eq:Pf mp tpa d}) holds if and only if Eqs.~ (\ref{eq:defi:MP TPA d}) and (\ref{eq:TPA b2}) hold.
    Hence the conclusion follows.}
It is straightforward.
\end{proof}

\delete{
    Noticing for representations $(\mu_{A},B)$ of $(A,\cdot_{A})$ and $(\rho_{A},B)$ of $(A,[-,-]_{A})$, Eqs.~(\ref{eq:defi:TPA REP2}) and (\ref{eq:defi:TPA REP1}) makes $(\mu_{A},\rho_{A},B)$ a representation of the transposed Poisson algebra $(A,\cdot_{A},[-,-]_{A})$,
    and similarly
    for representations $(\mu_{B},A)$ of $(B,\cdot_{B})$ and $(\rho_{B},A)$ of $(B,[-,-]_{B})$, Eqs.~(\ref{eq:TPA b1}) and (\ref{eq:TPA b2}) makes $(\mu_{B},\rho_{B},A)$ a representation of the transposed Poisson algebra $(B,\cdot_{B},[-,-]_{B})$,
    we finish the proof.

}

\begin{defi}
Let $(A,\cdot_{A},\circ_{A})$ and $(B,\cdot_{B},\circ_{B})$ be two
anti-pre-Lie Poisson algebras. Let $\mu_{A},l_{\circ_{A}},$
$r_{\circ_{A}}:A\rightarrow\mathrm{End}(B)$ and
$\mu_{B},l_{\circ_{B}},r_{\circ_{B}}:B\rightarrow\mathrm{End}(A)$
be linear maps. Suppose that the following conditions are
satisfied:
\begin{enumerate}
    \item $(A,B,\mu_{A},\mu_{B})$ is a matched pair of commutative associative algebras and $(A,B,l_{\circ_{A}},r_{\circ_{A}},l_{\circ_{B}},$
    $r_{\circ_{B}})$ is a matched pair of anti-pre-Lie algebras.
    \item $(\mu_{A},l_{\circ_{A}},r_{\circ_{A}},B)$ is a representation of $(A,\cdot_{A},\circ_{A})$ and $(\mu_{B},l_{\circ_{B}},r_{\circ_{B}},A)$ is a representation of $(B,\cdot_{B},\circ_{B})$.
    \item For all $x,y\in A, a,b\in B$, the following equations hold:
    \begin{small}
    \begin{eqnarray*}
    &&2\mu_{B}(a)(x\circ_{A}y)-2\mu_{B}(a)(y\circ_{A}x)=\mu_{B}\big(l_{\circ_{A}}(x)a\big)y+y\cdot_{A}r_{\circ_{B}}(a)x-\mu_{B}\big(l_{\circ_{A}}(y)a\big)x-x\cdot_{A}r_{\circ_{B}}(a)y,\\
    &&  2\mu_{B}\big((l_{\circ_{A}}-r_{\circ_{A}})(x)a\big)y-2(l_{\circ_{B}}-r_{\circ_{B}})(a)x\cdot_{A}y =\mu_{B}(a)(x\circ_{A}y)-x\cdot_{A}l_{\circ_{B}}(a)y-\mu_{B}\big(r_{\circ_{A}}(y)a\big)x,\\
    &&2r_{\circ_{B}}\big(\mu_{A}(y)a\big)x+2x\circ_{A}\mu_{B}(a)y=\mu_{B}(a)x\circ_{A}y+l_{\circ_{B}}\big(\mu_{A}(x)a\big)y+\mu_{B}(a)(x\circ_{A}y),\\
    &&2r_{\circ_{B}}\big(\mu_{A}(y)a\big)x+2x\circ_{A}\mu_{B}(a)y=r_{\circ_{B}}(a)(x\cdot_{A}y)+\mu\big(l_{\circ_{A}}(x)a\big)y+y\cdot_{A}r_{\circ_{B}}(a)x,\\
    &&2l_{\circ_{B}}(a)(x\cdot_{A}y)=l_{\circ_{B}}\big(\mu_{A}(y)a\big)x+\mu_{B}(a)y\circ_{A}x+y\cdot_{A}l_{\circ_{B}}(a)x+\mu_{B}\big(r_{\circ_{A}}(x)a\big)y,\\
    &&2\mu_{A}(x)(a\circ_{B}b)-2\mu_{A}(x)(b\circ_{B}a)=\mu_{A}\big(l_{\circ_{B}}(a)x\big)b+b\cdot_{B}r_{\circ_{A}}(x)a-\mu_{A}\big(l_{\circ_{B}}(b)x\big)a-a\cdot_{B}r_{\circ_{A}}(x)b,\\
    &&  2\mu_{A}\big((l_{\circ_{B}}-r_{\circ_{B}})(a)x\big)b-2(l_{\circ_{A}}-r_{\circ_{A}})(x)a\cdot_{B}b =\mu_{A}(x)(a\circ_{B}b)-a\cdot_{B}l_{\circ_{A}}(x)b-\mu_{A}\big(r_{\circ_{B}}(b)x\big)a,\\
     &&2r_{\circ_{A}}\big(\mu_{B}(b)x\big)a+2a\circ_{B}\mu_{A}(x)b=\mu_{A}(x)a\circ_{B}b+l_{\circ_{A}}\big(\mu_{B}(a)x\big)b+\mu_{A}(x)(a\circ_{B}b),\\
     &&2r_{\circ_{A}}\big(\mu_{B}(b)x\big)a+2a\circ_{B}\mu_{A}(x)b=r_{\circ_{A}}(x)(a\cdot_{B}b)+\mu\big(l_{\circ_{B}}(a)x\big)b+b\cdot_{B}r_{\circ_{A}}(x)a,\\
     &&2l_{\circ_{A}}(x)(a\cdot_{B}b)=l_{\circ_{A}}\big(\mu_{B}(b)x\big)a+\mu_{A}(x)b\circ_{B}a+b\cdot_{B}l_{\circ_{A}}(x)a+\mu_{A}\big(r_{\circ_{B}}(a)x\big)b.
\end{eqnarray*}
\end{small}
\end{enumerate}
Such a structure is called a \textbf{matched pair of anti-pre-Lie
Poisson algebras} $(A,\cdot_{A},\circ_{A})$ and
$(B,\cdot_{B},\circ_{B})$. We denote it by
$(A,B,\mu_{A},l_{\circ_{A}},r_{\circ_{A}},\mu_{B},l_{\circ_{B}},r_{\circ_{B}})$.
\end{defi}

\begin{thm}\label{thm:mp AP}
    Let $(A,\cdot_{A},\circ_{A})$ and $(B,\cdot_{B},\circ_{B})$ be two anti-pre-Lie Poisson algebras. Suppose that $\mu_{A},l_{\circ_{A}},$
    $r_{\circ_{A}}:A\rightarrow\mathrm{End}(B)$ and  $\mu_{B},l_{\circ_{B}},r_{\circ_{B}}:B\rightarrow\mathrm{End}(A)$ are linear maps. Define two bilinear
    operations $\cdot,\circ$ on $A\oplus B$ by Eqs.~(\ref{eq:Asso}) and (\ref{thm:matched pairs of anti-pre-Lie algebras1}) respectively.
    Then $(A\oplus B,\cdot,\circ)$ is an anti-pre-Lie Poisson algebra if and only if $(A,B,\mu_{A},l_{\circ_{A}},r_{\circ_{A}},\mu_{B},l_{\circ_{B}},r_{\circ_{B}})$
    is a matched pair of anti-pre-Lie Poisson algebras. In this case, we denote this anti-pre-Lie Poisson algebra structure on
    $A\oplus B$ by $A\bowtie^{\mu_{A},l_{\circ_{A}},r_{\circ_{A}}}_{\mu_{B},l_{\circ_{B}},r_{\circ_{B}}}B$. Conversely,
    every anti-pre-Lie Poisson algebra which is the direct sum of the underlying vector spaces of two subalgebras can be obtained from a matched pair of anti-pre-Lie Poisson algebras by this construction.
    \end{thm}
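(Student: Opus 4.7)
The plan is to follow the same template used for Theorem~\ref{thm:mp TPA} (matched pairs of transposed Poisson algebras), splitting the verification of the anti-pre-Lie Poisson algebra axioms on $A\oplus B$ into three packets: the commutative associative axioms, the anti-pre-Lie axioms, and the two Leibniz-type compatibilities from Definition~\ref{defi:anti-pre-Lie Poisson}. The first two are already handled by existing results. Indeed, $(A\oplus B,\cdot)$ is a commutative associative algebra if and only if $(A,B,\mu_{A},\mu_{B})$ is a matched pair of commutative associative algebras (recalled just before Definition~\ref{defi:MP TPA}), and by Theorem~\ref{thm:matched pairs of anti-pre-Lie algebras}, $(A\oplus B,\circ)$ is an anti-pre-Lie algebra if and only if $(A,B,l_{\circ_{A}},r_{\circ_{A}},l_{\circ_{B}},r_{\circ_{B}})$ is a matched pair of anti-pre-Lie algebras. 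So what remains is to show that the two compatibility equations~\eqref{eq:defi:anti-pre-Lie Poisson1} and~\eqref{eq:defi:anti-pre-Lie Poisson2} on $A\oplus B$ are equivalent to the conjunction of (i) the representation conditions packaged in Definition~\ref{defi:anti-pre-Lie Poisson rep} for $(\mu_{A},l_{\circ_{A}},r_{\circ_{A}},B)$ as a representation of $(A,\cdot_{A},\circ_{A})$ and $(\mu_{B},l_{\circ_{B}},r_{\circ_{B}},A)$ as a representation of $(B,\cdot_{B},\circ_{B})$, and (ii) the ten cross compatibility equations listed in the definition of matched pair of anti-pre-Lie Poisson algebras.

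To carry this out, I would test each of Eqs.~\eqref{eq:defi:anti-pre-Lie Poisson1} and~\eqref{eq:defi:anti-pre-Lie Poisson2} on each of the $2^3=8$ choices of placing the three variables into $A$ or into $B$. The all-$A$ and all-$B$ cases hold automatically because $(A,\cdot_{A},\circ_{A})$ and $(B,\cdot_{B},\circ_{B})$ are anti-pre-Lie Poisson algebras. For each mixed case, substituting the definitions~\eqref{eq:Asso} and~\eqref{thm:matched pairs of anti-pre-Lie algebras1} of $\cdot$ and $\circ$ and collecting terms by which summand ($A$ or $B$) they lie in, the resulting identity splits into two: one component is exactly a representation axiom (Eqs.~\eqref{eq:defi:anti-pre-Lie Poisson rep1}--\eqref{eq:defi:anti-pre-Lie Poisson rep5}) on one of the two sides, and the other component is one of the ten listed cross compatibilities. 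For instance, substituting $x,y\in A$, $z=a\in B$ into~\eqref{eq:defi:anti-pre-Lie Poisson1} produces, in its $B$-component, an identity equivalent to~\eqref{eq:defi:anti-pre-Lie Poisson rep2} applied to $a$; and in its $A$-component, the first of the cross compatibilities. A symmetric case analysis with $x\in A$, $y,z\in B$ extracts another representation identity together with another cross compatibility, and the remaining substitutions proceed analogously, exhausting all ten cross compatibilities together with all five representation identities on each side.

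The converse direction is immediate: given a matched pair, one reverses the same bookkeeping to conclude that~\eqref{eq:defi:anti-pre-Lie Poisson1} and~\eqref{eq:defi:anti-pre-Lie Poisson2} hold on $A\oplus B$. Finally, the last sentence of the theorem follows by observing that if $P$ is an anti-pre-Lie Poisson algebra with a vector space decomposition $P=A\oplus B$ into subalgebras, then the projections define linear maps $\mu_{A},l_{\circ_{A}},r_{\circ_{A}}\colon A\to\mathrm{End}(B)$ and $\mu_{B},l_{\circ_{B}},r_{\circ_{B}}\colon B\to\mathrm{End}(A)$ recovering the given operations via~\eqref{eq:Asso} and~\eqref{thm:matched pairs of anti-pre-Lie algebras1}, so the forward direction applies.

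The main obstacle is purely notational: the cross compatibilities are long, and one must be careful about the bookkeeping in distributing each Leibniz identity over the eight substitution patterns and sorting the terms into the $A$-summand and the $B$-summand. No conceptual difficulty arises, because the structure of the argument parallels Theorems~\ref{thm:matched pairs of anti-pre-Lie algebras} and~\ref{thm:mp TPA} exactly, and this is why the proof is expected to be left as ``straightforward'' in the paper.
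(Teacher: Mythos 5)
Your proposal is correct and matches the argument the paper intends by "It is straightforward": reduce the associative and anti-pre-Lie parts to the known matched-pair characterizations, then expand Eqs.~(\ref{eq:defi:anti-pre-Lie Poisson1})--(\ref{eq:defi:anti-pre-Lie Poisson2}) on mixed triples and sort the $A$- and $B$-components into the representation axioms (\ref{eq:defi:anti-pre-Lie Poisson rep1})--(\ref{eq:defi:anti-pre-Lie Poisson rep5}) and the ten cross conditions; your sample computation (the case $x,y\in A$, $z=a\in B$ giving (\ref{eq:defi:anti-pre-Lie Poisson rep2}) in the $B$-component and the first cross identity in the $A$-component) is accurate. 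No gap to report.
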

\begin{proof}
It is straightforward.
\end{proof}

\begin{pro}\label{pro:TPA-APL}
Let $(A,\cdot_{A},\circ_{A})$ and $(B,\cdot_{B},\circ_{B})$ be two
anti-pre-Lie Poisson algebras and their sub-adjacent transposed
Poisson algebras be $(A,\cdot_{A},[-,-]_{A})$ and
$(B,\cdot_{B},[-,-]_{B})$ respectively. If
$(A,B,\mu_{A},l_{\circ_{A}},r_{\circ_{A}},\mu_{B},l_{\circ_{B}},r_{\circ_{B}})$
is a matched pair of anti-pre-Lie Poisson algebras, then
$(A,B,\mu_{A},$
$l_{\circ_{A}}-r_{\circ_{A}},\mu_{B},l_{\circ_{B}}-r_{\circ_{B}})$
is a matched pair of transposed Poisson algebras.
\end{pro}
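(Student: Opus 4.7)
The plan is to use a direct-sum argument, combining Theorem~\ref{thm:mp AP}, Theorem~\ref{thm:mp TPA}, and Proposition~\ref{pro:from matched pairs of anti-pre-Lie algebras to matched pairs of Lie algebras}: produce the desired matched pair of transposed Poisson algebras as a by-product of passing from the anti-pre-Lie Poisson direct sum to its sub-adjacent transposed Poisson structure, and then read off the matched-pair data.

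First, I would apply Theorem~\ref{thm:mp AP} to the given matched pair of anti-pre-Lie Poisson algebras to obtain an anti-pre-Lie Poisson algebra $P = A \bowtie^{\mu_A, l_{\circ_A}, r_{\circ_A}}_{\mu_B, l_{\circ_B}, r_{\circ_B}} B$ on the vector space $A \oplus B$. Let $\bar P$ denote its sub-adjacent transposed Poisson algebra. Since $(A, \cdot_A, \circ_A)$ and $(B, \cdot_B, \circ_B)$ are anti-pre-Lie Poisson subalgebras of $P$, their sub-adjacent transposed Poisson algebras $(A, \cdot_A, [-,-]_A)$ and $(B, \cdot_B, [-,-]_B)$ are transposed Poisson subalgebras of $\bar P$. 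Therefore $\bar P$ is the direct sum of two transposed Poisson subalgebras as a vector space, and by the converse part of Theorem~\ref{thm:mp TPA}, $\bar P$ arises from some matched pair of transposed Poisson algebras $(A, B, \mu_A', \rho_A', \mu_B', \rho_B')$.

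It remains to identify this matched-pair data with the claimed one. The commutative associative structure on $\bar P$ is inherited from $P$ and is given by Eq.~(\ref{eq:Asso}) applied to $(\mu_A, \mu_B)$, so $\mu_A' = \mu_A$ and $\mu_B' = \mu_B$. For the Lie bracket, the computation carried out in the proof of Proposition~\ref{pro:from matched pairs of anti-pre-Lie algebras to matched pairs of Lie algebras} shows that the sub-adjacent bracket on $A \oplus B$ of the anti-pre-Lie algebra underlying $P$ takes precisely the form of Eq.~(\ref{eq:Lie}) with $\rho_{\mathfrak{g}} = l_{\circ_A} - r_{\circ_A}$ and $\rho_{\mathfrak{h}} = l_{\circ_B} - r_{\circ_B}$. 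Comparing with the formula read off from the converse direction of Theorem~\ref{thm:mp TPA} yields $\rho_A' = l_{\circ_A} - r_{\circ_A}$ and $\rho_B' = l_{\circ_B} - r_{\circ_B}$, which is exactly the claim.

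The argument is essentially formal: no genuine obstacle arises because the nontrivial compatibilities of Definition~\ref{defi:MP TPA} --- in particular the four mixed identities (\ref{eq:defi:MP TPA a})--(\ref{eq:defi:MP TPA d}) --- are absorbed into the converse statement of Theorem~\ref{thm:mp TPA}. The only real work is the bookkeeping identification of $(\mu_A', \rho_A', \mu_B', \rho_B')$ in the last step. A direct verification that bypasses Theorem~\ref{thm:mp TPA} is possible, combining the hypothesis that $(A,B,\mu_A,\mu_B)$ is a matched pair of commutative associative algebras, Proposition~\ref{pro:from matched pairs of anti-pre-Lie algebras to matched pairs of Lie algebras} for the Lie side, and Proposition~\ref{pro:repandsemidirectproduct3}(\ref{rep32}) for the representation conditions, but it would still require deriving (\ref{eq:defi:MP TPA a})--(\ref{eq:defi:MP TPA d}) from the ten matched-pair identities for anti-pre-Lie Poisson algebras, which would be considerably more laborious.
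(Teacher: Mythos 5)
Your proposal is correct and follows essentially the same route as the paper: the paper's proof is exactly "similar to the proof of Proposition~\ref{pro:from matched pairs of anti-pre-Lie algebras to matched pairs of Lie algebras}", i.e.\ form the direct-sum anti-pre-Lie Poisson algebra via Theorem~\ref{thm:mp AP}, pass to its sub-adjacent transposed Poisson algebra, observe that its operations are given by Eqs.~(\ref{eq:Asso}) and (\ref{eq:Lie}) with $\mu_A,\mu_B$ and $l_{\circ_A}-r_{\circ_A}$, $l_{\circ_B}-r_{\circ_B}$, and conclude by the equivalence in Theorem~\ref{thm:mp TPA}. Your phrasing via the converse part of Theorem~\ref{thm:mp TPA} plus identification of the data is logically the same bookkeeping, so no gap.
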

\begin{proof}
    It is similar to the proof of Proposition \ref{pro:from matched pairs of anti-pre-Lie algebras to matched pairs of Lie algebras}.
    \end{proof}

\subsection{Manin triples of transposed Poisson algebras and anti-pre-Lie Poisson bialgebras}\

As mentioned in Introduction, transposed Poisson algebras are
``inconsistent" with the nondegenerate (symmetric) bilinear forms
which are invariant on both the commutative associative algebras
and the Lie algebras in the following sense.

\begin{pro}\label{pro:tpa bilinear form}
    Let $(A,\cdot,[-,-])$ be a transposed Poisson algebra. If there is a 
    nondegenerate bilinear form $\mathcal{B}$ such that it is invariant on both $(A,\cdot)$ and $(A,[-,-])$,
    then Eq.~(\ref{eq:coherent TPA}) holds.

\end{pro}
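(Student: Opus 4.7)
The plan is to derive the auxiliary identity
\[
[x,z\cdot y]=z\cdot[x,y]\qquad\text{for all }x,y,z\in A,
\]
which says that $\mathrm{ad}(x)$ commutes with left multiplication by any $z$, and then exploit it together with the commutativity of $\cdot$ to force the desired vanishing.

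To establish this intermediate identity, I would pair the transposed Leibniz rule $2z\cdot[x,y]=[z\cdot x,y]+[x,z\cdot y]$ against an arbitrary $w\in A$ via $\mathcal B$ and drive everything toward pairings of the form $\mathcal B([x,y],z\cdot w)$ and $\mathcal B(x,z\cdot[y,w])$. The left-hand side becomes $2\mathcal B([x,y],z\cdot w)$ after associative invariance. The first summand on the right, $\mathcal B([z\cdot x,y],w)$, becomes $\mathcal B(x,z\cdot[y,w])$ after one application of Lie invariance followed by associative invariance. For the second summand, $\mathcal B([x,z\cdot y],w)$, I would apply Lie invariance once, then invoke the transposed Leibniz rule a \emph{second} time in the form $[z\cdot y,w]=2z\cdot[y,w]-[y,z\cdot w]$, and finish with one more application of Lie invariance to convert $\mathcal B(x,[y,z\cdot w])$ into $\mathcal B([x,y],z\cdot w)$. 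The three terms collapse to $3\mathcal B([x,y],z\cdot w)=3\mathcal B(x,z\cdot[y,w])$; moving $z$ across the pairings and using Lie invariance on the right yields $\mathcal B(z\cdot[x,y],w)=\mathcal B([x\cdot z,y],w)$ for every $w$, and nondegeneracy of $\mathcal B$ gives $z\cdot[x,y]=[x\cdot z,y]$. Combining this with the transposed Leibniz rule one last time then produces the displayed identity.

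With the identity in hand, commutativity of $\cdot$ immediately shows that $[x,a\cdot b]$ is symmetric in $(a,b)$, which I would rewrite as the relation
\[
x\cdot[y,z]=y\cdot[x,z]\qquad\text{for all }x,y,z\in A.
\]
Specializing $y=x$ (so that $[x,x]=0$) yields $x\cdot[x,z]=0$, whose linearization is the skew relation $a\cdot[b,c]+b\cdot[a,c]=0$. Combining the symmetry relation with the skew relation forces $2x\cdot[y,z]=0$, and since $\mathrm{char}\,\mathbb K=0$ one concludes $[y,z]\cdot x=0$, which is $[x,y]\cdot z=0$ after relabeling. Then $[x\cdot y,z]=x\cdot[y,z]=0$ follows at once from the intermediate identity (applied as $[x\cdot y,z]=-[z,x\cdot y]=-x\cdot[z,y]=x\cdot[y,z]$), giving both assertions of Eq.~(\ref{eq:coherent TPA}).

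The main obstacle is the coefficient bookkeeping in the derivation of the intermediate identity: two applications of the transposed Leibniz rule on different brackets have to be woven together with several applications of the two invariance properties so that the numerical coefficients match and only a single non-trivial residual equation survives. Once that is secured, the endgame is a short cancellation driven entirely by commutativity of $\cdot$.
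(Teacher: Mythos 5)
Your argument is correct, and its engine is the same as the paper's: pair the (vanishing) transposed Leibniz combination against a fourth element $w$, shuttle $\mathcal B$ back and forth using invariance on $(A,\cdot)$, invariance on $(A,[-,-])$ and commutativity of $\cdot$, and use nondegeneracy to extract pointwise identities. The organization differs slightly at the end: the paper extracts the two identities of Eq.~(\ref{eq:invariant tpa}) and then combines both with Eq.~(\ref{eq:defi:transposed Poisson algebra}), whereas you effectively extract only the second one (your commutation identity $z\cdot[x,y]=[z\cdot x,y]=[x,z\cdot y]$ is exactly what that identity plus the Leibniz rule gives) and replace the first by a purely algebraic symmetry/antisymmetry collapse driven by commutativity of $\cdot$; this is a perfectly valid, slightly leaner endgame, and your coefficient bookkeeping ($3\mathcal B([x,y],z\cdot w)=3\mathcal B(x,z\cdot[y,w])$, legitimate since $\mathrm{char}\,\mathbb K=0$) checks out. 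One trivial slip: in the relation $x\cdot[y,z]=y\cdot[x,z]$ the specialization $y=x$ gives only a tautology; you should set $z=x$ (equivalently, set $b=x$ in $a\cdot[x,b]=b\cdot[x,a]$), which yields $x\cdot[y,x]=y\cdot[x,x]=0$, and then your linearization and the rest of the argument go through unchanged.
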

\begin{proof}
    Let $x,y,z,w\in A$. We have
    \begin{eqnarray*}
        &&\mathcal{B}(2z\cdot [x,y]-[z\cdot x,y]-[x,z\cdot y],w)=\mathcal{B}(z,2[x,y]\cdot w-x\cdot[y,w]+y\cdot[x,w]),\\
        &&\mathcal{B}(2z\cdot [x,y]-[z\cdot x,y]-[x,z\cdot y],w)=\mathcal{B}(x,2[y,z\cdot w]-z\cdot[y,w]-[z\cdot y,w]).
    \end{eqnarray*}
    Then by the nondegeneracy of $\mathcal{B}$, we have
    \begin{equation}\label{eq:invariant tpa}
        2[x,y]\cdot z-x\cdot [y,z]+y\cdot[x,z]=0,\  2[y,z\cdot x]-z\cdot[y,x]-[z\cdot y,x]=0.
    \end{equation}
    Combining Eq.~(\ref{eq:invariant tpa}) with Eq.~(\ref{eq:defi:transposed Poisson algebra}), we get Eq.~(\ref{eq:coherent TPA}).
\end{proof}

\delete{ However, this compatibility condition is too specialized
since it asserts the interaction of the commutative associative
product and the Lie bracket to be trivial. Thus by Proposition
\ref{pro:tpa bilinear form}, defining a Manin triple of transposed
Poisson algebras to be a Manin triple of Lie algebras with respect
to the invariant bilinear form and a double construction of
commutative Frobenius algebras not only loses generality, but also
recovers the study of Manin triples of Poisson algebras. On the
other hand, by Proposition \ref{pro:comm 2-cocycle}, it is natural
to consider a symmetric nondegenerate bilinear form $\mathcal{B}$
on a transposed Poisson algebra $(A,\cdot,[-,-])$ such that it is
invariant on $(A,\cdot)$ and a commutative 2-cocycle on
$(A,[-,-])$. Moreover, a transposed Poisson algebra with such a
bilinear form leads to the notion of anti-pre-Lie Poisson
algebras.}

On the other hand, the relationship between anti-pre-Lie Poisson
algebras and transposed Poisson algebras with the nondegenerate
symmetric bilinear forms which are invariant on the commutative
associative algebras and commutative 2-cocycles on the Lie
algebras is given as follows.
\begin{pro} {\rm (\cite{LB2022})}
    Let $(A,\cdot,[-,-])$ be a transposed Poisson algebra. Suppose that there is a nondegenerate symmetric bilinear form $\mathcal{B}$ on  $A$ such that it is  invariant on $(A,\cdot)$ and a commutative 2-cocycle on $(A,[-,-])$.
    Then there is an anti-pre-Lie algebra $(A,\circ)$ defined by
Eq.~(\ref{eq:thm:commutative 2-cocycles and anti-pre-Lie
algebras}) through $\mathcal B$ such that $(A,\cdot,\circ)$ is a
compatible anti-pre-Lie Poisson algebra of $(A,\cdot,[-,-])$.
\end{pro}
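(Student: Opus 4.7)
The plan is to begin by applying Theorem~\ref{thm:commutative 2-cocycles and anti-pre-Lie algebras} to the nondegenerate commutative $2$-cocycle $\mathcal{B}$ on the Lie algebra $(A,[-,-])$, obtaining a unique compatible anti-pre-Lie algebra $(A,\circ)$ determined by the relation $\mathcal{B}(x\circ y,z)=\mathcal{B}(y,[x,z])$. Compatibility means the sub-adjacent Lie bracket of $(A,\circ)$ coincides with $[-,-]$; in particular $x\circ y-y\circ x=[x,y]$. What remains is to verify the two anti-pre-Lie Poisson axioms of Definition~\ref{defi:anti-pre-Lie Poisson}.

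Both verifications proceed uniformly by pairing the asserted identity against an arbitrary $w\in A$ via $\mathcal{B}$ and concluding by nondegeneracy. The three standing rewriting tools are invariance of $\mathcal{B}$ on $(A,\cdot)$, the defining relation $\mathcal{B}(x\circ y,z)=\mathcal{B}(y,[x,z])$ (which converts every occurrence of $\circ$ into a Lie bracket inside $\mathcal{B}$), and the transposed Leibniz rule~(\ref{eq:defi:transposed Poisson algebra}) (which reconciles the resulting Lie-bracket-and-product expressions).

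For Eq.~(\ref{eq:defi:anti-pre-Lie Poisson2}), pairing with $w$ turns the left-hand side into $2\mathcal{B}(y\cdot z,[x,w])$ and the right-hand side into $\mathcal{B}(y,[z\cdot x,w]+[x,z\cdot w])$; these agree by one application of the transposed Leibniz rule giving $[z\cdot x,w]+[x,z\cdot w]=2z\cdot[x,w]$, followed by one use of invariance on $(A,\cdot)$. For Eq.~(\ref{eq:defi:anti-pre-Lie Poisson1}), using $x\circ y-y\circ x=[x,y]$ and invariance, the left-hand side becomes $2\mathcal{B}([x,y],z\cdot w)$, while the right-hand side becomes $\mathcal{B}(z,[x,y\cdot w]-[y,x\cdot w])$; applying the transposed Leibniz rule with the scalar variable specialised to $w$, together with antisymmetry of the bracket, collapses $[x,y\cdot w]-[y,x\cdot w]$ to $2w\cdot[x,y]$, so the right-hand side becomes $2\mathcal{B}(z\cdot w,[x,y])$, matching the left-hand side.

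The only real obstacle is bookkeeping: choosing the correct order in which to apply invariance on $(A,\cdot)$, the defining relation of $\circ$, and the transposed Leibniz rule, and exploiting antisymmetry of the bracket to align the intermediate Lie-bracket terms. No identities beyond those just listed are required; in particular the commutative $2$-cocycle identity is used only implicitly, through the construction of $\circ$ itself.
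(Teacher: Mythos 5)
Your proof is correct: the existence and compatibility of $(A,\circ)$ follow from Theorem~\ref{thm:commutative 2-cocycles and anti-pre-Lie algebras} (this is the only place the commutative $2$-cocycle condition enters), and your verifications of Eqs.~(\ref{eq:defi:anti-pre-Lie Poisson1}) and (\ref{eq:defi:anti-pre-Lie Poisson2}) by pairing with an arbitrary $w$, rewriting $\circ$ via $\mathcal{B}(x\circ y,z)=\mathcal{B}(y,[x,z])$, and invoking invariance, the transposed Leibniz rule and nondegeneracy all check out. The paper itself gives no proof of this Proposition (it is quoted from \cite{LB2022}), so there is nothing internal to compare against; your duality-style argument is the natural one and is exactly what the quoted result rests on.
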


Recall that a \textbf{Frobenius algebra} is a triple
$(A,\cdot,\mathcal B)$, where the pair $(A,\cdot)$ is an
associative algebra and $\mathcal{B}$ is a nondegenerate invariant
bilinear form on $(A,\cdot)$.  Furthermore, we recall the
definition of double constructions of (commutative) Frobenius
algebras.

\begin{defi}(\cite{Bai2010})
    Let $(A,\cdot_{A})$ be a commutative associative algebra. Suppose that there is a commutative associative algebra structure $\cdot_{A^{*}}$ on the dual space $A^{*}$.
    A \textbf{double construction of commutative Frobenius algebras}  associated to $(A,\cdot_{A})$ and $(A^{*},\cdot_{A^{*}})$  is a collection $\big((A\oplus A^{*},\cdot,\mathcal{B}_{d}),A,A^{*}\big)$,
    such that $(A\oplus A^{*},\cdot)$ is a commutative associative algebra which contains $(A,\cdot_{A})$ and $(A^{*},\cdot_{A^{*}})$ as subalgebras, and the nondegenerate symmetric bilinear form $\mathcal{B}_{d}$ defined
     by Eq.~(\ref{eq:defi:Manin triples of Lie algebras})
    makes $(A\oplus A^{*},\cdot,\mathcal{B}_{d})$ a Frobenius algebra.
    \delete{
    \begin{enumerate}
        \item $(A\oplus A^{*},\cdot)$ is a commutative associative algebra which contains $(A,\cdot_{A})$ and $(A^{*},\cdot_{A^{*}})$ as subalgebras.
        \item The nondegenerate symmetric bilinear form $\mathcal{B}_{d}$ given by Eq.~(\ref{eq:defi:Manin triples of Lie algebras}) is invariant on $(A\oplus A^{*},\cdot)$.
    \end{enumerate}
    We denote it by $(A\bowtie A^{*},\mathcal{B}_{d})$.}
\end{defi}

\delete{
The notation $(A\bowtie A^{*},\mathcal{B}_{d})$ is justified since we have the following result.

\begin{pro}(\cite{Bai2010})
    Let $(A,\cdot_{A})$ be a commutative associative algebra. Suppose that there is a commutative associative algebra structure $\cdot_{A^{*}}$ on the dual space $A^{*}$. Then there is a double
     construction of commutative Frobenius algebras $((A\oplus A^{*},\cdot,\mathcal{B}_{d}),A,A^{*})$ if and only if $(A,A^{*},-\mathcal{L}^{*}_{\cdot_{A}},-\mathcal{L}^{*}_{\cdot_{A^{*}}})$ is a matched pair
     of commutative associative algebras.
\end{pro}}



\begin{defi}
    Let $(A,\cdot_{A},[-,-]_{A})$ be a transposed Poisson algebra. Assume  that there is a transposed Poisson algebra structure $(A^{*},\cdot_{A^{*}},[-,-]_{A^{*}})$ on the dual space $A^{*}$. Suppose that there is a transposed Poisson algebra structure $(A\oplus A^{*},\cdot,[-,-])$ on the direct sum $A\oplus A^{*}$ of vector spaces, such that $\big((A\oplus A^{*},\cdot,\mathcal{B}_{d}),A,A^{*}\big)$ is a double construction of commutative Frobenius
    algebras, $\big((A\oplus A^{*},[-,-],\mathcal{B}_{d}), A,
A^{*}\big)$ is a Manin triple of Lie algebras with respect to the
commutative 2-cocycle,
and $(A,\cdot_{A},[-,-]_{A})$ and
$(A^{*},\cdot_{A^{*}},[-,-]_{A^{*}})$ are transposed Poisson
subalgebras of $(A\oplus A^{*},\cdot,[-,-])$. Such a structure is
called a \textbf{Manin triple of transposed Poisson algebras with
respect to the invariant bilinear form on $(A\oplus A^{*},\cdot)$
and the commutative 2-cocycle on $(A\oplus A^{*},[-,-])$}, or
simply a \textbf{Manin triple of transposed Poisson algebras}, and
is denoted by $\big((A\oplus
A^{*},\cdot,[-,-],\mathcal{B}_{d}),A,A^{*}\big)$.
\end{defi}


\begin{lem}\label{lem:111-Poisson}
Let $\big((A\oplus A^{*},\cdot,[-,-],\mathcal{B}_{d}),A,A^{*}\big)$ be a
Manin triple of transposed Poisson algebras. Then there exists a
compatible anti-pre-Lie algebra structure $\circ$ on $A\oplus A^*$
defined by Eq.~(\ref{eq:thm:commutative 2-cocycles and
anti-pre-Lie algebras}) through $\mathcal B_d$ such that $(A\oplus
A^*, \cdot, \circ)$ is a compatible anti-pre-Lie Poisson algebra.
Moreover, $(A,\cdot_A,\circ_A=
\circ|_{A\otimes A})$ and
$(A^*,\cdot_{A^*},\circ_{A^*}=
\circ|_{A^*\otimes A^*})$ are
anti-pre-Lie Poisson subalgebras,
whose sub-adjacent transposed Poisson algebras are
$(A,\cdot_{A},[-.-]_{A})$ and $(A^{*},\cdot_{A^{*}},
[-,-]_{A^{*}})$ respectively.
\end{lem}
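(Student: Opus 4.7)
The plan is to reduce the lemma to two ingredients already at hand: (i) Theorem \ref{thm:commutative 2-cocycles and anti-pre-Lie algebras}, which produces a compatible anti-pre-Lie algebra from a Lie algebra equipped with a nondegenerate commutative 2-cocycle via Eq.~(\ref{eq:thm:commutative 2-cocycles and anti-pre-Lie algebras}); and (ii) the companion fact cited from \cite{LB2022} just above (the proposition preceding the definition of Frobenius algebra) asserting that, for a transposed Poisson algebra carrying a nondegenerate symmetric bilinear form which is invariant on the commutative associative part and a commutative 2-cocycle on the Lie part, the resulting operation $\circ$ together with $\cdot$ gives a compatible anti-pre-Lie Poisson algebra. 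Combined with the subalgebra argument of Lemma \ref{lem:111}, these yield the entire statement.

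By the definition of a Manin triple of transposed Poisson algebras, the bilinear form $\mathcal{B}_d$ is nondegenerate and symmetric, invariant on $(A\oplus A^*,\cdot)$ (since $((A\oplus A^*,\cdot,\mathcal{B}_d),A,A^*)$ is a double construction of commutative Frobenius algebras) and a commutative 2-cocycle on $(A\oplus A^*,[-,-])$ (since $((A\oplus A^*,[-,-],\mathcal{B}_d),A,A^*)$ is a Manin triple of Lie algebras with respect to the commutative 2-cocycle). Applying (ii) to $(A\oplus A^*,\cdot,[-,-])$ with $\mathcal{B}_d$ therefore produces the desired compatible anti-pre-Lie Poisson algebra $(A\oplus A^*,\cdot,\circ)$, where $\circ$ is uniquely determined by Eq.~(\ref{eq:thm:commutative 2-cocycles and anti-pre-Lie algebras}).

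To verify that $(A,\cdot_A,\circ_A)$ is an anti-pre-Lie Poisson subalgebra, closure under $\cdot_A$ follows since $A$ is a subalgebra of $(A\oplus A^*,\cdot)$. For closure under $\circ$, I transport the argument of Lemma \ref{lem:111}: for $x,y\in A$, write $x\circ y=w+w^*$ with $w\in A$ and $w^*\in A^*$; then for every $z\in A$,
\[
\langle w^*,z\rangle=\mathcal{B}_d(x\circ y,z)=\mathcal{B}_d(y,[x,z])=0,
\]
because $[x,z]\in A$ by the Lie subalgebra property and $A$ pairs trivially with itself under $\mathcal{B}_d$. Hence $w^*=0$, so $x\circ y\in A$ and $(A,\circ_A)$ is an anti-pre-Lie subalgebra of $(A\oplus A^*,\circ)$. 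The defining identities of an anti-pre-Lie Poisson algebra (Eqs.~(\ref{eq:defi:anti-pre-Lie Poisson1}) and (\ref{eq:defi:anti-pre-Lie Poisson2})) involve only $\cdot$ and $\circ$ and therefore restrict to $A$. The symmetric argument handles $(A^*,\cdot_{A^*},\circ_{A^*})$.

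Finally, the sub-adjacent Lie bracket of $(A,\circ_A)$ is $x\circ_A y-y\circ_A x$, which is the restriction to $A$ of the sub-adjacent Lie bracket of $(A\oplus A^*,\circ)$; by Theorem \ref{thm:commutative 2-cocycles and anti-pre-Lie algebras} this global bracket coincides with $[-,-]$, and the Manin triple condition ensures its restriction to $A$ is exactly $[-,-]_A$. Hence the sub-adjacent transposed Poisson algebra of $(A,\cdot_A,\circ_A)$ is $(A,\cdot_A,[-,-]_A)$, and an identical argument gives $(A^*,\cdot_{A^*},[-,-]_{A^*})$ for the dual side. The only genuinely nontrivial point is the closure of $A$ (respectively $A^*$) under $\circ$, which is disposed of by the short nondegeneracy computation above; everything else is a direct application of the already established theorems and of the Lie-algebraic argument of Lemma \ref{lem:111}.
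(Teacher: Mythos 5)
Your proposal is correct and follows essentially the same route as the paper: the paper's proof is just "similar to the proof of Lemma \ref{lem:111}", i.e.\ invoke the cited result from \cite{LB2022} (the compatible anti-pre-Lie Poisson structure coming from a bilinear form that is invariant on $(A\oplus A^*,\cdot)$ and a commutative 2-cocycle on $(A\oplus A^*,[-,-])$) and then repeat the nondegeneracy computation $\langle w^*,z\rangle=\mathcal B_d(x\circ y,z)=\mathcal B_d(y,[x,z])=0$ to get closure of $A$ and $A^*$ under $\circ$, exactly as you did. Your write-up just makes explicit the details the paper leaves implicit, with no substantive deviation.
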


\delete{
\begin{thm}
Let $(A,\cdot_{A},\circ_{A})$ and $(A^{*},\cdot_{A^{*}},\circ_{A^{*}})$ be two anti-pre-Lie Poisson algebras, and their sub-adjacent transposed Poisson algebras be $(A,\cdot_{A},[-,-]_{A})$ and $(A^{*},\cdot_{A^{*}},[-,-]_{A^{*}})$ respectively. Then the following conditions are equivalent:
\begin{enumerate}
    \item There is a Manin triple of transposed Poisson algebras $((A\oplus A^{*},\cdot,\circ,\mathcal{B}_{d}),A,A^{*})$ such that the compatible anti-pre-Lie Poisson algebra $(A\oplus A^{*},\cdot,\circ)$
in which $\circ$ defined by Eq.~(\ref{eq:thm:commutative
2-cocycles and anti-pre-Lie algebras}) through $\mathcal B_d$
contains $(A,\cdot_{A},\circ_{A})$ and
$(A^{*},\cdot_{A^{*}},\circ_{A^{*}})$ as anti-pre-Lie Poisson
subalgebras.
    \item
    $(A,A^{*},-\mathcal{L}^{*}_{\cdot_{A}},-\mathrm{ad}^{*}_{A},\mathcal{R}^{*}_{\circ_{A}},-\mathcal{L}^{*}_{\cdot_{A^{*}}},-\mathrm{ad}^{*}_{A^{*}},\mathcal{R}^{*}_{A^{*}})$ is a matched pair of anti-pre-Lie Poisson algebras;
    \item $(A,A^{*},-\mathcal{L}^{*}_{\cdot_{A}},-\mathcal{L}^{*}_{\circ_{A}},-\mathcal{L}^{*}_{\cdot_{A^{*}}}-\mathcal{L}^{*}_{\circ_{A^{*}}})$ is a matched pair of transposed Poisson algebras.
\end{enumerate}
\end{thm}
\begin{proof}
It is similar to the proof of Proposition \ref{thm:Manin triples and matched pairs}.
\end{proof}}

\begin{proof}
    It is similar to the proof of Lemma~\ref{lem:111}.
    \end{proof}

\begin{thm}\label{thm:ddd}
    Let $(A,\cdot_{A},\circ_{A})$ and $(A^{*},\cdot_{A^{*}},\circ_{A^{*}})$ be two anti-pre-Lie Poisson algebras and their sub-adjacent transposed Poisson algebras be $(A,\cdot_{A},[-,-]_{A})$ and $(A^{*},\cdot_{A^{*}},[-,-]_{A^{*}})$ respectively. Then the following conditions are equivalent:
    \begin{enumerate}
        \item There is a Manin triple of transposed Poisson algebras $\big((A\oplus A^{*},\cdot,[-,-],\mathcal{B}_{d}),A,A^{*}\big)$ such that the
        compatible anti-pre-Lie Poisson algebra $(A\oplus A^{*},\cdot,\circ)$ in which $\circ$ is defined by Eq.~(\ref{eq:thm:commutative
2-cocycles and anti-pre-Lie algebras}) through $\mathcal B_d$
contains $(A,\cdot_{A},\circ_{A})$ and
$(A^{*},\cdot_{A^{*}},\circ_{A^{*}})$ as anti-pre-Lie Poisson
subalgebras.
        \item
        $(A,A^{*},-\mathcal{L}^{*}_{\cdot_{A}},-\mathrm{ad}^{*}_{A},\mathcal{R}^{*}_{\circ_{A}},-\mathcal{L}^{*}_{\cdot_{A^{*}}},-\mathrm{ad}^{*}_{A^{*}},\mathcal{R}^{*}_{\circ_{A^{*}}})$ is a matched pair of anti-pre-Lie Poisson algebras.
        \item $(A,A^{*},-\mathcal{L}^{*}_{\cdot_{A}},-\mathcal{L}^{*}_{\circ_{A}},-\mathcal{L}^{*}_{\cdot_{A^{*}}}-\mathcal{L}^{*}_{\circ_{A^{*}}})$ is a matched pair of transposed Poisson algebras.
    \end{enumerate}
\end{thm}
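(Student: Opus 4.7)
The plan is to mimic the strategy of Theorem~\ref{thm:Manin triples and matched pairs}, but now we have to handle two compatible structures (commutative associative and Lie) simultaneously. The Lie/anti-pre-Lie piece has already been treated in Theorem~\ref{thm:Manin triples and matched pairs}, and the commutative associative piece is the classical double construction of commutative Frobenius algebras in \cite{Bai2010}; the remaining work is to glue the two together via the transposed Leibniz identity, using Theorem~\ref{thm:mp AP} and Proposition~\ref{pro:TPA-APL}.

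First, for $(1)\Longrightarrow(2)$: starting from a Manin triple of transposed Poisson algebras, Lemma~\ref{lem:111-Poisson} furnishes a compatible anti-pre-Lie Poisson algebra structure on $A\oplus A^{*}$ whose restrictions recover $(A,\cdot_{A},\circ_{A})$ and $(A^{*},\cdot_{A^{*}},\circ_{A^{*}})$. Then by Theorem~\ref{thm:mp AP} this yields a matched pair of anti-pre-Lie Poisson algebras with some $(\mu_{A},l_{\circ_{A}},r_{\circ_{A}},\mu_{B},l_{\circ_{B}},r_{\circ_{B}})$. To identify these maps, I would use the two invariance-type conditions separately. The Lie/anti-pre-Lie identifications $l_{\circ_{A}}=-\mathrm{ad}^{*}_{A}$, $r_{\circ_{A}}=\mathcal{R}^{*}_{\circ_{A}}$ (and symmetrically for $A^{*}$) follow exactly as in the proof of Theorem~\ref{thm:Manin triples and matched pairs}, using the commutative $2$-cocycle property of $\mathcal{B}_{d}$ and Eq.~(\ref{eq:thm:commutative 2-cocycles and anti-pre-Lie algebras}). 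The associative identification $\mu_{A}=-\mathcal{L}^{*}_{\cdot_{A}}$ (and dually on $A^{*}$) follows from the classical computation: for $x,y\in A$ and $a^{*}\in A^{*}$, invariance of $\mathcal{B}_{d}$ on the associative part gives $\langle\mu_{A}(x)a^{*},y\rangle=\mathcal{B}_{d}(x\cdot a^{*},y)=\mathcal{B}_{d}(a^{*},x\cdot_{A} y)=-\langle\mathcal{L}^{*}_{\cdot_{A}}(x)a^{*},y\rangle$.

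Next, $(2)\Longrightarrow(3)$ is immediate from Proposition~\ref{pro:TPA-APL}: a matched pair of anti-pre-Lie Poisson algebras descends to a matched pair of transposed Poisson algebras, and the only bookkeeping to do is the algebraic identity $l_{\circ_{A}}-r_{\circ_{A}}=-\mathrm{ad}^{*}_{A}-\mathcal{R}^{*}_{\circ_{A}}=-\mathcal{L}^{*}_{\circ_{A}}$ (using $-\mathrm{ad}=\mathcal{R}_{\circ}-\mathcal{L}_{\circ}$, whence $-\mathrm{ad}^{*}=\mathcal{R}^{*}_{\circ}-\mathcal{L}^{*}_{\circ}$), and symmetrically on $A^{*}$. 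This gives exactly the maps listed in (3).

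Finally, $(3)\Longrightarrow(1)$ is where one has to produce genuine geometric data. Given the matched pair of transposed Poisson algebras $(A,A^{*},-\mathcal{L}^{*}_{\cdot_{A}},-\mathcal{L}^{*}_{\circ_{A}},-\mathcal{L}^{*}_{\cdot_{A^{*}}},-\mathcal{L}^{*}_{\circ_{A^{*}}})$, Theorem~\ref{thm:mp TPA} endows $A\oplus A^{*}$ with a transposed Poisson algebra structure containing $A$ and $A^{*}$ as subalgebras. It remains to check that $\mathcal{B}_{d}$ is invariant on the commutative associative part and a commutative $2$-cocycle on the Lie part. The first check is the standard one for double constructions of commutative Frobenius algebras (\cite{Bai2010}) and is routine from $\mu_{A}=-\mathcal{L}^{*}_{\cdot_{A}}$. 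For the Lie part, note that the associated matched pair of Lie algebras is exactly $\big(\mathfrak{g}(A),\mathfrak{g}(A^{*}),-\mathcal{L}^{*}_{\circ_{A}},-\mathcal{L}^{*}_{\circ_{A^{*}}}\big)$, which by Theorem~\ref{thm:Manin triples and matched pairs} ($(\ref{it:APL3})\Rightarrow(\ref{it:APL1})$) produces a Manin triple of Lie algebras with respect to $\mathcal{B}_{d}$ as a commutative $2$-cocycle. Combining these two invariance properties yields the desired Manin triple of transposed Poisson algebras, and Lemma~\ref{lem:111-Poisson} identifies the induced anti-pre-Lie Poisson structure on $A$ and $A^{*}$ with the given ones. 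The main technical obstacle is the consistency check in $(1)\Rightarrow(2)$: one must verify that the \emph{same} collection of linear maps on $A$ and $A^{*}$ simultaneously satisfies all the matched pair axioms for anti-pre-Lie Poisson algebras (including the five mixed relations), and this reduces to carefully combining the Lie-algebra argument of Theorem~\ref{thm:Manin triples and matched pairs} with the Frobenius-algebra argument for the associative part, with the transposed Leibniz rule ensuring the mixed compatibility conditions hold automatically.
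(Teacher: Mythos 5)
Your plan is correct and follows essentially the same route as the paper, which likewise reduces the associative side to the Bai (2010) equivalence between double constructions of commutative Frobenius algebras and matched pairs $(A,A^{*},-\mathcal{L}^{*}_{\cdot_{A}},-\mathcal{L}^{*}_{\cdot_{A^{*}}})$, and handles the Lie/anti-pre-Lie side exactly as in Theorem~\ref{thm:Manin triples and matched pairs}, gluing via Theorem~\ref{thm:mp AP}, Proposition~\ref{pro:TPA-APL}, Theorem~\ref{thm:mp TPA} and Lemma~\ref{lem:111-Poisson}. The only cosmetic remark is that your closing worry about the mixed compatibility conditions in $(1)\Rightarrow(2)$ is already absorbed by the converse part of Theorem~\ref{thm:mp AP}, so no separate verification is needed beyond identifying the maps, just as in the paper.
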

\begin{proof}
Note that  by \cite{Bai2010}, there is a double construction of
commutative Frobenius algebras $\big((A\oplus
A^{*},\cdot,\mathcal{B}_{d}),A,A^{*}\big)$ if and only if
$(A,A^{*},-\mathcal{L}^{*}_{\cdot_{A}},-\mathcal{L}^{*}_{\cdot_{A^{*}}})$
is a matched pair of commutative associative algebras. Hence the
conclusion follows from a proof similar to the one of
Proposition~\ref{thm:Manin triples and matched pairs}.
\end{proof}


Recall (\cite{Bai2010}) that a  \textbf{cocommutative
coassociative coalgebra} is a pair $(A,\Delta)$, such that $A$ is
a vector space and $\Delta:A\rightarrow A\otimes A$ is a linear
map satisfying
\begin{eqnarray}
    \tau\Delta&=&\Delta,\label{eq:symmetric}\\
    (\mathrm{id}\otimes \Delta)\Delta&=&(\Delta\otimes\mathrm{id})\Delta.\label{AssoCo}
\end{eqnarray}
\delete{
\begin{equation}\label{eq:symmetric}
    \tau\Delta=\Delta,
\end{equation}
\begin{equation}\label{AssoCo}
    (\mathrm{id}\otimes \Delta)\Delta=(\Delta\otimes\mathrm{id})\Delta.
\end{equation}}
A \textbf{commutative and cocommutative infinitesimal bialgebra}
is a triple $(A,\cdot,\Delta)$ such that the pair $(A,\cdot)$ is a
commutative associative algebra, the pair $(A,\Delta)$ is a
cocommutative coassociative coalgebra, and the following equation
holds:
\begin{equation}\label{AssoBia}
    \Delta(x\cdot y)=\big(\mathcal{L}_{\cdot}(x)\otimes \mathrm{id}\big)\Delta(y)+\big(\mathrm{id}\otimes\mathcal{L}_{\cdot}(y)\big)\Delta(x),\;\;\forall x,y\in A.
\end{equation}

\begin{defi}\label{defi:anti-pre-Lie Poisson coalg}
An \textbf{anti-pre-Lie Poisson coalgebra} is a triple
$(A,\Delta,\delta)$, such that $(A,\Delta)$ is a cocommutative
coassociative coalgebra, $(A,\delta)$ is an anti-pre-Lie
coalgebra, and the following conditions are satisfied:
\begin{eqnarray}
    2(\delta\otimes\mathrm{id})\Delta-2(\tau\otimes\mathrm{id})(\delta\otimes\mathrm{id})\Delta&=&(\tau\otimes\mathrm{id})(\mathrm{id}\otimes\delta)\Delta-(\mathrm{id}\otimes\delta)\Delta,\label{eq:defi:anti-pre-Lie Poisson coalg1}\\
    2(\mathrm{id}\otimes\Delta)\delta&=&(\mathrm{id}\otimes\tau)(\delta\otimes\mathrm{id})\delta+(\delta\otimes\mathrm{id})\Delta.\label{eq:defi:anti-pre-Lie Poisson coalg2}
\end{eqnarray}
\delete{
\begin{equation}\label{eq:defi:anti-pre-Lie Poisson coalg1}
2(\delta\otimes\mathrm{id})\Delta-2(\tau\otimes\mathrm{id})(\delta\otimes\mathrm{id})\Delta=(\tau\otimes\mathrm{id})(\mathrm{id}\otimes\delta)\Delta-(\mathrm{id}\otimes\delta)\Delta,
\end{equation}
\begin{equation}\label{eq:defi:anti-pre-Lie Poisson coalg2}
2(\mathrm{id}\otimes\Delta)\delta=(\mathrm{id}\otimes\tau)(\delta\otimes\mathrm{id})\delta+(\delta\otimes\mathrm{id})\Delta.
\end{equation} }
\end{defi}

\begin{pro}\label{pro:Poisson algs and Poisson coalgs}
Let $A$ be a vector space and $\Delta,\delta:A\rightarrow A\otimes
A$ be linear maps. Let $\cdot_{A^{*}},\circ_{A^{*}}:A^{*}\otimes
A^{*}\rightarrow A^{*} $ be linear duals of $\Delta$ and $\delta$
respectively. Then $(A,\Delta,\delta)$ is an anti-pre-Lie Poisson
coalgebra if and only if $(A^{*},\cdot_{A^{*}},\circ_{A^{*}})$ is
an anti-pre-Lie Poisson algebra.
\end{pro}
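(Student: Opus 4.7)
The plan is to follow the dualization template of Proposition~\ref{pro:anti-pre-Lie coalgebras and anti-pre-Lie algebras}, showing that each clause in the definition of an anti-pre-Lie Poisson coalgebra is dual to the corresponding clause in the definition of an anti-pre-Lie Poisson algebra.

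First, by the standard duality between cocommutative coassociative coalgebras and commutative associative algebras (recorded in \cite{Bai2010}), $(A,\Delta)$ is a cocommutative coassociative coalgebra if and only if $(A^{*},\cdot_{A^{*}})$ is a commutative associative algebra. By Proposition~\ref{pro:anti-pre-Lie coalgebras and anti-pre-Lie algebras}, $(A,\delta)$ is an anti-pre-Lie coalgebra if and only if $(A^{*},\circ_{A^{*}})$ is an anti-pre-Lie algebra. Hence the ``coalgebra'' halves of the two structures match automatically, and it suffices to verify that the two mixed compatibility equations in Definition~\ref{defi:anti-pre-Lie Poisson coalg} correspond, under dualization, to the two defining identities of an anti-pre-Lie Poisson algebra applied to $(A^{*},\cdot_{A^{*}},\circ_{A^{*}})$.

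To carry this out, I would pair each side of the two coalgebra compatibility equations with an arbitrary $a^{*}\otimes b^{*}\otimes c^{*}\in A^{*\otimes 3}$, evaluated at some $x\in A$, using the identifications
\[
\langle \delta(x), u^{*}\otimes v^{*}\rangle = \langle x, u^{*}\circ_{A^{*}} v^{*}\rangle, \qquad \langle \Delta(x), u^{*}\otimes v^{*}\rangle = \langle x, u^{*}\cdot_{A^{*}} v^{*}\rangle.
\]
Each composition $(f\otimes g)h$ with $f,g\in\{\mathrm{id},\delta,\Delta\}$ and $h\in\{\delta,\Delta\}$ then dualizes to a specific triple product of $a^{*},b^{*},c^{*}$ built from $\circ_{A^{*}}$ and $\cdot_{A^{*}}$, while each factor of $\tau$ permutes the corresponding pair of dual arguments. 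For instance,
\[
\langle (\delta\otimes\mathrm{id})\Delta(x), a^{*}\otimes b^{*}\otimes c^{*}\rangle = \langle x, (a^{*}\circ_{A^{*}} b^{*})\cdot_{A^{*}} c^{*}\rangle,
\]
\[
\langle (\mathrm{id}\otimes\Delta)\delta(x), a^{*}\otimes b^{*}\otimes c^{*}\rangle = \langle x, a^{*}\circ_{A^{*}} (b^{*}\cdot_{A^{*}} c^{*})\rangle,
\]
and so on for the remaining terms. Expanding both sides of each coalgebra identity this way converts it into an identity in $A^{*}$, which, after invoking commutativity of $\cdot_{A^{*}}$ from Step~1 where necessary, is exactly one of the defining identities (\ref{eq:defi:anti-pre-Lie Poisson1})--(\ref{eq:defi:anti-pre-Lie Poisson2}) on $(A^{*},\cdot_{A^{*}},\circ_{A^{*}})$.

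The only real obstacle is notational bookkeeping: one must carefully track the permutations of $a^{*},b^{*},c^{*}$ induced by the various $\tau$ factors so as to match the dualized expressions term-by-term against the anti-pre-Lie Poisson identities on $A^{*}$. Since the pairing of $A^{\otimes 3}$ with $A^{*\otimes 3}$ separates points, once every term is accounted for, the coalgebra identity holds on all of $A$ if and only if the corresponding algebra identity holds on all of $A^{*}$, yielding the claimed equivalence. No new structural idea beyond that already present in Proposition~\ref{pro:anti-pre-Lie coalgebras and anti-pre-Lie algebras} is required.
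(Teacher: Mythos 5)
Your proposal is correct and follows essentially the same route as the paper: the paper likewise quotes \cite{Bai2010} for the equivalence between cocommutative coassociative coalgebras and commutative associative algebras, and then disposes of the remaining conditions by exactly the dualization argument "similar to the proof of Proposition~\ref{pro:anti-pre-Lie coalgebras and anti-pre-Lie algebras}" that you outline, pairing the compatibility identities against $a^{*}\otimes b^{*}\otimes c^{*}$. Your sample pairings are consistent with the paper's convention~(\ref{eq:pro:anti-pre-Lie coalgebras and anti-pre-Lie algebras}), so no further idea is needed.
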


\begin{proof}
    By \cite{Bai2010}, $(A,\Delta)$ is a cocommutative coassociative coalgebra if and only if $(A^{*},\cdot_{A^{*}})$ is a commutative associative algebra. Moreover, by a proof similar to the one of Proposition \ref{pro:anti-pre-Lie
        coalgebras and anti-pre-Lie algebras},
    Eqs.~(\ref{eq:defi:anti-pre-Lie Poisson1})-(\ref{eq:defi:anti-pre-Lie Poisson2}) hold  on $A^*$ if and only if Eqs.~(\ref{eq:defi:anti-pre-Lie Poisson coalg1})-(\ref{eq:defi:anti-pre-Lie Poisson coalg2}) hold respectively. Hence the conclusion follows.
\end{proof}


\delete{
\begin{defi}
An \textbf{anti-pre-Lie Poisson bialgebra} is a collection
$(A,\cdot,\circ,\Delta,\delta)$, such that $(A,\cdot,\Delta)$ is a
commutative and cocommutative infinitesimal bialgebra,
$(A,\circ,\delta)$ is an anti-pre-Lie Poisson bialgebra, and the
following compatible conditions are satisfied for all $x,y\in A$:
\begin{small}
    \begin{equation}\label{eq:Poisson bialg 1}
        2(\mathcal{L}_{\circ}(x)\otimes\mathrm{id})\Delta(y)-2(\mathrm{id}\otimes\mathcal{L}_{\cdot}(y))\delta(x)+\delta(x\cdot y)+(\mathcal{L}_{\cdot}(y)\otimes\mathrm{id})\delta(x)-(\mathrm{id}\otimes\mathrm{ad}(x))\Delta(y)=0,
    \end{equation}
\begin{equation}\label{eq:Poisson bialg 2}
2\Delta([x,y])+(\mathrm{id}\otimes\mathrm{ad}(y))\delta(x)-(\mathcal{L}_{\cdot}(x)\otimes\mathrm{id})\delta(y)-(\mathrm{id}\otimes\mathrm{ad}(x))\delta(y)+(\mathcal{L}_{\cdot}(y)\otimes\mathrm{id})\delta(x)=0,
\end{equation}
    \begin{equation}\label{eq:Poisson bialg 3}
    2(\mathrm{id}\otimes\mathcal{L}_{\cdot}(y))\delta(x)-2(\mathcal{L}_{\circ}(x)\otimes\mathrm{id})\Delta(y)+\Delta(x\circ y)+(\mathcal{R}_{\circ}(y)\otimes\mathrm{id})\Delta(x)+\tau(\mathcal{L}_{\cdot}(x)\otimes\mathrm{id})\delta(y)-(\mathrm{id}\otimes\mathcal{L}_{\cdot}(x))\delta(y)=0,
    \end{equation}
\begin{equation}\label{eq:Poisson bialg 4}
(\tau-\mathrm{id})(2\delta(x\cdot
y)-(\mathcal{L}_{\cdot}(x)\otimes\mathrm{id})\delta(y)-(\mathrm{id}\otimes\mathcal{L}_{\cdot}(x))\delta(y)-(\mathrm{id}\otimes\mathcal{R}_{\circ}(y))\Delta(x))=0.
\end{equation}
\end{small}
\end{defi}
}

\begin{defi}
Let $(A,\cdot_{A},\circ_{A})$ be an anti-pre-Lie Poisson algebra
and $(A,\Delta,\delta)$ be an anti-pre-Lie Poisson coalgebra.
Suppose that the following conditions are satisfied:
\begin{enumerate}
    \item $(A,\cdot_{A},\Delta)$ is a commutative and cocommutative infinitesimal bialgebra.
    \item $(A,\circ_{A},\delta)$ is an anti-pre-Lie bialgebra.
    \item The following equations hold:
    \begin{small}
        \begin{equation}\label{eq:Poisson bialg 1}
            2\big(\mathcal{L}_{\circ_{A}}(x)\otimes\mathrm{id}\big)\Delta(y)-2\big(\mathrm{id}\otimes\mathcal{L}_{\cdot_{A}}(y)\big)\delta(x)+\delta(x\cdot_{A} y)+\big(\mathcal{L}_{\cdot_{A}}(y)\otimes\mathrm{id}\big)\delta(x)-\big(\mathrm{id}\otimes\mathrm{ad}_{A}(x)\big)\Delta(y)=0,
        \end{equation}
        \begin{equation}\label{eq:Poisson bialg 2}
            2\Delta([x,y]_{A})+\big(\mathrm{id}\otimes\mathrm{ad}_{A}(y)\big)\delta(x)-\big(\mathcal{L}_{\cdot_{A}}(x)\otimes\mathrm{id}\big)\delta(y)-\big(\mathrm{id}\otimes\mathrm{ad}_{A}(x)\big)\delta(y)+\big(\mathcal{L}_{\cdot_{A}}(y)\otimes\mathrm{id}\big)\delta(x)=0,
        \end{equation}
        \begin{equation}\label{eq:Poisson bialg 3}
            \begin{array}{ll}
            &2\big(\mathrm{id}\otimes\mathcal{L}_{\cdot_{A}}(y)\big)\delta(x)-2\big(\mathcal{L}_{\circ_{A}}(x)\otimes\mathrm{id}\big)\Delta(y)+\Delta(x\circ_{A} y)+\big(\mathcal{R}_{\circ_{A}}(y)\otimes\mathrm{id}\big)\Delta(x)\\
            &+\tau\big(\mathcal{L}_{\cdot_{A}}(x)\otimes\mathrm{id}\big)\delta(y)-\big(\mathrm{id}\otimes\mathcal{L}_{\cdot_{A}}(x)\big)\delta(y)=0,
            \end{array}
        \end{equation}
        \begin{equation}\label{eq:Poisson bialg 4}
            (\tau-\mathrm{id}^{\otimes 2})\Big(2\delta(x\cdot_{A}
            y)-\big(\mathcal{L}_{\cdot_{A}}(x)\otimes\mathrm{id}\big)\delta(y)-\big(\mathrm{id}\otimes\mathcal{L}_{\cdot_{A}}(x)\big)\delta(y)-\big(\mathrm{id}\otimes\mathcal{R}_{\circ_{A}}(y)\big)\Delta(x)\Big)=0,
        \end{equation}
        for all $x,y\in A$.
    \end{small}
\end{enumerate}
Such a structure is called an \textbf{anti-pre-Lie Poisson
bialgebra}. We denote it by
$(A,\cdot_{A},\circ_{A},\Delta,\delta)$.
\end{defi}

\begin{thm}\label{thm:eee}
Let $(A,\cdot_{A},\circ_{A})$ be an anti-pre-Lie Poisson algebra.
Suppose that there is an anti-pre-Lie Poisson algebra structure
$(A^{*},\cdot_{A^{*}},\circ_{A^{*}})$ on the dual space $A^{*}$.
Let $(A,\cdot_A,[-,-]_A)$ and $(A^*,\cdot_{A^*}$, $[-,-]_{A^*})$
be the sub-adjacent transposed Poisson algebras respectively. Let
$\Delta,\delta:A\rightarrow A\otimes A$ be the linear duals of
$\cdot_{A^{*}}$ and $\circ_{A^{*}}$ respectively. Then
$(A,\cdot_{A},\circ_{A},\Delta,\delta)$ is an anti-pre-Lie Poisson
bialgebra if and only if
$(A,A^{*},-\mathcal{L}^{*}_{\cdot_{A}},-\mathcal{L}^{*}_{\circ_{A}},-\mathcal{L}^{*}_{\cdot_{A^{*}}},-\mathcal{L}^{*}_{\circ_{A^{*}}})$
is a matched pair of transposed Poisson algebras.
    \end{thm}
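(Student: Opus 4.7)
The strategy is to decompose both sides of the claimed equivalence into matching substructures and verify correspondence piece by piece. The anti-pre-Lie Poisson bialgebra data $(A,\cdot_A,\circ_A,\Delta,\delta)$ splits as (i) the commutative and cocommutative infinitesimal bialgebra $(A,\cdot_A,\Delta)$, (ii) the anti-pre-Lie bialgebra $(A,\circ_A,\delta)$, and (iii) the four cross-compatibility equations~(\ref{eq:Poisson bialg 1})--(\ref{eq:Poisson bialg 4}). The matched pair of transposed Poisson algebras $(A,A^{*},-\mathcal{L}^{*}_{\cdot_{A}},-\mathcal{L}^{*}_{\circ_{A}},-\mathcal{L}^{*}_{\cdot_{A^{*}}},-\mathcal{L}^{*}_{\circ_{A^{*}}})$ splits by Definition~\ref{defi:MP TPA} as (i$'$) a matched pair of commutative associative algebras, (ii$'$) a matched pair of Lie algebras on the sub-adjacent Lie algebras, (iii$'$) the representation conditions of Definition~\ref{defi:MP TPA} (2), and (iv$'$) the four cross-compatibility equations~(\ref{eq:defi:MP TPA a})--(\ref{eq:defi:MP TPA d}).

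I plan first to invoke known results to match (i)$\Leftrightarrow$(i$'$), (ii)$\Leftrightarrow$(ii$'$), and to dispose of (iii$'$). By \cite{Bai2010}, $(A,\cdot_A,\Delta)$ is a commutative and cocommutative infinitesimal bialgebra if and only if $(A,A^{*},-\mathcal{L}^{*}_{\cdot_{A}},-\mathcal{L}^{*}_{\cdot_{A^{*}}})$ is a matched pair of commutative associative algebras. By Corollary~\ref{cor:matched pairs of anti-pre-Lie algebras, Manin triples of anti-pre-Lie algebras, anti-pre-Lie bialgebras and their equivalence}, $(A,\circ_A,\delta)$ is an anti-pre-Lie bialgebra if and only if $\bigl(\mathfrak{g}(A),\mathfrak{g}(A^{*}),-\mathcal{L}^{*}_{\circ_{A}},-\mathcal{L}^{*}_{\circ_{A^{*}}}\bigr)$ is a matched pair of Lie algebras. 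For (iii$'$), both $(-\mathcal{L}^{*}_{\cdot_{A}},-\mathcal{L}^{*}_{\circ_{A}},A^{*})$ being a representation of $(A,\cdot_A,[-,-]_A)$ and its symmetric counterpart on $A^{*}$ are automatic consequences of Proposition~\ref{pro:anti-pre-Lie Poisson} applied to the anti-pre-Lie Poisson algebras $(A,\cdot_A,\circ_A)$ and $(A^{*},\cdot_{A^*},\circ_{A^*})$ respectively.

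It remains to establish (iii)$\Leftrightarrow$(iv$'$) under the preceding identifications. For this step, I will pair each of the equations in~(\ref{eq:Poisson bialg 1})--(\ref{eq:Poisson bialg 4}) with a test element $a^{*}\otimes b^{*}\in A^{*}\otimes A^{*}$, convert each occurrence of $\Delta$ and $\delta$ via $\langle\Delta(x),a^{*}\otimes b^{*}\rangle=\langle x,a^{*}\cdot_{A^{*}}b^{*}\rangle$ and $\langle\delta(x),a^{*}\otimes b^{*}\rangle=\langle x,a^{*}\circ_{A^{*}}b^{*}\rangle$, and push the starred operators $\mathcal{L}^{*}_{\cdot_{A}}$, $\mathcal{L}^{*}_{\circ_{A}}$, $\mathrm{ad}^{*}_{A}$, $\mathcal{R}^{*}_{\circ_{A}}$ across the pairing. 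This is the same bookkeeping pattern exhibited in the proof of Theorem~\ref{thm:equivalence matched pairs of Lie algebras and anti-pre-Lie bialgebras}. Each of the four bialgebra compatibility equations should collapse to one of the matched-pair compatibility equations (\ref{eq:defi:MP TPA a})--(\ref{eq:defi:MP TPA d}) on $A$, while its symmetric partner on $A^{*}$ follows by the symmetric calculation after exchanging the roles of $A$ and $A^{*}$.

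The main obstacle is simply the length and carefulness of this last translation: keeping track of the signs, slot-permutations and the transposition $\tau$ when dualizing the mixed coproduct-operator expressions. No new conceptual ingredient is needed beyond what appears in the proofs of Theorem~\ref{thm:equivalence matched pairs of Lie algebras and anti-pre-Lie bialgebras} and Theorem~\ref{thm:ddd}. A useful shortcut is to observe that each pair of matched-pair equations $(\ref{eq:defi:MP TPA a},\ref{eq:defi:MP TPA c})$ and $(\ref{eq:defi:MP TPA b},\ref{eq:defi:MP TPA d})$ come from the same bialgebra equation read in the two directions $A\otimes A^{*}$ versus $A^{*}\otimes A$, so only half of the verifications are genuinely independent. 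Assembling the four equivalences above yields the theorem.
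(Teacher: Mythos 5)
Your overall route is exactly the paper's: split the bialgebra structure into the commutative and cocommutative infinitesimal bialgebra $(A,\cdot_A,\Delta)$, the anti-pre-Lie bialgebra $(A,\circ_A,\delta)$ and the four cross-conditions (\ref{eq:Poisson bialg 1})--(\ref{eq:Poisson bialg 4}); match the first piece via \cite{Bai2010}, the second via Theorem \ref{thm:equivalence matched pairs of Lie algebras and anti-pre-Lie bialgebras}, observe that the representation conditions in Definition \ref{defi:MP TPA} are automatic by Proposition \ref{pro:anti-pre-Lie Poisson}, and identify the cross-conditions with (\ref{eq:defi:MP TPA a})--(\ref{eq:defi:MP TPA d}) by pairing with $a^*\otimes b^*$ and pushing the starred operators across the pairing. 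This is the paper's proof; the only piece your decomposition leaves implicit is the requirement in the definition of an anti-pre-Lie Poisson bialgebra that $(A,\Delta,\delta)$ be an anti-pre-Lie Poisson coalgebra, which the paper disposes of by Proposition \ref{pro:Poisson algs and Poisson coalgs} and which is immediate from the hypothesis that $(A^*,\cdot_{A^*},\circ_{A^*})$ is an anti-pre-Lie Poisson algebra.

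The one point you must correct is the claimed ``shortcut''. It is not true that (\ref{eq:defi:MP TPA a}) and (\ref{eq:defi:MP TPA c}) arise from the same bialgebra equation: the dualization matches the four conditions bijectively, namely (\ref{eq:Poisson bialg 1})$\leftrightarrow$(\ref{eq:defi:MP TPA a}), (\ref{eq:Poisson bialg 2})$\leftrightarrow$(\ref{eq:defi:MP TPA b}), (\ref{eq:Poisson bialg 3})$\leftrightarrow$(\ref{eq:defi:MP TPA c}), (\ref{eq:Poisson bialg 4})$\leftrightarrow$(\ref{eq:defi:MP TPA d}), and (\ref{eq:Poisson bialg 1}), (\ref{eq:Poisson bialg 3}) are genuinely different conditions (likewise (\ref{eq:Poisson bialg 2}), (\ref{eq:Poisson bialg 4})). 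If you verified only two of the four correspondences and treated the other two as ``the same equation read in the other direction'', then in the implication from the matched pair to the bialgebra you would never obtain (\ref{eq:Poisson bialg 3}) and (\ref{eq:Poisson bialg 4}), and in the converse you would wrongly deduce (\ref{eq:defi:MP TPA c}) and (\ref{eq:defi:MP TPA d}) from (\ref{eq:Poisson bialg 1}) and (\ref{eq:Poisson bialg 2}) alone. The legitimate economy is different: since (\ref{eq:defi:MP TPA c}), (\ref{eq:defi:MP TPA d}) are (\ref{eq:defi:MP TPA a}), (\ref{eq:defi:MP TPA b}) with the roles of $A$ and $A^*$ exchanged, their dualizations are the mirror computations of the first two, but they land on the new equations (\ref{eq:Poisson bialg 3}), (\ref{eq:Poisson bialg 4}); this is the precise sense of the paper's ``similarly''.
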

\begin{proof}
    By \cite{Bai2010}, $(A,A^{*}, -\mathcal{L}^{*}_{\cdot_{A}},-\mathcal{L}^{*}_{\cdot_{A^{*}}})$ is a matched pair of commutative associative algebras
    if and only if $(A,\cdot_{A},\Delta)$ is a commutative and cocommutative infinitesimal bialgebra,
    and by Theorem \ref{thm:equivalence matched pairs of Lie algebras and anti-pre-Lie bialgebras},
     $(A,A^{*}, -\mathcal{L}^{*}_{\circ_{A}},-\mathcal{L}^{*}_{\circ_{A^{*}}})$ is a matched pair of Lie algebras if and only if $(A,\circ_{A},\delta)$ is an anti-pre-Lie bialgebra.
By Proposition~\ref{pro:anti-pre-Lie Poisson},
$(-\mathcal{L}^{*}_{\cdot_{A}},-\mathcal{L}^{*}_{\circ_{A}}, A^*)$
and
$(-\mathcal{L}^{*}_{\cdot_{A^{*}}},-\mathcal{L}^{*}_{\circ_{A^{*}}},A)$
are representations of the transposed Poisson algebras
$(A,\cdot_A,[-,-]_A)$ and $(A^*,\cdot_{A^*},[-,-]_{A^*})$
respectively. By Proposition \ref{pro:Poisson algs and Poisson
coalgs}, $(A,\Delta,\delta)$ is an anti-pre-Lie Poisson coalgebra
if and only if $(A^{*},\cdot_{A^{*}},\circ_{A^{*}})$ is an
anti-pre-Lie Poisson algebra. Moreover, for all $x,y\in A, a^{*},
b^{*}\in A^{*}$, we have
\begin{eqnarray*}
\langle 2\mathcal{L}^{*}_{\cdot_{A^{*}}}\big(\mathcal{L}^{*}_{\circ_{A}}(x)a^{*}\big)y,b^{*}\rangle&=&-\langle 2y, \mathcal{L}^{*}_{\circ_{A}}(x)a^{*}\cdot_{A^{*}}b^{*}\rangle=\langle 2\big(\mathcal{L}_{\circ_{A}}(x)\otimes\mathrm{id}\big)\Delta(y),a^{*}\otimes b^{*}\rangle,\\
\langle 2y\cdot_{A}\mathcal{L}^{*}_{\circ_{A^{*}}}(a^{*})x, b^{*}\rangle&=&\langle 2x, a^{*}\circ_{A^{*}}\mathcal{L}^{*}_{\cdot_{A}}(y)b^{*}\rangle=-\langle 2\big(\mathrm{id}\otimes\mathcal{L}_{\cdot_{A}}(y)\big)\delta(x), a^{*}\otimes b^{*}\rangle,\\
\langle\mathcal{L}^{*}_{\circ_{A^{*}}}(a^{*})(x\cdot_{A}y), b^{*}\rangle&=&\-\langle \delta(x\cdot_{A}y), a^{*}\circ_{A^{*}}b^{*}\rangle,\\
-\langle\mathcal{L}^{*}_{\circ_{A^{*}}}\big(\mathcal{L}^{*}_{\cdot_{A}}(y)a^{*}\big)x,b^{*}\rangle&=&\langle x, \mathcal{L}^{*}_{\cdot_{A}}(y)a^{*}\circ_{A^{*}}b^{*}\rangle=-\langle\big(\mathcal{L}_{\cdot_{A}}(y)\otimes\mathrm{id}\big)\delta(x), a^{*}\otimes b^{*}\rangle,\\
-\langle[x,\mathcal{L}^{*}_{\cdot_{A^{*}}}(a^{*})y]_{A},b^{*}\rangle&=&-\langle
y, a^{*}\cdot_{A^{*}}
\mathrm{ad}^{*}_{A}(x)b^{*}\rangle=\langle\big(\mathrm{id}\otimes
\mathrm{ad}_{A}(x)\big)\Delta(y), a^{*}\otimes b^{*}\rangle.
\end{eqnarray*}
Thus Eq.~(\ref{eq:defi:MP TPA a}) holds if and only if Eq.~(\ref{eq:Poisson bialg 1}) holds for $\mu_{A}=-\mathcal{L}^{*}_{\cdot_{A}}$, $\mu_{B}=-\mathcal{L}^{*}_{\cdot_{A^{*}}}$,
$\rho_{A}=-\mathcal{L}^{*}_{\circ_{A}}$, $\rho_{B}=-\mathcal{L}^{*}_{\circ_{A^{*}}}$.
Similarly Eqs.~(\ref{eq:defi:MP TPA b})-(\ref{eq:defi:MP TPA d}) hold if and only if Eqs.~(\ref{eq:Poisson bialg 2})-(\ref{eq:Poisson bialg 4}) hold for $\mu_{A}=-\mathcal{L}^{*}_{\cdot_{A}}$, $\mu_{B}=-\mathcal{L}^{*}_{\cdot_{A^{*}}}$,
$\rho_{A}=-\mathcal{L}^{*}_{\circ_{A}}$, $\rho_{B}=-\mathcal{L}^{*}_{\circ_{A^{*}}}$ respectively.  Hence the conclusion follows.
\end{proof}

Combining Theorems \ref{thm:ddd} and \ref{thm:eee} together, we
have

\begin{cor}
Let $(A,\cdot_{A},\circ_{A})$ be an anti-pre-Lie Poisson algebra.
Suppose that there is an anti-pre-Lie Poisson algebra structure
$(A^{*},\cdot_{A^{*}},\circ_{A^{*}})$ on the dual space $A^{*}$
and $\Delta,\delta:A\rightarrow A\otimes A$ are the linear duals
of $\cdot_{A^{*}}$ and $\circ_{A^{*}}$ respectively. Let
$(A,\cdot_{A},[-,-]_{A})$ and
$(A^{*},\cdot_{A^{*}},[-,-]_{A^{*}})$ be the sub-adjacent
transposed Poisson algebras of $(A,\cdot_{A},\circ_{A})$ and
$(A^{*},\cdot_{A^{*}},\circ_{A^*})$ respectively. Then the
following conditions are equivalent:
\begin{enumerate}
\item There is a Manin triple of transposed Poisson algebras
$\big((A\oplus A^{*},\cdot,[-,-],\mathcal{B}_{d}),A,A^{*}\big)$ such that
the
        compatible anti-pre-Lie Poisson algebra $(A\oplus A^{*},\cdot,\circ)$ in which $\circ$ is defined by Eq.~(\ref{eq:thm:commutative
2-cocycles and anti-pre-Lie algebras}) through $\mathcal B_d$
contains $(A,\cdot_{A},\circ_{A})$ and
$(A^{*},\cdot_{A^{*}},\circ_{A^{*}})$ as anti-pre-Lie Poisson
subalgebras. \item
$(A,A^{*},-\mathcal{L}^{*}_{\cdot_{A}},-\mathrm{ad}^{*}_{A},\mathcal{R}^{*}_{\circ_{A}},-\mathcal{L}^{*}_{\cdot_{A^{*}}},-\mathrm{ad}^{*}_{A^{*}},{\mathcal{R}^{*}_{\circ_{A^{*}}}})$
is a matched pair of anti-pre-Lie Poisson algebras. \item
$(A,A^{*},-\mathcal{L}^{*}_{\cdot_{A}},-\mathcal{L}^{*}_{\circ_{A}},-\mathcal{L}^{*}_{\cdot_{A^{*}}}-\mathcal{L}^{*}_{\circ_{A^{*}}})$
is a matched pair of transposed Poisson algebras. \item
$(A,\cdot_{A},\circ_{A},\Delta,\delta)$ is an anti-pre-Lie Poisson
bialgebra.
\end{enumerate}
\end{cor}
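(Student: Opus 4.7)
The plan is to observe that this corollary is a direct synthesis of Theorems~\ref{thm:ddd} and~\ref{thm:eee}, with the matched pair of transposed Poisson algebras serving as the common bridge between the Manin triple characterization and the bialgebra characterization. So I would not introduce any new computation; instead I would simply verify that the two theorems share exactly the same intermediate statement.

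First, I would invoke Theorem~\ref{thm:ddd} to obtain the equivalence of Items (1), (2), and (3). Concretely, that theorem gives: the existence of a Manin triple of transposed Poisson algebras $\big((A\oplus A^{*},\cdot,[-,-],\mathcal{B}_{d}),A,A^{*}\big)$ whose associated compatible anti-pre-Lie Poisson algebra (via Eq.~(\ref{eq:thm:commutative 2-cocycles and anti-pre-Lie algebras})) contains $(A,\cdot_{A},\circ_{A})$ and $(A^{*},\cdot_{A^{*}},\circ_{A^{*}})$ as anti-pre-Lie Poisson subalgebras is equivalent to the matched pair condition
$$(A,A^{*},-\mathcal{L}^{*}_{\cdot_{A}},-\mathrm{ad}^{*}_{A},\mathcal{R}^{*}_{\circ_{A}},-\mathcal{L}^{*}_{\cdot_{A^{*}}},-\mathrm{ad}^{*}_{A^{*}},\mathcal{R}^{*}_{\circ_{A^{*}}})$$
at the anti-pre-Lie Poisson level, and also to the matched pair condition
$$(A,A^{*},-\mathcal{L}^{*}_{\cdot_{A}},-\mathcal{L}^{*}_{\circ_{A}},-\mathcal{L}^{*}_{\cdot_{A^{*}}},-\mathcal{L}^{*}_{\circ_{A^{*}}})$$
at the transposed Poisson level. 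Thus (1)$\Leftrightarrow$(2)$\Leftrightarrow$(3).

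Next, I would apply Theorem~\ref{thm:eee}, which states that $(A,\cdot_{A},\circ_{A},\Delta,\delta)$ is an anti-pre-Lie Poisson bialgebra if and only if exactly the matched pair of transposed Poisson algebras in Item~(3) holds. Since $\Delta$ and $\delta$ are by hypothesis the linear duals of $\cdot_{A^{*}}$ and $\circ_{A^{*}}$, the hypotheses of Theorem~\ref{thm:eee} are met, and this yields (3)$\Leftrightarrow$(4). Chaining the two equivalences yields the full four-way equivalence.

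The proof is effectively a one-line appeal to the two theorems, so no genuine obstacle arises; the only thing to double-check is purely notational, namely that the Items~(3) in Theorem~\ref{thm:ddd} and in Theorem~\ref{thm:eee} are literally the same matched pair data (same representations $-\mathcal{L}^{*}_{\cdot_{A}},-\mathcal{L}^{*}_{\circ_{A}},-\mathcal{L}^{*}_{\cdot_{A^{*}}},-\mathcal{L}^{*}_{\circ_{A^{*}}}$), which they are by direct inspection.
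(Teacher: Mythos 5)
Your proposal is correct and is exactly the paper's argument: the corollary is stated immediately after the phrase ``Combining Theorems~\ref{thm:ddd} and~\ref{thm:eee} together,'' so the paper likewise obtains (1)$\Leftrightarrow$(2)$\Leftrightarrow$(3) from Theorem~\ref{thm:ddd} and (3)$\Leftrightarrow$(4) from Theorem~\ref{thm:eee}, with the matched pair of transposed Poisson algebras as the common link. Your notational check that the two theorems refer to the same matched pair data is the only point needing verification, and it holds.
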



\subsection{Coboundary anti-pre-Lie Poisson bialgebras and the anti-pre-Lie Poisson Yang-Baxter equation}\

Recall (\cite{Bai2010}) that a commutative and cocommutative
infinitesimal bialgebra $(A,\cdot,\Delta)$ is called
\textbf{coboundary} if there exists an $r\in A\otimes A$ such that
\begin{equation}\label{AssoCob}
    \Delta(x):=\Delta_{r}(x):=\big(\mathrm{id}\otimes\mathcal{L}_{\cdot}(x)-\mathcal{L}_{\cdot}(x)\otimes \mathrm{id}\big)r,\ \ \forall x\in A.
\end{equation}

Let $(A,\cdot)$ be a commutative associative algebra and $r\in
A\otimes A$. Let $\Delta:A\rightarrow A\otimes A$ be a linear map
defined by Eq.~(\ref{AssoCob}). Then by \cite{Bai2010},
$(A,\cdot,\Delta)$ is a commutative and cocommutative
infinitesimal bialgebra if and only if for all $x\in A,$
\begin{eqnarray}
    \big(\mathrm{id}\otimes\mathcal{L}_{\cdot}(x)-\mathcal{L}_{\cdot}(x)\otimes \mathrm{id}\big)\big(r+\tau(r)\big)&=&0,\label{eq:AYBE1}\\
    \big(\mathrm{id}\otimes \mathrm{id}\otimes\mathcal{L}_{\cdot}(x)-\mathcal{L}_{\cdot}(x)\otimes \mathrm{id}\otimes
    \mathrm{id}\big)\textbf{A}(r)&=&0,\label{eq:AYBE2}
\end{eqnarray}
\delete{
\begin{equation}\label{eq:AYBE1}
    (\mathrm{id}\otimes\mathcal{L}_{\cdot}(x)-\mathcal{L}_{\cdot}(x)\otimes \mathrm{id})(r+\tau(r))=0,
\end{equation}
\begin{equation}\label{eq:AYBE2}
    (\mathrm{id}\otimes \mathrm{id}\otimes\mathcal{L}_{\cdot}(x)-\mathcal{L}_{\cdot}(x)\otimes \mathrm{id}\otimes
    \mathrm{id})\textbf{A}(r)=0,
\end{equation}}
where $\textbf{A}(r)$ is defined by Eq.~(\ref{eq:AYBE}).

 \delete{ where
\begin{equation}\label{eq:AYBE}
    \textbf{A}(r)=r_{12}\cdot r_{13}-r_{12}\cdot r_{23}+r_{13}\cdot
    r_{23},
\end{equation}
and
$$r_{12}\cdot r_{13}=\sum_{i,j}a_{i}\cdot a_{j}\otimes b_{i}\otimes b_{j}, r_{12}\cdot r_{23}=\sum_{i,j}a_{i}\otimes b_{i}\cdot a_{j}\otimes b_{j}, r_{13}\cdot r_{23}=\sum_{i,j}a_{i}\otimes  a_{j}\otimes b_{i}\cdot b_{j}.$$
The equation $\textbf{A}(r)=0$ is called the \textbf{associative
    Yang-Baxter equation} (AYBE) in $(A,\cdot)$.
}


\begin{defi}
An anti-pre-Lie Poisson bialgebra  $(A,\cdot,\circ,\Delta,\delta)$
is called \textbf{coboundary} if there exists an
$r
\in A\otimes A$ such that
Eqs.~(\ref{AssoCob}) and (\ref{eq:defi:coboundary anti-pre-Lie
bialgebras}) hold.
\end{defi}

A coboundary anti-pre-Lie Poisson bialgebra is clearly both a coboundary commutative and cocommutative infinitesimal bialgebra and a coboundary anti-pre-Lie bialgebra.

\begin{pro}\label{pro:fff2}
Let $(A,\cdot,\circ)$ be an anti-pre-Lie Poisson algebra  and
$r=\sum\limits_{i}a_{i}\otimes b_{i}\in A\otimes A$. Let
$\Delta=\Delta_{r}$ and $\delta=\delta_{r}$ be two linear maps
defined by Eqs.~(\ref{AssoCob}) and (\ref{eq:defi:coboundary
anti-pre-Lie bialgebras}) respectively.
\begin{enumerate}
    \item\label{it:1} Eq.~(\ref{eq:defi:anti-pre-Lie Poisson coalg1}) holds if and only if for all $x\in
    A$, the following {equation} holds:
    \begin{small}
    \begin{equation}\label{eq:TPBA1}
        \begin{split}
        &\big(2\mathrm{id}\otimes\mathrm{id}\otimes\mathcal{L}_{\cdot}(x)-\mathcal{L}_{\cdot}(x)\otimes\mathrm{id}\otimes\mathrm{id}-\mathrm{id}\otimes\mathcal{L}_{\cdot}(x)\otimes\mathrm{id}\big)\textbf{T}(r)\\
        &\ \ +\sum\limits_{j}\Big(2\big(\mathrm{ad}(a_{j})\otimes\mathrm{id}\otimes \mathcal{L}_{\cdot}(x)-\mathrm{id}\otimes\mathcal{L}_{\circ}(a_{j})\otimes\mathcal{L}_{\cdot}(x)-\mathrm{ad}(x\cdot a_{j})\otimes\mathrm{id}\otimes\mathrm{id}+\mathrm{id}\otimes\mathcal{L}_{\circ}(x\cdot a_{j})\otimes\mathrm{id}\big)\\
        &\ \ +\mathcal{R}_{\circ}(a_{j})\otimes \mathcal{L}_{\cdot}(x)\otimes\mathrm{id}-\mathcal{R}_{\circ}(a_{j})\mathcal{L}_{\cdot}(x)\otimes\mathrm{id}\otimes\mathrm{id}\Big)\Big(\big(r+\tau(r)\big)\otimes b_{j}\Big)=0.
        \end{split}
    \end{equation}
\end{small}
\item\label{it:2} Eq.~(\ref{eq:defi:anti-pre-Lie Poisson coalg2})
holds if and only if for all $x\in A$, the following {equation}
holds:
\begin{small}
    \begin{equation}\label{eq:TPBA2}
        \begin{split}
            &\big(2\mathcal{L}_{\circ}(x)\otimes\mathrm{id}\otimes\mathrm{id}-\mathrm{id}\otimes\mathrm{ad}(x)\otimes\mathrm{id}\big)\textbf{A}(r)-\big(\mathrm{id}\otimes\mathrm{id}\otimes\mathcal{L}_{\cdot}(x)\big)(\mathrm{id}\otimes \tau)\textbf{T}(r)\\
            &\ \ +\sum\limits_{j} \big(\mathrm{id}\otimes\mathrm{ad}(x)\otimes\mathcal{L}_{\cdot}(b_{j})+\mathrm{id}\otimes\mathrm{id}\otimes\mathcal{L}_{\cdot}(x)\mathcal{L}_{\circ}(b_{j})-\mathrm{id}\otimes\mathrm{id}\otimes\mathcal{L}_{\cdot}(b_{j})\mathcal{L}_{\circ}(x)\\
            &\ \ -\mathrm{id}\otimes\mathrm{ad}(b_{j})\otimes\mathcal{L}_{\cdot}(x)\big)\Big(a_{j}\otimes\big(r+\tau(r)\big)\Big)=0.
        \end{split}
    \end{equation}
\end{small}
\item \label{it:3} Eq.(\ref{eq:Poisson bialg 1}) holds
automatically. \item\label{it:4} Eq.(\ref{eq:Poisson bialg 2})
holds automatically. \item\label{it:5}Eq.(\ref{eq:Poisson bialg
3}) holds if and only if for all $x,y\in A$, the following
{equation} holds:
    \begin{equation}\label{eq:TPBA3}
        \big(\mathrm{ad}(y)\otimes\mathcal{L}_{\cdot}(x)-\mathrm{id}\otimes\mathcal{L}_{\cdot}(x)\mathcal{L}_{\circ}(y)\big)\big(r+\tau(r)\big)=0.
    \end{equation}
\item\label{it:6} Eq.(\ref{eq:Poisson bialg 4}) holds if and only
if for all $x,y\in A$, the following {equation} holds:
\begin{small}
\begin{equation}\label{eq:TPBA4}
    \big(\mathcal{L}_{\cdot}(x)\otimes\mathcal{L}_{\circ}(y)-\mathcal{L}_{\circ}(y)\otimes\mathcal{L}_{\cdot}(x)+2\mathcal{L}_{\circ}(x\cdot y)\otimes\mathrm{id}-2\mathrm{id}\otimes\mathcal{L}_{\circ}(x\cdot y)+\mathrm{id}\otimes\mathcal{L}_{\cdot}(x)\mathcal{L}_{\circ}(y)-\mathcal{L}_{\cdot}(x)\mathcal{L}_{\circ}(y)\otimes\mathrm{id}\big)\big(r+\tau(r)\big)=0.
\end{equation}
\end{small}
\end{enumerate}
\end{pro}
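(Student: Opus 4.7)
My plan is to prove each of the six items by direct substitution of the coboundary formulas $\Delta_{r}$ and $\delta_{r}$ into the corresponding compatibility equations, and then to simplify using the axioms of the anti-pre-Lie Poisson algebra $(A,\cdot,\circ)$. This is a natural analogue of Proposition~\ref{pro:cob coalg}: there, substituting $\delta_{r}$ into the anti-pre-Lie coalgebra axioms and the compatibility condition of Definition~\ref{defi:anti-pre-Lie bialgebra} reshuffled the resulting triple tensors into a ``curvature part'' $\mathbf{T}(r)$ plus a ``symmetric part'' involving $r+\tau(r)$. Here I will do the same, using in addition the commutative associative product $\cdot$ and the two extra identities Eqs.~(\ref{eq:defi:anti-pre-Lie Poisson1}) and (\ref{eq:defi:anti-pre-Lie Poisson2}) that tie $\circ$ to $\cdot$.

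For items (\ref{it:1}) and (\ref{it:2}), the anti-pre-Lie Poisson coalgebra axioms Eqs.~(\ref{eq:defi:anti-pre-Lie Poisson coalg1}) and (\ref{eq:defi:anti-pre-Lie Poisson coalg2}) are quadratic in $(\Delta_{r},\delta_{r})$. First I would expand each composition $(\delta\otimes\mathrm{id})\Delta$, $(\mathrm{id}\otimes\delta)\Delta$, $(\mathrm{id}\otimes\Delta)\delta$, and $(\delta\otimes\mathrm{id})\delta$ as double sums $\sum_{i,j}$ indexed by the expansions $r=\sum_{i}a_{i}\otimes b_{i}$. The ``mixed'' sums in which both slots of $r$ appear in a nontrivial algebra expression are what assemble into $\mathbf{T}(r)$, $\mathbf{A}(r)$, or their $\tau$-twists, via the identifications already used in the proofs of Theorem~\ref{thm:cob bialg} and Proposition~\ref{pro:cob coalg}. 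The remaining ``diagonal'' sums in which only one factor of $r$ is touched combine with $\tau$ and reorganize into tensors acted on by $\bigl(r+\tau(r)\bigr)\otimes b_{j}$ or $a_{j}\otimes\bigl(r+\tau(r)\bigr)$; at this step I invoke Eqs.~(\ref{eq:defi:anti-pre-Lie Poisson1}) and (\ref{eq:defi:anti-pre-Lie Poisson2}) to convert the mixed compositions $\mathcal{L}_{\circ}\mathcal{L}_{\cdot}$, $\mathcal{L}_{\cdot}\mathcal{L}_{\circ}$, $\mathrm{ad}\,\mathcal{L}_{\cdot}$ into the shape appearing in Eqs.~(\ref{eq:TPBA1}) and (\ref{eq:TPBA2}).

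For items (\ref{it:3}) and (\ref{it:4}), the claim is that Eqs.~(\ref{eq:Poisson bialg 1}) and (\ref{eq:Poisson bialg 2}) are automatic, in the same spirit as Eq.~(\ref{eq:defi:anti-pre-Lie bialgebra2}) being automatic in Theorem~\ref{thm:cob bialg}. These two equations are essentially joint $1$-cocycle identities coupling $\Delta$ and $\delta$, and a joint coboundary from a single $r$ satisfies them by inspection. Concretely, after substitution every resulting operator acting on $r$ is linear, and grouping by the tensor slot where the anti-pre-Lie Poisson axioms are applied shows that the left and right factors cancel separately. The cancellation in the first tensor factor is controlled by Eq.~(\ref{eq:defi:anti-pre-Lie Poisson2}) in the form $2\mathcal{L}_{\circ}(x)\mathcal{L}_{\cdot}(y)=\mathcal{R}_{\circ}(y)\mathcal{L}_{\cdot}(x)+\mathcal{L}_{\cdot}(x)\mathcal{L}_{\circ}(y)$ and by the derivation property of $\mathrm{ad}(x)$ on $(A,\cdot)$ encoded in Eq.~(\ref{eq:defi:transposed Poisson algebra}); the second factor is handled symmetrically.

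For items (\ref{it:5}) and (\ref{it:6}), only one of $\Delta_{r}$, $\delta_{r}$ appears squared in any term, so all contributions are linear in $r$. I will expand Eqs.~(\ref{eq:Poisson bialg 3}) and (\ref{eq:Poisson bialg 4}), separate each coefficient of $r$ from that of $\tau(r)$, and use the fact that $\tau$ swaps the roles of the two tensor factors to rewrite the whole expression as a single operator applied to $r+\tau(r)$. The target identities Eqs.~(\ref{eq:TPBA3}) and (\ref{eq:TPBA4}) then read off directly. The hard part is purely bookkeeping: each of (\ref{eq:Poisson bialg 1})--(\ref{eq:Poisson bialg 4}) yields many terms, and identifying which combinations vanish by the anti-pre-Lie Poisson axioms versus which survive as the coefficient of $r+\tau(r)$, $\mathbf{T}(r)$, or $\mathbf{A}(r)$ is delicate. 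As with Proposition~\ref{pro:cob coalg}, the full verification is lengthy enough that it naturally belongs in the Appendix, but the strategy above is entirely mechanical once the dictionary between tensor slots and representation-theoretic operators is fixed.
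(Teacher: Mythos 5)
Your plan is the same route the paper takes in its Appendix proof: substitute $\Delta_{r}$ and $\delta_{r}$, expand everything in the components of $r=\sum_i a_i\otimes b_i$, and regroup the sums into operators applied to $\textbf{T}(r)$, $\textbf{A}(r)$ and to $\big(r+\tau(r)\big)\otimes b_j$ or $a_j\otimes\big(r+\tau(r)\big)$, invoking Eqs.~(\ref{eq:defi:anti-pre-Lie Poisson1}), (\ref{eq:defi:anti-pre-Lie Poisson2}) and (\ref{eq:defi:transposed Poisson algebra}) along the way; this is precisely the $D$-, $E$-, $F$-, $G$-groupings of the paper. Two cautions, though, on how you justify the pieces. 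First, items (\ref{it:3})--(\ref{it:4}) are not automatic ``by inspection'' from the joint coboundary form, in contrast with Eq.~(\ref{eq:defi:anti-pre-Lie bialgebra2}) in Theorem~\ref{thm:cob bialg}: their vanishing genuinely uses the algebra axioms, with Eq.~(\ref{eq:defi:anti-pre-Lie Poisson2}) cancelling the terms in one tensor slot and the transposed Poisson identity (\ref{eq:defi:transposed Poisson algebra}) cancelling those in the other. Moreover the operator identity you quote is mis-stated: Eq.~(\ref{eq:defi:anti-pre-Lie Poisson2}) reads $2\mathcal{L}_{\circ}(x)\mathcal{L}_{\cdot}(y)=\mathcal{R}_{\circ}(y)\mathcal{L}_{\cdot}(x)+\mathcal{L}_{\cdot}(x\circ y)$, equivalently $2\mathcal{L}_{\circ}(x)\mathcal{L}_{\cdot}(y)=\mathcal{L}_{\circ}(x\cdot y)+\mathcal{L}_{\cdot}(y)\mathcal{L}_{\circ}(x)$, not $\mathcal{R}_{\circ}(y)\mathcal{L}_{\cdot}(x)+\mathcal{L}_{\cdot}(x)\mathcal{L}_{\circ}(y)$, and Eq.~(\ref{eq:defi:transposed Poisson algebra}) is not the assertion that $\mathrm{ad}(x)$ is a derivation of $(A,\cdot)$ — it is the transposed Leibniz rule. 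Second, for items (\ref{it:5})--(\ref{it:6}) one cannot merely ``separate the coefficients of $r$ and $\tau(r)$'': as in the Appendix, one must add and subtract auxiliary terms and apply Eqs.~(\ref{eq:defi:anti-pre-Lie Poisson1})--(\ref{eq:defi:anti-pre-Lie Poisson2}) before the surviving terms complete to a single operator acting on $r+\tau(r)$. With those corrections, carrying out the bookkeeping reproduces the paper's proof.
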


\begin{proof}
It follows from the careful interpretation, which is put into the
Appendix.
\end{proof}


Therefore, with Theorem~\ref{thm:cob bialg} together, we have the
following conclusion.

\begin{thm}\label{thm:lllll}
Let $(A,\cdot,\circ)$ be an anti-pre-Lie Poisson  algebra and
$r=\sum\limits_{i}a_{i}\otimes b_{i}\in A\otimes A$. Let
$\Delta=\Delta_{r}$ and $\delta=\delta_{r}$ be two linear maps
defined by Eqs.~(\ref{AssoCob}) and (\ref{eq:defi:coboundary
anti-pre-Lie bialgebras}) respectively. Then
$(A,\cdot,\circ,\Delta,\delta)$ is an anti-pre-Lie Poisson
bialgebra if and only if Eqs.~(\ref{eq:pro:cob coalg1})-
(\ref{eq:pro:coboundary anti-pre-Lie bialgebras1}) and
(\ref{eq:AYBE1})-(\ref{eq:TPBA4}) hold.
\end{thm}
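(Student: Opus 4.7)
The plan is to unpack the definition of an anti-pre-Lie Poisson bialgebra into its constituent axioms and then match each axiom, in the coboundary setting, with a criterion that is already at our disposal. Concretely, the data $(A,\cdot,\circ,\Delta,\delta)$ form an anti-pre-Lie Poisson bialgebra precisely when (a) $(A,\cdot,\Delta)$ is a commutative and cocommutative infinitesimal bialgebra; (b) $(A,\circ,\delta)$ is an anti-pre-Lie bialgebra; (c) the extra anti-pre-Lie Poisson coalgebra axioms (\ref{eq:defi:anti-pre-Lie Poisson coalg1}) and (\ref{eq:defi:anti-pre-Lie Poisson coalg2}) hold; and (d) the compatibility equations (\ref{eq:Poisson bialg 1})--(\ref{eq:Poisson bialg 4}) hold. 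The coassociativity/cocommutativity of $\Delta$ and the anti-pre-Lie coalgebra axioms for $\delta$ are already built into (a) and (b) respectively, so no separate verification is needed for those pieces.

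Since $\Delta = \Delta_{r}$ is a coboundary, the result of \cite{Bai2010} recalled immediately above the theorem rewrites (a) as the conjunction of Eqs.~(\ref{eq:AYBE1}) and (\ref{eq:AYBE2}). Since $\delta = \delta_{r}$ is a coboundary, Theorem~\ref{thm:cob bialg}, which I have just proved, rewrites (b) as Eqs.~(\ref{eq:pro:cob coalg1})--(\ref{eq:pro:coboundary anti-pre-Lie bialgebras1}). For (c), items~(\ref{it:1}) and~(\ref{it:2}) of Proposition~\ref{pro:fff2} translate the two coalgebra compatibilities into Eqs.~(\ref{eq:TPBA1}) and (\ref{eq:TPBA2}) respectively. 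For (d), items~(\ref{it:3})--(\ref{it:6}) of Proposition~\ref{pro:fff2} do the remaining work: the first two show that Eqs.~(\ref{eq:Poisson bialg 1}) and (\ref{eq:Poisson bialg 2}) are satisfied automatically whenever $\Delta$ and $\delta$ are given by the coboundary formulas, while the last two translate Eqs.~(\ref{eq:Poisson bialg 3}) and (\ref{eq:Poisson bialg 4}) into Eqs.~(\ref{eq:TPBA3}) and (\ref{eq:TPBA4}).

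Assembling the four equivalences above yields exactly the ``if and only if'' in the theorem statement. All of the genuine computational content has been absorbed into Theorem~\ref{thm:cob bialg}, Proposition~\ref{pro:fff2}, and the cited result from \cite{Bai2010}, so the argument here is pure bookkeeping: there is no substantive obstacle. The only care required is to ensure that each axiom in the definition of an anti-pre-Lie Poisson bialgebra is accounted for exactly once, with no double-counting between, for instance, the coassociativity of $\Delta$ appearing in (a) and in (c), and with the two ``automatic'' equations (\ref{eq:Poisson bialg 1}) and (\ref{eq:Poisson bialg 2}) correctly flagged as imposing no condition on $r$.
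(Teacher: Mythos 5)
Your proposal is correct and follows exactly the route the paper intends: the paper states the theorem as an immediate consequence of the recalled coboundary criterion from \cite{Bai2010} (giving Eqs.~(\ref{eq:AYBE1})--(\ref{eq:AYBE2})), Theorem~\ref{thm:cob bialg} (giving Eqs.~(\ref{eq:pro:cob coalg1})--(\ref{eq:pro:coboundary anti-pre-Lie bialgebras1})), and Proposition~\ref{pro:fff2} (giving Eqs.~(\ref{eq:TPBA1})--(\ref{eq:TPBA4}) and the automatic validity of Eqs.~(\ref{eq:Poisson bialg 1})--(\ref{eq:Poisson bialg 2})). Your bookkeeping of which axiom of the definition maps to which criterion, including the observation that the coalgebra axioms are already subsumed in the bialgebra conditions, is precisely the paper's argument.
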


\begin{defi}
Let $(A,\cdot,\circ)$ be an anti-pre-Lie Poisson  algebra and
$r\in A\otimes A$. We say $r$ is a solution of the
\textbf{anti-pre-Lie Poisson Yang-Baxter equation} or the
\textbf{APLP-YBE} in short, in $(A,\cdot,\circ)$ if $r$ satisfies
both the AYBE and the APL-YBE, that is,
\begin{equation}
    \textbf{A}(r)=\textbf{T}(r)=0.
\end{equation}
\end{defi}

\begin{ex}
     Let $(A,\cdot)$ be a commutative associative algebra with a derivation $P$ and $(A,\circ)$ be the anti-pre-Lie algebra defined by Eq.~(\ref{eq:diff anti-pre-Lie}).
     Then by \cite{LB2022}, $(A,\cdot,\circ)$ is an anti-pre-Lie Poisson algebra. Moreover, by Example \ref{ex:AYBE}, if $r$ is a solution of the AYBE in $(A,\cdot)$ satisfying Eq.~(\ref{eq:-P}), then $r$ is also a
     solution of the APLP-YBE in $(A,\cdot,\circ)$.
\end{ex}

\begin{cor}
Let $(A,\cdot,\circ)$ be an anti-pre-Lie Poisson algebra and $r\in
A\otimes A$ be a skew-symmetric solution of the APLP-YBE in
$(A,\cdot,\circ)$. Then $(A,\cdot,\circ,\Delta,\delta)$  is an
anti-pre-Lie Poisson bialgebra, where $\Delta=\Delta_{r}$ and
$\delta=\delta_{r}$ are defined by Eqs.~(\ref{AssoCob}) and
(\ref{eq:defi:coboundary anti-pre-Lie bialgebras}) respectively.
\end{cor}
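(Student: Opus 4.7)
The plan is to invoke Theorem~\ref{thm:lllll}, which reduces the assertion to verifying the list of identities Eqs.~(\ref{eq:pro:cob coalg1})-(\ref{eq:pro:coboundary anti-pre-Lie bialgebras1}) together with Eqs.~(\ref{eq:AYBE1})-(\ref{eq:TPBA4}). Once this reduction is in place, the argument should be essentially a bookkeeping exercise: inspect each of these identities and show that, under the skew-symmetry of $r$ together with $\mathbf{A}(r)=\mathbf{T}(r)=0$, every term vanishes.

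First I would observe that skew-symmetry $\tau(r)=-r$ forces $r+\tau(r)=0$. A quick scan of the relevant identities shows that every summand appearing in Eqs.~(\ref{eq:pro:coboundary anti-pre-Lie bialgebras1}), (\ref{eq:AYBE1}), (\ref{eq:TPBA3}), (\ref{eq:TPBA4}) is of the form (some linear operator)$(r+\tau(r))$, so all four identities follow immediately. The same observation takes care of the $\big(r+\tau(r)\big)$-blocks appearing in Eqs.~(\ref{eq:pro:cob coalg1}), (\ref{eq:pro:cob coalg2}), (\ref{eq:TPBA1}) and (\ref{eq:TPBA2}), reducing each of those to the claim that the \emph{remaining} part, which is a linear combination of operators applied to $\mathbf{T}(r)$ or $\mathbf{A}(r)$, vanishes. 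This is then exactly where the APLP-YBE hypothesis is used: by definition $\mathbf{A}(r)=0$ and $\mathbf{T}(r)=0$, killing the remaining pieces in Eqs.~(\ref{eq:pro:cob coalg1}), (\ref{eq:pro:cob coalg2}), (\ref{eq:AYBE2}), (\ref{eq:TPBA1}), (\ref{eq:TPBA2}).

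Finally, Items~(\ref{it:3}) and (\ref{it:4}) of Proposition~\ref{pro:fff2} already tell us that Eqs.~(\ref{eq:Poisson bialg 1}) and (\ref{eq:Poisson bialg 2}) hold automatically (as in the proof of Theorem~\ref{thm:cob bialg}, where the analogous Eq.~(\ref{eq:defi:anti-pre-Lie bialgebra2}) was automatic because $\delta=\delta_r$ is a $1$-coboundary and hence a $1$-cocycle). Combining these observations with Theorem~\ref{thm:lllll} yields the conclusion that $(A,\cdot,\circ,\Delta,\delta)$ is an anti-pre-Lie Poisson bialgebra.

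I do not anticipate any real obstacle: the whole point of designing the APLP-YBE as the simultaneous vanishing of $\mathbf{A}(r)$ and $\mathbf{T}(r)$, and of restricting to skew-symmetric tensors, is precisely to ensure that the $r+\tau(r)$-terms and the $\mathbf{A}(r),\mathbf{T}(r)$-terms in Proposition~\ref{pro:fff2} and Proposition~\ref{pro:cob coalg} separately collapse. The mild subtlety, if any, lies in double-checking that every term in Eqs.~(\ref{eq:pro:cob coalg1})-(\ref{eq:pro:coboundary anti-pre-Lie bialgebras1}) and (\ref{eq:AYBE1})-(\ref{eq:TPBA4}) really does decompose into those two harmless families, which is essentially a term-by-term reading of Propositions~\ref{pro:cob coalg} and~\ref{pro:fff2}.
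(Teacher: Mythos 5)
Your proposal is correct and takes essentially the same route as the paper: the paper's proof is the single line ``It follows from Theorem~\ref{thm:lllll}'', and the intended justification is exactly your term-by-term observation that skew-symmetry ($\tau(r)=-r$) annihilates every $r+\tau(r)$-block in Eqs.~(\ref{eq:pro:cob coalg1})--(\ref{eq:pro:coboundary anti-pre-Lie bialgebras1}) and (\ref{eq:AYBE1})--(\ref{eq:TPBA4}), while the APLP-YBE hypothesis $\mathbf{A}(r)=\mathbf{T}(r)=0$ kills the remaining pieces. No gap here; your filling-in of the details (including the automatic validity of Eqs.~(\ref{eq:Poisson bialg 1})--(\ref{eq:Poisson bialg 2}) from Proposition~\ref{pro:fff2}) is exactly what the citation of Theorem~\ref{thm:lllll} encapsulates.
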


\begin{proof}
It follows from Theorem~\ref{thm:lllll}.
\end{proof}

\delete{Recall a skew-symmetric bilinear form $\mathcal{B}$ on an
(commutative) associative algebra $(A,\cdot)$ is called a
\textbf{Connes cocycle} (\cite{Bai2010}) if
\begin{equation}
\mathcal{B}(x\cdot y,z)+\mathcal{B}(y\cdot z,x)+\mathcal{B}(z\cdot x,y)=0,\ \ \forall x,y,x\in A.
\end{equation}
Then we have the following theorem.

\begin{thm}
Let $(A,\cdot,\circ)$ be an anti-pre-Lie Poisson algebra and $r\in
A\otimes A$. Suppose that $r$ is skew-symmetric and nondegenerate.
Then $r$ is a solution of the APLP-YBE if and only if the bilinear
form $\mathcal{B}_{r}$ given by Eq.~(\ref{eq:Br}) is both a Connes
cocycle on $(A,\cdot)$ and an anti-2-cocycle on $(A,\circ)$.
\end{thm}
\begin{proof}
    By \cite{Bai2010}, $r$ is a solution of the AYBE in $(A,\cdot)$ if and only if
    $\mathcal{B}_{r}$ is a Connes cocycle on $(A,\cdot)$. Hence the conclusion follows from Theorem \ref{thm:anti-2-cocycle}.
    \end{proof}}

\subsection{$\mathcal{O}$-operators of anti-pre-Lie Poisson algebras and pre-(anti-pre-Lie Poisson) algebras}\

Let $(A,\cdot)$ be a commutative associative algebra and $(\mu,V)$
be a representation of $(A,\cdot)$. Recall (\cite{Bai2010}) that a
linear map $T:V\rightarrow A$ is called an
\textbf{$\mathcal{O}$-operator of $(A,\cdot)$ associated to
}$(\mu,V)$ if
\begin{equation}\label{eq:O-op}
T(u)\cdot T(v)=T\Big(\mu\big(T(u)\big)v+\mu\big(T(v)\big)u\Big),\;\;\forall u,v\in V.
\end{equation}

\begin{defi}
Let $(A,\cdot,\circ)$ be an anti-pre-Lie Poisson algebra and
$(\mu,l_{\circ},r_{\circ},V)$ be a representation of
$(A,\cdot,\circ)$. A linear map $T:V\rightarrow A$ is called an
{\bf $\mathcal{O}$-operator of $(A,\cdot,\circ)$ associated to
$(\mu,l_{\circ},r_{\circ},V)$} if $T$ is both an
$\mathcal{O}$-operator of $(A,\cdot)$ associated to $(\mu,V)$ and
an $\mathcal{O}$-operator of $(A,\circ)$ associated to
$(l_{\circ},r_{\circ},V)$, that is, Eqs.~(\ref{eq:O-op}) and
(\ref{eq:defi:O-operators}) hold.
\end{defi}

\begin{thm}\label{thm:AP1}
    Let $(A,\cdot,\circ)$ be an anti-pre-Lie Poisson algebra and $r\in A\otimes A$ be skew-symmetric. Then $r$ is a solution of the APLP-YBE in $(A,\cdot,\circ)$ if and only if $r$ is an $\mathcal{O}$-operator of $(A,\cdot,\circ)$ associated to $(-\mathcal{L}^{*}_{\cdot},-\mathrm{ad}^{*},\mathcal{R}^{*}_{\circ},A^{*})$.
\end{thm}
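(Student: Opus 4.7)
The plan is to observe that both sides of the equivalence decompose cleanly into an ``associative half'' and an ``anti-pre-Lie half'' and then appeal to the corresponding one-piece statements that have already appeared (or been cited) in the paper. Specifically, by definition $r$ is a solution of the APLP-YBE in $(A,\cdot,\circ)$ if and only if $\mathbf{A}(r)=0$ and $\mathbf{T}(r)=0$, while $r$ is an $\mathcal O$-operator of $(A,\cdot,\circ)$ associated to $(-\mathcal{L}^{*}_{\cdot},-\mathrm{ad}^{*},\mathcal{R}^{*}_{\circ},A^{*})$ if and only if it is simultaneously an $\mathcal O$-operator of $(A,\cdot)$ associated to $(-\mathcal{L}^{*}_{\cdot},A^{*})$ and an $\mathcal O$-operator of $(A,\circ)$ associated to $(-\mathrm{ad}^{*},\mathcal{R}^{*}_{\circ},A^{*})$. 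Thus it suffices to establish each half separately.

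For the anti-pre-Lie half, the equivalence ``$\mathbf{T}(r)=0$ for a skew-symmetric $r$ iff $r$ is an $\mathcal O$-operator of $(A,\circ)$ associated to $(-\mathrm{ad}^{*},\mathcal{R}^{*}_{\circ},A^{*})$'' is precisely the content of Theorem~\ref{thm:O-operator and T-equation} (see also the example immediately after Definition~\ref{defi:O-operators}). So I just quote it. For the associative half, the analogous fact ``$\mathbf{A}(r)=0$ for a skew-symmetric $r$ iff $r$ is an $\mathcal O$-operator of the commutative associative algebra $(A,\cdot)$ associated to $(-\mathcal{L}^{*}_{\cdot},A^{*})$'' is a well-known result of Bai (cf.~\cite{Bai2010}); the proof is a direct dualization entirely parallel to Theorem~\ref{thm:O-operator and T-equation}, obtained by writing $r=\sum_i a_i\otimes b_i$ and pairing both $\mathbf A(r)$ and Eq.~\eqref{eq:O-op} (with $\mu=-\mathcal L^*_{\cdot}$) against an arbitrary $c^*\otimes a^*\otimes b^*\in A^{*\otimes 3}$, so that the three tensor components of $\mathbf A(r)$ match term-by-term with the three scalar evaluations of the $\mathcal O$-operator identity. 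If one prefers, one can write this computation out in the same style as the proof of Theorem~\ref{thm:O-operator and T-equation} (it is even simpler there being no Lie bracket to keep track of), but no new idea is required.

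Combining the two halves gives the theorem. The only thing to double-check is that there is no hidden interaction between the associative and anti-pre-Lie sides: the APLP-YBE is by construction the conjunction $\mathbf A(r)=0\wedge \mathbf T(r)=0$, and the $\mathcal O$-operator condition for $(A,\cdot,\circ)$ is by construction the conjunction of the two separate $\mathcal O$-operator conditions, so the two halves truly decouple. Hence no obstruction arises and the proof is a one-line assembly of the two pieces. In particular, no analogue of Proposition~\ref{pro:cob coalg} or Proposition~\ref{pro:fff2} is needed here because the statement does not involve the bialgebra compatibility conditions at all—it is purely an identity between two tensor equations on $r\in A\otimes A$.
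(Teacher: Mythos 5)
Your proposal is correct and follows essentially the same route as the paper: the paper's proof likewise quotes the result from \cite{Bai2010} that a skew-symmetric $r$ satisfies the AYBE if and only if it is an $\mathcal O$-operator of $(A,\cdot)$ associated to $(-\mathcal{L}^{*}_{\cdot},A^{*})$, and then combines this with Theorem~\ref{thm:O-operator and T-equation} for the anti-pre-Lie half, the two conditions decoupling exactly as you observe.
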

\begin{proof}
    By
    \cite
    {Bai2010}, $r$ is a solution of AYBE if and only if $r$ is an $\mathcal{O}$-operator of $(A,\cdot)$ associated to $(-\mathcal{L}^{*}_{\cdot}, A^{*})$. Hence the conclusion follows from Theorem \ref{thm:O-operator and T-equation}.
    \end{proof}

\begin{thm}\label{thm:AP2}
Let $(A,\cdot,\circ)$ be an anti-pre-Lie Poisson algebra and
$(\mu,l_{\circ},r_{\circ},V)$ be a representation of
$(A,\cdot,\circ)$. Set
$\hat{A}=A\ltimes_{-\mu^{*},r^{*}_{\circ}-l^{*}_{\circ},r^{*}_{\circ}}V^{*}$.
Let $T:V\rightarrow A$ be a linear map which is identified as an
element in $\hat{A}\otimes \hat{A}$. Then $r=T-\tau(T)$ is a
skew-symmetric solution of the APLP-YBE in the anti-pre-Lie
Poisson algebra $\hat{A}$ if and only if $T$ is an
$\mathcal{O}$-operator of $(A,\cdot,\circ)$ associated to
$(\mu,l_{\circ},r_{\circ},V)$.
\end{thm}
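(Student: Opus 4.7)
The plan is to reduce the claim to the two corresponding single-operation statements that have already been established, using the observation that the APLP-YBE is by definition the conjunction $\mathbf{A}(r)=0$ and $\mathbf{T}(r)=0$, and that the semi-direct product anti-pre-Lie Poisson algebra $\hat A$ decomposes naturally as a commutative associative algebra and an anti-pre-Lie algebra built on the same underlying vector space $A\oplus V^{*}$.

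First I would verify that $\hat A$ is actually a well-defined anti-pre-Lie Poisson algebra: by Proposition \ref{pro:rep of anti-pre-Lie Poisson}, the quadruple $(-\mu^{*},r_{\circ}^{*}-l_{\circ}^{*},r_{\circ}^{*},V^{*})$ is a representation of $(A,\cdot,\circ)$, so Proposition \ref{pro:repandsemidirectproduct3}(\ref{it:hh1}) ensures that $\hat A=A\ltimes_{-\mu^{*},r_{\circ}^{*}-l_{\circ}^{*},r_{\circ}^{*}}V^{*}$ carries a semi-direct product anti-pre-Lie Poisson algebra structure. In particular, its commutative associative part is $A\ltimes_{-\mu^{*}}V^{*}$ and its anti-pre-Lie part is $A\ltimes_{r_{\circ}^{*}-l_{\circ}^{*},r_{\circ}^{*}}V^{*}$.

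Next, I would apply the two existing semi-direct product theorems to the two components of $\mathbf{APLP}(r)=\bigl(\mathbf{A}(r),\mathbf{T}(r)\bigr)$. By the $\mathcal{O}$-operator characterization of skew-symmetric solutions of the AYBE (from \cite{Bai2010}, which is the commutative associative analogue of Theorem \ref{thm:O-operator and T-equation:semi-direct product version}), $r=T-\tau(T)$ is a skew-symmetric solution of the AYBE in $A\ltimes_{-\mu^{*}}V^{*}$ if and only if $T$ is an $\mathcal{O}$-operator of $(A,\cdot)$ associated to $(\mu,V)$, i.e.~Eq.~(\ref{eq:O-op}) holds. Similarly, Theorem \ref{thm:O-operator and T-equation:semi-direct product version} itself tells us that $r=T-\tau(T)$ is a skew-symmetric solution of the APL-YBE in $A\ltimes_{r_{\circ}^{*}-l_{\circ}^{*},r_{\circ}^{*}}V^{*}$ if and only if $T$ is an $\mathcal{O}$-operator of $(A,\circ)$ associated to $(l_{\circ},r_{\circ},V)$, i.e.~Eq.~(\ref{eq:defi:O-operators}) holds.

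Combining these two equivalences yields the theorem directly: $r$ is a skew-symmetric solution of the APLP-YBE in $\hat A$ exactly when both $\mathbf{A}(r)=0$ and $\mathbf{T}(r)=0$, which happens if and only if $T$ satisfies both defining identities of an $\mathcal{O}$-operator of $(A,\cdot,\circ)$ associated to $(\mu,l_{\circ},r_{\circ},V)$. There is essentially no obstacle beyond bookkeeping; the only subtle point is to confirm that the operations in $\hat A$ used to compute $\mathbf{A}(r)$ and $\mathbf{T}(r)$ are precisely the commutative associative and anti-pre-Lie parts produced by the chosen representations $-\mu^{*}$ and $(r_{\circ}^{*}-l_{\circ}^{*},r_{\circ}^{*})$, so that the tensor element $T\in\mathrm{Hom}(V,A)\subseteq \hat A\otimes\hat A$ plays the same role in both reductions.
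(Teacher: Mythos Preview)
Your proposal is correct and follows essentially the same approach as the paper: reduce the APLP-YBE to its two constituent equations and invoke the known associative semi-direct product result (the paper cites \cite{Bai2012} rather than \cite{Bai2010} for this step) together with Theorem~\ref{thm:O-operator and T-equation:semi-direct product version}. Your additional verification that $\hat A$ is well-defined via Propositions~\ref{pro:rep of anti-pre-Lie Poisson} and~\ref{pro:repandsemidirectproduct3} is a helpful explicit check that the paper leaves implicit.
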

\begin{proof}
    By \cite{Bai2012}, $r$ is a skew-symmetric solution of the AYBE in $A\ltimes_{-\mu^{*}}V^{*}$ if and only if $T$ is an $\mathcal{O}$-operator associated to $(\mu,V)$. Hence the conclusion follows from Theorem \ref{thm:O-operator and T-equation:semi-direct product version}.
    \end{proof}

\delete{According to Theorems \ref{thm:AP1} and \ref{thm:AP2}, we
have the following corollary.

\begin{cor}\label{cor:O-operator and T-equation:equivalence2}
    Let $(A,\cdot,\circ)$ be an anti-pre-Lie Poisson algebra and $(\mu,l_{\circ},r_{\circ},V)$ be a representation of $(A,\cdot,\circ)$. Set $\hat{A}=A\ltimes_{-\mu^{*},r^{*}_{\circ}-l^{*}_{\circ},r^{*}_{\circ}}V^{*}$.
    Then the following conditions are equivalent:
    \begin{enumerate}
        \item $T$ is an $\mathcal{O}$-operator of $(A,\circ)$ associated to $(\mu,l_{\circ},r_{\circ},V)$.
        \item $r=T-\tau(T)$ is a skew-symmetric solution of the APLP-YBE in the anti-pre-Lie Poisson algebra $\hat{A}$.
        \item $r=T-\tau(T)$ is an $\mathcal{O}$-operator of $\hat{A}$ associated to
        $(-\mathcal{L}^{*}_{\cdot_{\hat{A}}},-\mathrm{ad}^{*}_{\hat{A}},\mathcal{R}^{*}_{\circ_{\hat{A}}},\hat{A}^{*})$.
    \end{enumerate}
\end{cor}
}

\delete{
 Recall the definition of Zinbiel algebras.
\begin{defi}(\cite{Lod})
    A \textbf{Zinbiel algebra} is a pair $(A,\star)$, where $A$ is a vector space, and $\star:A\otimes A\rightarrow A$ is a  bilinear operation such that the following equation holds:
    \begin{equation}\label{eq:Zinbiel}
        x\star(y\star z)=(y\star x)\star z+(x\star y)\star z, \;\;\forall x,y,z\in A.
    \end{equation}
\end{defi}

Let $(A,\star)$ be a Zinbiel algebra. Define a new operation $\cdot$ on $A$ by
\begin{equation}\label{eq:ZintoAss}
    x\cdot y=x\star y+y\star x,\;\;\forall x,y\in A.
\end{equation}
Then $(A,\cdot)$ is a commutative associative algebra. Moreover,
$(\mathcal{L}_{\star},A)$ is a representation of the commutative
associative algebra $(A,\cdot)$, where
$\mathcal{L}_{\star}:A\rightarrow\mathrm{End}(A)$ is given by
$\mathcal{L}_{\star}(x)y=x\star y$, for all $x,y\in A$. 
}

\begin{defi}
    A \textbf{pre-(anti-pre-Lie Poisson) algebra} or a {\bf pre-APLP algebra} in short,  is a quadruple $(A,\star,\succ,\prec)$, such that $(A,\star)$ is a Zinbiel algebra,
     $(A,\succ,\prec)$ is a pre-APL algebra, and the following equations hold:
    \begin{eqnarray}
        2y\star(x\succ z)-2y\star(  z\prec x)&=&(x\succ y+x\prec y)\star z-x\star(    z\prec y),\label{eq:PP1}\\
        2(x\succ y+x\prec y)\star z-2(y\succ  x+y\prec x)\star z&=&y\star(x\succ z)-x\star(y\succ z),\label{eq:PP2}\\
        2z\prec (x\star y+y\star x) &=& (x\star z)\prec y+x\star(  z\prec y),\label{eq:PP3}\\
        2x\succ (y\star z)&=&{(x\star y +y\star x)\succ z}+y\star(x\succ z),\label{eq:PP4}\\
        2x\succ (y\star z)&=& (x\star z)\prec y+{(x\succ y+x\prec y)\star z},\label{eq:PP5}
    \end{eqnarray}
for all $x,y,z\in A$.
\end{defi}

\begin{rmk}
In fact, the operad of pre-APLP algebras is the successor of the
operad of anti-pre-Lie Poisson algebras in the sense of
\cite{BBGN}. Note that they are analogues of pre-Poisson algebras
(\cite{A2}) whose operad is the successor of the operad of Poisson
algebras.

\end{rmk}

\begin{pro}\label{pro:PP1}
    Let $(A,\star,\succ,\prec)$ be a pre-APLP algebra. Let $\cdot,\circ:A\otimes A\rightarrow A$ be two bilinear operations defined by
    Eqs.~(\ref{eq:ZintoAss}) and (\ref{eq:defi:quasi anti-pre-Lie algebras and anti-pre-Lie algebras}) respectively.
    Then $(A,\cdot,\circ)$ is an anti-pre-Lie Poisson algebra, called the \textbf{sub-adjacent anti-pre-Lie Poisson algebra} of $(A,\star,\succ,\prec)$, and
    $(A,\star,\succ,\prec)$ is called a \textbf{compatible pre-APLP algebra} of $(A,\cdot,\circ)$.
    Moreover, $(\mathcal{L}_{\star},\mathcal{L}_{\succ},\mathcal{R}_{\prec},A)$ is a representation of $(A,\cdot,\circ)$.
\end{pro}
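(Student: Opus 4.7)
The plan is to verify each axiom directly by expanding in the $\star,\succ,\prec$ basis and matching against \eqref{eq:PP1}--\eqref{eq:PP5}. Two of the ingredients are already in hand: $(A,\cdot)$ is commutative associative because $(A,\star)$ is Zinbiel, and $(A,\circ)$ is anti-pre-Lie by Proposition~\ref{defi:quasi anti-pre-Lie algebras and anti-pre-Lie algebras}. Therefore the actual work consists of (i) the two compatibility axioms \eqref{eq:defi:anti-pre-Lie Poisson1}, \eqref{eq:defi:anti-pre-Lie Poisson2} defining an anti-pre-Lie Poisson algebra and, once those are established, (ii) the five compatibility axioms \eqref{eq:defi:anti-pre-Lie Poisson rep1}--\eqref{eq:defi:anti-pre-Lie Poisson rep5} for the quadruple $(\mathcal{L}_\star,\mathcal{L}_\succ,\mathcal{R}_\prec,A)$ to be a representation.

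For (i), I would expand both sides of \eqref{eq:defi:anti-pre-Lie Poisson1} using $u\cdot v=u\star v+v\star u$ and $u\circ v=u\succ v+u\prec v$, obtaining on the left a sum of terms $(u\succ v)\star z$, $(u\prec v)\star z$ and their swaps under $z\leftrightarrow\cdot$, and on the right a sum of terms of the form $y\star(x\succ z)$, $y\star(z\prec x)$, etc. The identity then follows by combining \eqref{eq:PP2} (which handles the differences $2(x\circ y)\star z - 2(y\circ x)\star z$) with the $x\leftrightarrow y$ antisymmetrization of \eqref{eq:PP1} (which absorbs the remaining $\star$-on-the-right terms). For \eqref{eq:defi:anti-pre-Lie Poisson2}, the same expansion reduces the identity to a combination of \eqref{eq:PP3} (accounting for the $\prec$-contributions) and the two relations \eqref{eq:PP4}, \eqref{eq:PP5} (accounting for the $\succ$-contributions), symmetrized in the roles of $y$ and $z$ that \eqref{eq:PP3}--\eqref{eq:PP5} are missing on the $\star$-side.

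For (ii), once $(A,\cdot,\circ)$ is known to be an anti-pre-Lie Poisson algebra, the representation claim is essentially a direct translation. Taking $\mu=\mathcal{L}_\star$, $l_\circ=\mathcal{L}_\succ$, $r_\circ=\mathcal{R}_\prec$, evaluating both sides of \eqref{eq:defi:anti-pre-Lie Poisson rep1}--\eqref{eq:defi:anti-pre-Lie Poisson rep5} on an arbitrary $z\in A$, and rewriting $\mu(u\circ v)$, $l_\circ(u\cdot v)$, $r_\circ(u\cdot v)$ by the same substitutions, the five axioms become \emph{verbatim} the pre-APLP identities \eqref{eq:PP1}, \eqref{eq:PP2}, \eqref{eq:PP3}, \eqref{eq:PP4}, \eqref{eq:PP5}, respectively. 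Together with the standard fact that $(\mathcal{L}_\star,A)$ is a representation of the commutative associative algebra $(A,\cdot)$ attached to a Zinbiel algebra, and the conclusion of Proposition~\ref{defi:quasi anti-pre-Lie algebras and anti-pre-Lie algebras} that $(\mathcal{L}_\succ,\mathcal{R}_\prec,A)$ is a representation of $(A,\circ)$, this completes the representation claim.

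The main obstacle is purely organizational rather than conceptual: each occurrence of $\cdot$ doubles the term count when expanded via $u\cdot v=u\star v+v\star u$, so matching left- and right-hand sides in step (i) requires careful bookkeeping of eight-to-sixteen term sums, but no new identity beyond \eqref{eq:PP1}--\eqref{eq:PP5} is ever needed.
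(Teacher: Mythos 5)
Your proposal is correct and follows essentially the same route as the paper: Eq.~(\ref{eq:defi:anti-pre-Lie Poisson1}) is verified from Eqs.~(\ref{eq:PP1})--(\ref{eq:PP2}), Eq.~(\ref{eq:defi:anti-pre-Lie Poisson2}) from Eqs.~(\ref{eq:PP3})--(\ref{eq:PP5}), and the representation claim is exactly the observation that Eqs.~(\ref{eq:defi:anti-pre-Lie Poisson rep1})--(\ref{eq:defi:anti-pre-Lie Poisson rep5}) for $(\mathcal{L}_{\star},\mathcal{L}_{\succ},\mathcal{R}_{\prec},A)$, evaluated on a third element, are verbatim the pre-APLP identities. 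Your explicit matching of the five representation axioms to Eqs.~(\ref{eq:PP1})--(\ref{eq:PP5}) is a nice clarification of the step the paper leaves implicit.
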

\begin{proof}
    Let $x,y,z\in A$, by Eqs.~(\ref{eq:PP1}) and (\ref{eq:PP2}), we have
    \begin{eqnarray*}
    &&2(x\circ y)\cdot z-2(y\circ x)\cdot z+x\cdot(y\circ z)-y\cdot(x\circ z)\\
    &&=2\big((x\succ y)\star z+(  x\prec y)\star z+z\star(x\succ y)+z\star(  x\prec y)\big)\\
    &&\ \ -2\big((y\succ x)\star z+(  y\prec x)\star z+z\star(y\succ x)+z\star(  y\prec x)\big)\\
    &&\ \ +x\star(y\succ z)+x\star(  y\prec z)+(y\succ z)\star x+(  y\prec z)\star x\\
    &&\ \ -y\star(x\succ z)-y\star(  x\prec z)-(x\succ z)\star y-(  x\prec z)\star y\\
    &&=0.
    \end{eqnarray*}
    Hence Eq.~(\ref{eq:defi:anti-pre-Lie Poisson1}) holds.
    Similarly, Eq.~(\ref{eq:defi:anti-pre-Lie Poisson2}) holds by
    Eqs.~(\ref{eq:PP3})-(\ref{eq:PP5}). Thus $(A,\cdot,\circ)$ is an anti-pre-Lie Poisson
    algebra. By Eqs.~(\ref{eq:defi:anti-pre-Lie Poisson
    rep1})-(\ref{eq:defi:anti-pre-Lie Poisson
    rep5}), $(\mathcal{L}_{\star},\mathcal{L}_{\succ},\mathcal{R}_{\prec},A)$ is a representation of
    $(A,\cdot,\circ)$.
\end{proof}

\begin{ex}
    Let $(A,\star)$ be a Zinbiel algebra with a derivation $P$. Let $(A,\succ,\prec)$ be the corresponding pre-APL algebra defined by Eq.~(\ref{eq:ex:Zinbiel derivation}). Then
    it is straightforward to show that $(A,\star,\succ,\prec)$ is a pre-APLP algebra.
\end{ex}

\begin{pro}\label{thm:O-operator and pre anti-pre-Lie Poisson algebras}
    Let $(A,\cdot ,\circ )$ be an anti-pre-Lie Poisson algebra and $(\mu,l_{\circ},r_{\circ},V)$ be a representation of $(A,\cdot ,\circ )$.
    Let $T:V\rightarrow A$ be an $\mathcal{O}$-operator of $(A,\cdot ,\circ )$ associated to $(\mu,l_{\circ},r_{\circ},V)$. Then there exists a  pre-APLP algebra structure
    $(V,\star ,\succ ,\prec )$ on $V$ defined by
    \begin{equation}
        u\star v=\mu\big(T(u)\big)v,\; u\succ  v=l_{\circ}\big(T(u)\big)v,\;
        u\prec  v=r_{\circ}\big(T(v)\big)u,\;\;\forall u,v\in V.
    \end{equation}
 Conversely, {let} $(A,\star,\succ,\prec)$ be a pre-APLP algebra and $(A,\cdot, \circ)$ be the sub-adjacent
 anti-pre-Lie Poisson
algebra. Then the identity map $\mathrm{id}:A\rightarrow A$ is an
$\mathcal{O}$-operator of $(A,\cdot, \circ)$ associated to
$(\mathcal{L}_{\star},\mathcal{L}_{\succ},\mathcal{R}_{\prec},A)$.
\end{pro}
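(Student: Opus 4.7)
The plan is to treat the two directions separately and to reduce each to known results plus a bookkeeping check.

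\textbf{Forward direction.} Since an $\mathcal{O}$-operator $T$ of $(A,\cdot,\circ)$ associated to $(\mu,l_{\circ},r_{\circ},V)$ is by definition both an $\mathcal{O}$-operator of $(A,\cdot)$ associated to $(\mu,V)$ and an $\mathcal{O}$-operator of $(A,\circ)$ associated to $(l_{\circ},r_{\circ},V)$, I would first invoke the classical result of Aguiar to obtain a Zinbiel product $u\star v=\mu(T(u))v$ on $V$, and Proposition~\ref{thm:O-operator and quasi anti-pre-Lie algebras} to obtain a pre-APL structure $(V,\succ,\prec)$ with $u\succ v=l_{\circ}(T(u))v$, $u\prec v=r_{\circ}(T(v))u$. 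The remaining task is to verify the five compatibility axioms \eqref{eq:PP1}--\eqref{eq:PP5} defining a pre-APLP algebra.

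\textbf{Key computational step.} Each of \eqref{eq:PP1}--\eqref{eq:PP5} is an image under $\mu$, $l_{\circ}$, $r_{\circ}$ of the corresponding compatibility \eqref{eq:defi:anti-pre-Lie Poisson rep1}--\eqref{eq:defi:anti-pre-Lie Poisson rep5} of the representation $(\mu,l_\circ,r_\circ,V)$, evaluated after pulling all triples out through $T$. For instance, for \eqref{eq:PP1}, applying \eqref{eq:defi:anti-pre-Lie Poisson rep1} to $T(x),T(y)\in A$ acting on $z\in V$ gives
\begin{equation*}
2\mu(T(y))l_{\circ}(T(x))z-2\mu(T(y))r_{\circ}(T(x))z=\mu(T(x)\circ T(y))z-\mu(T(x))r_{\circ}(T(y))z.
\end{equation*}
Using the $\mathcal{O}$-operator identity $T(x)\circ T(y)=T(l_{\circ}(T(x))y+r_{\circ}(T(y))x)=T(x\succ y+x\prec y)$ together with the definitions of $\star$, $\succ$, $\prec$, the right-hand side rewrites exactly as $(x\succ y+x\prec y)\star z-x\star(z\prec y)$. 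A parallel translation, using Eqs.~\eqref{eq:defi:anti-pre-Lie Poisson rep2}--\eqref{eq:defi:anti-pre-Lie Poisson rep5} and the analogous $\mathcal{O}$-operator identity $T(x)\cdot T(y)=T(x\star y+y\star x)$, yields \eqref{eq:PP2}--\eqref{eq:PP5} respectively.

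\textbf{Converse direction.} For the identity map $\mathrm{id}\colon A\to A$, the $\mathcal{O}$-operator equations \eqref{eq:O-op} and \eqref{eq:defi:O-operators} reduce precisely to \eqref{eq:ZintoAss} and \eqref{eq:defi:quasi anti-pre-Lie algebras and anti-pre-Lie algebras}, which hold by definition of the sub-adjacent structures. It only remains to check that $(\mathcal{L}_\star,\mathcal{L}_\succ,\mathcal{R}_\prec,A)$ is a representation of $(A,\cdot,\circ)$, which is exactly the last assertion of Proposition~\ref{pro:PP1}.

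\textbf{Main obstacle.} No conceptual difficulty is expected; the work is the routine yet slightly lengthy translation of each representation axiom through $T$. The only care needed is to keep track of the distinct roles of $l_{\circ}$ and $r_{\circ}$ (and the asymmetry in $u\prec v=r_{\circ}(T(v))u$, where $T$ is applied to the right argument), so as to correctly match each of \eqref{eq:PP1}--\eqref{eq:PP5} with its corresponding identity among \eqref{eq:defi:anti-pre-Lie Poisson rep1}--\eqref{eq:defi:anti-pre-Lie Poisson rep5}.
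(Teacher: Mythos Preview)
Your proposal is correct and is precisely the routine verification the paper has in mind; the paper's own proof reads in full ``It is straightforward.'' Your outline—reducing to the Zinbiel and pre-APL pieces via the separate $\mathcal{O}$-operator results, then matching each of \eqref{eq:PP1}--\eqref{eq:PP5} with the corresponding representation axiom \eqref{eq:defi:anti-pre-Lie Poisson rep1}--\eqref{eq:defi:anti-pre-Lie Poisson rep5} through the $\mathcal{O}$-operator identities $T(x)\circ T(y)=T(x\succ y+x\prec y)$ and $T(x)\cdot T(y)=T(x\star y+y\star x)$, and handling the converse via Proposition~\ref{pro:PP1}—is exactly the intended computation.
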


\begin{proof}
It is straightforward.
\end{proof}

\begin{pro}\label{pro:O-operator and APLP-YBE}
    Let $(A,\star,\succ,\prec)$ be a pre-APLP  algebra and $(A,\cdot,\circ)$ be the sub-adjacent anti-pre-Lie Poisson  algebra of $(A,\star,\succ,\prec)$. Let
    $\lbrace e_{1},\cdots ,e_{n}\rbrace$ be a basis of $A$ and $\lbrace e^{*}_{1},\cdots$, $e^{*}_{n}\rbrace$ be the dual basis. Then
    \begin{equation}
        r=\sum_{i=1}^{n}(e_{i}\otimes e^{*}_{i}-e^{*}_{i}\otimes e_{i})
    \end{equation}
    is a skew-symmetric solution of the APLP-YBE in the anti-pre-Lie Poisson algebra
    $A\ltimes_{-\mathcal{L}^{*}_{\star},\mathcal{R}^{*}_{\prec}-\mathcal{L}^{*}_{\succ}, \mathcal{R}^{*}_{\prec}}A^{*}$.
\end{pro}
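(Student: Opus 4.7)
The plan is to exhibit the given $r$ as coming from an $\mathcal{O}$-operator and then invoke the semi-direct product characterization (Theorem~\ref{thm:AP2}) to conclude.

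First, I would apply the converse direction of Proposition~\ref{thm:O-operator and pre anti-pre-Lie Poisson algebras}: since $(A,\star,\succ,\prec)$ is a pre-APLP algebra and $(A,\cdot,\circ)$ is its sub-adjacent anti-pre-Lie Poisson algebra, the identity map $\mathrm{id}:A\rightarrow A$ is an $\mathcal{O}$-operator of $(A,\cdot,\circ)$ associated to the representation $(\mathcal{L}_{\star},\mathcal{L}_{\succ},\mathcal{R}_{\prec},A)$ (this representation property was established in Proposition~\ref{pro:PP1}).

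Next, I would use the identification $\mathrm{Hom}(A,A)\cong A\otimes A^{*}$ to write $\mathrm{id}=\sum_{i=1}^{n}e_{i}\otimes e_{i}^{*}$, viewed as an element of $\hat{A}\otimes\hat{A}$ with $\hat{A}=A\ltimes_{-\mathcal{L}_{\star}^{*},\mathcal{R}_{\prec}^{*}-\mathcal{L}_{\succ}^{*},\mathcal{R}_{\prec}^{*}}A^{*}$. Then by the construction in Theorem~\ref{thm:AP2} applied with $T=\mathrm{id}$, $\mu=\mathcal{L}_{\star}$, $l_{\circ}=\mathcal{L}_{\succ}$, $r_{\circ}=\mathcal{R}_{\prec}$, $V=A$, the element
\[
r=T-\tau(T)=\sum_{i=1}^{n}\bigl(e_{i}\otimes e_{i}^{*}-e_{i}^{*}\otimes e_{i}\bigr)
\]
is a skew-symmetric solution of the APLP-YBE in $\hat{A}$ precisely because $\mathrm{id}$ is an $\mathcal{O}$-operator of $(A,\cdot,\circ)$ associated to $(\mathcal{L}_{\star},\mathcal{L}_{\succ},\mathcal{R}_{\prec},A)$.

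There is essentially no obstacle: the result is a direct corollary of Proposition~\ref{thm:O-operator and pre anti-pre-Lie Poisson algebras} and Theorem~\ref{thm:AP2}, and it mirrors exactly the argument already used in Proposition~\ref{pro:O-operator and T-equation} for the pure anti-pre-Lie case. The only minor thing to verify is that the specific semi-direct product structure on $\hat{A}$ displayed in the statement coincides with the one produced by Theorem~\ref{thm:AP2} under the substitution above, which is immediate from the formula $\hat{A}=A\ltimes_{-\mu^{*},r_{\circ}^{*}-l_{\circ}^{*},r_{\circ}^{*}}V^{*}$.
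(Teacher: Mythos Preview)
Your proposal is correct and follows exactly the same approach as the paper, which simply states that the result follows from Proposition~\ref{thm:O-operator and pre anti-pre-Lie Poisson algebras} and Theorem~\ref{thm:AP2}. Your additional remarks about the identification $\mathrm{id}=\sum_{i} e_i\otimes e_i^*$ and the matching of the semi-direct product structures are accurate elaborations of what the paper leaves implicit.
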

\begin{proof}
    It follows from Proposition~\ref{thm:O-operator and pre anti-pre-Lie Poisson
    algebras} and Theorem~\ref{thm:AP2}.
\end{proof}

\bigskip

 \noindent
 {\bf Acknowledgements.}  This work is supported by
NSFC (11931009, 12271265, 12261131498), the Fundamental Research
Funds for the Central Universities and Nankai Zhide Foundation.
The authors thank the referee for valuable suggestions.

\section*{Appendix: Proofs of Propositions~\ref{pro:cob coalg} and~\ref{pro:fff2}}

\noindent {\it Proof of Proposition~\ref{pro:cob coalg}}.

(\ref{it:bb1}). Let $x\in A$. Then we have {\small
\begin{eqnarray*}
&&(\mathrm{id}\otimes\delta)\delta(x)-(\tau\otimes\mathrm{id})(\mathrm{id}\otimes\delta)\delta(x)+(\delta\otimes\mathrm{id})\delta(x)-(\tau\otimes\mathrm{id})(\delta\otimes\mathrm{id})\delta(x)\\
&&=\sum\limits_{i,j}\big(a_{i}\otimes a_{j}\otimes[[x,b_{i}],b_{j}]-a_{i}\otimes [x,b_{i}]\circ a_{j}\otimes b_{j}-x\circ a_{i}\otimes a_{j}\otimes [b_{i},b_{j}]+x\circ a_{i}\otimes  b_{i}\circ a_{j}\otimes b_{j}\\
&&\hspace{0.2cm}-a_{j}\otimes
a_{i}\otimes[[x,b_{i}],b_{j}]+[x,b_{i}]\circ a_{j}\otimes
a_{i}\otimes b_{j}+a_{j}\otimes x\circ a_{i}\otimes [b_{i},b_{j}]-b_{i}\circ a_{j}\otimes x\circ a_{i}\otimes b_{j}\\
&&\hspace{0.2cm}+a_{j}\otimes[a_{i},b_{j}]\otimes [x,b_{i}]-a_{i}\circ a_{j}\otimes b_{j}\otimes [x,b_{i}]-a_{j}\otimes[x\circ a_{i},b_{j}]\otimes b_{i}+(x\circ a_{i})\circ a_{j}\otimes b_{j}\otimes b_{i}\\
&&\hspace{0.2cm}-[a_{i},b_{j}]\otimes
a_{j}\otimes [x,b_{i}]+b_{j}\otimes a_{i}\circ
a_{j}\otimes[x,b_{i}]+[x\circ a_{i},b_{j}]\otimes a_{j}\otimes
b_{i}-b_{j}\otimes(x\circ
a_{i})\circ a_{j}\otimes b_{i}\big)\\
&&=A(1)+A(2)+A(3),
\end{eqnarray*}}
where
\begin{eqnarray*}
A(1)&=&\sum\limits_{i,j}(a_{i}\otimes a_{j}\otimes[[x,b_{i}],b_{j}]-a_{j}\otimes a_{i}\otimes[[x,b_{i}],b_{j}]+a_{j}\otimes[a_{i},b_{j}]\otimes[x,b_{i}]\\
&&-a_{i}\circ a_{j}\otimes b_{j}\otimes [x,b_{i}]-[a_{i},b_{j}]\otimes a_{j}\otimes[x,b_{i}]+b_{j}\otimes a_{i}\circ a_{j}\otimes[x,b_{i}])\\
&=&\sum\limits_{i,j}(a_{i}\otimes a_{j}\otimes[x,[b_{i},b_{j}]]+a_{j}\otimes a_{i}\circ b_{j}\otimes[x,b_{i}]-a_{j}\otimes b_{j}\circ a_{i}\otimes [x,b_{i}]\\
&&+b_{j}\otimes a_{i}\circ a_{j}\otimes [x,b_{i}]-a_{i}\circ a_{j}\otimes b_{j}\otimes[x,b_{i}] -[a_{i}, b_{j}]\otimes a_{j}\otimes[x,b_{i}]\\
&&-[a_{i}, a_{j}]\otimes b_{j}\otimes[x,b_{i}]+[a_{i},
a_{j}]\otimes b_{j}\otimes[x,b_{i}]
)\\
&=&-\big(\mathrm{id}\otimes\mathrm{id}\otimes\mathrm{ad}(x)\big)\textbf{T}(r)+\sum_{j}\big(\mathrm{id}\otimes\mathcal{L}_{\circ}(a_{j})\otimes\mathrm{ad}(x)\big)\Big(\big(r+\tau(r)\big)\otimes b_{j}\Big)\\
&&-\sum_{j}\big(\mathrm{ad}(a_{j})\otimes\mathrm{id}\otimes\mathrm{ad}(x)\big)\Big(\big(r+\tau(r)\big)\otimes b_{j}\Big),\\
A(2)&=&\sum\limits_{i,j}\big([x,b_{i}]\circ a_{j}\otimes a_{i}\otimes b_{j}+(x\circ a_{i})\circ a_{j}\otimes b_{j}\otimes b_{i}+[x\circ a_{i},b_{j}]\otimes a_{j}\otimes b_{i}\\
&&-x\circ a_{i}\otimes a_{j}\otimes [b_{i},b_{j}]+x\circ a_{i}\otimes b_{i}\circ a_{j}\otimes b_{j}\big)\\
&\overset{(\ref{eq:defi:anti-pre-Lie algebras1})}{=}&\sum_{i,j}\big(b_{i}\circ(x\circ a_{j})\otimes a_{i}\otimes b_{j}-x\circ(b_{i}\circ a_{j})\otimes a_{i}\otimes b_{j}+(x\circ a_{i})\circ a_{j}\otimes b_{j}\otimes b_{i}\\
&&+(x\circ a_{i})\circ b_{j}\otimes a_{j}\otimes b_{i}-b_{j}\circ(x\circ a_{i})\otimes a_{j}\otimes b_{i}-x\circ(a_{i}\circ a_{j})\otimes b_{i}\otimes b_{j}\\
&&+x\circ (a_{i}\circ a_{j})\otimes b_{i}\otimes b_{j}-x\circ a_{i}\otimes a_{j}\otimes [b_{i}, b_{j}]+x\circ a_{i}\otimes b_{i}\circ a_{j}\otimes b_{j}\big)\\
&=&\big(\mathcal{L}_{\circ}(x)\otimes\mathrm{id}\otimes\mathrm{id}\big)\textbf{T}(r)+\sum_{j}\big(\mathcal{L}_{\circ}(x\circ a_{j})\otimes\mathrm{id}\otimes\mathrm{id}\big)\Big(\big(r+\tau(r)\big)\otimes b_{j}\Big)\\
&&-\sum_{j}\big(\mathcal{L}_{\circ}(x)\mathcal{R}_{\circ}(a_{j})\otimes\mathrm{id}\otimes\mathrm{id}\big)\Big(\big(r+\tau(r)\big)\otimes b_{j}\Big),\\
A(3)&=&\sum_{i,j}(-a_{i}\otimes [x,b_{i}]\circ a_{j}\otimes b_{j}+a_{j}\otimes x\circ a_{i}\otimes[b_{i},b_{j}]-b_{i}\circ a_{j}\otimes x\circ a_{i}\otimes b_{j}\\
&&-a_{j}\otimes[x\circ a_{i},b_{j}]\otimes b_{i}-b_{j}\otimes(x\circ a_{i})\circ a_{j}\otimes b_{i})\\
&=&-(\tau\otimes\mathrm{id})A(2)\\
&=&-(\tau\otimes \mathrm{id})\big(\mathcal{L}_{\circ}(x)\otimes\mathrm{id}\otimes\mathrm{id}\big)\textbf{T}(r)-\sum_{j}(\tau\otimes\mathrm{id})\big(\mathcal{L}_{\circ}(x\circ a_{j})\otimes\mathrm{id}\otimes\mathrm{id}\big)\Big(\big(r+\tau(r)\big)\otimes b_{j}\Big)\\
&&+\sum_{j}(\tau\otimes\mathrm{id})\big(\mathcal{L}_{\circ}(x)\mathcal{R}_{\circ}(a_{j})\otimes\mathrm{id}\otimes\mathrm{id}\big)\Big(\big(r+\tau(r)\big)\otimes
b_{j}\Big).
\end{eqnarray*}
Hence Eq.~(\ref{eq:defi:anti-pre-Lie coalgebras1}) holds if and
only if Eq.~(\ref{eq:pro:cob coalg1}) holds.

(\ref{it:bb2}). Let $x\in A$. Then we have
\begin{eqnarray*}
    &&(\mathrm{id}^{\otimes 3}+\xi+\xi^{2})(\mathrm{id}^{\otimes 3}-\tau\otimes \mathrm{id})(\mathrm{id}\otimes\delta)\delta(x)\\
    &&=(\mathrm{id}^{\otimes 3}+\xi+\xi^{2})\sum_{i,j}(a_{i}\otimes a_{j}\otimes[[x,b_{i}],b_{j}]-b_{j}\otimes a_{i}\otimes[x,b_{i}]\circ a_{j}\\
    &&\ \ -a_{j}\otimes [b_{i},b_{j}]\otimes x\circ a_{i}+b_{i}\circ a_{j}\otimes b_{j}\otimes x\circ a_{i}- a_{j}\otimes a_{i}\otimes [[x,b_{i}],b_{j}]\\
    &&\ \ +a_{i}\otimes b_{j}\otimes[x,b_{i}]\circ a_{j}+[b_{i},b_{j}]\otimes a_{j}\otimes x\circ a_{i}-b_{j}\otimes b_{i}\circ a_{j}\otimes x\circ a_{i})\\
    &&=(\mathrm{id}^{\otimes 3}+\xi+\xi^{2})\big(B(1)+B(2)+B(3)\big),
\end{eqnarray*}
where {\small
\begin{eqnarray*}
B(1)&=&\sum_{i,j}(a_{i}\otimes a_{j}\otimes[[x,b_{i}],b_{j}]-b_{j}\otimes a_{i}\otimes[x,b_{i}]\circ a_{j}- a_{j}\otimes a_{i}\otimes [[x,b_{i}],b_{j}]+a_{i}\otimes b_{j}\otimes[x,b_{i}]\circ a_{j})\\
&=&\sum_{i,j}(a_{i}\otimes a_{j}\otimes[[x,b_{i}],b_{j}]-b_{j}\otimes a_{i}\otimes[x,b_{i}]\circ a_{j}-a_{j}\otimes a_{i}\otimes[x,b_{i}]\circ b_{j}+a_{j}\otimes a_{i}\otimes[x,b_{i}]\circ b_{j}\\
&&- a_{j}\otimes a_{i}\otimes [[x,b_{i}],b_{j}]+a_{i}\otimes b_{j}\otimes[x,b_{i}]\circ a_{j}+a_{i}\otimes a_{j}\otimes[x,b_{i}]\circ b_{j}-a_{i}\otimes a_{j}\otimes[x,b_{i}]\circ b_{j})\\
&=&\sum_{i,j}a_{i}\otimes a_{j}\otimes x\circ[b_{i},b_{j}]-\sum_{j}\big(\mathrm{id}\otimes\mathrm{id}\otimes\mathcal{L}_{\circ}([x,b_{j}])\big)(\tau\otimes\mathrm{id})\Big(a_{j}\otimes\big(r+\tau(r)\big)\Big)\\
&&+\sum_{j}\big(\mathrm{id}\otimes\mathrm{id}\otimes\mathcal{L}_{\circ}([x,b_{j}])\big)\Big(a_{j}\otimes\big(r+\tau(r)\big)\Big),\\
B(2)&=&\sum_{i,j}(-a_{j}\otimes [b_{i},b_{j}]\otimes x\circ a_{i}-b_{j}\otimes b_{i}\circ a_{j}\otimes x\circ a_{i})\\
&=&\sum_{i,j}(-a_{j}\otimes [b_{i},b_{j}]\otimes x\circ a_{i}-a_{j}\otimes [a_{i},b_{j}]\otimes x\circ b_{i}+a_{j}\otimes [a_{i},b_{j}]\otimes x\circ b_{i}\\
&&-b_{j}\otimes b_{i}\circ a_{j}\otimes x\circ a_{i}-b_{j}\otimes a_{i}\circ a_{j}\otimes x\circ b_{i}\\
&&+b_{j}\otimes a_{i}\circ a_{j}\otimes x\circ b_{i}+a_{j}\otimes a_{i}\circ b_{j}\otimes x\circ b_{i}-a_{j}\otimes a_{i}\circ b_{j}\otimes x\circ b_{i})\\
&=&-\sum_{i,j}a_{j}\otimes b_{j}\circ a_{i}\otimes x\circ b_{i}+\sum_{j}\big(\mathrm{id}\otimes\mathrm{ad}(b_{j})\otimes\mathcal{L}_{\circ}(x)\big)\Big(a_{j}\otimes\big(r+\tau(r)\big)\Big)\\
&&-\sum_{j}\big(\mathrm{id}\otimes\mathcal{R}_{\circ}(a_{j})\otimes\mathcal{L}_{\circ}(x)\big)\Big(b_{j}\otimes\big(r+\tau(r)\big)\Big)+\sum_{j}\big(\mathrm{id}\otimes\mathcal{L}_{\circ}(a_{j})\otimes\mathcal{L}_{\circ}(x)\big)\Big(b_{j}\otimes\big(r+\tau(r)\big)\Big),\\
B(3)&=&\sum_{i,j}(b_{i}\circ a_{j}\otimes b_{j}\otimes x\circ a_{i}+[b_{i},b_{j}]\otimes a_{j}\otimes x\circ a_{i})\\
&=&\sum_{i,j}(b_{i}\circ a_{j}\otimes b_{j}\otimes x\circ a_{i}+[b_{i},b_{j}]\otimes a_{j}\otimes x\circ a_{i}+[b_{i},a_{j}]\otimes b_{j}\otimes x\circ a_{i}-[b_{i},a_{j}]\otimes b_{j}\otimes x\circ a_{i})\\
&=&\sum_{i,j}(a_{j}\circ b_{i}\otimes b_{j}\otimes x\circ a_{i}+a_{j}\circ a_{i}\otimes b_{j}\otimes x\circ b_{i}-a_{j}\circ a_{i}\otimes b_{j}\otimes x\circ b_{i}\\
&&+[b_{i},b_{j}]\otimes a_{j}\otimes x\circ a_{i}+[b_{i},a_{j}]\otimes b_{j}\otimes x\circ a_{i})\\
&=&-\sum_{i,j}a_{i}\circ a_{j}\otimes b_{i}\otimes x\circ b_{j} +\sum_{j}\big(\mathcal{L}_{\circ}(a_{j})\otimes\mathrm{id}\otimes\mathcal{L}_{\circ}(x)\big)(\tau\otimes\mathrm{id})\Big(b_{j}\otimes\big(r+\tau(r)\big)\Big)\\
&&+\sum_{j}\big(\mathrm{ad}(b_{j})\otimes\mathrm{id}\otimes\mathcal{L}_{\circ}(x)\big)\Big(\big(r+\tau(r)\big)\otimes
a_{j}\Big).
\end{eqnarray*}}
Hence Eq.~(\ref{eq:defi:anti-pre-Lie coalgebras2}) holds if and
only if Eq.~(\ref{eq:pro:cob coalg2}) holds.

(\ref{it:bb3}). 
Let $x\in A$. Then we have
\begin{eqnarray*}
    &&(\mathrm{id}^{\otimes 2}-\tau)\Big(\delta(x\circ y)-\big(\mathcal{L}_{\circ}(x)\otimes \mathrm{id}\big)\delta(y)-\big(\mathrm{id}\otimes\mathcal{L}_{\circ}(x)\big)\delta(y)+\big(\mathrm{id}\otimes\mathcal{R}_{\circ}(y)\big)\delta(x)\Big)\\
    &&=\sum_{i}\big(a_{i}\otimes [x\circ y,b_{i}]-(x\circ y)\circ a_{i}\otimes b_{i}-[x\circ y,b_{i}]\otimes a_{i}+b_{i}\otimes(x\circ y)\circ a_{i}\\
    &&\ \ -x\circ a_{i}\otimes [y,b_{i}]+x\circ(y\circ a_{i})\otimes b_{i}+[y,b_{i}]\otimes x\circ a_{i}-b_{i}\otimes x\circ(y\circ a_{i})\\
    &&\ \ -a_{i}\otimes x\circ[y,b_{i}]+y\circ a_{i}\otimes x\circ b_{i}+x\circ[y,b_{i}]\otimes a_{i}-x\circ b_{i}\otimes y\circ a_{i}\\
    &&\ \ +a_{i}\otimes [x,b_{i}]\circ y-x\circ a_{i}\otimes b_{i}\circ y-[x,b_{i}]\circ y\otimes a_{i}+b_{i}\circ y\otimes x\circ a_{i}\big)\\
    &&=C(1)+C(2)+C(3),
\end{eqnarray*}
where
\begin{small}
    \begin{eqnarray*}
        C(1)&=&\sum_{i}\big(a_{i}\otimes [x\circ y,b_{i}]+b_{i}\otimes(x\circ y)\circ a_{i}-b_{i}\otimes x\circ(y\circ a_{i})-a_{i}\otimes x\circ[y,b_{i}]+a_{i}\otimes [x,b_{i}]\circ y\big)\\
        &\overset{(\ref{eq:defi:anti-pre-Lie algebras1})}{=}&\sum_{i}\big(a_{i}\otimes (x\circ y)\circ b_{i}-a_{i}\otimes x\circ(y\circ b_{i})+b_{i}\otimes (x\circ y)\circ a_{i}-b_{i}\otimes x\circ(y\circ a_{i})\big)\\
        &=&\big(\mathrm{id}\otimes\mathcal{L}_{\circ}(x\circ y)-\mathrm{id}\otimes\mathcal{L}_{\circ}(x)\mathcal{L}_{\circ}(y)\big)\big(r+\tau(r)\big),\\
        C(2)&=&\sum_{i}\big(-(x\circ y)\circ a_{i}\otimes b_{i}-[x\circ y,b_{i}]\otimes a_{i}+x\circ(y\circ a_{i})\otimes b_{i}+x\circ[y,b_{i}]\otimes a_{i}-[x,b_{i}]\circ y\otimes a_{i}\big)\\
        &\overset{(\ref{eq:defi:anti-pre-Lie algebras1})}{=}&\sum_{i}\big(-(x\circ y)\circ a_{i}\otimes b_{i}+x\circ(y\circ a_{i})\otimes b_{i}-(x\circ y)\circ b_{i}\otimes a_{i}+x\circ(y\circ b_{i})\otimes a_{i}\big)\\
        &=&\big(\mathcal{L}_{\circ}(x)\mathcal{L}_{\circ}(y)\otimes \mathrm{id}-\mathcal{L}_{\circ}(x\circ y)\otimes \mathrm{id}\big)\big(r+\tau(r)\big),\\
        C(3)&=&\sum_{i}(-x\circ a_{i}\otimes [y,b_{i}]+[y,b_{i}]\otimes x\circ a_{i}+y\circ a_{i}\otimes x\circ b_{i}-x\circ b_{i}\otimes y\circ a_{i}-x\circ a_{i}\otimes b_{i}\circ y\\
        &&\hspace{1cm}+b_{i}\circ y\otimes x\circ a_{i})\\
        &=&\sum_{i}(-x\circ a_{i}\otimes y\circ b_{i}+y\circ b_{i}\otimes x\circ a_{i}+y\circ a_{i}\otimes x\circ b_{i}-x\circ b_{i}\otimes y\circ a_{i})\\
        &=&\big(\mathcal{L}_{\circ}(y)\otimes\mathcal{L}_{\circ}(x)-\mathcal{L}_{\circ}(x)\otimes\mathcal{L}_{\circ}(y)\big)\big(r+\tau(r)\big).
    \end{eqnarray*}
\end{small}
Thus Eq.~(\ref{eq:defi:anti-pre-Lie bialgebra1}) holds if and only
if Eq.~(\ref{eq:pro:coboundary anti-pre-Lie bialgebras1}) holds.
\hfill $\Box$

\bigskip

\vspace{1cm}

\noindent {\it Proof of Proposition~\ref{pro:fff2}}.

 (\ref{it:1}).
Let $x\in A$. Then we have
        \begin{small}
            \begin{eqnarray*}
                &&2(\delta\otimes\mathrm{id})\Delta(x)-2(\tau\otimes\mathrm{id})(\delta\otimes\mathrm{id})\Delta(x)+(\mathrm{id}\otimes\delta)\Delta(x)-(\tau\otimes\mathrm{id})(\mathrm{id}\otimes\delta)\Delta(x)\\
                &&=\sum_{i,j}\big(2a_{j}\otimes[x\cdot a_{i},b_{j}]\otimes b_{i}-2(x\cdot a_{i})\circ a_{j}\otimes b_{j}\otimes b_{i}-2a_{j}\otimes [a_{i},b_{j}]\otimes x\cdot b_{i}+2a_{i}\circ a_{j}\otimes b_{j}\otimes x\cdot b_{i}\\
                &&\ \ -2[x\cdot a_{i},b_{j}]\otimes a_{j}\otimes b_{i}+2b_{j}\otimes (x\cdot a_{i})\circ a_{j}\otimes b_{i}+2[a_{i},b_{j}]\otimes a_{j}\otimes x\cdot b_{i}-2b_{j}\otimes a_{i}\circ a_{j}\otimes x\cdot b_{i}\\
                &&\ \ +x\cdot a_{i}\otimes a_{j}\otimes[b_{i},b_{j}]-x\cdot a_{i}\otimes b_{i}\circ a_{j}\otimes b_{j}-a_{i}\otimes a_{j}\otimes[x\cdot b_{i}, b_{j}]+a_{i}\otimes(x\cdot b_{i})\circ a_{j}\otimes b_{j}\\
                &&\ \ -a_{j}\otimes x\cdot a_{i}\otimes [b_{i},b_{j}]+b_{i}\circ a_{j}\otimes x\cdot a_{i}\otimes b_{j}+a_{j}\otimes a_{i}\otimes[x\cdot b_{i},b_{j}]-(x\cdot b_{i})\circ a_{j}\otimes a_{i}\otimes b_{j}\big)\\
                &&=D(1)+D(2)+D(3),
            \end{eqnarray*}
        \end{small}
    where
    \begin{small}
    \begin{eqnarray*}
        D(1)&=&\sum_{i,j}\big(-2(x\cdot a_{i})\circ a_{j}\otimes b_{j}\otimes b_{i}-2[x\cdot a_{i},b_{j}]\otimes a_{j}\otimes b_{i}\\
        &&+x\cdot a_{i}\otimes a_{j}\otimes[b_{i},b_{j}]-x\cdot a_{i}\otimes b_{i}\circ a_{j}\otimes b_{j}-(x\cdot b_{i})\circ a_{j}\otimes a_{i}\otimes b_{j}\big)\\
        &=&\sum_{i,j}\big(-2(x\cdot a_{i})\circ a_{j}\otimes b_{j}\otimes b_{i}-2[x\cdot a_{i},b_{j}]\otimes a_{j}\otimes b_{i}-2[x\cdot a_{i}, a_{j}]\otimes b_{j}\otimes b_{i}\\
        &&+2[x\cdot a_{i}, a_{j}]\otimes b_{j}\otimes b_{i}+x\cdot a_{i}\otimes a_{j}\otimes[b_{i},b_{j}]-x\cdot a_{i}\otimes b_{i}\circ a_{j}\otimes b_{j}\\
        &&-(x\cdot a_{i})\circ a_{j}\otimes b_{i}\otimes b_{j}-(x\cdot b_{i})\circ a_{j}\otimes a_{i}\otimes b_{j}+(x\cdot a_{i})\circ a_{j}\otimes b_{i}\otimes b_{j}\big)\\
        &\overset{(\ref{eq:defi:anti-pre-Lie Poisson2})}{=}&\sum_{i,j}\big(-x\cdot (a_{i}\circ a_{j})\otimes b_{i}\otimes b_{j} -x\cdot a_{i}\otimes b_{i}\circ a_{j}\otimes b_{j}+x\cdot a_{i}\otimes a_{j}\otimes[b_{i},b_{j}]\big)\\
        &&+ \sum_{j}\bigg(-2\big(\mathrm{ad}(x\cdot a_{j})\otimes\mathrm{id}\otimes\mathrm{id}\big)\Big(\big(r+\tau(r)\big)\otimes b_{j}\Big)-\big(\mathcal{R}_{\circ}(a_{j})\mathcal{L}_{\cdot}(x)\otimes\mathrm{id}\otimes\mathrm{id}\big)\Big(\big(r+\tau(r)\big)\otimes b_{j}\Big)\bigg)\\
        &=&-\big(\mathcal{L}_{\cdot}(x)\otimes\mathrm{id}\otimes\mathrm{id}\big)\textbf{T}(r)-\sum_{j}\big(2\mathrm{ad}(x\cdot a_{j})\otimes\mathrm{id}\otimes\mathrm{id}+\mathcal{R}_{\circ}(a_{j})\mathcal{L}_{\cdot}(x)\otimes\mathrm{id}\otimes\mathrm{id}\big)\Big(\big(r+\tau(r)\big)\otimes b_{j}\Big),\\
        D(2)&=&\sum_{i,j}\big(2a_{j}\otimes[x\cdot a_{i},b_{j}]\otimes b_{i}+2b_{j}\otimes (x\cdot a_{i})\circ a_{j}\otimes b_{i}\\
        &&\ \ +a_{i}\otimes(x\cdot b_{i})\circ a_{j}\otimes b_{j} -a_{j}\otimes x\cdot a_{i}\otimes [b_{i},b_{j}]+b_{i}\circ a_{j}\otimes x\cdot a_{i}\otimes b_{j}\big)\\
        &=&\sum_{i,j}\big(2b_{i}\otimes (x\cdot a_{j})\circ a_{i}\otimes b_{j}+2a_{i}\otimes (x\cdot a_{j})\circ b_{i}\otimes b_{j}-2a_{i}\otimes (x\cdot a_{j})\circ b_{i}\otimes b_{j}\\
        &&\ \ +2a_{i}\otimes[x\cdot a_{j},b_{i}]\otimes b_{j}+a_{i}\otimes(x\cdot b_{i})\circ a_{j}\otimes b_{j}+a_{i}\otimes x\cdot a_{j}\otimes[b_{i},b_{j}]\\
        &&\ \ +b_{i}\circ a_{j}\otimes x\cdot a_{i}\otimes b_{j}+a_{i}\circ a_{j}\otimes x\cdot b_{i}\otimes b_{j}-a_{i}\circ a_{j}\otimes x\cdot b_{i}\otimes b_{j}\big)\\
        &\overset{(\ref{eq:defi:anti-pre-Lie Poisson2})}{=}&\sum_{i,j}\big(-a_{i}\otimes x\cdot(b_{i}\circ a_{j})\otimes b_{j}+a_{i}\otimes x\cdot a_{j}\otimes [b_{i},b_{j}]-a_{i}\circ a_{j}\otimes x\cdot b_{i}\otimes b_{j}\big)\\
        &&+\sum_{j}\bigg(2\big(\mathrm{id}\otimes\mathcal{L}_{\circ}(x\cdot a_{j})\otimes\mathrm{id}\big)\Big(\big(r+\tau(r)\big)\otimes b_{j}\Big)+\big(\mathcal{R}_{\circ}(a_{j})\otimes\mathcal{L}_{\cdot}(x)\otimes\mathrm{id}\big)\Big(\big(r+\tau(r)\big)\otimes b_{j}\Big)\bigg)\\
        &=&-\big(\mathrm{id}\otimes\mathcal{L}_{\cdot}(x)\otimes\mathrm{id}\big)\textbf{T}(r)+\sum_{j}\big(2\mathrm{id}\otimes\mathcal{L}_{\circ}(x\cdot a_{j})\otimes\mathrm{id}+\mathcal{R}_{\circ}(a_{j})\otimes\mathcal{L}_{\cdot}(x)\otimes\mathrm{id}\big)\Big(\big(r+\tau(r)\big)\otimes b_{j}\Big),\\
        D(3)&=&\sum_{i,j}(-2a_{j}\otimes [a_{i},b_{j}]\otimes x\cdot b_{i}+2a_{i}\circ a_{j}\otimes b_{j}\otimes x\cdot b_{i}+2[a_{i},b_{j}]\otimes a_{j}\otimes x\cdot b_{i}\\
        &&\ \ -2b_{j}\otimes a_{i}\circ a_{j}\otimes x\cdot b_{i}-a_{i}\otimes a_{j}\otimes[x\cdot b_{i},b_{j}]+a_{j}\otimes a_{i}\otimes[x\cdot b_{i},b_{j}])\\
        &=&\sum_{i,j}(
        -2a_{i}\otimes[a_{j},b_{i}]\otimes x\cdot b_{j}+2a_{j}\circ a_{i}\otimes b_{i}\otimes x\cdot b_{j}\\
        &&\ \ +2[a_{j},b_{i}]\otimes a_{i}\otimes x\cdot b_{j}+2[a_{j},a_{i}]\otimes b_{i}\otimes x\cdot b_{j}-2[a_{j},a_{i}]\otimes b_{i}\otimes x\cdot b_{j}\\
        &&\ \ -2b_{i}\otimes a_{j}\circ a_{i}\otimes x\cdot b_{j}-2a_{i}\otimes a_{j}\circ b_{i}\otimes x\cdot b_{j}
        +2a_{i}\otimes a_{j}\circ b_{i}\otimes x\cdot b_{j}\\
        &&\ \ -a_{i}\otimes a_{j}\otimes[x\cdot b_{i},b_{j}]
        -a_{i}\otimes a_{j}\otimes[ b_{i},x\cdot b_{j}])\\
        &\overset{(\ref{eq:defi:transposed Poisson algebra})}{=}&\sum_{i,j}(2a_{i}\circ a_{j}\otimes b_{i}\otimes x\cdot b_{j}+2a_{i}\otimes b_{i}\circ a_{j}\otimes x\cdot b_{j}-2a_{i}\otimes a_{j}\otimes x\cdot [b_{i},b_{j}])\\
        &&+\sum_{j}\bigg(2\big(\mathrm{ad}(a_{j})\otimes\mathrm{id}\otimes\mathcal{L}_{\cdot}(x)\big)\Big(\big(r+\tau(r)\big)\otimes b_{j}\Big)-2\big(\mathrm{id}\otimes\mathcal{L}_{\circ}(a_{j})\otimes\mathcal{L}_{\cdot}(x)\big)\Big(\big(r+\tau(r)\big)\otimes b_{j}\Big)\bigg)\\
        &=&2\big(\mathrm{id}\otimes\mathrm{id}\otimes\mathcal{L}_{\cdot}(x)\big)\textbf{T}(r)_+\sum_{j}2\big(\mathrm{ad}(a_{j})\otimes\mathrm{id}\otimes\mathcal{L}_{\cdot}(x)-\mathrm{id}\otimes\mathcal{L}_{\circ}(a_{j})\otimes\mathcal{L}_{\cdot}(x)\big)\Big(\big(r+\tau(r)\big)\otimes b_{j}\Big).
    \end{eqnarray*}
\end{small}
Hence Eq.~(\ref{eq:defi:anti-pre-Lie Poisson coalg1}) holds if and
only if Eq.~(\ref{eq:TPBA1}) holds.

(\ref{it:2}). Let $x\in A$. Then we have
\begin{small}
\begin{eqnarray*}
&&2(\mathrm{id}\otimes\Delta)\delta(x)-(\mathrm{id}\otimes\tau)(\Delta\otimes\mathrm{id})\delta(x)+(\delta\otimes\mathrm{id})\Delta(x)\\
&&=\sum_{i,j}\big(2a_{i}\otimes[x,b_{i}]\cdot a_{j}\otimes
b_{j}-2a_{i}\otimes a_{j}\otimes[x,b_{i}]\cdot b_{j} -2x\circ
a_{i}\otimes b_{i}\cdot a_{j}\otimes b_{j}+2x\circ a_{i}\otimes
a_{j}\otimes b_{i}\cdot b_{j}
\\
&&\ \ -a_{i}\cdot a_{j}\otimes[x,b_{i}]\otimes b_{j}+a_{j}\otimes[x,b_{i}]\otimes a_{i}\cdot b_{j}+(x\circ a_{i})\cdot a_{j}\otimes b_{i}\otimes b_{j}-a_{j}\otimes b_{i}\otimes(x\circ a_{i})\cdot b_{j}\\
&&\ \ -a_{j}\otimes[x\cdot a_{i},b_{j}]\otimes b_{i}+(x\cdot a_{i})\circ a_{j}\otimes b_{j}\otimes b_{i}+a_{j}\otimes [a_{i},b_{j}]\otimes x\cdot b_{i}-a_{i}\circ a_{j}\otimes b_{j}\otimes x\cdot b_{i}\big)\\
&&=E(1)+E(2)+E(3),
\end{eqnarray*}
\end{small}
where
\begin{small}
\begin{eqnarray*}
E(1)&=&\sum_{i,j}\big(-2x\circ a_{i}\otimes b_{i}\cdot a_{j}\otimes b_{j}+2x\circ a_{i}\otimes a_{j}\otimes b_{i}\cdot b_{j}+(x\circ a_{i})\cdot a_{j}\otimes b_{i}\otimes b_{j}\\
&\mbox{}&\ \ +(x\circ a_{j})\cdot a_{i}\otimes b_{i}\otimes b_{j}\big)\\
&\overset{(\ref{eq:defi:anti-pre-Lie Poisson2})}{=}&\sum_{i,j}\big(-2x\circ a_{i}\otimes b_{i}\cdot a_{j}\otimes b_{j}+2x\circ a_{i}\otimes a_{j}\otimes b_{i}\cdot b_{j}+2x\circ (a_{i}\cdot a_{j})\otimes b_{i}\otimes b_{j}\big)\\
&=&2\big(\mathcal{L}_{\circ}(x)\otimes\mathrm{id}\otimes\mathrm{id}\big)\textbf{A}(r),\\
E(2)&=&\sum_{i,j}(2a_{i}\otimes[x,b_{i}]\cdot a_{j}\otimes b_{j}-a_{i}\cdot a_{j}\otimes [x,b_{i}]\otimes b_{j}+a_{j}\otimes[x,b_{i}]\otimes a_{i}\cdot b_{j}-a_{j}\otimes[x\cdot a_{i},b_{j}]\otimes b_{i})\\
&=&\sum_{i,j}(2a_{i}\otimes[x,b_{i}]\cdot a_{j}\otimes b_{j}-a_{i}\cdot a_{j}\otimes [x,b_{i}]\otimes b_{j}+a_{j}\otimes[x,b_{i}]\otimes a_{i}\cdot b_{j}\\
&&\ \ +a_{j}\otimes[x,a_{i}]\otimes b_{i}\cdot b_{j}-a_{i}\otimes[x,a_{j}]\otimes b_{i}\cdot b_{j}-a_{i}\otimes[x\cdot a_{j},b_{i}]\otimes b_{j})\\
&\overset{(\ref{eq:defi:transposed Poisson algebra})}{=}&\sum_{i,j}(a_{i}\otimes[x,a_{j}\cdot b_{i}]\otimes b_{j}-a_{i}\cdot a_{j}\otimes[x,b_{i}]\otimes b_{j}-a_{i}\otimes[x,a_{j}]\otimes b_{i}\cdot b_{j})\\
&&\ \ +\sum_{j}\big(\mathrm{id}\otimes\mathrm{ad}(x)\otimes\mathcal{L}_{\cdot}(b_{j})\big)\Big(a_{j}\otimes\big(r+\tau(r)\big)\Big)\\
&=&-\big(\mathrm{id}\otimes\mathrm{ad}(x)\otimes\mathrm{id}\big)\textbf{A}(r)+\sum_{j}\big(\mathrm{id}\otimes\mathrm{ad}(x)\otimes\mathcal{L}_{\cdot}(b_{j})\big)\Big(a_{j}\otimes\big(r+\tau(r)\big)\Big),\\
E(3)&=&\sum_{i,j}\big(-2a_{i}\otimes a_{j}\otimes[x,b_{i}]\cdot b_{j}-a_{j}\otimes b_{i}\otimes(x\circ a_{i})\cdot b_{j}+a_{j}\otimes [a_{i},b_{j}]\otimes x\cdot b_{i}-a_{i}\circ a_{j}\otimes b_{j}\otimes x\cdot b_{i}\big)\\
&=&\sum_{i,j}\big(-2a_{i}\otimes a_{j}\otimes[x,b_{i}]\cdot b_{j}-a_{j}\otimes b_{i}\otimes(x\circ a_{i})\cdot b_{j}-a_{j}\otimes a_{i}\otimes(x\circ b_{i})\cdot b_{j}\\
&&\ \ +a_{j}\otimes a_{i}\otimes(x\circ b_{i})\cdot
b_{j}+a_{j}\otimes[a_{i},b_{j}]\otimes x\cdot b_{i}
+a_{j}\otimes[b_{i},b_{j}]\otimes x\cdot a_{i}-a_{j}\otimes[b_{i},b_{j}]\otimes x\cdot a_{i}\\
&&\ \ -a_{i}\circ a_{j}\otimes b_{j}\otimes x\cdot b_{i}\big)\\
&=&\sum_{i,j}\big(-2a_{i}\otimes a_{j}\otimes[x,b_{i}]\cdot b_{j}+a_{j}\otimes a_{i}\otimes(x\circ b_{i})\cdot b_{j}-a_{j}\otimes[b_{i},b_{j}]\otimes x\cdot a_{i}-a_{i}\circ a_{j}\otimes b_{j}\otimes x\cdot b_{i}\big)\\
&&\ \ -\sum_{j}\big(\mathrm{id}\otimes\mathrm{id}\otimes\mathcal{L}_{\cdot}(b_{j})\mathcal{L}_{\circ}(x)\big)\Big(a_{j}\otimes\big(r+\tau(r)\big)\Big)-\sum_{j}\big(\mathrm{id}\otimes\mathrm{ad}(b_{j})\otimes\mathcal{L}_{\cdot}(x)\big)\Big(a_{j}\otimes\big(r+\tau(r)\big)\Big)\\
&\overset{(\ref{eq:defi:anti-pre-Lie Poisson2})}{=}&\sum_{i,j}\big(a_{i}\otimes a_{j}\otimes x\cdot(b_{i}\circ b_{j})+a_{i}\otimes b_{j}\otimes x\cdot(b_{i}\circ a_{j})-a_{i}\otimes b_{j}\otimes x\cdot(b_{i}\circ a_{j})\\
&&\ \ +a_{i}\otimes [b_{i},b_{j}]\otimes x\cdot a_{j}-a_{i}\circ
a_{j}\otimes b_{j}\otimes x\cdot b_{i}
\big)\\
&&\ \ -\sum_{j}\big(\mathrm{id}\otimes\mathrm{id}\otimes\mathcal{L}_{\cdot}(b_{j})\mathcal{L}_{\circ}(x)+\mathrm{id}\otimes\mathrm{ad}(b_{j})\otimes\mathcal{L}_{\cdot}(x)\big)\Big(a_{j}\otimes\big(r+\tau(r)\big)\Big)\\
&=&-\big(\mathrm{id}\otimes\mathrm{id}\otimes\mathcal{L}_{\cdot}(x)\big)(\mathrm{id}\otimes\tau)\textbf{T}(r)\\
&&\
+\sum_{j}\big(\mathrm{id}\otimes\mathrm{id}\otimes\mathcal{L}_{\cdot}(x)\mathcal{L}_{\circ}(b_{j})-\mathrm{id}\otimes\mathrm{id}\otimes\mathcal{L}_{\cdot}(b_{j})\mathcal{L}_{\circ}(x)-\mathrm{id}\otimes\mathrm{ad}(b_{j})\otimes\mathcal{L}_{\cdot}(x)\big)\Big(a_{j}\otimes\big(r+\tau(r)\big)\Big).
\end{eqnarray*}
\end{small}
Hence Eq.~(\ref{eq:defi:anti-pre-Lie Poisson coalg2}) holds if and
only if Eq.~(\ref{eq:TPBA2}) holds.

(\ref{it:3}).
 Let $x,y\in A$. Then we have
\begin{small}
\begin{eqnarray*}
&&2\big(\mathcal{L}_{\circ}(x)\otimes\mathrm{id}\big)\Delta(y)-2\big(\mathrm{id}\otimes\mathcal{L}_{\cdot}(y)\big)\delta(x)+\delta(x\cdot y)+\big(\mathcal{L}_{\cdot}(y)\otimes\mathrm{id}\big)\delta(x)-\big(\mathrm{id}\otimes\mathrm{ad}(x)\big)\Delta(y)\\
&&=\sum_{i}\big(2x\circ(y\cdot a_{i})\otimes b_{i}-2x\circ a_{i}\otimes y\cdot b_{i} -2a_{i}\otimes y\cdot[x,b_{i}]+2x\circ a_{i}\otimes y\cdot b_{i}+a_{i}\otimes[x\cdot y, b_{i}]\\
&&\ \  -(x\cdot y)\circ a_{i}\otimes b_{i} +y\cdot
a_{i}\otimes[x,b_{i}]-y\cdot(x\circ a_{i}) \otimes b_{i} -y\cdot
a_{i}\otimes[x,b_{i}]+a_{i}\otimes[x,y\cdot b_{i}]
\big)\overset{(\ref{eq:defi:anti-pre-Lie Poisson2})}{=}0.
\end{eqnarray*}
\end{small}
Thus Eq.(\ref{eq:Poisson bialg 1}) holds automatically.

(\ref{it:4}). It follows from a similar proof of
Item~(\ref{it:3}).


(\ref{it:5}). Let $x,y\in A$. Then we have
\begin{small}
\begin{eqnarray*}
&&2\big(\mathrm{id}\otimes\mathcal{L}_{\cdot}(y)\big)\delta(x)-2\big(\mathcal{L}_{\circ}(x)\otimes\mathrm{id}\big)\Delta(y)+\Delta(x\circ
y)+
{\big(\mathcal{R}_{\circ}(y)\otimes\mathrm{id}\big)\Delta(x)}+\tau\big(\mathcal{L}_{\cdot}(x)\otimes\mathrm{id}\big)\delta(y)
\\
&&\hspace{5cm}-\big(\mathrm{id}\otimes\mathcal{L}_{\cdot}(x)\big)\delta(y)\\
&&=\sum_{i}\big(2a_{i}\otimes y\cdot[x,b_{i}]-2x\circ a_{i}\otimes y\cdot b_{i}-2x\circ(y\cdot a_{i})\otimes b_{i}+2x\circ a_{i}\otimes y\cdot b_{i}\\
&&\ \ +(x\circ y)\cdot a_{i}\otimes b_{i}-a_{i}\otimes(x\circ
y)\cdot b_{i}
+(x\cdot a_{i})\circ y\otimes b_{i}-a_{i}\circ y\otimes x\cdot b_{i}\\
&&\ \ +[y,b_{i}]\otimes x\cdot a_{i}-b_{i}\otimes x\cdot(y\circ a_{i})-a_{i}\otimes x\cdot[y,b_{i}]+y\circ a_{i}\otimes x\cdot b_{i}\big)\\
&&=F(1)+F(2)+F(3)+F(4),
\end{eqnarray*}
\end{small}
where
\begin{small}
\begin{eqnarray*}
F(1)&=&\sum_{i}\big(-2x\circ(y\cdot a_{i})\otimes b_{i}+(x\circ y)\cdot a_{i}\otimes b_{i}+(x\cdot a_{i})\circ y\otimes b_{i}\big)\overset{(\ref{eq:defi:anti-pre-Lie Poisson2})}{=}0,\\
F(2)&=&\sum_{i}(-2x\circ a_{i}\otimes y\cdot b_{i}+2x\circ a_{i}\otimes y\cdot b_{i})=0,\\
F(3)&=&\sum_{i}(-a_{i}\circ y\otimes x\cdot b_{i}+[y,b_{i}]\otimes x\cdot a_{i}+y\circ a_{i}\otimes x\cdot b_{i})=\big(\mathrm{ad}(y)\otimes\mathcal{L}_{\cdot}(x)\big)\big(r+\tau(r)\big),\\
F(4)&=&\sum_{i}\big({2}a_{i}\otimes y\cdot[x,b_{i}]-a_{i}\otimes(x\circ y)\cdot b_{i}-b_{i}\otimes x\cdot (y\circ a_{i})-a_{i}\otimes x\cdot[y,b_{i}]\big)\\
&\overset{(\ref{eq:defi:anti-pre-Lie
Poisson1})}{=}&-\big({\mathrm{id}\otimes\mathcal{L}_{\cdot}(x)\mathcal{L}_{\circ}(y)}\big)\big(r+\tau(r)\big).
\end{eqnarray*}
\end{small}
Hence Eq.(\ref{eq:Poisson bialg 3}) holds if and only if
Eq.~(\ref{eq:TPBA3}) holds.

(\ref{it:6}). Let $x,y\in A$. Then we have
\begin{small}
        \begin{eqnarray*}
                &&(\tau-\mathrm{id}^{\otimes 2})\Big(2\delta(x\cdot y)-\big(\mathcal{L}_{\cdot}(x)\otimes\mathrm{id}\big)\delta(y)-\big(\mathrm{id}\otimes\mathcal{L}_{\cdot}(x)\big)\delta(y)-\big(\mathrm{id}\otimes\mathcal{R}_{\circ}(y)\big)\Delta(x)\Big)\\
                &&=\sum_{i,j}\big(
                2[x\cdot y,b_{i}]\otimes a_{i}-2b_{i}\otimes(x\cdot y)\circ a_{i}-2a_{i}\otimes[x\cdot y,b_{i}]+2(x\cdot y)\circ a_{i}\otimes b_{i}\\
                &&\ \ +[y,b_{i}]\otimes x\cdot a_{i}+b_{i}\otimes x\cdot(y\circ a_{i})+x\cdot a_{i}\otimes [y,b_{i}]-x\cdot(y\circ a_{i})\otimes b_{i}\\
                &&\ \ -x\cdot[y,b_{i}]\otimes a_{i}+x\cdot b_{i}\otimes y\circ a_{i}+a_{i}\otimes x\cdot[y,b_{i}]-y\circ a_{i}\otimes x\cdot b_{i}\\
                &&\ \ -b_{i}\circ y\otimes x\cdot a_{i}+(x\cdot b_{i})\circ y\otimes a_{i}+x\cdot a_{i}\otimes b_{i}\circ y-a_{i}\otimes(x\cdot b_{i})\circ y\big)\\
                &&=G(1)+G(2)+G(3)+G(4),
            \end{eqnarray*}
\end{small}
where
\begin{small}
\begin{eqnarray*}
    G(1)&=&\sum_{i}\big(
    2[x\cdot y,b_{i}]\otimes a_{i}+2(x\cdot y)\circ a_{i}\otimes b_{i}-x\cdot(y\circ a_{i})\otimes b_{i}-x\cdot[y,b_{i}]\otimes a_{i}+(x\cdot b_{i})\circ y\otimes a_{i}\big)\\
    &=&\sum_{i}\big(2(x\cdot y)\circ b_{i}\otimes a_{i}-2b_{i}\circ(x\cdot y)\otimes a_{i}+2(x\cdot y)\circ a_{i}\otimes b_{i}-x\cdot(y\circ a_{i})\otimes b_{i}\\
    &&-x\cdot( y\circ b_{i})\otimes a_{i}+x\cdot(  b_{i}\circ y)\otimes a_{i}+(x\cdot b_{i})\circ y\otimes a_{i}\big)\\
    &\overset{(\ref{eq:defi:anti-pre-Lie Poisson2})}{=}&2\big(\mathcal{L}_{\circ}(x\cdot y)\otimes\mathrm{id}\big)\big(r+\tau(r)\big)-\big(\mathcal{L}_{\cdot}(x)\mathcal{L}_{\circ}(y)\otimes\mathrm{id}\big)\big(r+\tau(r)\big),\\
        G(2)&=&\sum_{i}(x\cdot a_{i}\otimes[y,b_{i}]+x\cdot b_{i}\otimes y\circ a_{i}+x\cdot a_{i}\otimes b_{i}\circ y)=\big(\mathcal{L}_{\cdot}(x)\otimes\mathcal{L}_{\circ}(y)\big)\big(r+\tau(r)\big),
\end{eqnarray*}
\end{small}
and similarly
\begin{small}
    \begin{eqnarray*}
    G(3)&=&\sum_{i}(-[y,b_{i}]\otimes x\cdot a_{i}-y\circ a_{i}\otimes x\cdot b_{i} -b_{i}\circ y\otimes x\cdot a_{i})=-\big(\mathcal{L}_{\circ}(y)\otimes\mathcal{L}_{\cdot}(x)\big)\big(r+\tau(r)\big),\\
    G(4)&=&\sum_{i}\big(
    -2b_{i}\otimes(x\cdot y)\circ a_{i}-2a_{i}\otimes[x\cdot y,b_{i}]+b_{i}\otimes x\cdot(y\circ a_{i})
    +a_{i}\otimes x\cdot[y,b_{i}]-a_{i}\otimes(x\cdot b_{i})\circ y\big)\\
    &=&\big(\mathrm{id}\otimes\mathcal{L}_{\cdot}(x)\mathcal{L}_{\circ}(y)-2\mathrm{id}\otimes\mathcal{L}_{\circ}(x\cdot y)\big)\big(r+\tau(r)\big).
    \end{eqnarray*}
\end{small}
Hence Eq.(\ref{eq:Poisson bialg 4}) holds if and only if 
Eq.~(\ref{eq:TPBA4}) holds.
    \hfill $\Box$

\end{document}